\newtheorem{thm}{Theorem}[section]
\newtheorem{prop}[thm]{Proposition}
\newtheorem{lem}[thm]{Lemma}
\newtheorem{cor}[thm]{Corollary}
\theoremstyle{definition}
\newtheorem{defn}[thm]{Definition}
\theoremstyle{remark}
\newtheorem{remk}[thm]{Remark}
\newtheorem{remks}[thm]{Remarks}
\newtheorem{exm}[thm]{Example}
\newtheorem{exms}[thm]{Examples}
\newtheorem{notat}[thm]{Notation}
\numberwithin{equation}{section}
\newcommand{\thmref}{Theorem~\ref}
\newcommand{\propref}{Proposition~\ref}
\newcommand{\corref}{Corollary~\ref}
\newcommand{\lemref}{Lemma~\ref}
\newcommand{\sA}{{\mathcal A}}
\newcommand{\sB}{{\mathcal B}}
\newcommand{\sE}{{\mathcal E}}
\newcommand{\sF}{{\mathcal F}}
\newcommand{\sG}{{\mathcal G}}
\newcommand{\sH}{{\mathcal H}}
\newcommand{\sI}{{\mathcal I}}
\newcommand{\sK}{{\mathcal K}}
\newcommand{\sM}{{\mathcal M}}
\newcommand{\sO}{{\mathcal O}}
\newcommand{\sP}{{\mathcal P}}
\newcommand{\sQ}{{\mathcal Q}}
\newcommand{\sV}{{\mathcal V}}
\newcommand{\sZ}{{\mathcal Z}}
\newcommand{\F}{{\mathbb F}}
\renewcommand{\H}{{\mathbb H}}
\renewcommand{\P}{{\mathbb P}}
\newcommand{\Q}{{\mathbb Q}}
\newcommand{\Z}{{\mathbb Z}}
\newcommand{\fm}{{\mathfrak m}}
\newcommand{\fp}{{\mathfrak p}}
\newcommand{\ff}{{\mathfrak f}}
\newcommand{\Irr}{{\rm Irr}}
\newcommand{\Ker}{{\rm Ker}}
\newcommand{\gr}{{\rm gr}}
\newcommand{\CH}{{\rm CH}}
\newcommand{\surj}{\twoheadrightarrow}
\newcommand{\inj}{\hookrightarrow}
\newcommand{\red}{{\rm red}}
\newcommand{\Hom}{{\rm Hom}}
\newcommand{\Spec}{{\rm Spec \,}}
\newcommand{\ms}{{\rm ms}}
\newcommand{\bk}{{\rm bk}}
\newcommand{\Tr}{{\rm Tr}}
\newcommand{\ab}{\rm ab}
\newcommand{\Gal}{{\rm Gal}}
\newcommand{\sHom}{{\mathcal{H}{om}}}
\newcommand{\Sch}{{\operatorname{\mathbf{Sch}}}}
\newcommand{\<}{\langle}
\renewcommand{\>}{\rangle}
\renewcommand{\max}{{\operatorname{\rm max}}}
\newcommand{\cyc}{{\operatorname{\rm cyc}}}
\newcommand{\et}{{\text{\'et}}}
\newcommand{\ds}{{/\kern-3pt/}}
\renewcommand{\log}{{\operatorname{log}}}
\newcommand{\Tor}{{\operatorname{Tor}}}
\newcommand{\un}{\underline}
\newcommand{\ov}{\overline}
\renewcommand{\dim}{\text{\rm dim}}
\newcommand{\tuborg}{\left\{\begin{array}{ll}}
\newcommand{\sluttuborg}{\end{array}\right.}
\newcommand{\zar}{{\rm zar}}
\newcommand{\nis}{{\rm nis}}
\newcommand{\dlog}{{\rm dlog}}
\newcommand{\abk}{{\rm abk}}
\newcommand{\adiv}{{\rm adiv}}
\newcommand{\wt}{\widetilde}
\newcommand{\wh}{\widehat}
\newcommand{\cont}{{\rm cont}}
\newcommand{\Fil}{{\rm fil}}
\newcommand{\etl}{{\acute{e}t}}
\newcounter{elno}
\newcounter{elno-abc}   
\newcounter{elno-abc-prime}
\begin{document}
\title{Ramified class field theory and duality over finite fields}
\author{Rahul Gupta, Amalendu Krishna}
\address{Fakult\"at f\"ur Mathematik, Universit\"at Regensburg, 
93040, Regensburg, Germany.}
\email{Rahul.Gupta@mathematik.uni-regensburg.de}
\address{Department of Mathematics, Indian Institute of Science,  
Bangalore, 560012, India.}
\email{amalenduk@iisc.ac.in}


\keywords{Milnor $K$-theory, Class field theory, 
$p$-adic {\'e}tale cohomology, Cycles with modulus}        

\subjclass[2020]{Primary 14C25; Secondary 14F42, 19E15}

\maketitle

\begin{quote}\emph{Abstract.} 
We prove a duality theorem for the $p$-adic {\'e}tale motivic 
cohomology of a variety $U$ which is the
complement of a divisor on a smooth projective variety over $\F_p$. 
This extends the duality theorems of Milne and Jannsen-Saito-Zhao.
The duality introduces a filtration on $H^1_{\etl}(U, {\Q}/{\Z})$.
We identify this filtration to the classically known 
Matsuda filtration when the
reduced part of the divisor is smooth. 
We prove a reciprocity theorem
for the idele class groups with modulus introduced by Kerz-Zhao and R{\"u}lling-Saito.
As an application, we derive the failure of Nisnevich descent for 
Chow groups with modulus. 
\end{quote}
\setcounter{tocdepth}{1}
\tableofcontents

\section{Introduction}\label{sec:Intro}
The objective of this paper is to study the duality and reciprocity theorems
for non-complete smooth varieties over finite fields and draw consequences.
Below we describe the contexts and statements of our main results.
 
\subsection{The duality theorem}\label{sec:D*}
Let $k$ be a finite field of characteristic $p$ 
and $X$ a smooth projective variety of dimension
$d$ over $k$. Let $W_m\Omega^r_{X,\log}$ be the logarithmic Hodge-Witt
sheaf on $X$, defined as the image of the dlog map 
from the Milnor $K$-theory sheaf $\sK^M_{r,X}$ to the $p$-typical
de Rham-Witt sheaf $W_m\Omega^r_X$ in the {\'e}tale topology.
Milne \cite{Milne-Zeta} proved that there is a perfect pairing of
cohomology groups 
\begin{equation}\label{eqn:Milne-Z}
H^i_\etl(X, W_m\Omega^r_{X,\log}) \times  
H^{d+1-i}_\etl(X, W_m\Omega^{d-r}_{X,\log})
\to {\Z}/{p^m}.
\end{equation}
By \cite[Theorem~8.4]{Geisser-Levine}, there is an isomorphism
$H^i_\etl(X, W_m\Omega^r_{X,\log}) \cong H^{i+r}_{\etl}(X, {\Z}/{p^m}(r))$,
where the latter is the $p$-adic {\'e}tale motivic cohomology 
due to Suslin-Voevodsky. Milne's theorem can thus be considered as
a perfect duality for the $p$-adic {\'e}tale motivic cohomology of
smooth projective varieties over $k$. The corresponding 
duality for the $\ell$-adic {\'e}tale  cohomology is the classical
Poincar{\'e} duality for {\'e}tale cohomology for a prime $\ell \neq p$.

If $U$ is a smooth quasi-projective variety over $k$ which is not 
complete, then there is a perfect duality (see \cite{Saito-89}) for the 
$\ell$-adic {\'e}tale cohomology in the form
\begin{equation}\label{eqn:Saito}
H^{i}_{\etl}(U, {\Z}/{\ell^m}(r)) \times H^{2d+1-i}_{c}(U, {\Z}/{\ell^m}(d-r))
\to {\Z}/{\ell^m},
\end{equation}
where $H^{i}_{c}(U,  {\Z}/{\ell^m}(j))$ is the $\ell$-adic {\'e}tale cohomology
of $U$ with compact support.

However, there is no known $p$-adic analogue of the {\'e}tale cohomology of $U$ 
with compact support such that a $p$-adic analogue of ~\eqref{eqn:Saito} could
hold. Construction of this duality is yet an open problem.
Recall that one of the applications (which is the interest of this paper)
of such a duality theorem is to the
study of the mod-$p$ {\'e}tale fundamental group of $U$, which is in general
a very complicated object. A duality theorem such as above would allow one to study this
group in terms of a more tractable {\'e}tale cohomology
of $U$ with compact support.

In \cite{JSZ}, Jannsen-Saito-Zhao proposed an approach in a special case
when $U$ is the complement of a divisor $D$ on a
smooth projective variety $X$ over $k$ such that $D_\red$ is a 
simple normal crossing divisor.
They constructed a relative version of the logarithmic Hodge-Witt sheaves
on $X$, denoted by $W_m\Omega^r_{X|D,\log}$. Using these sheaves, they showed
that there is a semi-perfect pairing (see Definition~\ref{defn:PF-lim-colim})
\begin{equation}\label{eqn:JSZ-main}
H^i_\etl(U, W_m\Omega^r_{U,\log}) \times  {\varprojlim}_n
H^{d+1-i}_\etl(X, W_m\Omega^{d-r}_{X|nD,\log}) \to {\Z}/{p^m}
\end{equation}
of topological abelian groups,
where the first group has discrete topology and the second has 
profinite topology. This pairing is perfect when $i = 1$ and $r = 0$.

In this paper, we propose a different approach to the $p$-adic
duality for $U$. This new approach has the advantage that it allows
$D$ to be an arbitrary divisor on $X$. This is possible by virtue of the
choice of the relative version of the logarithmic 
Hodge-Witt sheaves. Instead of using the Hyodo-Kato de Rham-Witt sheaves
with respect to a suitable log structure considered in \cite{JSZ},
we use a more ingenuous version of the relative logarithmic 
Hodge-Witt sheaf, which we denote by $W_m\Omega^r_{(X,D), \log}$. The latter is defined to be the 
kernel of the 
canonical surjection $W_m\Omega^r_{X,\log} \surj W_m\Omega^r_{D,\log}$.
Our main result on the $p$-adic duality for $U$ is roughly the
following. We refer to \thmref{thm:Duality-main} for the precise
statement.

\begin{thm}\label{thm:Main-1}
Let $X$ be a smooth projective variety of dimension $d$
over a finite field $k$ of characteristic $p$. 
Let $D \subset X$ be an effective Cartier divisor 
with complement $U$. Let $r, i \ge 0$ and $m \ge 1$ be integers.
Then there is a semi-perfect pairing of topological abelian groups
\[
H^i_{\etl}(U, W_m\Omega^r_{U, \log}) \times
{\varprojlim}_n
H^{d+1-i}_\etl(X, W_m\Omega^{d-r}_{(X,nD),\log}) \to {\Z}/{p^m}.
\]
This pairing is perfect if $D_\red$ is a simple normal crossing divisor,
$i = 1, r = 0$ and one of the conditions $\{d \neq 2, k \neq \F_2\}$
holds.
\end{thm}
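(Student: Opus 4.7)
The plan is to bootstrap from Milne's duality~\eqref{eqn:Milne-Z} on $X$, using the defining short exact sequence
$$0 \to W_m\Omega^{s}_{(X,nD),\log} \to W_m\Omega^{s}_{X,\log} \to W_m\Omega^{s}_{nD,\log} \to 0$$
for $s = d-r$ as the essential new input. First I would construct the pairing via Milne's trace and cup products on $X$: given $\alpha \in H^i_\etl(U, W_m\Omega^r_{U,\log})$ and $\beta_n \in H^{d+1-i}_\etl(X, W_m\Omega^{d-r}_{(X,nD),\log})$, represent $\alpha$ compatibly in the cohomology of $X$ modulo the ``vanishing on $nD$'' subsheaf. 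Because $\beta_n$ is by construction killed on $nD$, its cup product with this lift descends to a well-defined class in $H^{d+1}_\etl(X, W_m\Omega^d_{X,\log})$, on which Milne's trace provides a value in $\Z/p^m$. Checking independence of choices and compatibility in $n$ is a diagram chase using the functoriality of the defining short exact sequences in $n$.

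Next I would establish semi-perfectness. Endow the right-hand side with the profinite topology from the inverse limit; Pontryagin duality $\Hom_\cont(-, \Q/\Z)$ converts this limit into a colimit of the Pontryagin duals of each finite stage. Applying Milne's perfect duality termwise to the three sheaves in the short exact sequence and invoking the five-lemma, the continuous dual of the right-hand side fits into a long exact sequence built out of $H^*_\etl(X, W_m\Omega^r_{X,\log})$ and $H^*_\etl(X, W_m\Omega^r_{nD,\log})$ that coincides with the localization sequence computing $H^i_\etl(U, W_m\Omega^r_{U,\log})$ after passing to the colimit in $n$. This yields semi-perfectness.

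For the perfectness in the special case $i=1$, $r=0$ with $D_{\red}$ simple normal crossing and $(d, k) \neq (2, \F_2)$, note that $W_m\Omega^0_{U,\log} = \Z/p^m$, so the left-hand side becomes $H^1_\etl(U, \Z/p^m)$, the Pontryagin dual of $\pi_1^\ab(U)/p^m$. The right-hand side $\varprojlim_n H^d_\etl(X, W_m\Omega^d_{(X,nD),\log})$ should be identified with the mod-$p^m$ reduction of $\varprojlim_n$ of the Kerz--Zhao/R\"ulling--Saito idele class groups $C(X, nD)$. The identification of the induced filtration on $H^1_\etl(U, \Q/\Z)$ with the Matsuda filtration (indicated in the abstract and to be proved using local computations at a smooth branch of $D$), combined with the reciprocity theorem for these idele class groups (also established in this paper), identifies the pairing with the tautological perfect duality between $H^1_\etl(U, \Z/p^m)$ and $\pi_1^\ab(U)/p^m$.

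The main obstacle will be this last step: semi-perfectness is essentially formal from Milne, but upgrading to full perfectness requires the two concrete class-field-theoretic identifications and then a non-trivial reciprocity isomorphism. The snc hypothesis enters precisely to make the Matsuda filtration available and to compare it, locally around a smooth point of $D$, with the ``ingenious'' filtration coming from $W_m\Omega^d_{(X,nD),\log}$. The exclusion of $(d, k) = (2, \F_2)$ reflects the known pathology of ramified class field theory for surfaces over $\F_2$, where one cannot separate sufficiently many ramified characters using $\F_2$-valued differentials. A persistent secondary difficulty is the careful bookkeeping of discrete versus profinite topologies through the various limits, colimits, and Pontryagin duals.
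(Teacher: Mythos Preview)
Your semi-perfectness argument has a genuine gap. You propose to apply ``Milne's perfect duality termwise to the three sheaves in the short exact sequence''
\[
0 \to W_m\Omega^{d-r}_{(X,nD),\log} \to W_m\Omega^{d-r}_{X,\log} \to W_m\Omega^{d-r}_{nD,\log} \to 0,
\]
but Milne's duality~\eqref{eqn:Milne-Z} is available only for \emph{smooth projective} schemes. The thickening $nD$ is neither smooth nor even reduced, and no duality statement pairing $H^*_\etl(nD, W_m\Omega^{d-r}_{nD,\log})$ with anything resembling cohomology with support along $D$ is available a priori. Producing such a statement is essentially equivalent to the theorem you are trying to prove, so the five-lemma step collapses. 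The ``localization sequence'' you invoke for $H^i_\etl(U, W_m\Omega^r_{U,\log})$ involves cohomology with supports $H^*_D(X,-)$, not cohomology of $nD$, and there is no direct duality relating those two.

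The paper circumvents this entirely. It replaces the relative logarithmic sheaves by \emph{twisted} de Rham-Witt sheaves $W_m\Omega^r_X(\pm nD)$, which after pushforward by Frobenius are coherent $\sO_X$-modules, and proves (\corref{cor:pro-iso-rel*}) that $\{W_m\Omega^r_X(-nD)\}_n \cong \{W_m\Omega^r_{(X,nD)}\}_n$ as pro-sheaves. Cartier operators on these twists (\propref{prop:Cartier-map}) yield two-term complexes $W_m\sF^{r,\bullet}_n$ and $W_m\sG^{d-r,\bullet}_n$ whose hypercohomology computes the two sides of the pairing. For $m=1$ the duality then reduces to Grothendieck--Serre \emph{coherent} duality between $\Omega^r_X(pnD)$ and $\Omega^{d-r}_X(-pnD)$ (\lemref{lem:PP-pro-1}); general $m$ follows by induction using the exact sequence of \propref{prop:log-HW-pro-*}. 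This is a substantially different mechanism than bootstrapping from Milne on the three terms.

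Your account of perfectness is also off-target. The paper does not use the Matsuda-filtration comparison here (that is the content of \thmref{thm:Main-2}, proved afterward and only when $D_\red$ is regular). Perfectness (\corref{cor:Perfect**}) follows instead from the general fact (\lemref{lem:Semi-perfect*}) that a semi-perfect pairing is perfect once the pro-system on the right consists of finite groups with surjective transitions; this is verified by identifying $\{H^d_\etl(X, W_m\Omega^d_{(X,nD),\log})\}_n$ with the mod-$p^m$ Kato--Saito Nisnevich groups $\{C_{KS}(X,nD)/p^m\}_n$ (via \propref{prop:Milnor-iso}, \corref{cor:RS-K-comp}, and \lemref{lem:Rel-dlog-iso}), which are finite with surjective transitions by \cite[Theorem~9.1]{Kato-Saito}. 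The restriction $(d,k)\neq(2,\F_2)$ enters in \lemref{lem:Milnor-d-0} when comparing the Kato and improved Milnor $K$-sheaves $\sK^M$ and $\wh{\sK}^M$; it has nothing to do with ``$\F_2$-valued differentials''.
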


We show that \thmref{thm:Main-1} recovers the duality theorem of
Jannsen-Saito-Zhao if  $D_\red$ is a simple normal crossing divisor.

\vskip .3cm

As a consequence of \thmref{thm:Main-1}, we obtain a filtration
$\{\Fil^{\etl}_{nD} H^1(K)\}_n$, where $K$ is the function field of $X$ and
$H^1(K)$ is a shorthand for $H^1_\etl(K, {\Q}/{\Z})$ (see \S~\ref{sec:Filt-et}).
We let $\Fil_{D} H^1(K)$ be the subgroup of $H^1(K)$ introduced in
\cite[Definition~7.12]{Gupta-Krishna-REC}. This coincides with
the filtration defined in \cite[Definition~2.9]{Kerz-Saito} by
\cite[Theorem~1.2]{Gupta-Krishna-BF}. $\Fil_{D} H^1(K)$ can be described as 
the subgroup
of continuous characters of the absolute Galois group of $K$ whose Artin 
conductors (see \cite[Definition~3.2.5]{Matsuda}) 
at the generic points of $D$ are bounded by the multiplicities of $D$ in
$D_\red$. This is a more intricate subgroup of $H^1(K)$ than
$\Fil^{\etl}_{D} H^1(K)$ because the latter can be described in terms of 
the simpler objects such as the cohomology of the relative logarithmic Hodge-Witt sheaves. 
On the other hand, $\Fil_{D} H^1(K)$ determines the ramification theory of
finite {\'e}tale coverings of $U$.
It is therefore desirable to know if and
when these two filtrations agree. Our next result is the following.

\begin{thm}\label{thm:Main-2}
Let $X$ be a smooth projective variety of dimension $d$
over a finite field $k$ of characteristic $p$. 
Let $D \subset X$ be an effective Cartier divisor 
such that $D_\red$ is regular. Assume that either $d \ne 2$ or $k \neq \F_2$.
Then 
\[
\Fil_D H^1(K) = \Fil^{\etl}_D H^1(K).
\]
\end{thm}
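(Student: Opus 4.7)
The plan is to combine the perfect duality provided by Theorem~\ref{thm:Main-1} at $(i,r) = (1,0)$ with a local analysis of the relative logarithmic Hodge--Witt sheaves at the generic points of $D_\red$, and Matsuda's explicit description of the ramification filtration on the Hodge--Witt sheaves of a complete discrete valuation field.

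As a first reduction, $\Fil_D H^1(K)$ and $\Fil^{\etl}_D H^1(K)$ agree on the prime-to-$p$ summand $H^1(K)\{p'\}$ by construction, so we may focus on their $p$-primary parts. Writing $H^1(K)\{p\} = \varinjlim_m H^1_\etl(K, \Z/p^m)$ and observing that both filtrations commute with this colimit, it suffices to show equality of the induced filtrations on $H^1_\etl(U, \Z/p^m)$ for each $m \geq 1$. Since $D_\red$ is regular, it is in particular a simple normal crossing divisor; together with $d \neq 2$ or $k \neq \F_2$, Theorem~\ref{thm:Main-1} then yields a \emph{perfect} pairing
\[
H^1_\etl(U, \Z/p^m) \times \varprojlim_n H^d_\etl(X, W_m\Omega^d_{(X,nD),\log}) \to \Z/p^m,
\]
under which $\Fil^{\etl}_D H^1_\etl(U, \Z/p^m)$ is, by its very definition, the exact annihilator of the kernel of the natural surjection $\varprojlim_n H^d_\etl(X, W_m\Omega^d_{(X,nD),\log}) \surj H^d_\etl(X, W_m\Omega^d_{(X,D),\log})$.

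The next step is to translate this global annihilator condition into local conditions at each generic point $\xi$ of $D_\red$. Because $D_\red$ is regular, $\cO_{X, \xi}$ is a DVR; let $K_\xi$ denote the fraction field of its Henselization and $n_\xi$ the multiplicity of $D$ along $\xi$. Localizing, the stalk of $W_m\Omega^d_{(X, nD), \log}$ at $\xi$ is the $n_\xi$-th piece of the natural filtration on $W_m\Omega^d_{K_\xi,\log}$ defined via the kernel of restriction to the residue field. By Matsuda's theorem combined with Kato's local reciprocity for $K_\xi$, this piece is Pontryagin-dual to the subgroup of $H^1(K_\xi)$ consisting of characters with Artin conductor at most $n_\xi$. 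Compatibility of the global pairing of Theorem~\ref{thm:Main-1} with Kato's local pairings at each $\xi$ then shows that the global annihilator condition above translates exactly into $\Ar_\xi(\chi) \leq n_\xi$ for every generic point $\xi$ of $D_\red$. This is precisely the defining condition of $\Fil_D H^1(K)$ via its identification with the Kerz--Saito filtration established in \cite[Theorem~1.2]{Gupta-Krishna-BF}.

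The main obstacle is the compatibility step: one must verify that the global pairing from Theorem~\ref{thm:Main-1}, when restricted to the direct summand supported at a given $\xi$, agrees with Kato's local symbol pairing on the complete discretely valued field $K_\xi$, and dually that the stalk of $W_m\Omega^d_{(X, nD), \log}$ at $\xi$ coincides with Matsuda's $n_\xi$-th filtration piece. The former reduces to a trace/residue computation via functoriality of \'etale duality under the localization $\Spec K_\xi \hookrightarrow U$. The latter is where the regularity of $D_\red$ enters crucially: it reduces the stalk computation to the one-dimensional filtration theory on a DVR and avoids the combinatorial intricacies that appear when several components of $D_\red$ cross. The identification hinges on Matsuda's characterization of his filtration in terms of $\dlog$ symbols and its compatibility with the kernel-based definition $W_m\Omega^d_{(X,nD), \log} = \ker(W_m\Omega^d_{X,\log} \surj W_m\Omega^d_{nD,\log})$ used here.
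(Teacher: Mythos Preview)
Your outline has a genuine gap at the step you yourself flag as ``the main obstacle.'' You want to pass from the global annihilator condition --- $\chi$ annihilates $\ker\bigl(\varprojlim_n H^d_\etl(X, W_m\Omega^d_{(X,nD),\log}) \surj H^d_\etl(X, W_m\Omega^d_{(X,D),\log})\bigr)$ --- to the local conductor bound $\Ar_\xi(\chi) \le n_\xi$ at each generic point $\xi$ of $D_\red$. But this kernel lives in top-degree {\'e}tale cohomology; it does not localize to stalks in any naive sense. To make the translation work you would need to know that the kernel is generated by classes pushed from the local Henselizations at the $\xi$'s, i.e.\ a coniveau/Gersten-type control on $H^d_\etl$ of the relative logarithmic Hodge--Witt sheaf, together with a local duality over each $\sO^h_{X,\xi}$ identifying Matsuda's filtration with the kernel filtration $W_m\Omega^d_{(\sO^h_{X,\xi},\,\fm_\xi^n),\log}$. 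Neither ingredient is supplied, and the sentence ``reduces to a trace/residue computation via functoriality of {\'e}tale duality under $\Spec K_\xi \hookrightarrow U$'' does not furnish them. The regularity of $D_\red$ is invoked only heuristically.

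The paper proceeds quite differently and avoids any localization argument. It first shows (\propref{prop:Nis-et-rec}) that the duality map $\theta'_m$ is compatible with the Kato--Saito reciprocity map $\rho_{U/X}$ via the change-of-topology map $\eta \colon C_{KS}(X,nD;m) \to C^{\etl}_{KS}(X,nD;m)$. Since the reciprocity theorem of \cite{Gupta-Krishna-BF} already identifies $\Fil_D H^1(U,\Z/p^m)$ with $C_{KS}(X,nD;m)^\vee$ (Nisnevich cohomology), and the perfect duality identifies $\Fil^{\etl}_D H^1(U,\Z/p^m)$ with the image of $C^{\etl}_{KS}(X,nD;m)^\vee$ ({\'e}tale cohomology), the whole problem collapses to showing that $\eta$ is an isomorphism. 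That is the content of \thmref{thm:Comparison-3}: $H^d_\nis(X,\sK^M_{d,(X,nD)})/p^m \cong H^d_\etl(X,\ov{\wh{\sK}^M_{d,(X,nD)}/p^m})$ whenever $D_\red$ is regular. The local input here is not Matsuda's filtration but a Bloch--Kato-style computation of the graded pieces $F^n_{m,d}/G^{n+1}_{m,d}$ as coherent sheaves on $D_\red$ (\lemref{lem:Coherence}), which have the same Nisnevich and {\'e}tale cohomology. In short, the paper trades your proposed local duality at $\xi$ for an already-proven global reciprocity plus a Nisnevich--{\'e}tale comparison whose proof is entirely sheaf-theoretic.
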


\vskip .3cm

\subsection{The reciprocity theorem}\label{sec:R**}
The purpose of reciprocity theorems in class field theory over a perfect 
field is to be able to represent the abelianized 
{\'e}tale fundamental groups of varieties 
over the field in terms of idele class groups which are often described
in terms of explicit sets of generators and relations.
Let us assume that the base field $k$ is finite of characteristic $p$.
In this case, such a reciprocity theorem for smooth projective varieties
over $k$ is due to Kato-Saito \cite{Kato-Saito-83} which describes the abelianized
{\'e}tale fundamental group in terms of the Chow group of 0-cycles.

For the more intricate case of a
smooth quasi-projective variety $U$ which is not complete, an approach
was introduced by Kato-Saito \cite{Kato-Saito} whose underlying idea
is to study the so called `{\'e}tale fundamental group of $X$ with modulus $D$',
where $D \subset X$ is a fixed divisor which is supported on the 
complement of $U$ in a normal compactification $U \subset X$.
This group characterizes the finite {\'e}tale coverings of $U$ whose 
ramification along $X\setminus U$ is bounded by $D$ in a certain sense.
There are various ways to make sense of this bound on the ramification,
and they give rise to several definitions of the 
{\'e}tale fundamental group with modulus.
It turns out that depending on what one wants to do, each of these has 
certain advantage over the others.

Kato and Saito were able to describe $\pi^{\ab}_1(U)$ in terms of the limit 
(over $D$) of the idele class groups with modulus 
$H^d_{\nis}(X, \sK^M_{d,(X,D)})$, where $d = \dim(X)$. 
In \cite[Theorems~1.1]{Gupta-Krishna-BF}, it was shown that
$H^d_{\nis}(X, \sK^M_{d,(X,D)})$ describes $\pi^{\adiv}_1(X,D)$ for
every $D$ if we let $\pi^{\adiv}_1(X,D)$ be the Pontryagin dual to
the Matsuda filtration $\Fil_D  H^1(K)$.
It was also shown in loc. cit. that $\pi^{\adiv}_1(X,D)$
coincides with the fundamental group
with modulus $\pi^{\ab}_1(X,D)$, introduced earlier by Deligne and Laumon
\cite{Laumon} if $X$ is smooth. The latter was shown to coincide (along the degree zero parts)
with the Chow group of 0-cycles with modulus $\CH_0(X|D)$ by Kerz-Saito 
\cite{Kerz-Saito} (when $p \neq 2$) and Binda-Krishna-Saito \cite{BKS}.

If we use Kato's Swan conductor instead of Matsuda's Artin conductor
to bound the ramification in terms of a divisor supported away from $U$, we are led to a
different notion of the abelianized {\'e}tale fundamental group
with modulus which we denote by $\pi^{\abk}_1(X,D)$.
This is defined as the Pontryagin dual to the subgroup
$\Fil^{\bk}_D H^1(K)$. The latter is 
the subgroup
of continuous characters of the absolute Galois group of $K$ whose Swan 
conductors (defined by Kato \cite{Kato-89}) 
at the generic points of $D$ are bounded by the multiplicities of $D$ in
$D_\red$. 

One can now ask if $\pi^{\abk}_1(X,D)$ could be described by
an idele class group with modulus, similar to the $K$-theoretic 
idele class group of Kato-Saito and the cycle-theoretic idele class group of
Kerz-Saito. Our next result solves this problem.

Let $\wh{\sK}^M_{r,X}$ be the improved Milnor $K$-theory sheaf of Gabber and
Kerz \cite{Kerz-10}.
Let $\wh{\sK}^M_{r,X|D}$ be the relative Milnor $K$-theory sheaf, defined 
locally as the image of the map
$\sK^M_{1, (X,D)} \otimes_{\Z} j_* \sO^{\times}_U \otimes_{\Z} \cdots  
\otimes_{\Z} j_* \sO^{\times}_U \to \wh{\sK}^M_{r,X}$.
We refer to \lemref{lem:RS-GK} for the proof that this map is defined.
This sheaf was considered by R{\"u}lling-Saito \cite{Rulling-Saito} when
$X$ is smooth and $D_\red$ is a simple normal crossing divisor.
There are
degree maps $\deg \colon H^d_\nis(X, \wh{\sK}^M_{d,X|D}) \to \Z$ 
(see \S~\ref{sec:Degree}) and 
$\deg' \colon \pi^{\abk}_1(X,D) \to \wh{\Z}$. We let
$H^d_\nis(X, \wh{\sK}^M_{d,X|D})_0 = \Ker(\deg)$ and 
$\pi^{\abk}_1(X,D)_0 = \Ker(\deg')$.

\begin{thm}\label{thm:Main-3}
Let $X$ be a normal projective variety of dimension $d$
over a finite field. 
Let $D \subset X$ be an effective Cartier divisor whose complement is regular.
Then there is a continuous reciprocity homomorphism
\[
\rho'_{X|D} \colon H^d_\nis(X, \wh{\sK}^M_{d,X|D}) \to \pi_1^{\abk}(X, D)
\]
with dense image such that the induced map
\[
\rho'_{X|D} \colon H^d_\nis(X, \wh{\sK}^M_{d,X|D})_0 \xrightarrow{\cong} 
\pi_1^{\abk}(X, D)_0
\]
is an isomorphism of finite groups.
\end{thm}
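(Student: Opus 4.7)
The plan is to construct $\rho'_{X|D}$ by taking the Pontryagin dual of a pairing furnished by \thmref{thm:Main-1}, after passing from Milnor $K$-theory to logarithmic Hodge-Witt sheaves via the $\dlog$ morphism. Extending the reference \lemref{lem:RS-GK} from the excerpt, one first verifies that $\dlog$ induces compatible morphisms of sheaves $\wh{\sK}^M_{d, X|D} \to W_m\Omega^d_{(X, nD), \log}$ for all $m, n \ge 1$ (the shift from $D$ to $nD$ is forced because differentiating a relative unit along $D$ produces only a second-order vanishing at generic points of $D$, but this is harmless after a cofinality argument in $n$). Passing to cohomology gives a map $H^d_\nis(X, \wh{\sK}^M_{d, X|D}) \to H^d_\etl(X, W_m\Omega^d_{(X, nD), \log})$; coupling with the pairing of \thmref{thm:Main-1} against $H^1_\etl(U, {\Z}/{p^m})$ and then Pontryagin-dualizing and taking limits in $n$ and $m$ produces the $p$-primary part $\rho'_{X|D, p}$. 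The prime-to-$p$ part is extracted from the classical Kato-Saito reciprocity for the open variety $U$ composed with the canonical surjection $\wh{\sK}^M_{d, X|D} \surj j_* \wh{\sK}^M_{d, U}$.

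Next I would show that the image lies in $\pi_1^{\abk}(X, D) \subset \pi_1^{\ab}(U)$; equivalently, that the image of the dual map into $H^1(K)\{p\}$ lies in $\Fil^{\bk}_D H^1(K)\{p\}$. This is a local question at the generic points of $D$ and mirrors the method of \thmref{thm:Main-2}: one exhibits relative symbols in the stalks of $\wh{\sK}^M_{d, X|D}$ and pairs them with a character $\chi$ of prescribed Swan conductor using Kato's cohomological formula \cite{Kato-89}. Density of $\rho'_{X|D}$ is then a formal consequence: any character in $\Fil^{\bk}_D H^1(K)$ annihilating the image of $\rho'_{X|D}$ must kill all of $H^d_\nis(X, \wh{\sK}^M_{d, X|D})$, hence is zero by the Kato-Saito reciprocity applied to $U$.

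For the isomorphism on degree-zero parts, one observes that both $\pi^{\abk}_1(X, D)_0$ and $H^d_\nis(X, \wh{\sK}^M_{d, X|D})_0$ are finite, the former by classical finiteness over finite fields and the latter by \thmref{thm:Main-1} combined with the preceding image-filtration identification. To get bijectivity I would reduce to $X$ smooth projective via a resolution or de Jong alteration $\pi \colon \wt X \to X$ which is an isomorphism over the regular $U$ (possible because $X_{\sing} \subset |D|$); compatibility of $\rho'$ with the induced pushforward then reduces the question to $(\wt X, \wt D)$. In the smooth case, the conclusion follows by combining the Kato-Saito reciprocity for smooth projective varieties with the Matsuda reciprocity of \cite{Gupta-Krishna-BF}, translating between Artin and Swan conductors via the identity $\operatorname{ar}(\chi) = \operatorname{sw}(\chi) + 1$ on wildly ramified $\chi$.

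The principal obstacle is the identification of the image of the dual of $\rho'_{X|D, p}$ with $\Fil^{\bk}_D H^1(K)\{p\}$ in the generality allowed here: \thmref{thm:Main-2} treats only the case when $D_{\red}$ is regular, whereas the present theorem assumes only that the complement $U$ is regular. One must therefore extend the filtration comparison to a general Cartier divisor on a possibly singular $X$ and verify that it descends through the resolution of $X_{\sing}$ used in the reduction step, keeping track of how the Swan-Kato filtration transforms under pullback of wild ramification data.
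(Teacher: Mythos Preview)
Your proposal takes a route quite different from the paper's, and it has real gaps.

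The paper does \emph{not} use the duality theorem (\thmref{thm:Main-1}) at all in the proof of \thmref{thm:Main-3}. Instead it argues purely idele-theoretically: the map $\rho'_{X|D}$ is obtained by showing that the already existing reciprocity map $\rho_{U/X}\colon C_{U/X}\to\pi^{\ab}_1(U)$ of \cite{Gupta-Krishna-REC} descends to a map $C(X|D)\to\pi^{\abk}_1(X,D)$, where $C(X|D)\cong C_{KS}(X|D)=H^d_\nis(X,\wh{\sK}^M_{d,X|D})$ by the Kerz--Zhao presentation (\thmref{thm:KZ-main}). The descent is checked by pairing a character $\chi\in\Fil^{\bk}_D H^1(K)$ against the image of $\wh{K}^M_d(\sO^h_{X,P'}|I_D)$ for each maximal Parshin chain $P$, and invoking Kato's criterion (\thmref{thm:Filt-Milnor-*}(1)) in the $d$-dimensional Henselian local field $Q(V^h)$. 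Injectivity is proved by dualizing: given $\chi\in C_{KS}(X|D)^\vee$, one pushes it to $C_{KS}(X,D)^\vee$ via \lemref{lem:KS-GK-iso}, invokes the known reciprocity \cite[Theorem~1.1]{Gupta-Krishna-BF} to get $\chi'\in\Fil_D H^1(K)$, and then shows $\chi'\in\Fil^{\bk}_D H^1(K)$ using the key local input \propref{prop:Fil-SH-BK} (the square comparing $\Fil^{\bk}_m$ for $K_\lambda$ and $K^{sh}_\lambda$ is Cartesian), via \corref{cor:Fil-SH-4}. Surjectivity on degree-zero parts and finiteness come from the commutative square with $C_{KS}(X,D)_0\to\pi^{\adiv}_1(X,D)_0$.

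Your approach has two concrete problems. First, \thmref{thm:Main-1} requires $X$ smooth projective, while \thmref{thm:Main-3} only assumes $X$ normal; so you cannot even construct $\rho'_{X|D,p}$ by your method without first performing the alteration, and then you must justify why the map you build on $\wt X$ descends to $X$ --- this is not automatic, because the Milnor $K$-sheaf $\wh{\sK}^M_{d,X|D}$ and the group $\pi^{\abk}_1(X,D)$ are defined on $X$, not on $\wt X$. Second, the identity $\operatorname{ar}(\chi)=\operatorname{sw}(\chi)+1$ holds only when ${\rm ord}_p$ of the Artin conductor is zero (see \thmref{thm:Fil-main}(3)); in general $\Fil^{\bk}_{m-1}\subsetneq\Fil^{\ms}_m$, so the translation step you sketch does not go through. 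The paper handles this gap precisely by \propref{prop:Fil-SH-BK}, which lets one test membership in $\Fil^{\bk}_{m}$ after passing to $Q(V^h)$, where Kato's pairing criterion applies directly.
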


\vskip .3cm

Let $H^i_\sM(X|D, \Z(j))$ be the motivic cohomology with modulus. 
This is defined as the Nisnevich hypercohomology 
of the sheafified cycle complex with modulus
$z^j(X|D, 2j- \bullet)$, introduced in \cite{Binda-Saito}. 
Using \thmref{thm:Main-3} and 
\cite[Theorem~1]{Rulling-Saito}, we obtain the following.

\begin{cor}\label{cor:Main-4}
Assume in \thmref{thm:Main-3} that $X$ is regular and
$D_\red$ is a simple normal crossing 
divisor. Then there is an isomorphism of finite groups
\[
cyc'_{X|D} \colon H^{2d}_\sM(X|D, \Z(d))_0 \xrightarrow{\cong} 
\pi_1^{\abk}(X, D)_0.
\]
\end{cor}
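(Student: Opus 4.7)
The plan is to factor the claimed map $cyc'_{X|D}$ as the composition of two isomorphisms: a cycle-class identification of motivic cohomology with modulus with the Nisnevich cohomology of the relative Milnor $K$-theory sheaf, followed by the reciprocity isomorphism of \thmref{thm:Main-3}.

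First, I would invoke the main theorem of R\"ulling--Saito \cite{Rulling-Saito} (cited in the statement), which, in the smooth $X$ and $D_\red$ simple normal crossing setting, produces a natural cycle class map
\[
cyc_{X|D} \colon H^{2d}_\sM(X|D, \Z(d)) \xrightarrow{\cong} H^d_\nis(X, \wh{\sK}^M_{d,X|D}),
\]
and asserts that it is an isomorphism. This is the modulus version of the classical Nesterenko--Suslin--Totaro identification of the top-degree motivic cohomology of a smooth variety with the top-degree Nisnevich cohomology of the Milnor $K$-sheaf, applied degreewise at the level of sheafified cycle complexes with modulus $z^d(X|D, 2d-\bullet)$.

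Next, I would compose $cyc_{X|D}$ with the reciprocity map $\rho'_{X|D}$ from \thmref{thm:Main-3} to define
\[
cyc'_{X|D} := \rho'_{X|D} \circ cyc_{X|D} \colon H^{2d}_\sM(X|D, \Z(d)) \to \pi_1^{\abk}(X, D).
\]
To descend this to an isomorphism on the degree-zero parts, I would check that $\deg \circ cyc_{X|D}$ agrees with the natural degree map on $H^{2d}_\sM(X|D, \Z(d))$ (given by pushforward of a $0$-cycle supported in $U$ to $\Spec k$ followed by the degree on $\Z$), and similarly that $\deg'$ is compatible with $\deg$ via $\rho'_{X|D}$, which is built into \thmref{thm:Main-3}. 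Both compatibilities reduce to a straightforward check on a closed point $x \in U$: the cycle $[x]$ maps under $cyc_{X|D}$ to the class of the pushforward of $1 \in \wh{\sK}^M_{0}(k(x)) \hookrightarrow \wh{\sK}^M_{d,X|D}$ along $x \hookrightarrow X$, whose degree in both theories is $[k(x):k]$.

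The genuinely substantive input is the R\"ulling--Saito identification, and I would not attempt to reprove it; everything else is a formal composition. Once these identifications are in place, restricting to the degree-zero subgroups gives a commutative square
\[
\begin{CD}
H^{2d}_\sM(X|D, \Z(d))_0 @>{cyc_{X|D}}>> H^d_\nis(X, \wh{\sK}^M_{d,X|D})_0 @>{\rho'_{X|D}}>> \pi_1^{\abk}(X, D)_0
\end{CD}
\]
in which the first arrow is an isomorphism by \cite[Theorem~1]{Rulling-Saito} and the second is an isomorphism of finite groups by \thmref{thm:Main-3}. The composite $cyc'_{X|D}$ is therefore an isomorphism of finite groups, which is the assertion of the corollary. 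There is no essential obstacle beyond verifying the degree compatibility, and the finiteness is inherited for free from \thmref{thm:Main-3}.
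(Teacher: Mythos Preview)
Your proposal is correct and matches the paper's approach exactly: the paper states this corollary with the one-line justification ``Using \thmref{thm:Main-3} and \cite[Theorem~1]{Rulling-Saito}, we obtain the following,'' which is precisely the composition $\rho'_{X|D} \circ cyc_{X|D}$ you spell out. Your additional care in checking degree compatibility on closed points is appropriate but not something the paper elaborates on.
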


This result can be viewed as the cycle-theoretic description of
the $\pi_1^{\abk}(X, D)$, analogous a similar result for $\pi_1^{\adiv}(X,D)$, proven in
\cite{Kerz-Saito}, \cite{BKS} and \cite{Gupta-Krishna-BF}.

\vskip .3cm

\subsection{Failure of Nisnevich descent for Chow group with modulus}
\label{sec:Chow-nis}
Recall that the classical higher Chow groups of smooth varieties
satisfy the Nisnevich descent in the sense that the canonical map
$\CH^i(X,j) \to H^{2i-j}_\sM(X, \Z(i))$ is an isomorphism for $i, j \ge 0$,
where the latter is the Nisnevich hypercohomology of the sheafified
(in Nisnevich topology) Bloch's cycle complex. 
However, this question for the higher Chow groups with modulus is 
yet an open problem. Some cases of this were verified by
R{\"u}lling-Saito \cite[Theorem~3]{Rulling-Saito}.
As an application of \corref{cor:Main-4}, we prove the following 
result which provides a counterexample to the Nisnevich descent for the
Chow groups with modulus. This was
one of our motivations for studying the reciprocity for
$\pi_1^{\abk}(X, D)$.

\begin{thm}\label{thm:Main-5}
Let $X$ be a smooth projective surface
over a finite field. 
Let $D \subset X$ be an effective Cartier divisor such that
$D_\red$ is a simple normal crossing divisor.
Then the canonical map
\[
\CH_0(X|D) \to H^{4}_\sM(X, \Z(2))
\]
is not always an isomorphism.
\end{thm}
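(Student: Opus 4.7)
The plan is to translate the question of Nisnevich descent into a comparison between two ramification filtrations on $H^1(K)$, and then exhibit a wildly ramified character whose Artin and Swan conductors disagree.

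First, under the hypotheses of the theorem, the results of \cite{Kerz-Saito}, \cite{BKS} and \cite{Gupta-Krishna-BF} give a reciprocity isomorphism
\[
\CH_0(X|D)_0 \xrightarrow{\cong} \pi_1^{\adiv}(X,D)_0,
\]
while \corref{cor:Main-4} provides an isomorphism
\[
H^{4}_\sM(X|D,\Z(2))_0 \xrightarrow{\cong} \pi_1^{\abk}(X,D)_0.
\]
Both reciprocities factor the common degree map through $0$-cycles, so the Nisnevich-descent map on the motivic cohomology side is, after passing to degree-zero parts and dualizing, identified with the map $\pi_1^{\adiv}(X,D)_0 \to \pi_1^{\abk}(X,D)_0$ induced by the inclusion $\Fil_D H^1(K)\subseteq \Fil^{\bk}_D H^1(K)$. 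Consequently, if the descent map were an isomorphism for every pair $(X,D)$ as in the statement, one would have $\Fil_D H^1(K)=\Fil^{\bk}_D H^1(K)$ for every effective Cartier divisor $D$ on a smooth projective surface whose reduced part is simple normal crossing.

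To refute this, I would take $X=\P^2_{\F_p}$, fix a line $E\subset X$, and use affine coordinates $\A^2=\Spec\F_p[x,y]$ on $X\setminus E$ so that $E$ is the closure of $\{y=0\}$. Fix an integer $n\ge 1$ coprime to $p$ and consider the Artin--Schreier character $\chi\in H^1(K)$ defined by $t^p-t = x/y^n$. A direct local computation (compare \cite[\S3]{Matsuda} together with the Brylinski--Kato formula for the Swan conductor) shows that at the generic point of $E$ the Swan conductor of $\chi$ equals $n$ while its Artin conductor equals $n+1$. Setting $D=nE$, which is an effective Cartier divisor with $D_\red=E$ smooth (hence simple normal crossing), yields $\chi\in \Fil^{\bk}_D H^1(K)\setminus \Fil_D H^1(K)$, contradicting the equality of filtrations forced by the hypothetical Nisnevich descent.

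The main obstacle is the conductor computation, where the $+1$ gap between Artin and Swan conductors is crucial: one must verify that $x/y^n$ is already in Katz--Gabber (i.e.\ prime-to-$p$) normal form at the generic point of $E$, so that the Swan conductor is exactly $n$ and cannot be lowered by a substitution $t\mapsto t+g$ with $g\in K$. Once this local computation is pinned down, the rest of the argument is purely formal: \corref{cor:Main-4} together with the cycle-theoretic reciprocity for $\pi_1^{\adiv}$ converts the assumed Nisnevich descent into a false statement about characters of $K$.
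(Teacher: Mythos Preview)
Your overall strategy---reducing the descent question to the comparison $\pi_1^{\adiv}(X,D)_0 \to \pi_1^{\abk}(X,D)_0$ via the two reciprocity isomorphisms, and then exhibiting a character that separates the two ramification filtrations---is exactly the paper's strategy. However, both the direction of the inclusion and your explicit character are wrong, and the latter cannot be repaired by adjusting $D$.

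By \thmref{thm:Fil-main}(2) one has $\Fil^{\bk}_{n_\lambda-1}H^1(K_\lambda)\subset\Fil^{\ms}_{n_\lambda}H^1(K_\lambda)$, so $\Fil^{\bk}_D H^1(K)\subset\Fil_D H^1(K)$ (not the reverse) and the map $\pi_1^{\adiv}(X,D)\to\pi_1^{\abk}(X,D)$ is a \emph{surjection}. To exhibit a nontrivial kernel you need $\chi\in\Fil_D H^1(K)\setminus\Fil^{\bk}_D H^1(K)$, i.e.\ at some generic point $\lambda$ of $D$ you need $\chi_\lambda\in\Fil^{\ms}_{n_\lambda}\setminus\Fil^{\bk}_{n_\lambda-1}$. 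Your Artin--Schreier character has $\Ar_\lambda(\chi)=\Sw_\lambda(\chi)+1=n+1$; hence for $D=nE$ the character lies in neither $\Fil_D$ nor $\Fil^{\bk}_D$, while for $D=(n+1)E$ it lies in both (since $\Sw_\lambda(\chi)=n$ gives $\chi_\lambda\in\Fil^{\bk}_n$). No multiplicity works. The underlying obstruction is \thmref{thm:Fil-main}(3): when $p\nmid m$ one has $\Fil^{\bk}_{m-1}=\Fil^{\ms}_m$, so a character with $\Ar=\Sw+1$ can never land in the gap. One needs instead a character with $\Ar_\lambda(\chi)=\Sw_\lambda(\chi)=m_0$ for some $m_0$ divisible by $p$. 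The paper produces such a $\chi$ by invoking Matsuda's isomorphism $\Fil^{\ms}_{m_0}/\Fil^{\bk}_{m_0-1}\cong B_r\Omega^1_\ff$ for $m_0=p^r m'$, $r\ge 1$, and observing that $B_1\Omega^1_\ff=d(\ff)\neq 0$ when $\ff=k(t)$.

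There is also a globalization gap. The cover $t^p-t=x/y^n$ on $\P^2$ is ramified along the line at infinity as well as along $\{y=0\}$ (and your identification of $E$ with both ``the line with $X\setminus E=\A^2$'' and ``the closure of $\{y=0\}$'' is inconsistent), so $\chi$ does not lie in $H^1(X\setminus E)$ for a single line $E$, and the global groups $\Fil_D H^1(K)$, $\Fil^{\bk}_D H^1(K)$ for $D$ supported on $E$ do not contain $\chi$ at all. The paper handles this by taking the full ramification locus $C'$ of $\chi$, blowing up so that $f^{-1}(C')_\red$ becomes simple normal crossing, and then setting the multiplicity of $D'$ along each component $E_i$ to be the Artin conductor $\Ar_{\lambda_i}(\chi)$. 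This forces $\chi\in\Fil_{D'}H^1(K)$ by construction, while the local choice at the original point $\lambda_0$ guarantees $\chi\notin\Fil^{\bk}_{D'}H^1(K)$.
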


Using \thmref{thm:Main-5}, one can show that the Nisnevich descent for the Chow groups with
modulus fails over infinite fields too.

Recall that the map in \thmref{thm:Main-5} is known to be an isomorphism if $X$ is a curve.
This is related to the fact that for a Henselian discrete valuation field
$K$ with perfect residue field, the Swan conductor and the Artin conductor
agree for the characters of the absolute Galois group of $K$.
In this context, we also remark that one could attempt to define an analogue
of the Deligne-Laumon fundamental group with modulus $\pi^{\ab}_1(X,D)$
by using the Brylinski-Kato filtration along all integral curves in $X$ not
contained in $D$. However, the resulting fundamental group will coincide with
$\pi^{\adiv}_1(X,D)$ and not yield anything new for the same reason as above.

\vskip .3cm

\subsection{Overview of proofs}\label{sec:Overview}
We give a brief overview of our proofs.
The main idea behind the proof of our duality theorem is the observation that
the naive relative logarithmic Hodge-Witt sheaves are isomorphic to the
naive relative mod-$p$ Milnor $K$-theory sheaves (in the sense of
Kato-Saito) in the pro-setting. The hope that the  
{\'e}tale cohomology of relative mod-$p$ Milnor $K$-theory sheaves are the 
correct objects to use for duality originated from our results in previous 
papers which showed that the Nisnevich cohomology of the
Kato-Saito relative Milnor $K$-theory yields a reciprocity isomorphism
without any condition on the divisor.

To implement  the above ideas, we need to prove many results about 
the relative logarithmic Hodge-Witt sheaves and their relation with
various versions of relative Milnor $K$-theory. One of these is a 
pro-isomorphism between the relative logarithmic Hodge-Witt sheaves
and the twisted de Rham-Witt sheaves. The latter are easier objects
to work with because duality for them follows from the 
Grothendieck-Serre coherent duality.
The next step is to construct higher Cartier operators on the
twisted de Rham-Witt sheaves. This allows us to construct the relative
version of the two-term complexes considered by Milne
\cite{Milne-Zeta}.
The proof of the duality theorem is then reduced to coherent duality
using an induction procedure.  

To prove \thmref{thm:Main-2}, we use our duality theorem and the reciprocity
theorem of \cite{Gupta-Krishna-BF} to reduce it to proving an
independent statement that the top Nisnevich cohomology of the
relative Milnor $K$-theory sheaf coincides with the corresponding
{\'e}tale cohomology if the reduced divisor is regular
(see \thmref{thm:Comparison-3}).

To prove \thmref{thm:Main-3}, we follow the strategy we used 
in the proof of a similar result for $\pi^{\adiv}_1(X,D)$ in
\cite{Gupta-Krishna-BF}. But there are some new ingredients to be used.
The first key ingredient is a result of Kerz-Zhao
\cite{Kerz-Zhau} which gives an idelic presentation of
$H^d_\nis(X, \wh{\sK}^M_{d,X|D})$.
The second key result is a theorem of Kato \cite{Kato-89}
which gives a criterion for the
characters of the absolute Galois group of a Henselian discrete valuation
field to annihilate various subgroups of the Milnor $K$-theory of the
field under a suitable pairing (see \thmref{thm:Fil-main}).
What remains after this is to prove \propref{prop:Fil-SH-BK}.

We prove various results about relative Milnor $K$-theory in
sections ~\ref{sec:Milnor-K}, ~\ref{sec:KZ} and ~\ref{sec:Comp**}.
We study the relative logarithmic Hodge-Witt sheaves and prove some
results about their cohomology in \S~\ref{sec:Hodge-Witt}. 
We introduce the Cartier operators on the twisted de Rham-Witt sheaves
in \S~\ref{sec:Cartier}. We construct the pairing for the duality theorem
and prove its perfectness in sections ~\ref{sec:DT} and ~\ref{sec:Perfectness}.
The reciprocity theorem is proven in \S~\ref{sec:REC*} and the
failure of Nisnevich descent for the Chow groups with
modulus is shown in \S~\ref{sec:CE}.

\vskip .3cm

\subsection{Notation}\label{sec:Notn}
We shall work over a field $k$ of characteristic $p > 0$ throughout this
paper. We let $\Sch_k$ denote the category of separated and
essentially of finite type schemes over $k$. The product $X \times_{\Spec(k)} Y$
in $\Sch_k$ will be written as $X \times Y$. We let
$X^{(q)}$ (resp. $X_{(q)}$) denote the set of points on $X$ having codimension
(resp. dimension) $q$.
We let $\Sch_{k/\zar}$ (resp. $\Sch_{k/\nis}$, resp. $\Sch_{k/ \etl})$
denote the Zariski (resp. Nisnevich, resp. {\'e}tale) site of $\Sch_{k}$.
We let $\epsilon \colon
\Sch_{k/ \et} \to \Sch_{k/ \nis}$ denote the canonical morphism of sites.
If $\sF$ is a sheaf on $\Sch_{k/\nis}$, 
we shall denote $\epsilon^* \sF$ also by $\sF$ as long as the usage of
the {\'e}tale topology is clear in a context.
For $X \in \Sch_k$, we shall let $\psi \colon X \to X$ denote the
absolute Frobenius morphism.

For an abelian group $A$, we shall write $\Tor^1_{\Z}(A, {\Z}/n)$ as
$_nA$ and $A/{nA}$ as $A/n$.
The tensor product $A \otimes_{\Z} B$ will be written as $A \otimes B$.
We shall let $A\{p'\}$ denote the
subgroup of elements of $A$ which are torsion of order prime to $p$.
We let $A\{p\}$ denote the subgroup of elements of $A$ which are torsion of 
order some power of $p$.

\section{Relative Milnor $K$-theory}\label{sec:Milnor-K}
In this section, we recall several versions of relative Milnor $K$-theory
sheaves and establish some relations among their cohomology.

For a commutative ring $A$, the Milnor $K$-group $K^M_r(A)$ 
(as defined by Kato \cite{Kato-86}) is
the $r$-th graded piece of the graded ring $K^M_*(A)$.
The latter is the quotient of the tensor algebra $T_*(A^{\times})$ by the 
two-sided graded ideal generated by the homogeneous elements
$a_1 \otimes \cdots \otimes a_r$ such that $r \ge 2$, and $a_i + a_j = 1$ for 
some $1 \le i \neq j \le r$.
The residue class of $a_1 \otimes \cdots \otimes a_r \in T_r(A^{\times})$
in $K^M_r(A)$ is denoted by the Milnor symbol $\un{a} = \{a_1,  \ldots , a_r\}$.
Given an ideal $I \subset A$, the relative Milnor $K$-theory $K^M_*(A, I)$ is
defined as the kernel of the restriction map
$K^M_*(A) \to K^M_*(A/I)$. 

Let $\wt{K}^M_r(A)$ denote the $r$-th graded piece of the graded ring 
$\wt{K}^M_*(A)$, where the latter is the quotient of the tensor algebra 
$T_*(A^{\times})$ by the two-sided graded ideal generated by the homogeneous 
elements $a_1 \otimes a_2$ such that $a_1 + a_2 = 1$.
We let $\wt{K}^M_*(A, I)$ be the kernel of the restriction map
$\wt{K}^M_*(A) \to \wt{K}^M_*(A/I)$.
It is clear that there is a natural surjection $\wt{K}^M_*(A) \surj
{K}^M_*(A)$. This is an isomorphism if $A$ is a local ring with infinite
residue field (see \cite[Proposition~2]{Kerz-10}). The following says that
a similar thing holds also for the relative $K$-theory.

\begin{lem}\label{lem:BT-KS}
Let $A$ be a local ring and let $I \subset A$ be an
ideal. Then the following hold.
\begin{enumerate}
\item
$K^M_r(A,I)$ and $\wt{K}^M_r(A,I)$ are generated by the Milnor symbols 
$\{a_1, \ldots , a_r\}$ 
such that $a_i \in K^M_1(A,I)$ for some $1 \le i \le r$.
\item
The canonical map $\wt{K}^M_r(A,I) \to K^M_r(A,I)$ is surjective.
This is an isomorphism if $A$ has infinite residue field.
\end{enumerate}
\end{lem}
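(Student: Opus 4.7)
The plan is to prove part (1) by identifying $K^M_r(A,I)$ with the subgroup $J_r \subseteq K^M_r(A)$ generated by symbols having at least one entry in $1+I = K^M_1(A,I)$, and then to deduce part (2) from this generation statement using Kerz's theorem together with the five lemma.

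For part (1), the containment $J_r \subseteq K^M_r(A,I)$ is immediate because reducing such a symbol modulo $I$ produces a symbol with a factor equal to $1$. For the reverse direction, my plan is to show that the natural surjection $K^M_r(A)/J_r \twoheadrightarrow K^M_r(A/I)$ is an isomorphism, which forces $J_r = K^M_r(A,I)$. I would construct the inverse as follows: given a symbol $\{\bar a_1, \ldots, \bar a_r\}$ in $K^M_r(A/I)$, lift each $\bar a_i$ to some $a_i \in A^\times$ (possible because $A$ is local, so $A^\times \twoheadrightarrow (A/I)^\times$ is surjective with kernel exactly $1+I$), and assign it the class of $\{a_1, \ldots, a_r\}$ in $K^M_r(A)/J_r$. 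Independence from the choice of lifts follows from multilinearity modulo $J_r$. The critical check is that Steinberg relations are respected: given $\bar a + \bar b = 1$ with $\bar a,\bar b \in (A/I)^\times$, I would lift $\bar a$ to a unit $a \in A^\times$ and take $b := 1-a$; since $b \equiv \bar b \pmod I$ is a unit modulo $\fm$, the locality of $A$ forces $b \in A^\times$, so $\{\ldots,a,\ldots,b,\ldots\}$ genuinely vanishes in $K^M_r(A)$. The same argument applies verbatim to $\wt K^M_r$ since the construction only uses the weaker Steinberg relation $a_1 + a_2 = 1$. The degenerate case $I=A$ is handled separately and is trivial.

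For part (2), the surjectivity of $\wt K^M_r(A,I) \twoheadrightarrow K^M_r(A,I)$ is immediate from part (1), because every generating symbol of the target is the image of its tautological counterpart in $\wt K^M_r(A,I)$. When $A$ has infinite residue field, so does $A/I$, and Kerz's \cite[Proposition~2]{Kerz-10} gives isomorphisms $\wt K^M_r(A) \xrightarrow{\cong} K^M_r(A)$ and $\wt K^M_r(A/I) \xrightarrow{\cong} K^M_r(A/I)$. These assemble into a commutative diagram of short exact sequences
\[
\begin{CD}
0 @>>> \wt K^M_r(A,I) @>>> \wt K^M_r(A) @>>> \wt K^M_r(A/I) @>>> 0 \\
@. @VVV @VV{\cong}V @VV{\cong}V @. \\
0 @>>> K^M_r(A,I) @>>> K^M_r(A) @>>> K^M_r(A/I) @>>> 0
\end{CD}
\]
where the right-hand surjectivity in both rows comes from the surjectivity of $A^\times \twoheadrightarrow (A/I)^\times$ on symbols. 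The five lemma then forces the left vertical arrow to be an isomorphism.

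The main obstacle is the Steinberg-lifting step in part (1): verifying that every defining relation of $K^M_r(A/I)$ admits a compatible lift modulo $J_r$. This is the one place where the locality of $A$ is essential (to guarantee that $1-a$ is a unit whenever its reduction is); once this is in hand, the rest of the argument — the well-definedness of the inverse, the generation statement for $\wt K^M_r$, and the five-lemma step — is formal.
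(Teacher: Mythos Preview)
Your proposal is correct and follows essentially the same approach as the paper: the paper cites \cite[Lemma~1.3.1]{Kato-Saito} for part~(1) (whose proof is precisely the lift-and-invert argument you wrote out), and for part~(2) it uses exactly your observation that symbols with an entry in $1+I$ lie in $\wt K^M_r(A,I)$ together with the non-relative isomorphism of Kerz. The only difference is that you spell out the construction of the inverse $K^M_r(A/I) \to K^M_r(A)/J_r$ explicitly, which is fine and is the standard argument behind the cited lemma.
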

\begin{proof}
The assertion (1) for $K^M_r(A,I)$ is \cite[Lemma~1.3.1]{Kato-Saito} and the 
proof of (1) for $\wt{K}^M_r(A,I)$ is completely identical to that of
$K^M_r(A,I)$.
The second part of (2) follows from the corresponding result in the 
non-relative case mentioned above. To prove the first part of (2), 
we fix an integer $r \ge 1$.
Let $\un{a} = \{a_1, \ldots , a_r\} \in \wt{K}^M_r(A)$ be such that $a_i \in 
\wt{K}^M_1(A,I) = K^M_1(A,I)$ for some $1 \le i \le r$. It is then clear that 
$\un{a} \in \wt{K}^M_r(A,I)$. Using this observation, our assertion follows
directly from item (1) for $K^M_r(A,I)$.
\end{proof}
 
For a ring $A$ as above, we let $\wh{K}^M_*(A)$ denote the improved
Milnor $K$-theory defined independently by Gabber and Kerz \cite{Kerz-10}.
For an ideal $I \subset A$, we define $\wh{K}^M_*(A,I)$ to be the kernel
of the map $\wh{K}^M_*(A) \to \wh{K}^M_*(A/I)$. 
Let $K_*(A)$ denote the Quillen $K$-theory of $A$.
We state the following facts as a lemma and refer to \cite{Kerz-10}
for their source.

\begin{lem}\label{lem:Milnor-Kerz}
There are natural maps
\begin{equation}\label{eqn:Milnor-K-0}
K^M_*(A) \xleftarrow{\alpha_A} \wt{K}^M_*(A) \xrightarrow{\beta_A} 
\wh{K}^M_*(A) \xrightarrow{\gamma_A} K_*(A),
\end{equation}
where $\alpha_A$ is always surjective and $\beta_A$ is surjective
if $A$ is local. These two maps are isomorphisms if $A$ is either a field
or a local ring with infinite residue field.
\end{lem}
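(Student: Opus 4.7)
The plan is to assemble the three comparison maps by construction, then reduce each claim about surjectivity or isomorphism to a result of Kerz~\cite{Kerz-10}, since the statement is essentially a compilation of properties of the improved Milnor $K$-theory.

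First, I would construct $\alpha_A$ and $\beta_A$ at the level of generators. The map $\alpha_A$ exists because the defining ideal of $\wt{K}^M_*(A)$ (relations $a_1 \otimes a_2$ with $a_1+a_2 = 1$ in degree two) is contained in the defining ideal of $K^M_*(A)$ (which also imposes these relations in every degree $r\ge 2$); hence $\alpha_A$ is a canonical surjection of graded rings by construction. For $\beta_A$, I would recall that $\wh{K}^M_*(A)$ is defined by Kerz as the Zariski/Nisnevich-sheafified version of $\wt{K}^M_*$ on the category of local rings obtained by a standard procedure of ``correcting'' $\wt{K}^M_*$ so that it agrees with Quillen $K$-theory on fields and satisfies Gersten-type properties. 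From this construction one has a natural transformation $\wt{K}^M_*(A)\to \wh{K}^M_*(A)$ of graded rings. The map $\gamma_A$ is the usual product pairing: multiplication of symbols $\{a_1,\ldots ,a_r\}$ is defined using cup products in Quillen $K$-theory via the inclusion $A^\times\subset K_1(A)$, and this factors through the Steinberg relation, so descends to $\wh{K}^M_*(A)$ by the universal property of the improved theory.

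Next, I would establish the surjectivity claims. Surjectivity of $\alpha_A$ is immediate from the quotient description above. For surjectivity of $\beta_A$ when $A$ is local, I would invoke \cite[Proposition~10]{Kerz-10} (or its analogue), which shows that every element of $\wh{K}^M_*(A)$ is represented by a Milnor symbol in $\wt{K}^M_*(A)$ when $A$ is local; the point is that Kerz's construction of $\wh{K}^M_*$ is engineered so that the natural map from the naive theory is surjective on local rings with sufficiently many elements, while for local rings with small residue field an additional argument (e.g., a transfer or cofinality argument on finite {\'e}tale extensions of the residue field) yields the same conclusion.

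Finally, I would deduce the isomorphism statements. If $A$ is a field, then $\wt{K}^M_r(A)=K^M_r(A)$ essentially by the classical result of Bass--Tate that each Steinberg relation in degree $r\ge 2$ is a consequence of the degree-two ones for fields, so $\alpha_A$ is an isomorphism. Similarly, $\beta_A$ is an isomorphism for fields because $\wh{K}^M_*$ was constructed to agree with $K^M_*$ on fields \cite[Theorem~1.3]{Kerz-10}. If $A$ is a local ring with infinite residue field, the same identification holds by \cite[Proposition~2]{Kerz-10} for $\alpha_A$, and by \cite[Theorem~1.3]{Kerz-10} for $\beta_A$. The main point to be careful about is not to conflate the roles of the three theories: the essential obstacle is arranging the definitions and references so that $\beta_A$ is genuinely a map into $\wh{K}^M_*(A)$ (rather than into some intermediate object) and so that the isomorphism statements for $\beta_A$ come from Kerz's main comparison theorem rather than from more elementary considerations.
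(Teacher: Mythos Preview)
Your proposal is correct and matches the paper's approach: the paper does not give an argument at all but simply states the lemma ``as facts'' and refers to \cite{Kerz-10} for everything, which is exactly what you do (with considerably more elaboration on how the maps are built and which results of Kerz to invoke). The only point worth tightening is your justification of the surjectivity of $\beta_A$ for local rings with finite residue field: rather than sketching a transfer/cofinality argument yourself, you can cite this directly as part of Kerz's construction and \cite[Proposition~10]{Kerz-10}.
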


Let $X$ be a Noetherian scheme and $\iota \colon D \inj X$ a 
closed immersion.
We shall say that $(X,D)$ is a modulus pair if $D$ is an effective
Cartier divisor on $X$. This Cartier divisor may be empty.
If $\sP$ is a property of schemes,
we shall say that $(X,D)$ satisfies $\sP$ if $X$ does so.
We shall say that $(X,D)$ has dimension $d$ if $X$ has Krull
dimension $d$.

Given a Noetherian scheme $X$ and an integer $r \ge 1$, 
let $\sK^M_{r,X}$ denote the sheaf on $X_\nis$
whose stalk at a point $x \in X$ is $K^M_{r}(\sO^h_{X,x})$. We let
$\sK^M_{r, (X,D)}$ be the kernel of the restriction map
$\sK^M_{r,X} \surj \sK^M_{r,D} := \iota_* \sK^M_{r,D}$. We shall usually refer to
$\sK^M_{r, (X,D)}$ as the Kato-Saito relative Milnor $K$-sheaves.
We define the sheaves $\wt{\sK}^M_{r, (X,D)}$ and $\wh{\sK}^M_{r, (X,D)}$
in an analogous way. We let $\sK_{r,X}$ be the Quillen $K$-theory sheaf on
$X_\nis$.

\begin{lem}\label{lem:Milnor-top-coh}
Let $X$ be a reduced Noetherian
scheme of Krull dimension $d$ and let $D \subset X$
be a nowhere dense closed subscheme. Then the canonical map
\[
H^d_\nis(X,  {\wt{\sK}^M_{r,(X,D)}}/n) \to
H^d_\nis(X,  {\sK^M_{r,(X,D)}}/n) 
\]
is an isomorphism for every integer $n \ge 0$.
\end{lem}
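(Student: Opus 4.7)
The plan is to study the kernel of the surjection $\wt{\sK}^M_{r,(X,D)} \twoheadrightarrow \sK^M_{r,(X,D)}$ (surjective by \lemref{lem:BT-KS}(2)) and show that its top Nisnevich cohomology after reduction mod $n$ vanishes. Let $\sM$ denote this kernel, giving a short exact sequence
\[
0 \to \sM \to \wt{\sK}^M_{r,(X,D)} \to \sK^M_{r,(X,D)} \to 0,
\]
for which \lemref{lem:BT-KS}(2) yields $\sM_x = 0$ whenever the residue field $\kappa(x)$ is infinite. Applying the snake lemma to the multiplication-by-$n$ self-map, the kernel $\sN$ of $\wt{\sK}^M_{r,(X,D)}/n \to \sK^M_{r,(X,D)}/n$ is a quotient of $\sM/n$. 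Surjectivity of the map in the statement is then automatic from $H^{d+1}_{\nis}(X, \sN) = 0$ (the Nisnevich cohomological dimension of $X$ being at most $d$), so it suffices to establish $H^d_{\nis}(X, \sM/n) = 0$, which implies $H^d_{\nis}(X, \sN) = 0$ since $\sN$ is a quotient of $\sM/n$.

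To prove this vanishing, the case $d = 0$ is trivial: $X$ is a finite disjoint union of spectra of fields, the nowhere-dense hypothesis forces $D = \emptyset$, and for a field $F$ the map $\wt{K}^M_r(F) \to K^M_r(F)$ is an isomorphism by \lemref{lem:BT-KS}(2) when $F$ is infinite and a tautology otherwise. For $d \geq 1$, the crucial input is that any Noetherian integral domain of positive Krull dimension has infinite fraction field, so every generic point $\eta$ of a $d$-dimensional irreducible component of $X$ has infinite $\kappa(\eta)$ and hence $\sM_\eta = 0$. I would then invoke the Kato--Gersten-type coniveau complex computing $H^*_{\nis}$ of the mod-$n$ relative Milnor $K$-sheaves, whose terms in codimensions $d-1$ and $d$ are direct sums of (relative) Milnor $K$-groups at points of $X^{(d-1)}$ and $X^{(d)}$, all of whose residue fields are fields. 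Since $\wt{K}^M_*(F,I) = K^M_*(F,I)$ for every field $F$ and every ideal $I$ (the only ideals being $0$ and $F$, in both cases the two $K$-groups coincide with the usual Milnor $K$-theory of $F$ or vanish), the coniveau complexes for $\wt{\sK}^M_{r,(X,D)}/n$ and $\sK^M_{r,(X,D)}/n$ agree termwise in their top two degrees, and therefore their $H^d$-cohomologies coincide.

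The main obstacle is the availability of such a Kato--Gersten coniveau resolution for relative Milnor $K$-sheaves on a possibly non-regular reduced Noetherian scheme; in the regular case this is Kerz's theorem, whereas in the present relative form one would need to adapt the Kato--Saito cycle complex used in \cite{Kato-Saito} and \cite{Gupta-Krishna-REC}, together with a check that the natural transformation $\wt{\sK}^M \to \sK^M$ respects the Kato residue differentials. A more hands-on alternative, bypassing Gersten entirely, would be to show directly that the Nisnevich sheaf $\sM$ is supported on a closed subscheme of $X$ of dimension at most $d-1$ using the generic-point vanishing of $\sM$, and then deduce $H^d_{\nis}(X, \sM/n) = 0$ by pushforward from that lower-dimensional subscheme.
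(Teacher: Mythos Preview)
Your alternative in the final paragraph is precisely the paper's argument; the Gersten route is an unnecessary detour, and as you note, the required resolution is not available for an arbitrary reduced Noetherian scheme.

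Two small clarifications sharpen the argument. First, you do not need the fact about infinite fraction fields: by \lemref{lem:Milnor-Kerz}, the map $\wt{K}^M_r(F) \to K^M_r(F)$ is an isomorphism for \emph{every} field $F$, finite or not. Since $X$ is reduced, $\sO^h_{X,\eta}$ is a field at each generic point $\eta$, so $\sM_\eta = 0$ there --- that is all you use. Second, ``supported on a closed subscheme of dimension at most $d-1$'' (and the paper's own ``supported on a nowhere dense closed subscheme'') is a mild abuse: the set $\{x : \sM_x \neq 0\}$ need not be closed. The honest argument writes $\sM$ (or $\sM/n$) as the filtered colimit of $(\iota_Z)_*(\sM|_Z)$ over closed $Z \subset X$ containing no generic point, as in the proof of \lemref{lem:Sheaf-0}; each such $Z$ has dimension at most $d-1$, whence $H^d_\nis$ vanishes.

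The paper's organization differs only cosmetically: it establishes the case $n = 0$ by the support argument for $\sM$ itself, and then for $n \ge 1$ compares the right-exact sequences
\[
H^d_\nis(X,\sF) \xrightarrow{n} H^d_\nis(X,\sF) \to H^d_\nis(X,\sF/n) \to 0
\]
for $\sF = \wt{\sK}^M_{r,(X,D)}$ and $\sF = \sK^M_{r,(X,D)}$, invoking the $n = 0$ isomorphism on the first two columns. Your direct treatment via $\sM/n$ is equally valid.
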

\begin{proof}
We look at the commutative diagram
\begin{equation}\label{eqn:Milnor-top-coh-0}
\xymatrix@C.8pc{
\wt{\sK}^M_{r,(X,D)} \ar@{^{(}->}[r] \ar[d] & \wt{\sK}^M_{r,X} \ar@{->>}[d] \\
\sK^M_{r,(X,D)} \ar@{^{(}->}[r] & \sK^M_{r,X}.}
\end{equation}

Lemma~\ref{lem:BT-KS} says that the left vertical arrow is surjective.
We have seen above that the kernel of the right vertical arrow is
supported on a nowhere dense closed subscheme of $X$ (this uses reducedness of $X$).
Hence, the same holds for the left vertical arrow.
But this easily implies the lemma when $n = 0$.

For $n \ge 1$, we use the commutative diagram
\[
\xymatrix@C.8pc{
H^d_\nis(X,  {\wt{\sK}^M_{r,(X,D)}}) \ar[r]^-{n} \ar[d] & 
H^d_\nis(X,  {\wt{\sK}^M_{r,(X,D)}}) \ar[r] \ar[d] & 
H^d_\nis(X,  {\wt{\sK}^M_{r,(X,D)}}/n) \ar[r] \ar[d] & 0 \\
H^d_\nis(X,  {\sK^M_{r,(X,D)}}) \ar[r]^-{n} &
H^d_\nis(X,  {\sK^M_{r,(X,D)}}) \ar[r] & H^d_\nis(X,  {\sK^M_{r,(X,D)}}/n)  
\ar[r] & 0.}
\]
It is easily seen that the two rows are exact. The $n = 0$ case shows that
the left and the middle vertical arrows are isomorphisms.
It follows that the right vertical arrow is also an isomorphism.
\end{proof}

For $D \subset X$ as above and
$n \ge 1$, let 
\begin{equation}\label{eqn:Milnor-mod-n}
\ov{{\wt{\sK}^M_{r,(X,D)}}/n}
:= {\wt{\sK}^M_{r,(X,D)}}/{(\wt{\sK}^M_{r,(X,D)} \cap n \wt{\sK}^M_{r,X})} =
{\rm Image}(\wt{\sK}^M_{r,(X,D)} \to {\wt{\sK}^M_{r,X}}/n).
\end{equation}
Since the map $n\wt{\sK}^M_{r,X} \to 
n \wt{\sK}^M_{r,D}$ is surjective, it easily follows that there are
exact sequences
\begin{equation}\label{eqn:Rel-Milnor}
0 \to \ov{{\wt{\sK}^M_{r,(X,D)}}/n} \to
{\wt{\sK}^M_{r,X}}/n \to  {\wt{\sK}^M_{r,D}}/n \to 0;
\end{equation}
\[
_{n}{\wt{\sK}^M_{r,D}} \to {\wt{\sK}^M_{r,(X,D)}}/n \to 
\ov{{\wt{\sK}^M_{r,(X,D)}}/n} \to 0.
\]
We let $\ov{{\wh{\sK}^M_{r,(X,D)}}/n}
:= {\wh{\sK}^M_{r,(X,D)}}/{(\wh{\sK}^M_{r,(X,D)} \cap n \wh{\sK}^M_{r,X})}$. 
It is then clear that the two exact sequences of ~\eqref{eqn:Rel-Milnor}
hold for the improved Milnor $K$-sheaves as well.

The following is immediate from ~\eqref{eqn:Rel-Milnor}.

\begin{lem}\label{lem:Milnor-d}
Let $X$ be a Noetherian scheme of Krull dimension $d$ and let $D \subset X$
be a nowhere dense closed subscheme. Then the canonical map
\[
H^d_\nis(X,  {\wt{\sK}^M_{r,(X,D)}}/n) \to 
H^d_\nis(X, \ov{{\wt{\sK}^M_{r,(X,D)}}/n})
\]
is an isomorphism for all $n \ge 1$.
\end{lem}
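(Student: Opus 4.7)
The plan is to exploit the second exact sequence of~\eqref{eqn:Rel-Milnor}, which gives an epimorphism
\[
{\wt{\sK}^M_{r,(X,D)}}/n \twoheadrightarrow \ov{{\wt{\sK}^M_{r,(X,D)}}/n}
\]
whose kernel $\sG$ is a quotient of ${}_n{\wt{\sK}^M_{r,D}}$, and hence supported on $D$. Since $\iota \colon D \hookrightarrow X$ is a closed immersion, there is a sheaf $\sG'$ on $D$ with $\sG = \iota_* \sG'$, and $\iota_*$ is exact on Nisnevich sheaves, so $H^i_\nis(X, \sG) \cong H^i_\nis(D, \sG')$ for every $i \ge 0$.

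Next, associated to the short exact sequence
\[
0 \to \sG \to {\wt{\sK}^M_{r,(X,D)}}/n \to \ov{{\wt{\sK}^M_{r,(X,D)}}/n} \to 0,
\]
I would write down the long exact sequence of Nisnevich cohomology groups
\[
H^d_\nis(X, \sG) \to H^d_\nis(X, {\wt{\sK}^M_{r,(X,D)}}/n) \to H^d_\nis\big(X, \ov{{\wt{\sK}^M_{r,(X,D)}}/n}\big) \to H^{d+1}_\nis(X, \sG).
\]
The identification $H^i_\nis(X, \sG) \cong H^i_\nis(D, \sG')$ reduces the vanishing of the two outer terms to dimension bounds on $D$: since $D$ is nowhere dense in $X$ and $\dim X = d$, one has $\dim D \le d-1$, so by the Nisnevich cohomological dimension bound (the Nisnevich cohomological dimension of a Noetherian scheme is bounded by its Krull dimension) we get $H^d_\nis(D, \sG') = 0 = H^{d+1}_\nis(D, \sG')$. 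This forces the middle map to be an isomorphism, proving the lemma.

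There is essentially no obstacle here; the statement is, as the excerpt advertises, immediate from~\eqref{eqn:Rel-Milnor} once one invokes the dimension bound on Nisnevich cohomological dimension for sheaves supported on a nowhere dense closed subscheme.
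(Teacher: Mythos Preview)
Your proof is correct and is precisely the argument the paper has in mind when it says the lemma is ``immediate from~\eqref{eqn:Rel-Milnor}'': the kernel of the surjection in the second sequence of~\eqref{eqn:Rel-Milnor} is supported on $D$, and the Nisnevich cohomological dimension bound kills its $H^d$ and $H^{d+1}$.
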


To proceed further, we need the following result about the
sheaf cohomology. Given a field $k$ and a topology
$\tau$ on $\Spec(k)$, let $cd_\tau(k)$ denote the cohomological
dimension of $k$ for torsion $\tau$-sheaves.

\begin{lem}\label{lem:Sheaf-0}
Let $k$ be a field and $X \in \Sch_k$. Let $\sF$ be a torsion sheaf on $X_\nis$
such that $\sF_x = 0$ for every $x \in X_{(q)}$ with $q > 0$. Then 
$H^q_{\tau}(X, \sF) = 0$ for $q > cd_\tau(k)$ and $\tau \in \{\nis, \etl\}$.
In particular, given an exact sequence of Nisnevich sheaves
\[
0 \to \sF \to \sF' \to \sF'' \to 0,
\]
the induced map $H^q_{\tau}(X, \sF') \to H^q_{\tau}(X, \sF'')$ is an isomorphism
for $q > cd_\tau(k)$ and $\tau \in \{\nis, \etl\}$.
\end{lem}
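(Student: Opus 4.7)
The strategy is to use the hypothesis $\sF_x = 0$ for $x \in X_{(q)}$ with $q > 0$ to express $\sF$ as a filtered colimit of subsheaves supported on finite sets of closed points, and then reduce the cohomology to the Galois cohomology of the residue fields.

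The first step is to show that every section $s \in \sF(U)$ has finite support. Indeed, $\supp(s) := \{x \in U : s_x \neq 0\}$ is contained in $U_{(0)}$ by hypothesis, and is a closed subset of $U$ (being the complement of the largest open on which $s$ vanishes). Since $X$ is Noetherian, $\supp(s)$ has only finitely many irreducible components, and each must consist of a single closed point (because its generic point lies in $U_{(0)}$). Hence $\supp(s)$ is a finite subset of $X_{(0)}$. Letting $\sF_S \subset \sF$ denote the subsheaf of sections whose support lies in a finite set $S \subset X_{(0)}$, we obtain $\sF = \varinjlim_{S} \sF_S$, where the colimit runs over the filtered poset of finite subsets of $X_{(0)}$.

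For each such $S$, the sheaf $\sF_S$ has vanishing stalks outside of $S$. Letting $i_S \colon Z_S \hookrightarrow X$ denote the inclusion of the reduced subscheme $Z_S \cong \bigsqcup_{x \in S} \Spec \kappa(x)$ with underlying set $S$, the localization sequence gives $\sF_S \cong (i_S)_{*} (i_S)^{*} \sF_S$; since $(i_S)_{*}$ is exact and preserves injectives for a closed immersion and $Z_S$ is a finite disjoint union, we get
\[
H^q_\tau(X, \sF_S) \;\cong\; H^q_\tau(Z_S, (i_S)^{*} \sF_S) \;\cong\; \bigoplus_{x \in S} H^q_\tau(\Spec \kappa(x), (\sF_S)_x).
\]
Since each $\kappa(x)/k$ is a finite extension, $cd_\tau(\kappa(x)) \le cd_\tau(k)$, so each summand vanishes for $q > cd_\tau(k)$ (trivially for $\tau = \nis$, since every field has Nisnevich cohomological dimension zero, and by the defining property of Galois cohomological dimension for $\tau = \etl$). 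Taking the filtered colimit over $S$, and using that cohomology commutes with filtered colimits on the Noetherian site $X_\tau$, we conclude that $H^q_\tau(X, \sF) = 0$ for $q > cd_\tau(k)$, which is the first assertion.

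The ``in particular'' statement follows immediately from the long exact cohomology sequence attached to $0 \to \sF \to \sF' \to \sF'' \to 0$: the flanking terms $H^q_\tau(X, \sF)$ and $H^{q+1}_\tau(X, \sF)$ both vanish for $q > cd_\tau(k)$ by the first part, forcing $H^q_\tau(X, \sF') \to H^q_\tau(X, \sF'')$ to be an isomorphism. The main obstacle is the finite-support step at the beginning: it is crucial to avoid infinite direct sums of skyscrapers (which behave subtly at the sheaf level) by instead writing $\sF$ as a filtered colimit of sheaves that are genuinely supported on finite closed subschemes. Once that reduction is in place, the localization sequence and the standard compatibilities of cohomology with colimits complete the argument.
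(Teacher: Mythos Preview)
Your proof is correct and takes essentially the same approach as the paper: both express $\sF$ as a filtered colimit of sheaves supported on finite subsets of $X_{(0)}$ and then use that cohomology commutes with filtered colimits on a Noetherian site. The paper organizes this via the recollement sequences $0 \to (j_\alpha)_! (\sF|_{U_\alpha}) \to \sF \to (\iota_\alpha)_* (\sF|_{S_\alpha}) \to 0$ (showing the colimit of the left-hand terms vanishes on stalks), whereas you use the subsheaves $\sF_S = \underline{\Gamma}_S(\sF)$; these are dual presentations of the same reduction.

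One point deserves a word of justification. Your assertion that $\supp(s)=\{x:s_x\neq 0\}$ is closed tacitly identifies the Nisnevich support with the Zariski support (``the complement of the largest open on which $s$ vanishes''). This is correct but not automatic, since $s_x=0$ a priori only gives $s|_W=0$ for an \emph{\'etale} neighborhood $W\to U$ admitting a point $w\mapsto x$ with $\kappa(w)=\kappa(x)$. The missing remark is that for such $W$ the local map $\sO_{U,x}\to\sO_{W,w}$ is finite \'etale of residue degree one, hence an isomorphism; spreading out, $W$ contains a Zariski open mapping isomorphically onto a Zariski open around $x$, so $s$ already vanishes Zariski-locally near $x$. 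With this observation your finite-support step, and hence the rest of the argument, goes through as written.
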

\begin{proof}
We fix $\tau \in \{\nis, \etl\}$.
Let $J$ be the set of finite subsets of $X_{(0)}$. 
Given any $\alpha \in J$, let $S_\alpha \subset X$ be the
0-dimensional reduced closed subscheme defined by $\alpha$ and let
$U_\alpha = X \setminus S_\alpha$. Let $\iota_\alpha \colon S_\alpha \inj X_\alpha$
and $j_\alpha \colon U_\alpha \inj X$ be the inclusions.    
As $J$ is cofiltered 
with respect to inclusion, there is a cofiltered system of short exact
sequences of $\tau$-sheaves
\begin{equation}\label{eqn:Sheaf-0-0}
0 \to (j_\alpha)_!(\sF|_{U_\alpha}) \to \sF \to (\iota_\alpha)_*(\sF|_{S_\alpha}) \to 0.
\end{equation}
It is easily seen from our assumption that 
${\underset{\alpha \in J}\varinjlim} \
(j_\alpha)_!(\sF|_{U_\alpha}) = 0$ so that the map
$\sF \to {\underset{\alpha \in J}\varinjlim} \ (\iota_\alpha)_*(\sF|_{S_\alpha})$ is an
isomorphism. Since $H^*_\tau(S_\alpha, \sF|_{S_\alpha}) \cong
H^*_\tau(X, (\iota_\alpha)_*(\sF|_{S_\alpha}))$, we are done by 
\cite[Proposition~58.89.6]{SP}.
\end{proof}

\begin{lem}\label{lem:Milnor-d-0}
Let $k$ be a field and $(X, D)$ a reduced modulus pair in $\Sch_k$. 
Assume that $X$ is of pure dimension $d \ge 1$ and
$\tau \in \{\nis, \etl\}$. Let $n \ge 1$ be an integer and
\begin{equation}\label{eqn:Milnor-d-0-0}
H^d_\tau(X, \wt{\sK}^M_{r,(X,D)}) \to 
H^d_\tau(X,  \wh{\sK}^M_{r,(X,D)});
\end{equation}
\begin{equation}\label{eqn:Milnor-d-0-0-*}
H^d_\tau(X,  \ov{{\wt{\sK}^M_{r,(X,D)}}/n}) \to 
H^d_\tau(X,  \ov{{\wh{\sK}^M_{r,(X,D)}}/n})
\end{equation}
the canonical maps. Then the following hold.
\begin{enumerate}
\item
If $k$ is infinite, then both maps are isomorphisms.
\item
If $d = 1$ and $r \le 1$, 
then both maps are isomorphisms.
\item
If $d \ge 2$ and $\tau = \nis$, then  both maps are isomorphisms.
\item
If $k$ is finite, $d \ge 3$ and $\tau = \etl$, then 
~\eqref{eqn:Milnor-d-0-0-*} is an isomorphism. 
\item
If $k \neq \F_2$ is finite, $d = 2,  r \le 2$ and
$\tau = \etl$, then ~\eqref{eqn:Milnor-d-0-0-*} is an isomorphism.
\end{enumerate}
\end{lem}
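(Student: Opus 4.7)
The plan is to reduce everything to controlling the kernel and cokernel of the underlying sheaf maps, and then to apply \lemref{lem:Sheaf-0} together with long exact sequences. By \lemref{lem:Milnor-Kerz}, the map $\beta \colon \wt{\sK}^M_{r,Y} \to \wh{\sK}^M_{r,Y}$ is stalkwise surjective on every local ring and is an isomorphism at every stalk whose residue field is infinite. Applying the snake lemma to the two defining short exact sequences of the relative sheaves, and likewise to their mod-$n$ quotients from \eqref{eqn:Rel-Milnor}, shows that the kernels and cokernels of the maps $\wt{\sK}^M_{r,(X,D)} \to \wh{\sK}^M_{r,(X,D)}$ and $\ov{{\wt{\sK}^M_{r,(X,D)}}/n} \to \ov{{\wh{\sK}^M_{r,(X,D)}}/n}$ also vanish at every point with infinite residue field. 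Since $X$ is of finite type over $k$, such a point has finite residue field iff $k$ is finite and the point is closed. Case (1) follows immediately, as over infinite $k$ every stalk has infinite residue field. Case (2) is handled separately by the stronger observation that $\wt{\sK}^M_{r,Y} = \wh{\sK}^M_{r,Y}$ as sheaves whenever $r \leq 1$ (they equal $\Z$ for $r = 0$ and $\G_m$ for $r = 1$), which forces both maps to be isomorphisms of sheaves in every dimension.

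Assume now $k$ is finite, so the kernel $\sK$ and cokernel $\sC$ of each map in question are supported on the zero-dimensional set of closed points of $X$. For each of \eqref{eqn:Milnor-d-0-0} and \eqref{eqn:Milnor-d-0-0-*}, the four-term exact sequence $0 \to \sK \to \wt{}\ \to\ \wh{} \to \sC \to 0$ splits into two short exact sequences, and the long exact sequences in cohomology reduce the desired isomorphism on $H^d_\tau$ to the vanishing $H^d_\tau(X, \sK) = H^{d+1}_\tau(X, \sK) = H^{d-1}_\tau(X, \sC) = H^d_\tau(X, \sC) = 0$. When $\tau = \nis$, sheaves supported on closed points decompose as direct sums of pushforwards from spectra of residue fields and have vanishing Nisnevich cohomology in all positive degrees, without any torsion hypothesis; this yields (3) for both maps as soon as $d \geq 2$. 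When $\tau = \etl$ over finite $k$, \lemref{lem:Sheaf-0} gives the analogous vanishing above $cd_\etl(k) = 1$, but only for torsion sheaves; since the mod-$n$ kernels and cokernels are torsion, this gives (4) for \eqref{eqn:Milnor-d-0-0-*} whenever $d \geq 3$, and the torsion requirement is exactly why (4) is restricted to the mod-$n$ version.

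The hardest case is (5), where $d = 2$, $\tau = \etl$, $r \leq 2$ and $k \neq \F_2$. The long-exact-sequence argument kills $H^2_\etl(X, \sK)$, $H^3_\etl(X, \sK)$ and $H^2_\etl(X, \sC)$ but leaves $H^1_\etl(X, \sC)$ uncontrolled, so support alone is not enough. My plan is to force $\sC = 0$. For $r \leq 1$ this is again case (2). For $r = 2$, the hypothesis $k \neq \F_2$ implies that every residue field at every point of $X$ has at least three elements, and the key input is that for such a local ring $A$, the map $\wt{K}^M_2(A) \to \wh{K}^M_2(A)$ is already an isomorphism: in degree two the Steinberg ideal defining $\wt{K}^M_2$ coincides with the one used by Kerz in the improved theory once enough units are available in the residue field. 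Granted this, $\wt{\sK}^M_{2,Y} = \wh{\sK}^M_{2,Y}$ as sheaves for $Y \in \{X, D\}$, so $\sC = 0$ and \eqref{eqn:Milnor-d-0-0-*} is an isomorphism of sheaves. The main obstacle is making this degree-two identification precise over local rings whose residue field is a small finite field; once that is in place, the rest of the argument proceeds formally.
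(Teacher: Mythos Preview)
Your approach to (1)--(4) is correct and is essentially the paper's argument in a different packaging: you track the kernel and cokernel of the sheaf map directly, while the paper applies a five-lemma to the long exact sequences in $\tau$-cohomology coming from \eqref{eqn:Rel-Milnor} on $X$, on $D$, and on the relative term. Both reduce to the same support estimate handled by \lemref{lem:Sheaf-0}.

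The gap is in (5). Your plan rests on the assertion that $\wt{K}^M_2(A) \to \wh{K}^M_2(A)$ is an isomorphism for every local $k$-algebra whose residue field has at least three elements. You rightly flag this as the obstacle, but it is not a routine verification: the standard deduction of antisymmetry from the Steinberg relation goes through $-a = (1-a)/(1-a^{-1})$, which requires both $1-a$ and $1-a^{-1}$ to be units, and this fails precisely when $\bar a = 1$; over $\F_3$ there is no evident repair (try $a = 1+t$ in $\F_3[[t]]$). The paper does not attempt this. Instead it uses $\wh{K}^M_2(A) \cong K_2(A)$ from \cite{Kerz-10} together with Kolster's presentation of $K_2$ of a local ring by symbols modulo bilinearity, Steinberg, \emph{and} antisymmetry \cite{Kolster}. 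This identifies $\Ker\bigl(\wt{K}^M_2(A)/n \surj \wh{K}^M_2(A)/n\bigr)$ as generated by the images of $\{a,b\}+\{b,a\}$; since units lift along $A \surj A/I$, this kernel surjects onto its counterpart for $A/I$. A snake-lemma chase on the diagram \eqref{eqn:Milnor-d-0-1} then shows that the \emph{relative} mod-$n$ map is surjective, and its kernel---supported on closed points, as you already argued---has vanishing $H^2_\etl$ by \lemref{lem:Sheaf-0}. So the correct move is not to force the cokernel to vanish at the absolute level, but to prove surjectivity of the relative map via the explicit description of the absolute kernels.
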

\begin{proof}
The items (1) and (2) are clear from \lemref{lem:Milnor-Kerz}.
To prove (3) and (4), we let $n \ge 1$ and consider 
the commutative diagram
\[
\xymatrix@C.3pc{
H^{d-1}_\tau(X, {\wt{\sK}^M_{r,X}}/n) \ar[r] \ar[d] &
H^{d-1}_\tau(D, {\wt{\sK}^M_{r,D}}/n) \ar[r] \ar[d] &
H^d_\tau(X,  \ov{{\wt{\sK}^M_{r,(X,D)}}/n}) \ar[r] \ar[d] &
H^{d}_\tau(X, {\wt{\sK}^M_{r,X}}/n) \ar[r] \ar[d] & 
H^{d}_\tau(D, {\wt{\sK}^M_{r,D}}/n) \ar[d] \\
H^{d-1}_\tau(X, {\wh{\sK}^M_{r,X}}/n) \ar[r]  &
H^{d-1}_\tau(D, {\wh{\sK}^M_{r,D}}/n) \ar[r]  &
H^d_\tau(X,  \ov{{\wh{\sK}^M_{r,(X,D)}}/n}) \ar[r]  &
H^{d}_\tau(X, {\wh{\sK}^M_{r,X}}/n) \ar[r]  & 
H^{d}_\tau(D, {\wh{\sK}^M_{r,D}}/n)}
\]
with $d \ge 2$.
The two rows are exact by ~\eqref{eqn:Rel-Milnor}. It suffices to show that all vertical 
arrows except possibly the middle one are isomorphisms.
But this follows easily from Lemmas~\ref{lem:Milnor-Kerz} and
~\ref{lem:Sheaf-0} if we observe that $cd_\nis(k) = 0$ for $k$ arbitrary and
$cd_\etl(k) = 1$ for $k$ finite (see \cite[Expos{\'e}~X]{SGA4}). 
The proof of ~\eqref{eqn:Milnor-d-0-0} for $\tau = \nis$
is identical if we observe that \lemref{lem:Sheaf-0}
holds even if the sheaf $\sF$ is not torsion when $\tau = \nis$.

We now prove (5). We only consider the case $r = 2$ as the other cases
are trivial.
We look at the commutative diagram of Nisnevich (or {\'e}tale) sheaves
\begin{equation}\label{eqn:Milnor-d-0-1}
\xymatrix@C.8pc{
0 \ar[r] & \ov{{\wt{\sK}^M_{r,(X,D)}}/n} \ar[r] \ar[d] &
{\wt{\sK}^M_{r,X}}/n \ar[r] \ar[d]^-{\alpha_X} & 
{\wt{\sK}^M_{r,D}}/n \ar[r] \ar[d]^-{\alpha_D} & 0 \\
0 \ar[r] & \ov{{\wh{\sK}^M_{r,(X,D)}}/n} \ar[r] &
{\wh{\sK}^M_{r,X}}/n \ar[r] & {\wh{\sK}^M_{r,D}}/n \ar[r] & 0.}
\end{equation}

We now recall from \cite[Proposition~10(6)]{Kerz-10} that for a
local ring $A$ containing $k$, the map $\wh{K}^M_2(A) \to K_2(A)$
is an isomorphism. Using this, it follows from \cite[Corollary~3.3]{Kolster}
that $\wh{K}^M_2(A)$ is generated by the Milnor symbols $\{a, b\}$ subject
to bilinearity, Steinberg relation, and the relation $\{a, b\} = -\{b, a\}$.
Since $\Ker(\wt{K}^M_2(A) \surj \wh{K}^M_2(A))$ surjects onto
$\Ker({\wt{K}^M_2(A)}/n \surj {\wh{K}^M_2(A)}/n)$, it follows that
the latter is generated by the images of the symbols of the type
$\{a, b\} + \{b, a\}$. Since $A^{\times} \to (A/I)^{\times}$ is surjective for
any ideal $I \subset A$, we conclude that the map
\begin{equation}\label{eqn:Milnor-d-0-2}
\Ker({\wt{K}^M_2(A)}/n \surj {\wh{K}^M_2(A)}/n) \to
\Ker({\wt{K}^M_2(A/I)}/n \surj {\wh{K}^M_2(A/I)}/n)
\end{equation}
is surjective.
We have therefore shown that $\Ker(\alpha_X) \surj \Ker(\alpha_D)$ in
~\eqref{eqn:Milnor-d-0-1}. Subsequently, a diagram chase shows that
\begin{equation}\label{eqn:Milnor-d-0-3}
\ov{{\wt{\sK}^M_{r,(X,D)}}/n} \to \ov{{\wh{\sK}^M_{r,(X,D)}}/n} 
\end{equation}
is a surjective map of Nisnevich (or {\'e}tale) sheaves. 
We let $\sF$ denote the kernel of this map.
A diagram similar to ~\eqref{eqn:Milnor-top-coh-0}
(with $\sK^M_{r,X}$ replaced by $\wh{\sK}^M_{r,X}$) and
\lemref{lem:Milnor-Kerz} together show that 
$\sF_x = 0$ for every $x \in X_{(q)}$ with $q > 0$.
We now apply \lemref{lem:Sheaf-0} to finish the proof.
\end{proof}

Combining Lemmas~\ref{lem:Milnor-top-coh}, ~\ref{lem:Milnor-d} and
~\ref{lem:Milnor-d-0}, we get the following key result that we shall use.

\begin{prop}\label{prop:Milnor-iso}
Let $k$ be a field and $(X, D)$ a reduced modulus pair in $\Sch_k$. 
Assume that $X$ is of pure dimension $d \ge 2$. Then there are
natural isomorphisms
\[
H^d_\nis(X,  \sK^M_{r,(X,D)}) \xrightarrow{\cong} 
H^d_\nis(X,  \wh{\sK}^M_{r,(X,D)});
\]
\[
H^d_\nis(X,  {\sK^M_{r,(X,D)}}/n) \xrightarrow{\cong} 
H^d_\nis(X,  \ov{{\wh{\sK}^M_{r,(X,D)}}/n})
\]
for every integer $n \ge 1$. If $d = 1$, these isomorphisms hold if
either $k$ is infinite or $r \le 1$.
\end{prop}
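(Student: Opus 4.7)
The plan is to chain the three preceding lemmas to produce both isomorphisms. For the first one, \lemref{lem:Milnor-top-coh} applied with $n = 0$ (interpreting the mod-$n$ quotient as the sheaf itself) supplies an isomorphism $H^d_\nis(X, \wt{\sK}^M_{r,(X,D)}) \xrightarrow{\cong} H^d_\nis(X, \sK^M_{r,(X,D)})$. Since $d \ge 2$ and $\tau = \nis$, \lemref{lem:Milnor-d-0}(3) furnishes an isomorphism $H^d_\nis(X, \wt{\sK}^M_{r,(X,D)}) \xrightarrow{\cong} H^d_\nis(X, \wh{\sK}^M_{r,(X,D)})$. Composing these two identifications in the appropriate direction yields the first claimed isomorphism.

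For the second assertion, with $n \ge 1$, a parallel three-step chain works. First, \lemref{lem:Milnor-top-coh} identifies $H^d_\nis(X, \sK^M_{r,(X,D)}/n)$ with $H^d_\nis(X, \wt{\sK}^M_{r,(X,D)}/n)$. Second, \lemref{lem:Milnor-d} identifies the latter with $H^d_\nis(X, \ov{\wt{\sK}^M_{r,(X,D)}/n})$; this step uses only the dimension and nowhere-dense hypotheses, which hold here. Third, \lemref{lem:Milnor-d-0}(3) converts the naive object to its improved counterpart $H^d_\nis(X, \ov{\wh{\sK}^M_{r,(X,D)}/n})$, again using $d \ge 2$ and $\tau = \nis$. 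Composing these three gives the desired isomorphism.

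In the case $d = 1$, \lemref{lem:Milnor-top-coh} and \lemref{lem:Milnor-d} remain available with no change, so only the naive-to-improved comparison needs additional input. Here \lemref{lem:Milnor-d-0}(1) handles the subcase $k$ infinite — where in fact $\wt{\sK}^M_{r,(X,D)} \cong \wh{\sK}^M_{r,(X,D)}$ already at the sheaf level by \lemref{lem:Milnor-Kerz} — and \lemref{lem:Milnor-d-0}(2) handles $r \le 1$. Since the argument is a direct assembly of identifications already proved, there is no substantive obstacle; the real content — the dévissage reducing various relative Milnor $K$-sheaves to one another and controlling the discrepancies by sheaves supported in positive codimension — has been carried out in the three earlier lemmas.
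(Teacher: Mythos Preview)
Your proof is correct and matches the paper's approach exactly: the paper's proof consists of the single sentence ``Combine Lemmas~\ref{lem:Milnor-top-coh}, \ref{lem:Milnor-d} and \ref{lem:Milnor-d-0},'' and you have spelled out precisely how these three lemmas chain together via the zigzag through $\wt{\sK}^M$.
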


\vskip .3cm

\section{The idele class group of Kerz-Zhao}\label{sec:KZ}
Let $A$ be a local domain with fraction field $F$ and let $I = (f)$ be a 
principal ideal, where $f \in A$ is a non-zero element. 
Let $A_f$ denote the localization $A[f^{-1}]$ obtained by
inverting the powers of $f$ so that there are inclusions of rings $A \inj A_f \inj F$.
We let $\wh{K}^M_1(A|I) = K^M_1(A,I)$ and for $r \ge 2$, we let
$\wh{K}^M_r(A|I)$ denote the image of the canonical map
of abelian groups
\begin{equation}\label{eqn:RS-0}
K^M_1(A,I)  \otimes (A_f)^{\times} \otimes \cdots \otimes (A_f)^{\times} \to 
K^M_r(F),
\end{equation}
induced by the product in Milnor $K$-theory, where the tensor product is
taken $r$ times. 
These groups coincide with the relative Milnor $K$-groups of R\"ulling-Saito 
(see \cite[Definition~2.4 and Lemma~2.1]{Rulling-Saito}) when
$A$ is regular and 
$\Spec(A/f)_\red$ is a normal crossing divisor on $\Spec(A)$.
We shall denote the associated sheaf on an integral scheme $X$ with
an effective Cartier divisor $D$ by $\wh{\sK}^M_{r, X|D}$.

We let $E_r(A,I)$ be the image of the canonical map
${K}^M_r(A,I) \to K^M_r(F)$. This is same as the image of the map
$\wt{K}^M_r(A,I) \to K^M_r(F)$ by \lemref{lem:BT-KS}.
The following result will be important for us.
\begin{lem}\label{lem:RS-GK}
We have the following.
\begin{enumerate}
\item
The composite map 
\[
\wh{K}^M_r(A|I) \to K^M_r(F) \xrightarrow{\partial}
{\underset{{\rm ht} (\fp) = 1}\oplus} \ K^M_{r-1}(k(\fp))
\]
is zero. In particular, $\wh{K}^M_r(A|I) \subset \wh{K}^M_r(A)$ if 
$A$ is regular and equicharacteristic or a Henselian discrete valuation ring.
\item
There is an inclusion $E_r(A,I) \subseteq \wh{K}^M_r(A|I)$
which is an equality if $I$ is a prime ideal.
\end{enumerate}
\end{lem}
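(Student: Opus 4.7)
The plan is to work throughout with symbols in the ambient group $K^M_r(F)$ and exploit the explicit generating presentations of both subgroups.

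For part (1), I would reduce to verifying $\partial_\fp$-vanishing on each generating Milnor symbol $\{a, b_2, \ldots, b_r\}$ of $\wh{K}^M_r(A|I)$, where $a \in 1+I$ and $b_i \in A_f^\times$, by splitting on whether $f \in \fp$. If $f \notin \fp$, then $A_f \subseteq A_\fp$ and every entry is a $\fp$-adic unit; $\partial_\fp$ vanishes on any Milnor symbol of pure units. If $f \in \fp$, then $I \subseteq \fp$ and $a \in 1+\fp$ is a principal unit at $\fp$; the standard formula for $\partial_\fp$ combined with $\bar a = 1$ in $\kappa(\fp)$ forces $\partial_\fp$ to annihilate any Milnor symbol containing a principal-unit slot. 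For the ``in particular'' clause, I would cite Kerz's theorem on Gersten's conjecture for $\wh{K}^M$ \cite{Kerz-10} (and its direct DVR analogue), which identifies $\wh{K}^M_r(A) \hookrightarrow K^M_r(F)$ with $\bigcap_\fp \ker(\partial_\fp)$, so the tame-symbol vanishing forces $\wh{K}^M_r(A|I) \subseteq \wh{K}^M_r(A)$.

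For part (2), the containment $E_r(A,I) \subseteq \wh{K}^M_r(A|I)$ is immediate from \lemref{lem:BT-KS}(1): every generator of $K^M_r(A,I)$ is a Milnor symbol with all entries in $A^\times$ and at least one entry in $1+I$; antisymmetry in $K^M_r(F)$ moves the $1+I$-entry to the first slot and exhibits it as a generator of $\wh{K}^M_r(A|I)$ via $A^\times \subseteq A_f^\times$.

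For the reverse containment when $I=(f)$ is prime, I would first use primality of $(f)$ together with Krull's intersection theorem in the Noetherian local domain $A$ to establish the normal form $A_f^\times = A^\times \cdot \langle f \rangle$. Multilinear expansion of a generator $\{a, b_2, \ldots, b_r\}$ then yields a sum of symbols whose entries lie in $A^\times \cup \{f\}$, and the identity $\{f,f\} = \{f,-1\}$ reduces to symbols with at most one $f$-slot. Symbols with no $f$-slot lie in $E_r(A,I)$ by construction. For the remaining type $\{a, f, c_3, \ldots, c_r\}$ with $c_i \in A^\times$, I would write $a = 1 + f^n u$ with $n \ge 1$, $u \in A$ (via Krull's intersection) and argue by cases: if $n \ge 2$, multiplying by $b = 1+f$ gives $ab = 1 + fw$ with $w = 1 + f^{n-1}u + f^n u \in A^\times$, reducing to the $n=1$ case via $\{a,f,\ldots\} = \{ab,f,\ldots\} - \{b,f,\ldots\}$; if $n = 1$ with $u \in A^\times$, Steinberg applied to $\{a,-fu\} = 0$ directly yields $\{a,f\} = \{a,-u^{-1}\} \in E_r(A,I)$; if $n = 1$ with $u \in \fm$, the factorization $a = (1+f)(1+fv)$ with $v = (u-1)/(1+f) \in A^\times$ (both $u-1$ and $1+f$ being units) reduces to the previous subcase.

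The main obstacle lies in this final reverse inclusion of part (2): each factorization is elementary once identified, but the case analysis is delicate and genuinely uses primality of $I$ (to secure the normal form for $A_f^\times$) together with the Noetherian local structure of $A$; the naive application of Steinberg to $\{a,-f^n u\} = 0$ only yields $n \cdot \{a,f\} \in E_r(A,I)$, and one has to discover the specific ``multiplication by $1+f$'' and ``quadratic factorization'' tricks to extract $\{a,f\}$ itself.
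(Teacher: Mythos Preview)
Your argument is correct and shares the same overall architecture as the paper's proof, but is more explicit where the paper cites external results. For part~(1), the paper localizes at each height-one prime to reduce to the one-dimensional case, then invokes \cite[Proposition~2.8]{Rulling-Saito} and \cite[Proposition~2.7]{Dahlhausen} to conclude in the DVR setting; your direct verification via the tame-symbol formula on generators is more elementary, though you should note that when $A_\fp$ fails to be a DVR, Kato's boundary $\partial_\fp$ is defined through the normalization, so your principal-unit argument must be transported there (harmless, since an element reducing to $1$ in $\kappa(\fp)$ also reduces to $1$ in every residue field of the normalization lying over $\fp$). For the reverse inclusion in part~(2), both proofs factor $u_i = a_i f^{n_i}$ via primality of $(f)$ and Krull's intersection theorem; the paper then simply appeals to \cite[Lemma~1]{Kato-86} to dispose of the remaining symbols, whereas you carry out the Steinberg manipulations by hand. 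Your case analysis is sound, and your remark that the na{\"\i}ve Steinberg relation only yields $n\cdot\{a,f\}\in E_r(A,I)$ is precisely why one needs either Kato's lemma or your ``multiply by $1+f$'' and factorization tricks. The paper's route is terser; yours is self-contained.
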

\begin{proof}
We let $\fp \subset A$ be a prime ideal of height one such that $f \in \fp$.
To prove (1), it suffices to show that the composite
$\wh{K}^M_r({A_\fp}|{I_\fp}) \to K^M_r(F) \xrightarrow{\partial}
K^M_{r-1}(k(\fp))$
is zero, where $\partial$ is the boundary map defined in 
\cite[\S~1]{Kato-86}. We can therefore assume that $\dim(A) = 1$ and
$f \notin A^\times$. By the definition of $\partial$, we can assume that
$A$ is normal. But one easily checks in this case that
$\wh{K}^M_r(A|I) \subset \wh{K}^M_r(A)$ 
(see \cite[Proposition~2.8]{Rulling-Saito})
and $\partial(\wh{K}^M_r(A)) = 0$ (see \cite[Proposition~2.7]{Dahlhausen}).
The second part of (1) follows from \cite[Proposition~10]{Kerz-10} and
\cite[Theorem~5.1]{Luders}.

We now prove (2). One checks using \lemref{lem:BT-KS} that
$E_r(A,I) \subseteq \wh{K}^M_r(A|I)$. Suppose now that $I$ is prime 
and consider an element
$\alpha = \{1 + af, u_2, \ldots , u_{r}\}$, where
$a \in A$ and $u_i \in (A_f)^\times$ for $i \ge 2$. 
Since $I$ is prime, it is easy  to check that $u_i = a_if^{n_i}$ for
$a_i \in A^{\times}$ and $n_i \ge 0$. Using the bilinearity and the Steinberg
relations, it easily follows  from  \cite[Lemma~1]{Kato-86} that
$\alpha \in E_r(A,I)$. 
\end{proof}

Suppose that $k$ is any field and $X \in \Sch_k$ is regular.
Let $D \subset X$ be an effective Cartier divisor.
It follows from \lemref{lem:RS-GK} that there are
inclusions $\wh{\sK}^M_{r, X|nD} \subset \wh{\sK}^M_{r, X} \supset 
\wh{\sK}^M_{r, (X,nD)}$ for $n \ge 1$.
We let $\ov{{\wh{\sK}^M_{r,X|D}}/m} = 
{\wh{\sK}^M_{r, X|D}}/{(\wh{\sK}^M_{r, X|D} \cap m \wh{\sK}^M_{r,X})}$.

\begin{prop}\label{prop:Rel-RS}
Let $k$ be a field and $X \in \Sch_k$ a
regular scheme of dimension $d \ge 1$. 
Let $D \subset X$ be a simple normal crossing divisor.
Let $\tau \in \{\nis, \etl\}$ and $m \ge 1$.
Then the conclusions (1) $\sim$ (5) of \lemref{lem:Milnor-d-0}
hold for the maps 
\[
\{H^d_\tau(X, \wt{\sK}^M_{r,(X,nD)})\}_n \to 
\{H^d_\tau(X,  \wh{\sK}^M_{r,X|nD})\}_n;
\]
\[
\{H^d_\tau(X,  \ov{{\wt{\sK}^M_{r,(X,nD)}}/m})\}_n \to 
\{H^d_\tau(X,  \ov{{\wh{\sK}^M_{r,X|nD}}/m})\}_n
\]
of pro-abelian groups.
\end{prop}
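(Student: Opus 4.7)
The plan is to deduce Proposition~\ref{prop:Rel-RS} from \lemref{lem:Milnor-d-0} by establishing a sheaf-level (or pro-sheaf-level) comparison between the R\"ulling-Saito relative Milnor $K$-sheaf $\wh{\sK}^M_{r,X|nD}$ and the Kato-Saito variant $\wh{\sK}^M_{r,(X,nD)}$. The map of pro-sheaves in question factors as
\[
\wt{\sK}^M_{r,(X,nD)} \xrightarrow{\alpha_n} \wh{\sK}^M_{r,(X,nD)} \xleftarrow{\beta_n} \wh{\sK}^M_{r,X|nD},
\]
where $\alpha_n$ is the map studied in \lemref{lem:Milnor-d-0} and $\beta_n$ is the inclusion: any R\"ulling-Saito generator $\{1+af^n, u_2, \ldots, u_r\}$ with $u_i \in (A_f)^\times$ lies in $\wh{K}^M_r(A)$ by \lemref{lem:RS-GK}(1), and after decomposing $u_i = b_i f^{k_i}$ and expanding via bilinearity, its image in $\wh{K}^M_r(A/I^n)$ is seen to vanish (each summand has first slot $\equiv 1$ modulo $I^n$).

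The crux is to show that $\beta_n$ becomes an isomorphism on top cohomology in the pro-sense. I would first argue that at the stalk level one has $\wh{K}^M_r(A, I^n) = \wh{K}^M_r(A|I^n)$ as subgroups of $\wh{K}^M_r(A)$. Assuming an analog of \lemref{lem:BT-KS}(1) for the improved Milnor $K$-theory, the group $\wh{K}^M_r(A, I^n)$ is generated by symbols with one slot in $1 + I^n$ and the others in $A^\times$; graded commutativity of Milnor symbols then repositions this distinguished slot to the first position, exhibiting the symbol as an element of $\wh{K}^M_r(A|I^n)$ via $A^\times \subseteq (A_f)^\times$. The reverse inclusion was the content of the previous paragraph. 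This would give $\wh{\sK}^M_{r,X|nD} = \wh{\sK}^M_{r,(X,nD)}$ as Nisnevich (or \'etale) sheaves, and the mod-$m$ versions then agree after passing to images under $\wh{\sK}^M_{r,X}/m$. The conclusions (1)--(5) of \lemref{lem:Milnor-d-0} transfer verbatim to the R\"ulling-Saito pro-system.

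The main obstacle is justifying the stalk-level equality $\wh{K}^M_r(A, I^n) = \wh{K}^M_r(A|I^n)$ in the SNCD case. The analog of \lemref{lem:BT-KS}(1) for $\wh{K}^M_r$ is not immediate, since Kato-Saito's original argument hinges on the naive group $K^M_r$, and \lemref{lem:RS-GK}(2) gives only the inclusion $E_r(A, I) \subseteq \wh{K}^M_r(A|I)$, with equality only for \emph{prime} $I$. When $f = \prod t_i^{e_i}$ cuts out an SNCD, one must absorb the $(A_f)^\times \setminus A^\times$ contributions from R\"ulling-Saito symbols via Steinberg relations such as $\{1+af^n, -af^n\} = 0$, possibly at the cost of passing from $n$ to some $n' \ge n$; the resulting statement is only a pro-isomorphism, but this is enough for the proposition. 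Finally, as in the proof of \lemref{lem:Milnor-d-0}, one uses \lemref{lem:Sheaf-0} to discard any residual kernel supported in positive codimension when passing to top cohomology, together with the cohomological dimension bounds $cd_\nis(k) = 0$ and $cd_\et(k) = 1$ (for $k$ finite) which dictate the range of conditions (3)--(5).
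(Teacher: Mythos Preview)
Your overall strategy---establish a pro-isomorphism at the sheaf level and then rerun the proof of \lemref{lem:Milnor-d-0}---matches the paper's, but the detour through $\wh{\sK}^M_{r,(X,nD)}$ creates avoidable difficulties. First, your justification that $\beta_n$ is a level-wise inclusion fails in the SNCD case: the decomposition $u_i = b_i f^{k_i}$ with $b_i \in A^\times$ is only available when $(f)$ is prime. If $f = t_1 t_2$, then $t_1 \in (A_f)^\times$ is not of this form, and the restriction of $\{1+af^n, t_1\} \in \wh{K}^M_2(A)$ to $\wh{K}^M_2(A/I^n)$ cannot be read off symbolically since $t_1$ is a zero-divisor in $A/I^n$. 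Second, the reverse inclusion forces you to seek an analogue of \lemref{lem:BT-KS}(1) for the improved Milnor $K$-theory, which (as you note) is not readily available.

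The paper avoids both issues by never introducing $\wh{\sK}^M_{r,(X,nD)}$. Instead it works with the image sheaf $\sE_{r,(X,nD)} := {\rm Image}(\wt{\sK}^M_{r,(X,nD)} \to \wh{\sK}^M_{r,X})$ and its mod-$m$ variant. By \lemref{lem:RS-GK}(2) one has $\sE_{r,(X,nD)} \subseteq \wh{\sK}^M_{r,X|nD}$, and \cite[Proposition~2.8]{Rulling-Saito} supplies the reverse sandwich $\wh{\sK}^M_{r,X|(n+1)D} \subseteq \sE_{r,(X,nD)}$, so this inclusion is a pro-isomorphism (and likewise for the mod-$m$ version). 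The remainder is then literally the proof of \lemref{lem:Milnor-d-0} with $\wh{\sK}^M_{r,(X,nD)}$ replaced by $\sE_{r,(X,nD)}$: the surjection $\wt{\sK}^M_{r,(X,nD)} \twoheadrightarrow \sE_{r,(X,nD)}$ has kernel supported away from the generic point, so \lemref{lem:Sheaf-0} and the cohomological-dimension considerations apply verbatim. Your ``Steinberg relations'' paragraph is essentially rediscovering the content of \cite[Proposition~2.8]{Rulling-Saito}; once you cite that result and route through $\sE_{r,(X,nD)}$ rather than $\wh{\sK}^M_{r,(X,nD)}$, the proof collapses to a few lines.
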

\begin{proof}
We let $\sE_{r, (X,nD)} = {\rm Image}(\wt{\sK}^M_{r,(X,nD)} \to 
\wh{\sK}^M_{r,X})$ and $\ov{{\sE_{r, (X,nD)}}/m} = 
{\rm Image}(\wt{\sK}^M_{r,(X,nD)} \to {\wh{\sK}^M_{r,X}}/m)$.
It is easy to check using \cite[Proposition~2.8]{Rulling-Saito} that
the inclusions $\{\sE_{r, (X,nD)}\}_n \inj \{\wh{\sK}^M_{r,X|nD}\}_n$ and
$\{\ov{{\sE_{r, (X,nD)}}/m}\}_n \inj \{\ov{{\wh{\sK}^M_{r,X|nD}}/m}\}_n$
are pro-isomorphisms. The rest of the proof is now completely identical to that
of \lemref{lem:Milnor-d-0}.
\end{proof}

Let us now assume that $k$ is any field
and $X \in \Sch_k$ is an integral scheme of dimension $d \ge 1$. 
We shall endow $X$ with its canonical dimension function $d_X$ (see \cite{Kerz-11}).
Assume that $D \subset X$ is an effective Cartier divisor. 
Let $F$ denote the function field of $X$ and $U = X \setminus D$.
We refer the reader to \cite[\S~1]{Kato-Saito} or
\cite[\S~2]{Gupta-Krishna-REC} for the definition 
and all properties of Parshin chains and their Milnor $K$-groups that
we shall need. 

We let $\sP_{U/X}$ denote the set of Parshin chains on the pair $(U \subset X)$
and let $\sP^{\max}_{U/X}$ be the subset of $\sP_{U/X}$ consisting of
maximal Parshin chains. 
If $P$ is a Parshin chain on $X$  of dimension $d_X(P)$, then we shall consider 
$K^M_{d_X(P)}(k(P))$ as a topological abelian group with its canonical
topology if $P$ is maximal (see \cite[\S~2,5]{Gupta-Krishna-REC}). 
Otherwise, we shall consider
$K^M_{d_X(P)}(k(P))$ as a topological abelian group with its discrete topology.
If $P = (p_0, \ldots , p_d)$ is a maximal Parshin chain on $X$, we shall let $P' = 
(p_0, \ldots , p_{d-1})$.

Recall from \cite[Definition~3.1]{Gupta-Krishna-REC} that the idele
group of $(U \subset X)$ is defined as 
\begin{equation}\label{eqn:Idele-U-0}
I_{U/X} = {\underset{P \in \sP_{U/X}}\bigoplus} K^M_{d_X(P)}(k(P)).
\end{equation}
We consider $I_{U/X}$ as a topological group with its direct sum topology.
We let 
\begin{equation}\label{eqn:Idele-D}
I(X|D) = {\rm Coker}\left({\underset{P \in \sP^{\max}_{U/X}}\bigoplus}
\wh{K}^M_{d_X(P)}(\sO^h_{X, P'}|I_D) \to I_{U/X}\right),
\end{equation}
where $I_D$ is the extension of the ideal sheaf $\sI_D \subset \sO_X$
to $\sO^h_{X,P'}$ and the map on the right is induced by the composition
$\wh{K}^M_{d_X(P)}(\sO^h_{X, P'}|I_D) \inj K^M_{d_X(P)}(k(P)) \to I_{U/X}$ for
$P \in \sP^{\max}_{U/X}$.
We consider $I(X|D)$ a topological group with its quotient topology.
Recall from \cite[\S~3.1]{Gupta-Krishna-REC} that $I(X,D)$ is
defined analogous to $I(X|D)$, where we replace 
$\wh{K}^M_{d_X(P)}(\sO^h_{X, P'}|I_D)$ by ${K}^M_{d_X(P)}(\sO^h_{X, P'}, I_D)$.

We let $\sQ_{U/X}$ denote the set of all $Q$-chains on $(U \subset X)$.
Recall that the idele class group of the pair $(U \subset X)$
is the topological abelian group
\begin{equation}\label{eqn:IC-0}
C_{U/X} = {\rm Coker}\left({\underset{Q \in \sQ_{U/X}}\bigoplus}
K^M_{d_X(Q)}(k(Q)) \xrightarrow{\partial_{U/X}} I_{U/X}\right)
\end{equation}
with its quotient topology.
We let
\begin{equation}\label{eqn:IC-1}
C(X|D) = {\rm Coker}\left({\underset{Q \in \sQ_{U/X}}\bigoplus}
K^M_{d_X(Q)}(k(Q)) \to I(X|D)\right).
\end{equation}
This is a topological group with its quotient topology.
This idele class group was introduced by Kerz-Zhao in 
\cite[Definition~2.1.4]{Kerz-Zhau}. We let $C(X,D)$ be the cokernel of the
map ${\underset{Q \in \sQ_{U/X}}\oplus}
K^M_{d_X(Q)}(k(Q)) \to I(X,D)$.
One checks using part (2) of \lemref{lem:RS-GK} that there is a commutative diagram
\begin{equation}\label{eqn:IC-2}
\xymatrix@C.8pc{
I_{U/X} \ar[r] \ar[d] & I(X,D) \ar[r] \ar[d] & I(X|D) \ar[d] \\
C_{U/X} \ar[r] & C(X,D) \ar[r] & C(X|D).}
\end{equation}

If $\dim(X) = d$, we let $C_{KS}(X|D) = H^d_\nis(X, \wh{\sK}^M_{d, X|D})$
and $C^{\etl}_{KS}(X|D) = H^d_\etl(X, \wh{\sK}^M_{d, X|D})$.
We let $C_{KS}(X,D) = H^d_\nis(X, {\sK}^M_{d, (X,D)})$
and $C^{\etl}_{KS}(X,D) = H^d_\etl(X, {\sK}^M_{d, (X,D)})$.

\begin{lem}\label{lem:KS-GK-iso}
The canonical map $C_{KS}(X,D) \to C_{KS}(X|D)$ is surjective.
It is an isomorphism if $D \subset X$ is a reduced Cartier divisor. 
\end{lem}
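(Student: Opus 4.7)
The plan is to factor the canonical map
$\sK^M_{d,(X,D)} \to \wh{\sK}^M_{d,X|D}$ of Nisnevich sheaves through
its image subsheaf and then control the resulting kernel and cokernel
by the dimension of their supports. The map itself is obtained stalkwise:
writing $A = \sO^h_{X,x}$, $I = I_D A$, and $F$ for the function field of
$X$, the map $K^M_d(A, I) \to K^M_d(F)$ has image $E_d(A, I)$,
which \lemref{lem:RS-GK}(2) places inside $\wh{K}^M_d(A|I)$. Letting
$\sE \subseteq \wh{\sK}^M_{d,X|D}$ be the resulting image subsheaf, I obtain
two short exact sequences
\[
0 \to K \to \sK^M_{d,(X,D)} \to \sE \to 0,
\qquad 0 \to \sE \to \wh{\sK}^M_{d,X|D} \to Q \to 0,
\]
so the task reduces to bounding the supports of $K$ and $Q$.

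For $K$: by \lemref{lem:Milnor-Kerz} the surjection
$\sK^M_{d,X} \twoheadrightarrow \wh{\sK}^M_{d,X}$ is an isomorphism at
stalks with infinite residue field, and a snake-lemma comparison with
the analogous surjection for $D$ shows that $K$ coincides with the kernel of
$\sK^M_{d,(X,D)} \to \wh{\sK}^M_{d,(X,D)}$; in particular $K$ is
supported on the finite-residue-field locus of $X$, a set of dimension
$\le 0$. Hence $H^i_\nis(X, K) = 0$ for every $i \ge 1$. For $Q$: away
from $D$ the sheaves $\sK^M_{d,(X,D)}$ and $\wh{\sK}^M_{d,X|D}$ coincide
with $\sK^M_{d,X}$ and $\wh{\sK}^M_{d,X}$, and \lemref{lem:Milnor-Kerz}
again gives $Q = 0$ on $X \setminus D$. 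Since $\dim D \le d-1$, this forces
$H^d_\nis(X, Q) = 0$. Combined with $H^{d+1}_\nis(X, K) = 0$ (the Noetherian
Nisnevich dimension bound), the two long exact sequences yield the
surjectivity of $C_{KS}(X, D) \to C_{KS}(X|D)$.

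For the isomorphism when $D$ is reduced, \lemref{lem:RS-GK}(2) upgrades
the inclusion to an equality $E_d(A, I) = \wh{K}^M_d(A|I)$ at any stalk
where $I$ is prime. Reducedness of the Cartier divisor $D$ on the integral
scheme $X$ makes $I_D \sO^h_{X,x}$ prime at every point where $D$ is
locally irreducible, and the non-prime locus is contained in the
non-unibranch locus of $D$; at a generic point of any component of $D$
the henselized stalk $\sO^h_{X,\eta}/(f)$ is zero-dimensional and hence
local, so this locus sits inside the non-generic points of $D$ and has
dimension $\le d-2$. Therefore $Q$ is supported in dimension $\le d-2$,
which yields $H^{d-1}_\nis(X, Q) = H^d_\nis(X, Q) = 0$. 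Together with the
kernel vanishing, the long exact sequences now promote surjectivity to an
isomorphism
$H^d_\nis(X, \sK^M_{d,(X,D)}) \xrightarrow{\cong} H^d_\nis(X, \wh{\sK}^M_{d,X|D})$.
The main point requiring attention is the codimension estimate on the
non-prime locus of $I_D$; this is delicate only insofar as we are not
assuming regularity of $X$, but the generic-unibranch observation above
handles it directly.
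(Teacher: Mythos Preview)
Your overall approach---factor the sheaf map through its image and bound the supports of the resulting kernel $K$ and cokernel $Q$---is exactly the paper's. The cokernel analysis is fine: $Q$ is supported on $D$ (giving surjectivity), and when $D$ is reduced, \lemref{lem:RS-GK}(2) forces $Q_\lambda=0$ at each generic point $\lambda$ of $D$ since $I_D\sO^h_{X,\lambda}=\fm^h_\lambda$ is prime, so $Q$ is supported in dimension $\le d-2$.

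The kernel analysis, however, contains an error. You invoke a ``surjection $\sK^M_{d,X}\twoheadrightarrow\wh{\sK}^M_{d,X}$'' and then identify $K$ with $\ker(\sK^M_{d,(X,D)}\to\wh{\sK}^M_{d,(X,D)})$, concluding that $K$ is supported on the finite-residue-field locus. But there is no natural map $K^M_*(A)\to\wh{K}^M_*(A)$: by \lemref{lem:Milnor-Kerz} both receive maps from $\wt{K}^M_*(A)$, and that is all. Even if such a map existed, without regularity of $X$ one has no Gersten-type injection $\wh{K}^M_d(\sO^h_{X,x})\hookrightarrow K^M_d(F)$, so the snake-lemma identification and the dimension-$0$ support claim are both unjustified.

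Fortunately this is irrelevant to the conclusion. At the generic point $\eta$ of $X$ both sheaves are $K^M_d(F)$ and the map is the identity, so $K_\eta=0$; hence $K$ is supported in dimension $\le d-1$ and $H^d_\nis(X,K)=0$. That is all you need: combined with $H^{d-1}_\nis(X,Q)=H^d_\nis(X,Q)=0$ in the reduced case, the two long exact sequences give the isomorphism. This is precisely the paper's argument, which simply records that the kernel and cokernel vanish at the generic point (for surjectivity) and at all points of dimension $\ge d-1$ when $D$ is reduced (for the isomorphism), then appeals to a non-torsion variant of \lemref{lem:Sheaf-0}.
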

\begin{proof}
It is clear that the stalks of the kernel and
cokernel of the
map $\sK^M_{r,(X,D)} \to \wh{\sK}^M_{r,X|D}$ vanish at the generic point of $X$.
It follows moreover from \lemref{lem:RS-GK} that these stalks vanish at all 
points of $X$ with dimension $d-1$ or more if $D$ is reduced.
A variant of \lemref{lem:Sheaf-0} therefore implies the desired result.
\end{proof}

Let $\sZ_0(U)$ be the free abelian group on $U_{(0)}$.
It is clear from the definitions of $C_{KS}(X|D)$ and $C(X|D)$ that there
is a diagram
\begin{equation}\label{eqn:idele}
\xymatrix@C.8pc{
\sZ_0(U) \ar[dr] \ar[d] & \\
C(X|D) \ar@{.>}[r] & C_{KS}(X|D),}
\end{equation} 
where the left vertical arrow is induced by the inclusion of the Milnor
$K$-groups of the length zero Parshin chains
on $(U \subset X)$ into $I_{U/X}$, and the diagonal arrow is induced by
the coniveau spectral sequence for $C_{KS}(X|D)$.
The following is \cite[Theorem~3.1.1]{Kerz-Zhau} whose proof is
obtained by simply repeating the proof of \cite[Theorem~8.2]{Kerz-11}
mutatis mutandis (see \cite[Theorem~3.8]{Gupta-Krishna-REC}).

\begin{thm}\label{thm:KZ-main} 
Assume that $U$ is regular. Then one has an isomorphism  
\[
\phi_{X|D} \colon C(X|D) \xrightarrow{\cong} C_{KS}(X|D)
\]
such that ~\eqref{eqn:idele} is commutative.
\end{thm}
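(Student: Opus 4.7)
The plan is to adapt the argument of \cite[Theorem~8.2]{Kerz-11} (cf.\ also \cite[Theorem~3.8]{Gupta-Krishna-REC}) to the present relative setting, just as Kerz-Zhao do for Theorem~3.1.1 of \cite{Kerz-Zhau}. The strategy is to build $\phi_{X|D}$ from the coniveau (Cousin) spectral sequence for $\wh{\sK}^M_{d,X|D}$, then to match the resulting Parshin-chain presentation of $C_{KS}(X|D)$ with the definition of $C(X|D)$.

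First I would construct the map. For each Parshin chain $P$ on $(U \subset X)$, the Cousin resolution of $\wh{\sK}^M_{d,X|D}$ on $\Spec(\sO^h_{X,P'})$ furnishes a canonical homomorphism $K^M_{d_X(P)}(k(P)) \to C_{KS}(X|D)$, and assembling these gives a map $I_{U/X} \to C_{KS}(X|D)$. By \lemref{lem:RS-GK}(1), the subgroup $\wh{K}^M_{d_X(P)}(\sO^h_{X,P'}|I_D)$ has vanishing residues at every height-one prime of $\sO^h_{X,P'}$, so it lies in the kernel; the map therefore descends to $I(X|D)$. Commutativity of~\eqref{eqn:idele} is built into the construction, since the diagonal arrow $\sZ_0(U) \to C_{KS}(X|D)$ is induced by the length-zero Parshin chains.

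To descend further to the quotient $C(X|D)$, I would identify the $Q$-chain boundary $\partial_{U/X}$ with the differential of the Cousin complex computing $H^*_\nis(X, \wh{\sK}^M_{d,X|D})$. A $Q$-chain of dimension $d_X(Q) = d-1$ corresponds to an integral curve on $(U \subset X)$, and the boundary map on its Milnor $K$-group is precisely the residue-sum appearing as the Cousin differential; hence $Q$-chain relations become coboundaries and vanish in $C_{KS}(X|D)$. This produces $\phi_{X|D}$, and surjectivity follows at once since the top term of the Cousin complex is the direct sum of Milnor $K$-groups at maximal Parshin chains.

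The main obstacle, as in \cite{Kerz-11}, is injectivity: one must show that every coboundary in the Cousin complex of $\wh{\sK}^M_{d,X|D}$ is already a $Q$-chain relation in $C(X|D)$. The key input is the Gersten-type acyclicity of $\wh{\sK}^M_{d,X|D}$ on semi-local Henselian pieces of $X$. On $U$ this is Kerz's Gersten conjecture for the improved Milnor $K$-sheaf of a regular scheme (where the regularity hypothesis on $U$ is used), and along $D$ it is supplied by \lemref{lem:RS-GK} together with the inclusion $\wh{\sK}^M_{d,X|D} \subset \wh{\sK}^M_{d,X}$ with prescribed behaviour at codimension-one points of $D$. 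Once this resolution is in place and its global cochains are identified with the Parshin complex defining $C(X|D)$, the proof of \cite[Theorem~8.2]{Kerz-11} transports mutatis mutandis to close the argument.
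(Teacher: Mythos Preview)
Your proposal is correct and follows exactly the approach the paper indicates: the paper does not give an independent proof but simply cites this as \cite[Theorem~3.1.1]{Kerz-Zhau}, noting that the argument of \cite[Theorem~8.2]{Kerz-11} (see also \cite[Theorem~3.8]{Gupta-Krishna-REC}) carries over mutatis mutandis. Your sketch of constructing $\phi_{X|D}$ via the Cousin resolution and establishing injectivity through Gersten-type acyclicity is precisely the content of that transported argument.
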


\subsection{Degree map for $C_{KS}(X|D)$}\label{sec:Degree}
Let $k$ be any field and $X \in \Sch_k$ an integral scheme of dimension $d \ge 1$. 
Assume that $D \subset X$ is an effective Cartier divisor.
Let $\CH^F_r(X)$ denote the Chow group of $r$-dimensional cycles
on $X$ as in \cite[Chapter~1]{Fulton}.
It follows from \lemref{lem:RS-GK} and \cite[Proposition~1]{Kato-86} that
\begin{equation}\label{eqn:KS-Chow-0}
\wh{\sK}^M_{r,X|D} \to {\underset{x \in X^{(0)}}\oplus} (\iota_x)_*(K^M_{r}(k(x)))
\to {\underset{x \in X^{(1)}}\oplus} (\iota_x)_*(K^M_{r-1}(k(x))) \to
\cdots 
\hspace*{2cm}
\end{equation}
\[
\hspace*{3.8cm} \cdots
\to {\underset{x \in X^{(d-1)}}\oplus} (\iota_x)_*(K^M_{r-d+1}(k(x))) 
\xrightarrow{\partial}
{\underset{x \in X^{(d)}}\oplus} (\iota_x)_*(K^M_{r-d}(k(x))) \to 0
\]
is a complex of Nisnevich sheaves on $X$.
An elementary cohomological argument (see \cite[\S~2.1]{Gupta-Krishna-BF})
shows that by taking the cohomology of 
the sheaves in this complex, one gets a canonical  
homomorphism 
\begin{equation} \label{eqn:RS-Chow*-1}
\nu_{X|D} \colon C_{KS}(X|D) \to \CH^{F}_0(X). 
\end{equation}
If $X$ is projective over $k$, there is a degree map 
$\deg \colon \CH^{F}_0(X) \to \Z$. We let $\deg \colon C_{KS}(X|D) \to \Z$
be the composition $C_{KS}(X|D) \xrightarrow{\nu_{X|D}} \CH^{F}_0(X) 
\xrightarrow{\deg} \Z$. We let
$C_{KS}(X|D)_0 = \Ker(\deg)$.
We let $C_{KS}(X,D)_0$ be the kernel of the composite map
$C_{KS}(X,D) \surj C_{KS}(X|D) \xrightarrow{\deg} \Z$.

\section{Comparison theorem for cohomology of 
$K$-sheaves}\label{sec:Comp**}
Let $k$ be a perfect field of characteristic $p > 0$
and $X \in \Sch_k$ an integral regular scheme of dimension $d$.
Let $D \subset X$ be an effective Cartier divisor.
Let $U = X \setminus D$ and $F = k(X)$.
Let $m, r \ge 1$ be integers. 
A combination of  \cite[Theorem~1.1.5]{JSZ} and
\cite[Theorems~2.2.2, 3.3.1]{Kerz-Zhau} implies the following.

\begin{thm}\label{thm:RS-comp}
Assume that $k$ is finite and $D_\red$ is a simple normal crossing 
divisor. Then the canonical map
\[
H^d_\nis(X, \ov{{\wh{\sK}^M_{d, X|D}}/{p^m}}) \to 
H^d_\etl(X, \ov{{\wh{\sK}^M_{d, X|D}}/{p^m}})
\]
is an isomorphism. 
\end{thm}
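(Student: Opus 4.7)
The plan is to identify the sheaf $\ov{\wh{\sK}^M_{d,X|D}/{p^m}}$ with a logarithmic de Rham-Witt sheaf of Jannsen-Saito-Zhao type and then invoke their Nisnevich-to-\'etale comparison. Since $X$ is regular over a perfect field it is smooth, and I would first use the Bloch-Kato-Gabber theorem together with \cite[Proposition~10]{Kerz-10} to obtain a canonical isomorphism $\wh{\sK}^M_{d,X}/{p^m} \xrightarrow{\cong} W_m\Omega^d_{X,\log}$ on both the Nisnevich and the \'etale sites of $X$. Under this identification, $\ov{\wh{\sK}^M_{d,X|D}/{p^m}}$ becomes a well-defined subsheaf of $W_m\Omega^d_{X,\log}$, and the natural map in the theorem gets translated into a comparison of cohomology of this subsheaf.

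Next, I would appeal to Theorem~2.2.2 of Kerz-Zhao, which (under the SNCD hypothesis on $D_\red$) identifies this subsheaf with the relative logarithmic Hodge-Witt sheaf $W_m\Omega^d_{X|D,\log}$ constructed by Jannsen-Saito-Zhao using the log structure attached to $D$. This identification is functorial and compatible with the change-of-topology map $\epsilon^*$, so the Nisnevich-to-\'etale comparison for $\ov{\wh{\sK}^M_{d,X|D}/{p^m}}$ reduces to the corresponding comparison for $W_m\Omega^d_{X|D,\log}$.

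Finally, the desired isomorphism is then exactly the content of Theorem~1.1.5 of Jannsen-Saito-Zhao, which asserts for $k$ finite, $X$ smooth and $D_\red$ a simple normal crossing divisor that
\[
H^d_\nis(X, W_m\Omega^d_{X|D,\log}) \xrightarrow{\cong} H^d_\etl(X, W_m\Omega^d_{X|D,\log}).
\]
Chaining the three identifications produces the claimed isomorphism.

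The main obstacle will be the sheaf-level comparison in the middle step: the Kerz-Zhao sheaf $\wh{\sK}^M_{d,X|D}$ is defined as the image of a product map out of the function field, whereas $W_m\Omega^d_{X|D,\log}$ of JSZ is defined via logarithmic differentials relative to a log structure. Verifying that these two presentations match after passing to the quotient $\ov{\cdot/p^m}$ is the delicate part; fortunately this is precisely what Theorem~2.2.2 of Kerz-Zhao provides, and the SNCD hypothesis on $D_\red$ is exactly what is needed for that comparison to hold.
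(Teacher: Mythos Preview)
Your approach is essentially the same as the paper's: the paper simply states that the theorem follows from \cite[Theorem~1.1.5]{JSZ} together with \cite[Theorems~2.2.2, 3.3.1]{Kerz-Zhau}, which is exactly the sheaf identification plus cohomology comparison you outline. One small point to double-check: the paper's citation pattern (see also the proof of \lemref{lem:Comparison-1}) suggests that the Nisnevich-to-\'etale cohomology isomorphism in degree $d$ is actually supplied by \cite[Theorem~3.3.1]{Kerz-Zhau}, with \cite[Theorem~1.1.5]{JSZ} entering as a sheaf-level input to the identification step; you should verify which reference carries which statement so the attribution in your Step~3 is accurate.
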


\begin{cor}\label{cor:RS-K-comp}
Under the assumptions of \thmref{thm:RS-comp}, the canonical map
\[
\{H^d_\nis(X, \ov{{\wh{\sK}^M_{d, (X,nD)}}/{p^m}})\}_n \to
\{H^d_\etl(X, \ov{{\wh{\sK}^M_{d, (X,nD)}}/{p^m}})\}_n 
\]
is an isomorphism if either $k \neq \F_2$ or $d \neq 2$.
\end{cor}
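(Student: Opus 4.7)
The plan is to deduce the corollary from Theorem~\ref{thm:RS-comp} by comparing the pro-system of sheaves $\{\ov{{\wh{\sK}^M_{d,(X,nD)}}/{p^m}}\}_n$ with the pro-system $\{\ov{{\wh{\sK}^M_{d, X|nD}}/{p^m}}\}_n$, using the intermediate pro-system $\{\ov{{\wt{\sK}^M_{d,(X,nD)}}/{p^m}}\}_n$ as a bridge. The natural maps $\wt{\sK}^M_{d,(X,nD)} \to \wh{\sK}^M_{d,(X,nD)}$ and $\wt{\sK}^M_{d,(X,nD)} \to \wh{\sK}^M_{d, X|nD}$ form a zigzag connecting the three sheaves, and the goal is to show that both of these maps induce pro-isomorphisms on $H^d_\tau$ for $\tau \in \{\nis, \etl\}$ under the hypotheses of the corollary.

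First I would observe that $(nD)_\red = D_\red$ is a simple normal crossing divisor for every $n \ge 1$, so Theorem~\ref{thm:RS-comp} applies to the modulus pair $(X, nD)$ at every level. This yields, for each $n$, an isomorphism
\[
H^d_\nis(X, \ov{{\wh{\sK}^M_{d, X|nD}}/{p^m}}) \xrightarrow{\cong} H^d_\etl(X, \ov{{\wh{\sK}^M_{d, X|nD}}/{p^m}}),
\]
and hence a pro-isomorphism of the corresponding pro-systems in $n$ on the R\"ulling--Saito side.

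Next I would apply Lemma~\ref{lem:Milnor-d-0} to the modulus pair $(X, nD)$ for the comparison $\wt{\sK}^M_{d,(X,nD)} \to \wh{\sK}^M_{d,(X,nD)}$: part~(3) (resp.~part~(2)) handles the Nisnevich case when $d \ge 2$ (resp.~when $d = 1$), while parts~(4), (5), and (2) together handle the {\'e}tale case when $d \ge 3$, when $d = 2$ and $k \ne \F_2$ (taking $r = d = 2 \le 2$), and when $d = 1$, respectively. Passing to pro-systems, this yields a pro-isomorphism $\{H^d_\tau(X, \ov{{\wt{\sK}^M_{d,(X,nD)}}/{p^m}})\}_n \to \{H^d_\tau(X, \ov{{\wh{\sK}^M_{d,(X,nD)}}/{p^m}})\}_n$ for both topologies. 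The parallel pro-isomorphism $\{H^d_\tau(X, \ov{{\wt{\sK}^M_{d,(X,nD)}}/{p^m}})\}_n \to \{H^d_\tau(X, \ov{{\wh{\sK}^M_{d, X|nD}}/{p^m}})\}_n$ is then supplied directly by Proposition~\ref{prop:Rel-RS} with $r = d$, under exactly the same case analysis.

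Assembling these three pro-isomorphisms into a commutative diagram whose rows are indexed by $\tau \in \{\nis, \etl\}$ and whose vertical arrows are the Nisnevich-to-{\'e}tale comparison maps, a diagram chase transports the Nisnevich-to-{\'e}tale pro-isomorphism from the R\"ulling--Saito side to the Kato--Saito side, producing the desired pro-isomorphism for $\wh{\sK}^M_{d,(X,nD)}$. The only point requiring care is the bookkeeping that the dimensional and finiteness hypotheses of Lemma~\ref{lem:Milnor-d-0} and Proposition~\ref{prop:Rel-RS} align with the corollary's precise condition; this reduces to the observation that the {\'e}tale comparison --- as opposed to the essentially automatic Nisnevich one --- is the restrictive case and forces the exclusion $\{d \ne 2\text{ or }k \ne \F_2\}$.
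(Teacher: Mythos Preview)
Your proposal is correct and follows essentially the same approach as the paper: the paper's proof simply reads ``Combine Lemma~\ref{lem:Milnor-d-0}, Proposition~\ref{prop:Rel-RS} and Theorem~\ref{thm:RS-comp},'' and you have spelled out precisely how these three ingredients fit together via the zigzag through $\wt{\sK}^M$. Your case analysis matching the dimensional hypotheses of Lemma~\ref{lem:Milnor-d-0} and Proposition~\ref{prop:Rel-RS} to the condition $\{d \ne 2 \text{ or } k \ne \F_2\}$ is exactly the bookkeeping the paper leaves implicit.
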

\begin{proof}
Combine Lemma~\ref{lem:Milnor-d-0}, \propref{prop:Rel-RS} and
\thmref{thm:RS-comp}.
\end{proof}

Our goal in this section is to prove an analogue of 
\thmref{thm:RS-comp} for the
cohomology of $\sK^M_{d, (X,D)}$ when $D_\red$ is regular. This will be used in the proof of
\thmref{thm:Main-2}.

\subsection{Filtrations of Milnor $K$-theory}
\label{sec:Filt*}
Let $A$ be a regular local ring essentially of finite type over $k$
and $I = (f)$ a principal ideal, where $f \in A$ is a non-zero 
element such that $R = {A}/{I}$ is regular.
Let $A_f$ denote the localization $A[f^{-1}]$ obtained by
inverting the powers of $f$. Let $F$ be the quotient field of
$A$ so that there are inclusions of rings $A \inj A_f \inj F$.
For an integer $n \ge 1$, let $I^n = (f^n)$. 

We let $E_r(A,I^n)$ be the image of the canonical map
$\wt{K}^M_r(A,I^n) \to \wh{K}^M_r(A) \subset K^M_r(F)$.
One easily checks that 
there is a commutative diagram (see ~\eqref{eqn:Milnor-mod-n})
\begin{equation}\label{eqn:Rel-filtn-0}
\xymatrix@C.8pc{
\wh{K}^M_r(A,I^n) \ar@{->>}[r] 
& \ov{{\wh{K}^M_r(A,I^n)}/{p^m}} \ar[rr]^-{\cong} & &
\frac{\wh{K}^M_r(A,I^n) +  p^m\wh{K}^M_r(A)}{p^m\wh{K}^M_r(A)} 
\ar@{^{(}->}[dd] & \\
\wt{K}^M_r(A,I^n) \ar@{->>}[r] \ar[u] \ar[d] & 
\ov{{\wt{K}^M_r(A,I^n)}/{p^m}} \ar@{->>}[r] \ar[u] \ar[d] & 
\frac{E_r(A,I^n) +  p^m\wh{K}^M_r(A)}{p^m\wh{K}^M_r(A)} \ar@{^{(}->}[ur]
\ar@{^{(}->}[d]  & \\
\wh{K}^M_r(A|I^n) \ar@{->>}[r] & \ov{{\wh{K}^M_r(A|I^n)}/{p^m}} 
\ar[r]^-{\cong} & 
\frac{\wh{K}^M_r(A|I^n) + p^m\wh{K}^M_r(A)}{p^m\wh{K}^M_r(A)} \ar@{^{(}->}[r] & 
\frac{\wh{K}^M_r(A)}{p^m\wh{K}^M_r(A)},}
\end{equation}
where the existence of the left-most bottom vertical arrow and
the right-most bottom horizontal inclusion follows from 
\cite[Proposition~2.8]{Rulling-Saito} (see also 
\cite[Lemma~3.8]{Gupta-Krishna-p}).

We consider the filtrations $F^\bullet_{m,r}(A)$ and
$G^\bullet_{m,r}(A)$ of ${\wh{K}^M_r(A)}/{p^m}$ by letting
$F^n_{m,r}(A) =  \frac{E_r(A,I^n) +  p^m\wh{K}^M_r(A)}{ p^m\wh{K}^M_r(A)}$ and
$G^n_{m,r}(A) = \frac{\wh{K}^M_r(A|I^n) + p^m\wh{K}^M_r(A)}{p^m\wh{K}^M_r(A)}$.
We then have a commutative diagram of filtrations 
\begin{equation}\label{eqn:Rel-filtn-1}
\xymatrix@C.8pc{
E_r(A, I^\bullet) \ar@{^{(}->}[r] \ar@{->>}[d] &
\wh{K}^M_r(A|I^\bullet) \ar@{^{(}->}[r] \ar@{->>}[d] & 
\wh{K}^M_r(A) \ar@{->>}[d] \\
F^\bullet_{m,r}(A) \ar@{^{(}->}[r] & G^\bullet_{m,r}(A) \ar@{^{(}->}[r] & 
{\wh{K}^M_r(A)}/{p^m}.}
\end{equation}

\begin{lem}\label{lem:Rel-filtn-2}
One has $\wh{K}^M_r(A|I^{n+1}) \subseteq E_r(A, I^n)$ and
$G^{n+1}_{m,r}(A) \subseteq F^{n}_{m,r}(A)$. Furthermore,
$E_r(A, I) = \wh{K}^M_r(A|I)$ and $F^{1}_{m,r}(A) = G^{1}_{m,r}(A)$.
\end{lem}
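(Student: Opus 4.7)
The plan is first to dispatch the easy claims and then to focus on the main inclusion. Since $R = A/I$ is regular, it is in particular an integral domain, so $I = (f)$ is a prime ideal of $A$; \lemref{lem:RS-GK}(2) therefore yields the equality $E_r(A, I) = \wh{K}^M_r(A|I)$. Taking images modulo $p^m\wh{K}^M_r(A)$ in \eqref{eqn:Rel-filtn-0} immediately gives $F^1_{m,r}(A) = G^1_{m,r}(A)$. By the same reasoning, the inclusion $G^{n+1}_{m,r}(A) \subseteq F^n_{m,r}(A)$ follows at once from the main claim $\wh{K}^M_r(A|I^{n+1}) \subseteq E_r(A, I^n)$ by passing to images in ${\wh{K}^M_r(A)}/{p^m}$. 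So the content of the lemma lies in this last inclusion.

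For this, I would verify the inclusion on generators. A typical generator of $\wh{K}^M_r(A|I^{n+1})$ has the form $\alpha = \{1+af^{n+1}, u_2, \ldots, u_r\}$ with $a \in A$ and $u_i \in A_f^\times$. Since $A$ is a regular local ring (hence a UFD) and $f$ is a prime element (because $A/(f) = R$ is an integral domain), each $u_i$ factors as $u_i = a_i f^{n_i}$ with $a_i \in A^\times$ and $n_i \in \Z$. Expanding $\alpha$ by bilinearity and collapsing repeated occurrences of $f$ via the identity $\{f, f\} = \{f, -1\}$, I would reduce the problem to two types of symbols: (i) those of the form $\{1+af^{n+1}, a_2, \ldots, a_r\}$ with all $a_i \in A^\times$, which lie in $K^M_r(A, I^{n+1}) \subseteq E_r(A, I^{n+1}) \subseteq E_r(A, I^n)$; and (ii) those of the form $\{1+af^{n+1}, f, a_3, \ldots, a_r\}$ with $a_i \in A^\times$. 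Since $E_\bullet(A, I^n)$ is a $K^M_\bullet(A)$-submodule of $K^M_\bullet(F)$, case (ii) reduces to the $r = 2$ statement $\{1+af^{n+1}, f\} \in E_2(A, I^n)$.

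For the $r = 2$ step, I would rewrite $af^{n+1} = (af)f^n$ and apply the Steinberg relation to $\{1+(af)f^n,\, -(af)f^n\} = 0$; bilinear expansion yields the identity $(n+1)\{1+af^{n+1}, f\} = -\{1+af^{n+1}, -a\}$ in $K^M_2(F)$. When $a$ is a unit of $A$, the right-hand side already lies in $K^M_2(A, I^{n+1}) \subseteq E_2(A, I^n)$. When $a$ is not a unit, one would factor $a = u g_1^{e_1} \cdots g_s^{e_s}$ in the UFD $A$, treat the factor corresponding to $g_i = f$ by descending induction on $n$ (rewriting $1+af^{n+1}$ as $1+a'f^{n+1+e_i}$), and sweep away each remaining prime factor $g_j$ coprime to $f$ by the analogous Steinberg manipulation $\{1+af^{n+1},\, (-af^{n+1}/g_j)\cdot g_j\} = 0$. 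The main obstacle is then to pass from the statement that $(n+1)$ times the symbol lies in $E_2$ to the assertion that the symbol itself lies in $E_2(A, I^n)$; this is exactly where \cite[Lemma~1]{Kato-86} enters the argument, in the same spirit as in the proof of \lemref{lem:RS-GK}(2). That lemma furnishes an explicit rewriting of $\{1+af^{n+1}, f\}$ as a $\Z$-linear combination of symbols $\{1+bf^n, c\}$ with $b \in A$ and $c \in A^\times$, the drop of the filtration index from $n+1$ to $n$ accounting for one factor of $f$ being absorbed into the first slot.
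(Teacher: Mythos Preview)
The paper's own proof is a one-line citation: ``This is immediate from \cite[Proposition~2.8]{Rulling-Saito}.'' That proposition (in the setting where $A$ is regular local and $\Spec(A/f)_\red$ is a normal crossing divisor) gives precisely the sandwich $\wh{K}^M_r(A|I^{n+1}) \subseteq E_r(A,I^n) \subseteq \wh{K}^M_r(A|I^n)$, from which all four assertions are immediate.

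Your approach instead unpacks the argument. The reductions are correct: since $R=A/I$ is regular it is a domain, so $I$ is prime and \lemref{lem:RS-GK}(2) gives $E_r(A,I)=\wh{K}^M_r(A|I)$; the mod-$p^m$ statements follow from the integral ones; and for the main inclusion you correctly use that $A$ is a UFD with $f$ prime to reduce to the $r=2$ symbol $\{1+af^{n+1},f\}$.

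The gap is in the $r=2$ step. Your Steinberg computation yields only $(n+1)\{1+af^{n+1},f\}=-\{1+af^{n+1},-a\}$, and you never close the distance between ``$(n+1)$ times the symbol lies in $E_2$'' and ``the symbol itself lies in $E_2(A,I^n)$''. The paragraph about factoring $a$ and ``sweeping away'' prime factors $g_j$ via $\{1+af^{n+1},(-af^{n+1}/g_j)\cdot g_j\}=0$ reintroduces $(n+1)\{1+af^{n+1},f\}$ at each step rather than eliminating it, so the induction does not terminate in the form stated. You then defer the whole issue to \cite[Lemma~1]{Kato-86} without stating the identity or checking it applies; this is exactly the nontrivial content that \cite[Proposition~2.8]{Rulling-Saito} packages. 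If you want a self-contained argument, write out the explicit Kato-type identity that expresses $\{1+af^{n+1},f\}$ as a combination of symbols $\{1+bf^n,c\}$ with $c\in A^\times$ and verify it; otherwise, citing \cite[Proposition~2.8]{Rulling-Saito} directly, as the paper does, is both shorter and honest about where the work is done.
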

\begin{proof}
This is immediate from \cite[Proposition~2.8]{Rulling-Saito}.
\end{proof}

We let $\ov{\wt{K}^M_r(A_f)}$ be the image of the canonical map
$\wt{K}^M_r(A_f) \to K^M_r(F)$. We let
$H^n_{m,r}(A) = \frac{\wh{K}^M_r(A|I^n) + 
p^m\ov{\wt{K}^M_r(A_f)}}{p^m\ov{\wt{K}^M_r(A_f)}}$.
 Note that 
the canonical map $\wt{K}^M_r(A) \surj \wh{K}^M_r(A)$ is surjective.  
We therefore have a surjective map $G^n_{m,r}(A) \surj H^n_{m,r}(A)$.

\begin{lem}\label{lem:BK-RS}
The canonical map 
\[
\frac{G^n_{m,r}(A)}{G^{n+1}_{m,r}(A)} \to \frac{H^n_{m,r}(A)}{H^{n+1}_{m,r}(A)}
\]
is an isomorphism.
\end{lem}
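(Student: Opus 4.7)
The surjection $G^n_{m,r}(A)/G^{n+1}_{m,r}(A) \twoheadrightarrow H^n_{m,r}(A)/H^{n+1}_{m,r}(A)$ is tautological, since both sides are quotients of $\wh K^M_r(A|I^n)/\wh K^M_r(A|I^{n+1})$ and the denominator defining the $H$-quotient contains the one defining the $G$-quotient. The content of the lemma is therefore the inclusion
\[
\wh K^M_r(A|I^n)\,\cap\, p^m\ov{\wt K^M_r(A_f)} \ \subseteq\ \wh K^M_r(A|I^{n+1}) + p^m\wh K^M_r(A).
\]

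Since $R = A/(f)$ is regular, $(f)$ is a height-one prime in the UFD $A$, so $A_f^\times = A^\times \oplus \Z\cdot f$. Multilinearity of Milnor symbols, together with $\{f,-f\}=0$, yields the identity of subgroups of $K^M_r(F)$
\[
\ov{\wt K^M_r(A_f)} = \wh K^M_r(A) + \{\wh K^M_{r-1}(A),\,f\}.
\]
Any $\gamma \in \ov{\wt K^M_r(A_f)}$ can hence be written as $\gamma = \gamma_0 + \{\xi,f\}$ with $\gamma_0 \in \wh K^M_r(A)$ and $\xi \in \wh K^M_{r-1}(A)$. For such $\gamma$ with $p^m\gamma \in \wh K^M_r(A|I^n)$, the term $p^m\gamma_0$ is already in $p^m\wh K^M_r(A)$, so the task reduces to placing $p^m\{\xi,f\} = \{p^m\xi,f\}$ into $\wh K^M_r(A|I^{n+1}) + p^m\wh K^M_r(A)$.

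To constrain $\xi$, apply the tame symbol $\partial_f\colon K^M_r(F) \to K^M_{r-1}(k(f))$, which annihilates $\wh K^M_r(A)$ by \lemref{lem:RS-GK}(1). Since $p^m\gamma \in \wh K^M_r(A)$, one obtains $p^m\bar\xi = 0$ in $K^M_{r-1}(\mathrm{Frac}(R))$, where $\bar\xi$ denotes the residue image of $\xi$. Izhboldin's theorem on the $p$-torsion-freeness of Milnor $K$-theory of characteristic-$p$ fields forces $\bar\xi = 0$, and Gersten injectivity $\wh K^M_{r-1}(R) \hookrightarrow K^M_{r-1}(\mathrm{Frac}(R))$ for the equicharacteristic regular local ring $R$ then places $\xi \in \wh K^M_{r-1}(A,I)$. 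By \lemref{lem:BT-KS} and the construction of $\wh K^M_r(A|I)$ in \lemref{lem:RS-GK}(2), this yields $\{\xi,f\} \in \wh K^M_r(A|I)$. Finally, the characteristic-$p$ Frobenius identity $(1+I^k)^p \subseteq 1+I^{pk}$ translates to $p\cdot \wh K^M_r(A|I^k) \subseteq \wh K^M_r(A|I^{pk})$, so iterating gives $p^m\{\xi,f\} \in \wh K^M_r(A|I^{p^m})$.

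When $n+1 \le p^m$ the inclusion $\wh K^M_r(A|I^{p^m}) \subseteq \wh K^M_r(A|I^{n+1})$ completes the proof. For $n+1 > p^m$ one iterates the procedure: the stronger constraint $p^m\gamma \in \wh K^M_r(A|I^n)$ permits one to successively modify $\gamma_0$ and force $\xi$ into deeper pieces $\wh K^M_{r-1}(A,I^k)$ of the modulus filtration, until $k \ge \lceil(n+1)/p^m\rceil$, at which point the $p^m$-th-power estimate closes the argument. The principal obstacle is this inductive refinement, which requires propagating the $p$-torsion-freeness input through the graded pieces of the modulus filtration on $\wh K^M_{r-1}(A)$; this is in turn handled by the description of those graded pieces in terms of differentials on $R$ due to R\"ulling--Saito.
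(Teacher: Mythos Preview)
Your core argument is correct and closely related to the paper's, but you have over-complicated the conclusion and left the final ``iteration'' step both unnecessary and unjustified.

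The paper observes that, independently of $n$, injectivity reduces to showing that multiplication by $p^m$ is injective on the quotient $\ov{\wt K^M_r(A_f)}/\wh K^M_r(A)$. By the snake lemma (using $p$-torsion-freeness of $\ov{\wt K^M_r(A_f)}$), this is equivalent to injectivity of $\wh K^M_r(A)/p^m \to K^M_r(F)/p^m$, which follows from Gersten for $\wh K^M_r$ together with the Geisser--Levine/Izhboldin $p$-torsion-freeness.

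You prove essentially the same thing by a more explicit route: your tame-symbol computation shows that if $\gamma\in\ov{\wt K^M_r(A_f)}$ satisfies $p^m\gamma\in\wh K^M_r(A)$ (you only use $\partial_f(p^m\gamma)=0$), then $\bar\xi=0$, hence $\xi\in\wh K^M_{r-1}(A,I)$, hence $\{\xi,f\}\in\wh K^M_r(A|I)\subseteq\wh K^M_r(A)$. But this already gives $\gamma=\gamma_0+\{\xi,f\}\in\wh K^M_r(A)$, so $p^m\gamma\in p^m\wh K^M_r(A)$ and you are done for \emph{every} $n$. There is no need to push $p^m\{\xi,f\}$ into $\wh K^M_r(A|I^{n+1})$, nor to invoke the Frobenius estimate $p\cdot\wh K^M_r(A|I^k)\subseteq\wh K^M_r(A|I^{pk})$, nor to iterate anything for $n+1>p^m$. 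That last paragraph should be deleted; as written it is a sketch of an argument you do not need and have not actually carried out.

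In short: your decomposition $\ov{\wt K^M_r(A_f)}=\wh K^M_r(A)+\{\wh K^M_{r-1}(A),f\}$ together with the tame-symbol/Izhboldin/Gersten step is a valid (and somewhat more hands-on) proof of the same injectivity the paper uses; you just failed to notice that it finishes the job immediately.
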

\begin{proof}
The map in question is clearly surjective since its both sides are quotients
of $\wh{K}^M_r(A|I^n)$. So we need to prove only the injectivity.
Using \lemref{lem:Rel-filtn-2} and an easy reduction, one can see that
it suffices to show that the map
\[
\frac{\ov{\wt{K}^M_r(A_f)}}{\wh{K}^M_r(A)} \xrightarrow{p^m}
\frac{\ov{\wt{K}^M_r(A_f)}}{\wh{K}^M_r(A)}
\]
is injective.

Since $\ov{\wt{K}^M_r(A_f)}$ has no $p$-torsion, a snake lemma argument
shows that the previous injectivity is equivalent to the injectivity of the
map ${\wh{K}^M_r(A)}/{p^m} \to {\ov{\wt{K}^M_r(A_f)}}/{p^m}$.
We now look at the composition 
${\wh{K}^M_r(A)}/{p^m} \to {\ov{\wt{K}^M_r(A_f)}}/{p^m} \to
{K^M_r(F)}/{p^m}$. It suffices to show that this composition is injective.
But this is an easy consequence of \cite[Proposition~10(8)]{Kerz-10}
and \cite[Theorem~8.1]{Geisser-Levine}. 
\end{proof}

\vskip .3cm

We now consider the map
\begin{equation}\label{eqn:Rel-filtn-3}
\rho \colon \Omega^{r-1}_R \oplus \Omega^{r-2}_R \to
{\wh{K}^M_r(A|I^n)}/{\wh{K}^M_r(A|I^{n+1})}
\end{equation}
having the property that 
\[
\rho(a\dlog b_1 \wedge \cdots \wedge \dlog b_{r-1},0)
= \{1 + \wt{a}f^n, \wt{b}_1, \ldots , \wt{b}_{r-1}\};
\]
\[
\rho(0, a\dlog b_1 \wedge \cdots \wedge \dlog b_{r-2}) = 
\{1 + \wt{a}f^n, \wt{b}_1, \ldots , \wt{b}_{r-2}, f\},
\]
where $\wt{a}$ and $\wt{b}_i$'s are arbitrary lifts of $a$ and $b_i$'s,
respectively, under the surjection $A \surj R$.
It follows from \cite[\S4, p.~122]{Bloch-Kato} that this map is well defined.
It is clear from ~\eqref{eqn:Rel-filtn-1} and \lemref{lem:Rel-filtn-2}
that $\rho$ restricts to a map
\begin{equation}\label{eqn:Rel-filtn-4}
\rho \colon \Omega^{r-1}_R \to
{E_r(A, I^n)}/{\wh{K}^M_r(A|I^{n+1})}.
\end{equation}
This map is surjective by \lemref{lem:BT-KS}.

Let $\psi \colon R \to R$ be the absolute Frobenius morphism.
Let $Z_i\Omega^{r-1}_R$ be the unique $R$-submodule of $\psi^i_* \Omega^{r-1}_R$
such that the inverse Cartier operator 
(see \cite[Chap.~0, \S~2]{Illusie})
induces an $R$-linear isomorphism $C^{-1} \colon Z_{i-1}\Omega^{r-1}_R
\xrightarrow{\cong} {Z_{i}\Omega^{r-1}_R}/{d\Omega^{r-2}_R}$, where
we let $Z_1 \Omega^{r-1}_R = \Ker(\Omega^{r-1}_R \to \Omega^{r}_R)$.
We let $B_1\Omega^{r-1}_R = d\Omega^{r-2}_R$ and
let $B_i \Omega^{r-1}_R$ ($i \ge 2$) 
be the unique $R$-submodule of $\psi^i_* \Omega^{r-1}_R$
such that $C^{-1} \colon B_{i-1} \Omega^{r-1}_R \to 
{B_i\Omega^{r-1}_R}/{d\Omega^{r-2}_R}$ is an isomorphism of $R$-modules.

\begin{lem}\label{lem:Coherence}
Let $n = n_1p^s$, where $s \ge 0$ and $p \nmid n_1$.
Then $\rho$ induces isomorphisms
\[
\rho^n_{m,r} \colon \frac{\psi^m_* \Omega^{r-1}_R}{Z_m\Omega^{r-1}_R}
\xrightarrow{\cong} \frac{F^n_{m,r}(A)}{G^{n+1}_{m,r}(A)} \ \ {\rm if} \
m \le s;
\]
\[
\rho^n_{m,r} \colon \frac{\psi^s_* \Omega^{r-1}_R}{B_s\Omega^{r-1}_R}
\xrightarrow{\cong} \frac{F^n_{m,r}(A)}{G^{n+1}_{m,r}(A)} \ \ {\rm if} \
m > s.
\]
\end{lem}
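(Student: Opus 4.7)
The plan is to interpret $\rho^n_{m,r}$ as the symbol map arising from the Bloch--Kato filtration on Milnor $K$-theory modulo $p^m$ and to identify its kernel with the classical subgroups appearing in \cite{Bloch-Kato}.

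First, surjectivity of $\rho^n_{m,r}$ is immediate: by \lemref{lem:BT-KS}, the group $E_r(A,I^n)$ is generated by Milnor symbols of the form $\{1+\wt{a}f^n, \wt{b}_1, \ldots, \wt{b}_{r-1}\}$ with $\wt{a} \in A$ and $\wt{b}_i \in A^\times$, and the calculation in \cite[\S4, p.~122]{Bloch-Kato} (which is what underlies the definition of $\rho$ in ~\eqref{eqn:Rel-filtn-3}) shows that the class of such a symbol in $E_r(A,I^n)/\wh{K}^M_r(A|I^{n+1})$ depends only on the differential form $a\,\dlog b_1 \wedge \cdots \wedge \dlog b_{r-1} \in \Omega^{r-1}_R$. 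Hence $\rho^n_{m,r}$ surjects onto $F^n_{m,r}(A)/G^{n+1}_{m,r}(A)$.

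Next, for injectivity, the key step is to reduce the computation of the kernel to the classical Bloch--Kato theorem for Henselian discrete valuation fields. Using \lemref{lem:BK-RS}, we identify $G^n_{m,r}(A)/G^{n+1}_{m,r}(A)$ with $H^n_{m,r}(A)/H^{n+1}_{m,r}(A)$, and the latter is realized as a subquotient of $\ov{\wt{K}^M_r(A_f)}/p^m \hookrightarrow K^M_r(F)/p^m$. In $K^M_r(F)/p^m$ we have the classical Bloch--Kato filtration along the divisor $V(f)$, and localizing at the generic point of $V(f)$ and henselizing yields a Henselian discrete valuation ring with fraction field $F$ and residue field $\kappa$, the fraction field of $R$. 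The theorem \cite[Theorem~4.5]{Bloch-Kato} then identifies the corresponding graded piece with $\psi^m_* \Omega^{r-1}_\kappa/Z_m\Omega^{r-1}_\kappa$ when $m \le s$ and with $\psi^s_* \Omega^{r-1}_\kappa/B_s\Omega^{r-1}_\kappa$ when $m > s$. The natural splitting of this graded piece into an $\Omega^{r-1}$-part and an $\Omega^{r-2}$-part (the latter corresponding to symbols ending in $f$) matches the two summands of the source of $\rho$ in ~\eqref{eqn:Rel-filtn-3}, so the subgroup $F^n_{m,r}(A)/G^{n+1}_{m,r}(A) \subseteq G^n_{m,r}(A)/G^{n+1}_{m,r}(A)$ picks out precisely the first summand.

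Finally, one descends the kernel computation from $\kappa$ to $R$. Since $R$ is a regular local domain, the natural maps $\psi^i_* \Omega^{r-1}_R \hookrightarrow \psi^i_* \Omega^{r-1}_\kappa$ are injective, and the subsheaves $Z_m$ and $B_s$ are defined intrinsically via the inverse Cartier operator, which is compatible with localization. It follows that the kernel of $\rho^n_{m,r}$ is exactly the preimage under this inclusion of the classical kernel, and this preimage is $Z_m\Omega^{r-1}_R$ or $B_s\Omega^{r-1}_R$ respectively. The hard part will be the middle step: verifying that the Milnor-$K$-theoretic relations in $F^n_{m,r}(A)/G^{n+1}_{m,r}(A)$ reduce faithfully to the classical Bloch--Kato relations at the generic point of $V(f)$, without introducing or losing relations in higher codimension. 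The regularity of $R = A/I$ is essential here, as it ensures that the differentials over $R$ inject into those over $\kappa$ and that the henselization step produces a genuine Henselian discrete valuation ring whose graded-piece structure matches the higher-dimensional one on the nose.
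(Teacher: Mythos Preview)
Your approach differs from the paper's, and the difference is not merely cosmetic: it leaves a genuine gap.

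The paper does not localize to the generic point of $V(f)$ at all. Instead, it applies the symbol computations of \cite[\S4]{Bloch-Kato} directly to the pair $(A,R)$. Those computations (specifically \cite[(4.7), Remark~4.8]{Bloch-Kato} for $m\le s$ and \cite[Lemma~4.5]{Bloch-Kato} for $m>s$) are formal identities among Milnor symbols and Cartier operators; they go through verbatim once the residue ring is regular local, as is recorded in \cite[Proposition~1.1.9]{JSZ}. Combined with \lemref{lem:BK-RS}, this gives an isomorphism between the \emph{full} quotient $G^n_{m,r}(A)/G^{n+1}_{m,r}(A)$ and the appropriate two-term direct sum of differential-form quotients over $R$. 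One then reads off $F^n_{m,r}(A)/G^{n+1}_{m,r}(A)$ as the first summand by an elementary diagram chase (in the $m>s$ case one also needs that $\psi^s_*\Omega^{r-1}_R/B_s\Omega^{r-1}_R$ injects into ${\rm Coker}(\theta)$, which is immediate from the shape of $\theta$ and the fact that $C^{-s}$ is an isomorphism).

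Your route --- pass to the henselized DVR at the generic point of $V(f)$, invoke the literal Bloch--Kato theorem over the residue field $\kappa$, then descend to $R$ --- requires that the localization map
\[
\frac{F^n_{m,r}(A)}{G^{n+1}_{m,r}(A)} \longrightarrow \left(\text{corresponding graded piece at the DVR}\right)
\]
be injective. You flag this as ``the hard part'' but do not prove it. Injectivity of $\wh{K}^M_r(A)/p^m \hookrightarrow K^M_r(F)/p^m$ does not suffice, because injectivity is not preserved under passage to subquotients. In fact, the only way I see to establish this injectivity is to already know that both sides are described by differential forms (over $R$ and $\kappa$ respectively) compatibly with localization --- which is precisely the content of the lemma. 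So the argument as written is circular. The fix is to do what the paper does: run the Bloch--Kato identities over $R$ itself, which is legitimate because $R$ is regular local, and avoid the localization step entirely.
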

\begin{proof}
Once it exists,  $\rho^n_{m,r}$ is clearly surjective in both cases. We therefore
need to prove its existence and injectivity. These are easy consequences
of various lemmas in \cite[\S~4]{Bloch-Kato} (see also
\cite[Proposition~1.1.9]{JSZ}) once we have \lemref{lem:BK-RS}.
The proof goes as follows.

When $m \le s$, we look at the diagram
\begin{equation}\label{eqn:Coherence-0}
\xymatrix@C.8pc{
\frac{\psi^m_* \Omega^{r-1}_R}{Z_m\Omega^{r-1}_R} 
\ar@{.>}[r]^-{\rho^n_{m,r}} \ar@{^{(}->}[d] &
\frac{F^n_{m,r}(A)}{G^{n+1}_{m,r}(A)} \ar@{^{(}->}[d] & \\
\frac{\psi^m_* \Omega^{r-1}_R}{Z_m\Omega^{r-1}_R} \oplus
\frac{\psi^m_* \Omega^{r-2}_R}{Z_m\Omega^{r-2}_R} \ar[r]^-{\rho} &
\frac{G^n_{m,r}(A)}{G^{n+1}_{m,r}(A)} \ar[r]^-{\cong} &
\frac{H^n_{m,r}(A)}{H^{n+1}_{m,r}(A)},}
\end{equation}
where the bottom right isomorphism is by \lemref{lem:BK-RS}.
The right vertical arrow is clearly injective. 
The composite arrow on the bottom is an isomorphism by
\cite[(4.7), Remark~4.8]{Bloch-Kato}. It follows that the top
horizontal arrow exists and it is injective.

When $m > s$, we look at the commutative diagram
\begin{equation}\label{eqn:Coherence-1}
\xymatrix@C.8pc{
& \frac{\psi^s_* \Omega^{r-1}_R}{B_s\Omega^{r-1}_R}
 \ar@{.>}[r]^-{\rho^n_{m,r}}  \ar[d] \ar@{^{(}->}[dl] &
\frac{F^n_{m,r}(A)}{G^{n+1}_{m,r}(A)} \ar@{^{(}->}[d] & \\
\frac{\psi^s_* \Omega^{r-1}_R}{B_s\Omega^{r-1}_R} \oplus
\frac{\psi^s_* \Omega^{r-2}_R}{B_s\Omega^{r-2}_R} \ar@{->>}[r] &
{\rm Coker}(\theta) \ar[r]^-{\rho} & 
\frac{G^n_{m,r}(A)}{G^{n+1}_{m,r}(A)} \ar[r]^-{\cong} &
\frac{H^n_{m,r}(A)}{H^{n+1}_{m,r}(A)},}
\end{equation}
where $\theta$ is defined in \cite[Lemma~4.5]{Bloch-Kato}.
The map $\rho$ on the bottom is again an isomorphism. 
Hence, it suffices to show that the left vertical arrow is injective.
But this is immediate from the definition of $\theta$ (see loc. cit.)
since $C^{-s}$ is an isomorphism. 
\end{proof}

\subsection{The comparison theorem}\label{sec:NIS-ET}
Let $k$ be a perfect field of characteristic $p > 0$. 
Let $X \in \Sch_k$ be a regular scheme of pure dimension $d \ge 1$ and
let $D \subset X$ be a regular closed subscheme of codimension one.
Let $\sF^n_{m,r, X}$ be the sheaf on $X_{\nis}$ such that for any
point $x \in X$, the stalk $(\sF^n_{m,r, X})_x$ coincides with
$F^n_{m,r}(\sO^{h}_{X,x})$, where $I = \sI_D \sO^{h}_{X,x}$.
The corresponding {\'e}tale sheaf is defined
similarly by replacing the Henselization $\sO^h_{X,x}$ by the strict
Henselization $\sO^{sh}_{X,x}$.
We define the Nisnevich and {\'e}tale sheaves $\sG^n_{m,r, X}$
in an identical way, using the groups $G^n_{m,r}(A)$ for stalks.

\begin{lem}\label{lem:Comparison-0}
The change of topology map
\[
H^i_\nis(X, {\sF^1_{m,r, X}}/{\sF^n_{m,r, X}}) \to
H^i_\etl(X, {\sF^1_{m,r, X}}/{\sF^n_{m,r, X}})
\]
is an isomorphism for every $i \ge 0$.
\end{lem}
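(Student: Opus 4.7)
The plan is to reduce the statement to the classical fact that Zariski, Nisnevich, and \'etale cohomologies agree for quasi-coherent sheaves on a Noetherian scheme. The key observation is that the successive quotients of the filtration $\{\sF^n_{m,r,X}\}_n$ are essentially coherent $\sO_D$-modules (supported on the regular divisor $D$), and this will be extracted from the results of \S\ref{sec:Filt*}.

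First I would induct on $n$. Using the short exact sequence
\[
0 \to \sF^{j}_{m,r,X}/\sF^{j+1}_{m,r,X} \to \sF^{1}_{m,r,X}/\sF^{j+1}_{m,r,X} \to \sF^{1}_{m,r,X}/\sF^{j}_{m,r,X} \to 0,
\]
the long exact sequence in cohomology, and the five lemma, the problem reduces to showing that the change of topology map is an isomorphism on each successive quotient $\sF^{j}_{m,r,X}/\sF^{j+1}_{m,r,X}$ for $1 \le j \le n-1$.

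Next I would interpolate the intermediate sheaf $\sG^{j+1}_{m,r,X}$. From \lemref{lem:Rel-filtn-2}, one has $\sF^{j+1}_{m,r,X} \subseteq \sG^{j+1}_{m,r,X} \subseteq \sF^{j}_{m,r,X}$, yielding
\[
0 \to \sG^{j+1}_{m,r,X}/\sF^{j+1}_{m,r,X} \to \sF^{j}_{m,r,X}/\sF^{j+1}_{m,r,X} \to \sF^{j}_{m,r,X}/\sG^{j+1}_{m,r,X} \to 0.
\]
The right-hand quotient is isomorphic to a coherent $\sO_D$-module by \lemref{lem:Coherence}, namely $\psi^m_*\Omega^{r-1}_R/Z_m\Omega^{r-1}_R$ or $\psi^s_*\Omega^{r-1}_R/B_s\Omega^{r-1}_R$ in the respective cases. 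For the left-hand quotient, the inclusion $\sG^{j+2}_{m,r,X} \subseteq \sF^{j+1}_{m,r,X}$ shows it is a quotient of $\sG^{j+1}_{m,r,X}/\sG^{j+2}_{m,r,X}$, and the full map $\rho$ of ~\eqref{eqn:Rel-filtn-3} together with diagrams ~\eqref{eqn:Coherence-0} and ~\eqref{eqn:Coherence-1} exhibits $\sG^{j+1}_{m,r,X}/\sG^{j+2}_{m,r,X}$ as a quotient of a coherent $\sO_D$-module built from $\Omega^{r-1}_R \oplus \Omega^{r-2}_R$. Since $D$ is Noetherian, quotients of coherent sheaves are coherent.

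Finally, each $\sF^{j}_{m,r,X}/\sF^{j+1}_{m,r,X}$ is an extension of coherent $\sO_D$-modules (pushed forward to $X$ via the closed immersion $\iota\colon D \inj X$), and in particular it is a quasi-coherent $\sO_X$-module supported on $D$. For such sheaves, the change of topology map $H^i_\nis(X,-) \to H^i_\etl(X,-)$ is an isomorphism for every $i \ge 0$ by the classical comparison theorem for quasi-coherent sheaves. The main obstacle in this plan is verifying that all the comparisons and filtration containments in \S\ref{sec:Filt*} globalize properly (i.e., that the identifications of \lemref{lem:Coherence} yield isomorphisms of Nisnevich \emph{and} \'etale sheaves on $X$, which amounts to checking that both sides have identical stalks at Henselian and strict Henselian local rings); once this is confirmed, the reduction above goes through without further difficulty.
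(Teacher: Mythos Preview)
Your proposal is correct and follows essentially the same strategy as the paper: both reduce to the fact that the successive quotients of the $F$ and $G$ filtrations are coherent $\sO_D$-modules, so that Nisnevich and \'etale cohomology agree. The only difference is in the d\'evissage: the paper uses the single exact sequence
\[
0 \to \sF^n_{m,r,X}/\sG^{n+1}_{m,r,X} \to \sG^1_{m,r,X}/\sG^{n+1}_{m,r,X} \to \sF^1_{m,r,X}/\sF^n_{m,r,X} \to 0
\]
and handles the middle term by induction on $n$ via \cite[Proposition~1.1.9]{JSZ} (which identifies $\sG^j/\sG^{j+1}$ as coherent), while you instead break $\sF^1/\sF^n$ into the layers $\sF^j/\sF^{j+1}$ and then split each of those using $\sG^{j+1}$. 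Your route is slightly more self-contained in that it extracts the coherence of $\sG^{j+1}/\sG^{j+2}$ from the diagrams already present in \lemref{lem:Coherence} rather than appealing to \cite{JSZ}; the paper's route is a bit shorter. Either way the content is the same, and your caveat about globalization is the right one to flag---it is handled exactly as the paper does, by checking vanishing of ${\bf R}^i\epsilon_*$ on Henselian local rings.
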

\begin{proof}
We consider the exact sequence of Nisnevich (or {\'e}tale) sheaves
\begin{equation}\label{eqn:Comparison-10}
0 \to \frac{\sF^n_{m,r, X}}{\sG^{n+1}_{m,r, X}}
\to \frac{\sG^1_{m,r, X}}{\sG^{n+1}_{m,r, X}}
\to \frac{\sF^1_{m,r, X}}{\sF^n_{m,r, X}} \to 0,
\end{equation}
where we have replaced $\sG^1_{m,r, X}$ by $\sF^1_{m,r, X}$ in the quotient on the
right using \lemref{lem:Rel-filtn-2}.
Working inductively on $n$, it follows from \cite[Proposition~1.1.9]{JSZ}
that the middle term in ~\eqref{eqn:Comparison-10} has identical
Nisnevich and {\'e}tale cohomology. It suffices therefore to show 
that the same holds for the left term in  ~\eqref{eqn:Comparison-10} too.

Since this isomorphism is obvious for $i = 0$, it suffices to show using the 
Leray spectral sequence that
${\bf R}^i\epsilon_*({\sF^n_{m,r, X}}/{\sG^{n+1}_{m,r, X}}) = 0$ for
$i > 0$.
Equivalently, we need to show that if $A = \sO^h_{X,x}$ for some $x \in X$
and $f \in A$ defines $D$ locally at $x$, then
$H^i_\etl(A,  {F^n_{m,r}(A)}/{G^{n+1}_{m,r}(A)}) = 0$ for $i > 0$.
But this is immediate from \lemref{lem:Coherence}. 
\end{proof}

\begin{lem}\label{lem:Comparison-1}
Assume that $k$ is finite. Then the change of topology map
\[
H^d_\nis(X, {\sF^n_{m,d, X}}) \to
H^d_\etl(X, {\sF^n_{m,d, X}})
\]
is an isomorphism.
\end{lem}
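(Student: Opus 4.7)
The plan is to combine \thmref{thm:RS-comp} (applied with the divisor $(n+1)D$ in place of $D$) with \lemref{lem:Coherence}, via the short exact sequence of Nisnevich (or \'etale) sheaves
\[
0 \to \sG^{n+1}_{m,d,X} \to \sF^n_{m,d,X} \to \sF^n_{m,d,X}/\sG^{n+1}_{m,d,X} \to 0,
\]
whose existence is supplied by the pointwise inclusion $G^{n+1}_{m,d}(A) \subseteq F^n_{m,d}(A)$ of \lemref{lem:Rel-filtn-2}. Since the reduced part of $(n+1)D$ equals $D_{\red}$, which is regular and hence a simple normal crossing divisor, \thmref{thm:RS-comp} (applied to $(n+1)D$) yields that
$H^d_{\nis}(X, \sG^{n+1}_{m,d,X}) \to H^d_{\etl}(X, \sG^{n+1}_{m,d,X})$
is an isomorphism, and this step requires only the finiteness of $k$.

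By \lemref{lem:Coherence}, the quotient $\sF^n_{m,d,X}/\sG^{n+1}_{m,d,X}$ is locally on $D$ isomorphic to a coherent $\sO_D$-module of the form $\psi^m_* \Omega^{d-1}_R/Z_m \Omega^{d-1}_R$ or $\psi^s_* \Omega^{d-1}_R/B_s \Omega^{d-1}_R$, and is therefore (after pushforward along the closed immersion $D \hookrightarrow X$) a quasi-coherent $\sO_X$-module. Standard descent for quasi-coherent modules then forces its Nisnevich and \'etale cohomology groups to coincide in every degree.

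With these inputs in hand, I apply the five-lemma to the commutative ladder of long exact cohomology sequences arising from the above short exact sequence, at the five consecutive terms
\[
H^{d-1}(\sF^n_{m,d,X}/\sG^{n+1}_{m,d,X}) \to H^d(\sG^{n+1}_{m,d,X}) \to H^d(\sF^n_{m,d,X}) \to H^d(\sF^n_{m,d,X}/\sG^{n+1}_{m,d,X}) \to H^{d+1}(\sG^{n+1}_{m,d,X}).
\]
The first, second and fourth vertical comparison maps are isomorphisms by the previous two paragraphs, and the fifth vertical map has Nisnevich source $H^{d+1}_{\nis}(X, \sG^{n+1}_{m,d,X}) = 0$ because $X$ has Nisnevich cohomological dimension at most $\dim X = d$; hence it is trivially injective. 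The five-lemma then delivers the claimed isomorphism $H^d_{\nis}(X, \sF^n_{m,d,X}) \cong H^d_{\etl}(X, \sF^n_{m,d,X})$.

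I do not anticipate any substantive obstacle here: the essential analytic and cohomological input is already packaged in \thmref{thm:RS-comp} and \lemref{lem:Coherence}, and what remains is diagrammatic. The most delicate detail is perhaps verifying that the identifications of \lemref{lem:Coherence} are compatible with both Henselization and strict Henselization along $D$, so that the quotient sheaf really is quasi-coherent in both topologies; this is immediate because formation of $\Omega^{\bullet}$ and of the subsheaves $Z_\bullet$, $B_\bullet$ commutes with \'etale localization. Note also that, since \thmref{thm:RS-comp} is applied to the single divisor $(n+1)D$ rather than to a pro-system, none of the extra hypotheses $\{k \neq \F_2, d \neq 2\}$ appearing in \corref{cor:RS-K-comp} are needed.
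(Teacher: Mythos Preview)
Your proof is correct, but it takes a genuinely different route from the paper's.

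The paper first treats $n=1$ separately by identifying $\sF^1_{m,d,X}=\sG^1_{m,d,X}$ with $W_m\Omega^d_{X,\log}$ (via \cite{JSZ} and \cite{Kerz-Zhau}) and then, for $n\ge 2$, uses the exact sequence
\[
0 \to \sF^n_{m,d,X} \to \sF^1_{m,d,X} \to \sF^1_{m,d,X}/\sF^n_{m,d,X} \to 0,
\]
appealing to \lemref{lem:Comparison-0} for the quotient and to the forward reference \lemref{lem:Kato-complex} (needed because one must control $H^{d-1}$ of the middle term, not just $H^d$). Your argument instead places $\sF^n$ in the middle of
\[
0 \to \sG^{n+1}_{m,d,X} \to \sF^n_{m,d,X} \to \sF^n_{m,d,X}/\sG^{n+1}_{m,d,X} \to 0,
\]
invokes \thmref{thm:RS-comp} directly for $\sG^{n+1}_{m,d,X}=\ov{{\wh{\sK}^M_{d,X|(n+1)D}}/{p^m}}$, and uses \lemref{lem:Coherence} to see that the cokernel is (the pushforward of) a coherent $\sO_{D}$-module, so its Nisnevich and {\'e}tale cohomology agree in every degree. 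Since the quotient is coherent, you get isomorphisms in degrees $d-1$ and $d$ for free, and the five-lemma closes with the trivial vanishing $H^{d+1}_\nis=0$.

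What each buys: your route is shorter and self-contained within \S\ref{sec:Comp**}---it avoids the inductive proof of \lemref{lem:Comparison-0} and the forward dependency on \lemref{lem:Kato-complex}. The paper's route, on the other hand, isolates \lemref{lem:Comparison-0} as a statement in all cohomological degrees, and its use of \lemref{lem:Kato-complex} makes explicit the surjectivity in degree $d-1$ that is otherwise hidden inside the coherence argument. Both are valid; yours is the more economical path to this particular lemma.
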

\begin{proof}
We first assume $n =1$. Using \lemref{lem:Rel-filtn-2}, we can
replace $\sF^1_{m,d, X}$ by $\sG^1_{m,d, X}$. 
By \cite[Theorem~1.1.5]{JSZ} and 
\cite[Theorems~2.2.2, Proposition~2.2.5]{Kerz-Zhau},
the dlog map $\sG^1_{m,d, X} \to W_m\Omega^d_{X, \log}$ is an isomorphism
in Nisnevich and {\'e}tale topologies. We conclude by
\cite[Theorem~3.3.1]{Kerz-Zhau}.

We now assume $n \ge 2$ and look at the exact sequence
\[
0 \to \sF^n_{m,d, X} \to \sF^1_{m,d, X} \to \frac{\sF^1_{m,d, X}}{\sF^n_{m,d, X}}
\to 0.
\]
We saw above that the middle term can be replaced by $W_m\Omega^d_{X, \log}$.
We can now conclude the proof by Lemmas~\ref{lem:Comparison-0}
and the independent statement \lemref{lem:Kato-complex} via the 5-lemma.
\end{proof}

\begin{lem}\label{lem:Comparison-2}
Assume that $k$ is finite. Then the change of topology map
\[
H^d_\nis(X, \ov{{\wt{\sK}^M_{d, (X, nD)}}/{p^m}}) \to
H^d_\etl(X, \ov{{\wt{\sK}^M_{d, (X, nD)}}/{p^m}})
\]
is an isomorphism if either $d \neq 2$ or $k \neq \F_2$.
\end{lem}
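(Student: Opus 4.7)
The strategy is to compare the sheaf $\ov{\wt{\sK}^M_{d,(X,nD)}/p^m}$ appearing in the statement with the sheaf $\sF^n_{m,d,X}$ handled in \lemref{lem:Comparison-1}, and then invoke the latter. Observe that the stalks $F^n_{m,d}(A)$ of $\sF^n_{m,d,X}$ are, by definition, the image of $\wt{K}^M_d(A,I^n)$ in $\wh{K}^M_d(A)/p^m$, while $\ov{\wt{\sK}^M_{d,(X,nD)}/p^m}$ has stalks equal to the image of the same group in $\wt{K}^M_d(A)/p^m$. Using the surjection $\wt{\sK}^M_{d,X}\twoheadrightarrow \wh{\sK}^M_{d,X}$ of \lemref{lem:Milnor-Kerz} (which descends modulo $p^m$), I would obtain a natural surjection $\pi\colon \ov{\wt{\sK}^M_{d,(X,nD)}/p^m}\twoheadrightarrow \sF^n_{m,d,X}$ of Nisnevich (resp.~{\'e}tale) sheaves.

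Let $\sN=\Ker(\pi)$. Then $\sN$ is a subsheaf of $\Ker\bigl(\wt{\sK}^M_{d,X}/p^m\to \wh{\sK}^M_{d,X}/p^m\bigr)$. By the isomorphism part of \lemref{lem:Milnor-Kerz}, $\wt{K}^M_d(A)\to \wh{K}^M_d(A)$ is bijective for any local ring $A$ with infinite residue field, hence so is the mod-$p^m$ quotient. Over the finite field $k$, any non-closed point $x$ of $X$ has residue field $\kappa(x)$ of positive transcendence degree over $k$, hence infinite; this forces $\sN_x=0$ for every $x\in X_{(q)}$ with $q>0$, whether the stalk is computed via $\sO^h_{X,x}$ or $\sO^{sh}_{X,x}$. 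Since $\sN$ is torsion and supported on $X_{(0)}$, \lemref{lem:Sheaf-0} combined with $cd_\nis(k)=0$ and $cd_\etl(k)=1$ yields $H^q_\nis(X,\sN)=0$ for $q\ge 1$ and $H^q_\etl(X,\sN)=0$ for $q\ge 2$.

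The long exact sequences attached to $0\to \sN\to \ov{\wt{\sK}^M_{d,(X,nD)}/p^m}\to \sF^n_{m,d,X}\to 0$ in both topologies then give isomorphisms $H^d_\tau(X,\ov{\wt{\sK}^M_{d,(X,nD)}/p^m})\xrightarrow{\cong} H^d_\tau(X,\sF^n_{m,d,X})$ for $\tau\in\{\nis,\etl\}$ whenever $d\ge 2$; in the remaining case $d=1$ one has $\wt{\sK}^M_{1,X}=\wh{\sK}^M_{1,X}=\sO_X^{\times}$ so $\sN=0$ and the map is tautologically an isomorphism. Composing with the isomorphism $H^d_\nis(X,\sF^n_{m,d,X})\cong H^d_\etl(X,\sF^n_{m,d,X})$ furnished by \lemref{lem:Comparison-1} delivers the desired comparison, with the exceptional restriction $\{d\ne 2$ or $k\ne\F_2\}$ entering only via \lemref{lem:Comparison-1} (through the independent \lemref{lem:Kato-complex}); the reduction step itself is uniform in $d$ and $k$. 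The principal obstacle is to verify that $\sN$ really is supported on closed points, which rests on the isomorphism content of \lemref{lem:Milnor-Kerz}; without it the kernel could carry cohomology at higher-dimensional points, and the cohomological-dimension estimate of \lemref{lem:Sheaf-0} would no longer be sufficient to collapse the long exact sequence.
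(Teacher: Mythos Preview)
Your approach is essentially the paper's: reduce to \lemref{lem:Comparison-1} by showing that the map
\[
H^d_\tau\bigl(X,\ \ov{{\wt{\sK}^M_{d,(X,nD)}}/{p^m}}\bigr)\longrightarrow H^d_\tau(X,\sF^n_{m,d,X})
\]
is an isomorphism for $\tau\in\{\nis,\etl\}$, and the paper likewise points to \lemref{lem:Sheaf-0} and the diagram~\eqref{eqn:Rel-filtn-0} for this step. Your execution, however, is a bit sharper than the paper's blanket reference to the proof of \lemref{lem:Milnor-d-0}: since both source and target are images of the \emph{same} sheaf $\wt{\sK}^M_{d,(X,nD)}$ under $\wt{\sK}^M_{d,X}/p^m\twoheadrightarrow \wh{\sK}^M_{d,X}/p^m$, surjectivity of $\pi$ is tautological, and only the kernel needs to be controlled via \lemref{lem:Sheaf-0}. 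In \lemref{lem:Milnor-d-0}(5) the analogous surjectivity was not automatic and required Kolster's presentation of $K_2$, which is where the hypothesis $k\ne\F_2$ originates; your direct argument bypasses that entirely.

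One correction: the restriction $\{d\ne 2\text{ or }k\ne\F_2\}$ does \emph{not} enter through \lemref{lem:Comparison-1} or \lemref{lem:Kato-complex}; neither of those results carries that hypothesis. In fact, since your reduction step is uniform in $d$ and $k$ (as you yourself note) and \lemref{lem:Comparison-1} is unconditional, your argument actually proves the lemma without the exceptional restriction. The paper's stated restriction is an artifact of invoking the template of \lemref{lem:Milnor-d-0} verbatim rather than a genuine constraint at this step.
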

\begin{proof}
In view of \lemref{lem:Comparison-1}, it suffices to show that the canonical 
map (see ~\eqref{eqn:Rel-filtn-0}) 
\begin{equation}\label{eqn:Comparison-2-0}
H^d_\tau(X, \ov{{\wt{\sK}^M_{d, (X, nD)}}/{p^m}}) \to 
H^d_\tau(X, {\sF^n_{m,d, X}}) 
\end{equation}
is an isomorphism for $\tau \in \{\nis, \etl\}$.
Using \lemref{lem:Sheaf-0} and ~\eqref{eqn:Rel-filtn-0}, the proof of this is 
completely identical to that of \lemref{lem:Milnor-d-0}.
\end{proof}

The following is the main result we were after in this section. We restate
all assumptions for convenience.

\begin{thm}\label{thm:Comparison-3}
Let $k$ be a finite field of characteristic $p$. 
Let $X \in \Sch_k$ be a regular scheme of pure dimension $d \ge 1$ and
let $D \subset X$ be a regular closed subscheme of codimension one.
Assume that either $d \neq 2$ or $k \neq \F_2$.
Then there is a natural isomorphism
\[
{H^d_\nis(X, {\sK^M_{d,(X,nD)}})}/{p^m} \xrightarrow{\cong}
H^d_\etl(X, \ov{{\wh{\sK}^M_{d, (X,nD)}}/{p^m}})
\]
for every $m,n \ge 1$.
\end{thm}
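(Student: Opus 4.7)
The plan is to assemble the result from a short chain of already-established isomorphisms, bridging the Kato-Saito relative Milnor sheaf $\sK^M_{d,(X,nD)}$ with the improved relative Milnor sheaf $\wh{\sK}^M_{d,(X,nD)}$ and inserting the ``naive'' relative sheaf $\wt{\sK}^M_{d,(X,nD)}$ as the technical bridge on which the Nisnevich-to-étale comparison is actually proven.

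First, since $X$ has Krull dimension $d$, we have $H^{d+1}_\nis(X, -) = 0$, so the long exact cohomology sequence of
\[
0 \to \sK^M_{d,(X,nD)} \xrightarrow{p^m} \sK^M_{d,(X,nD)} \to \sK^M_{d,(X,nD)}/p^m \to 0
\]
yields
\[
H^d_\nis(X,\sK^M_{d,(X,nD)})/p^m \xrightarrow{\cong} H^d_\nis(X,\sK^M_{d,(X,nD)}/p^m).
\]
Applying \propref{prop:Milnor-iso} with $r = d$ (the auxiliary hypothesis $r \le 1$ in the case $d = 1$ is then automatic), this is identified with $H^d_\nis(X,\ov{\wh{\sK}^M_{d,(X,nD)}/p^m})$.

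Second, I would swap $\wh{\sK}^M$ for $\wt{\sK}^M$ on both sides using \lemref{lem:Milnor-d-0}: part (3) covers the Nisnevich side for $d \ge 2$, while parts (4) and (5) cover the étale side (for $d = 2$ one invokes (5), which requires $r = d \le 2$ and $k \ne \F_2$); the case $d = 1$ is handled by part (2) of the same lemma. The remaining step, the Nisnevich-to-étale isomorphism
\[
H^d_\nis(X,\ov{\wt{\sK}^M_{d,(X,nD)}/p^m}) \xrightarrow{\cong} H^d_\etl(X,\ov{\wt{\sK}^M_{d,(X,nD)}/p^m}),
\]
is exactly \lemref{lem:Comparison-2}, whose hypothesis ``$d \ne 2$ or $k \ne \F_2$'' matches ours verbatim. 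Composing these isomorphisms delivers the theorem.

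All genuine work has been absorbed into the earlier lemmas, so the present theorem is essentially bookkeeping. The one place where the plan could go wrong is the interplay of the hypothesis ``$d \ne 2$ or $k \ne \F_2$'' with the various parts of \lemref{lem:Milnor-d-0}; but careful tracking shows that the hypothesis enters only through \lemref{lem:Milnor-d-0}(5) and \lemref{lem:Comparison-2}, both of which were proven under precisely this condition. The true technical obstacle lives one level down, inside \lemref{lem:Comparison-2}, which rests on the coherent filtration computation \lemref{lem:Coherence} — and it is regularity of $D$ (rather than merely simple normal crossings, as in \thmref{thm:RS-comp}) that makes the residue ring $R = A/I$ regular and hence lets the symbol map $\rho$ be described cleanly in terms of $\Omega^{r-1}_R$; without this one would be forced to work with the Rülling-Saito sheaf $\wh{\sK}^M_{d,X|nD}$ rather than the Kato-Saito sheaf of the theorem.
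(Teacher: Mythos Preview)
Your proposal is correct and follows essentially the same route as the paper's proof, which simply reads ``Combine Lemmas~\ref{lem:Milnor-top-coh}, \ref{lem:Milnor-d}, \ref{lem:Milnor-d-0} and \ref{lem:Comparison-2}.'' The only cosmetic difference is that you invoke \propref{prop:Milnor-iso} (itself a package of Lemmas~\ref{lem:Milnor-top-coh}, \ref{lem:Milnor-d}, \ref{lem:Milnor-d-0}) to pass from $\sK^M$ to $\wh{\sK}^M$ on the Nisnevich side and then use \lemref{lem:Milnor-d-0} again to revert to $\wt{\sK}^M$, whereas the paper goes from $\sK^M$ to $\wt{\sK}^M$ directly via Lemmas~\ref{lem:Milnor-top-coh} and \ref{lem:Milnor-d}; this introduces a harmless redundancy but changes nothing substantively.
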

\begin{proof}
Combine Lemmas~\ref{lem:Milnor-top-coh}, ~\ref{lem:Milnor-d}, 
~\ref{lem:Milnor-d-0} and ~\ref{lem:Comparison-2}.
\end{proof}

\begin{cor}\label{cor:Comparison-4}
Under the assumptions of \thmref{thm:Comparison-3}, there is a commutative
diagram
\begin{equation}\label{eqn:Comparison-4-0}
\xymatrix@C.8pc{
{H^d_\nis(X, {\sK^M_{d,(X,nD)}})}/{p^m} \ar[r] \ar[d] &
H^d_\etl(X, \ov{{\wh{\sK}^M_{d, (X,nD)}}/{p^m}}) \ar[d] \\
H^d_\nis(X, \ov{{\wh{\sK}^M_{d, X|nD}}/{p^m}}) \ar[r] & 
H^d_\etl(X, \ov{{\wh{\sK}^M_{d, X|nD}}/{p^m}})}
\end{equation}
for $m,n \ge 1$ in which the horizontal arrows are isomorphisms.
\end{cor}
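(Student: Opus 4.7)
\emph{Proof proposal.} My plan is to reduce both horizontal arrows to isomorphisms already available, and then verify commutativity of the square by a naturality argument. The top arrow is immediate from \thmref{thm:Comparison-3}. For the bottom arrow, I observe that under the hypothesis inherited from \thmref{thm:Comparison-3}, the divisor $D$ is a regular closed subscheme of codimension one, so $D = D_\red$ is trivially a simple normal crossing divisor, and the same holds for $(nD)_\red = D_\red$ for every $n \ge 1$. Therefore \thmref{thm:RS-comp} applies with $nD$ in place of $D$ and yields the bottom arrow as an isomorphism for every $m, n \ge 1$. Since \thmref{thm:RS-comp} imposes no further constraint on $d$ or $k$, no new assumption beyond those of \thmref{thm:Comparison-3} is needed.

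I would build the vertical arrows from natural sheaf maps relating the Kato--Saito and R\"ulling--Saito relative Milnor $K$-sheaves through the common intermediate $\wt{\sK}^M_{d,(X,nD)}$. By \lemref{lem:BT-KS}, the canonical surjection $\wt{\sK}^M_{d,(X,nD)} \twoheadrightarrow \sK^M_{d,(X,nD)}$ becomes an isomorphism on top cohomology after the combination of Lemmas~\ref{lem:Milnor-top-coh} and~\ref{lem:Milnor-d}, while the factorization $\wt{\sK}^M_{d,(X,nD)} \twoheadrightarrow \sE_{d,(X,nD)} \hookrightarrow \wh{\sK}^M_{d,X|nD}$ from \lemref{lem:RS-GK}(2) (as already used in the proof of \propref{prop:Rel-RS}) supplies the other leg. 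Reducing modulo $p^m$, passing to the $\ov{(-)/p^m}$-quotients, and taking top cohomology in both the Nisnevich and {\'e}tale topologies assembles these maps (with the surjection above inverted on cohomology) into the two vertical arrows of~\eqref{eqn:Comparison-4-0}; the analogous zigzag for $\wh{\sK}^M_{d,(X,nD)}$ used in \lemref{lem:Milnor-d-0} identifies the right-hand column with the target appearing in the top row of~\eqref{eqn:Comparison-4-0}.

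Commutativity of the square then follows from the naturality of the change-of-topology functor $\epsilon^*$ applied to the diagram of sheaves constructed above, which is automatic by functoriality. The only conceptual point in the whole argument is the observation that \thmref{thm:RS-comp} applies verbatim to $nD$ in place of $D$; beyond this, the proof is routine bookkeeping assembled from the lemmas listed in the hint.
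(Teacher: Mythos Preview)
The proposal is correct and follows essentially the same approach as the paper: the horizontal arrows come from \thmref{thm:Comparison-3} and \thmref{thm:RS-comp} (the latter applicable since a regular divisor is trivially simple normal crossing), while the vertical arrows are assembled from the natural sheaf maps relating $\sK^M_{d,(X,nD)}$, $\wt{\sK}^M_{d,(X,nD)}$, $\wh{\sK}^M_{d,(X,nD)}$ and $\wh{\sK}^M_{d,X|nD}$ via Lemmas~\ref{lem:Milnor-top-coh}, \ref{lem:Milnor-d}, \ref{lem:Milnor-d-0}. Your citation of \lemref{lem:RS-GK}(2) for the bottom leg is exactly what the paper packages into the diagram~\eqref{eqn:Rel-filtn-0}.
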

\begin{proof}
In view of Theorems~\ref{thm:RS-comp} and ~\ref{thm:Comparison-3}, we
only have to explain the vertical arrows.
But their existence follows from Lemmas~\ref{lem:Milnor-top-coh}, 
~\ref{lem:Milnor-d} and ~\ref{lem:Milnor-d-0}, and the 
diagram ~\eqref{eqn:Rel-filtn-0}.
\end{proof}

\begin{remk}\label{remk:Comparison-5}
  The reader may note that \thmref{thm:RS-comp} holds whenever $D_\red$ is a simple normal crossing
  divisor, but  this is not the case with \thmref{thm:Comparison-3}. The main reason for this
  is that while $\wh{K}^M_{d,X|D}$ satisfies cdh-descent for closed covers \cite{JSZ}, the same
  is not true for $\wh{K}^M_{d,(X,D)}$. This prevents us from using an induction argument on the number
  of irreducible components of $D$.
\end{remk}

\section{Relative logarithmic Hodge-Witt sheaves}
\label{sec:Hodge-Witt}
In this section, we shall recall the relative logarithmic Hodge-Witt sheaves
and show that they coincide with the Kato-Saito relative Milnor $K$-theory
with ${\Z}/{p^m}$ coefficients in a pro-setting. We fix a field $k$ of 
characteristic $p > 0$ and let $X \in \Sch_k$.

\subsection{The relative Hodge-Witt sheaves}\label{sec:dRW}
Let $\{W_m\Omega^\bullet_X\}_{m \ge 1}$ denote the pro-complex of de 
Rham-Witt (Nisnevich) sheaves on $X$. This is a pro-complex of differential
graded algebras with the structure map $R$ and the
differential $d$. Let $[-]_m \colon \sO_X \to W_m\sO_X$ be the multiplicative
Teichm{\"u}ller homomorphism. Recall that the pro-complex 
$\{W_m\Omega^\bullet_X\}_{m \ge 1}$ is equipped with the
Frobenius homomorphism of graded algebras $F \colon W_m\Omega^r_X \to 
W_{m-1}\Omega^r_X$ and the additive Verschiebung homomorphism 
$V \colon W_m\Omega^r_X \to W_{m+1}\Omega^r_X$ .
These homomorphisms satisfy the following properties which we shall use
frequently.

\begin{enumerate}
\item $FV = p = VF$.
\item $FdV = d$, $dF = p Fd$ and $p dV = Vd$.
\item $F(d[a]) = [a^{p-1}] d[a]$
for all  $a \in \sO_X$. 
\item $V(F(x) y ) = x V(y)$ and $V(x dy)= Vx d Vy$ for all 
$x \in W_m \Omega^i_X$, $y \in W_m \Omega^j_X$. 
\end{enumerate}

Let $\Fil^i_V W_m\Omega^r_X = V^iW_{m-i}\Omega^r_X + dV^iW_{m-i}\Omega^{r-1}_X,
\ \Fil^i_R W_m\Omega^r_X = \Ker(R^{r-i})$ and $\Fil^i_p W_m\Omega^r_X = 
\Ker(p^{r-i})$ for $0 \le i \le r$. By \cite[Lemma~3.2.4]{Hesselholt-Madsen}
and \cite[Proposition~I.3.4]{Illusie}, we know that
$\Fil^i_V W_m\Omega^r_X = \Fil^i_R W_m\Omega^r_X \subseteq
\Fil^i_p W_m\Omega^r_X$ and the last inclusion is an equality if
$X$ is regular. We let $Z W_m\Omega^r_X = \Ker(d \colon W_m\Omega^r_X \to
W_m\Omega^{r+1}_X)$ and $B W_m\Omega^r_X = 
{\rm Image}(d \colon W_m\Omega^{r-1}_X \to
W_m\Omega^{r}_X)$. We set $\sH^r(W_m\Omega^\bullet_X) = 
{Z W_m\Omega^r_X}/{B W_m\Omega^r_X}$.

Let $\psi \colon X \to X$ denote the absolute Frobenius morphism.
One then knows that $d \colon \psi_* W_1 \Omega^r_X \to \psi_* W_1 \Omega^{r+1}_X$
is $\sO_X$-linear. In particular, the sheaves $\psi_* Z W_1\Omega^r_X, \ 
\psi_* B W_1\Omega^r_X$ and
$\psi_* \sH^r(W_1\Omega^\bullet_X)$ are quasi-coherent on $X_\nis$.
If $X$ is $F$-finite, then $\psi$ is a finite morphism, and therefore,
$\psi_* Z W_1\Omega^r_X, \ \psi_* B W_1\Omega^r_X$ and
$\psi_* \sH^r(W_1\Omega^\bullet_X)$ are coherent sheaves on $X_\nis$.

Let $D \subset X$ be a closed subscheme defined by a sheaf of ideals $\sI_D$
and let $nD \subset X$ be the closed subscheme defined by $\sI^n_D$ for 
$n \ge 1$. 
We let $W_m\Omega^\bullet_{(X,D)} = \Ker(W_m\Omega^\bullet_X \surj 
\iota_* W_m\Omega^\bullet_D)$, where $\iota \colon D \inj X$ is the inclusion.
We shall often use the notation $W_m\Omega^\bullet_{(A,I)}$ if $X = \Spec(A)$
and $D = \Spec(A/I)$.
If $D$ is an effective Cartier divisor on $X$ and $n\in \Z$, 
we let $W_m \sO_X(nD)$ be the 
Nisnevich sheaf on $X$ which is locally defined as
$W_m \sO_X(n D)  = [f^{-n}] \cdot W_m \sO_X \subset j_* W_m\sO_U$,
where $f\in \sO_X$ is a local equation of $D$ and $j \colon U \inj X$ is the
inclusion of the complement of $D$ in $X$. One knows that $W_m \sO_X(nD)$
is a sheaf of invertible $W_m\sO_X$-modules. We let $W_m\Omega^\bullet_X(nD) =
W_m\Omega^\bullet_X \otimes_{W_m\sO_X} W_m\sO_X(nD)$. It is clear that there is
a canonical $W_m\sO_X$-linear map $W_m\Omega^\bullet_X(-D) \to  
W_m\Omega^\bullet_{(X,D)}$. The reader is invited to compare the
following with a weaker statement \cite[Proposition~2.5]{Geisser-Hesselholt-top}.

\begin{lem}\label{lem:pro-iso-rel}
If $(X,D)$ is a modulus pair and $m \ge 1$ an
integer, then the canonical map
of pro-sheaves 
\[
\{W_m\Omega^\bullet_X(-nD)\}_n \to \{W_m\Omega^\bullet_{(X,nD)}\}_n
\]
on $X_\nis$ is surjective.
If $X$ is furthermore regular, then the 
map $W_m\Omega^\bullet_X(-D) \to W_m\Omega^\bullet_{(X,D)}$ is injective.
\end{lem}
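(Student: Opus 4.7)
The plan is to work Nisnevich-locally on $X$, reducing to the affine case $X = \Spec A$ with $D = V(f)$ for $f \in A$ a non-zero-divisor. In this local setting, the map $W_m\Omega^r_X(-nD) \to W_m\Omega^r_{(X,nD)}$ is induced by multiplication by the Teichm\"uller lift $[f^n] = [f]^n$; its image is the submodule $[f^n] W_m\Omega^r_A \subset W_m\Omega^r_A$, while $W_m\Omega^r_{(X,nD)} = W_m\Omega^r_{(A,f^n)}$ is the kernel of the restriction to $W_m\Omega^r_{A/(f^n)}$.

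For the pro-surjectivity I would first treat $r = 0$. The Witt-vector formula in characteristic $p$ gives $[f^n] \cdot (b_0, \ldots, b_{m-1}) = (f^n b_0, f^{np} b_1, \ldots, f^{np^{m-1}} b_{m-1})$, so
\[
W_m\sO_{(X, n'D)} \subseteq [f^n] W_m\sO_X \quad \text{whenever} \quad n' \ge n p^{m-1},
\]
which is the desired pro-surjectivity at the level of Witt vectors. For $r \ge 1$, the key computations are the identity $[f^n] \cdot V^i(\omega) = V^i([f^{n p^i}] \omega)$, which follows from the projection formula $V(F x \cdot y) = x \cdot V y$ applied to $x = [f^n]$ together with $F^i[f^n] = [f^{np^i}]$, and the Leibniz relation $d[f^{n+1}] = (n+1)[f^n] d[f] \in [f^n] W_m\Omega^1_A$. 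Combining these, one sees that any elementary generator $V^i([a_0] d[a_1] \cdots d[a_r])$ or $dV^i([a_0] d[a_1] \cdots d[a_{r-1}])$ whose leading Teichm\"uller coefficient satisfies $a_0 \in (f^{(n+1) p^{m-1}})$ lies in $[f^n] W_m\Omega^r_A$. An induction on the $V$-filtration $\Fil^\bullet_V W_m\Omega^r_X$, using the explicit description of its graded pieces as extensions involving $\Omega^r_X/B_i\Omega^r_X$ and $\Omega^{r-1}_X/Z_i\Omega^{r-1}_X$ (following Illusie), should then upgrade this to pro-surjectivity in general.

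For injectivity when $X$ is regular, it suffices to show that multiplication by $[f]$ is a monomorphism on $W_m\Omega^r_X$, since $W_m\sO_X(-D)$ is locally free of rank one over $W_m\sO_X$ generated by $[f]$. The Witt-vector formula directly gives that $[f]$ is a non-zero-divisor on $W_m\sO_X$, using that $f$ is a non-zero-divisor in $\sO_X$. To pass from $W_m\sO_X$ to the de Rham-Witt sheaves, I would invoke the fact that for regular $X$ the graded pieces of $\Fil^\bullet_V W_m\Omega^r_X$ are locally free on $X$ (by the Cartier isomorphism and the coherence results of Illusie and Langer-Zink), so that multiplication by $[f]$, which acts on the $i$-th graded piece as multiplication by the Frobenius twist $f^{p^i}$, is injective there and hence on the whole sheaf.

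The main obstacle I foresee is the sum-of-generators issue in the pro-surjectivity for $r \ge 1$: an arbitrary element of $W_m\Omega^r_{(X, n'D)}$ is typically a sum of elementary generators whose individual summands need not vanish on $n'D$, so $[f^n]$ cannot be factored out term-by-term. The induction on the $V$-filtration is intended to bypass this by reducing at each step to the $W_1$-level, where the vanishing condition on $n'D$ translates directly into an honest divisibility statement on $\Omega^r_X$ that can be handled by the Cartier-isomorphism picture.
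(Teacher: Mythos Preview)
Your pro-surjectivity argument has a real gap at exactly the point you flag. The $V$-filtration induction you sketch does not obviously go through: even at the $W_1$-level you would need to control $\ker\bigl(\Fil^{m-1}_V W_m\Omega^r_A \to \Fil^{m-1}_V W_m\Omega^r_{A/f^{n'}}\bigr)$, and this relative piece is not simply $V^{m-1}\Omega^r_{(A,f^{n'})} + dV^{m-1}\Omega^{r-1}_{(A,f^{n'})}$ in general, so the reduction to a divisibility statement on $\Omega^r$ is not clean. The paper bypasses the sum-of-generators problem entirely by invoking a structural result of Hesselholt \cite[Lemma~1.2.2]{Hesselholt-2004}: the relative module $W_m\Omega^r_{(A,I)}$ is generated over $W_m(A)$ by forms $a_0\,da_1\wedge\cdots\wedge da_r$ with some $a_j\in W_m(I)$. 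With this in hand one takes $n=p^m$, permutes so that $a_1\in W_m(I^n)$, writes $a_1=\sum_{i=0}^{m-1}V^i([f^n]_{m-i}[a'_i]_{m-i})$, and uses the projection formula $V^i(F^i(x)y)=xV^i(y)$ to rewrite each summand as $[f^{p^{m-i}}]_m\,V^i[a'_i]$; applying $d$ and the Leibniz rule then shows each term lies in $[f]\cdot W_m\Omega^r_A$. This is the key lemma (\lemref{lem:pro-iso-rel-local}) you are missing.

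For injectivity your approach is different from the paper's and more delicate than you suggest: the claim that $[f]$ acts on $\gr^i_V W_m\Omega^r_X$ as multiplication by $f^{p^i}$ requires identifying the $W_m\sO_X$-module structure on the graded pieces through the Cartier-type isomorphisms, and the graded pieces involve quotients like $\Omega^r_X/B_i\Omega^r_X$ rather than $\Omega^r_X$ itself, so local freeness needs the regularity hypothesis plus Illusie's structure theory. The paper instead uses the Gersten resolution for $W_m\Omega^\bullet_X$ (Gros) to get an injection $W_m\Omega^r_X\hookrightarrow j_*W_m\Omega^r_U$, then tensors with the invertible sheaf $W_m\sO_X(-D)$ and uses the projection formula to conclude that $W_m\Omega^r_X(-D)\hookrightarrow j_*W_m\Omega^r_U$; since this factors through $W_m\Omega^r_{(X,D)}$, injectivity follows. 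This is both shorter and avoids any filtration bookkeeping.
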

\begin{proof}
The first part follows directly from \lemref{lem:pro-iso-rel-local} below.
We prove the second part.
Let $U = X \setminus D$ and $j \colon U \inj X$ the inclusion.
Using the N{\'e}ron-Popescu approximation and the Gersten resolution of
$W_m\Omega^\bullet_X$ (when $X$ is smooth), proven in
\cite[Proposition~II.5.1.2]{Gross}, it follows that the canonical map
$W_m\Omega^\bullet_X \to j_* W_m\Omega^\bullet_U$ is injective (see
\cite[Proposition~2.8]{KP-dRW}).
Since $W_m\sO_X(-D)$ is invertible as a $W_m\sO_X$-module, 
there are natural isomorphisms 
$j_* W_m\Omega^\bullet_U \otimes_{W_m\sO_X} W_m\sO_X(-D) 
\to j_*(W_m\Omega^\bullet_U \otimes_{W_m\sO_U} j^* W_m\sO_X(-D)) \cong
j_* W_m\Omega^\bullet_U$ (see \cite[Exercise~II.5.1(d)]{Hartshorne-AG}).
Using the invertibility of $W_m\sO_X(-D)$ again, we conclude that the
canonical map $W_m\Omega^\bullet_X(-D) \to j_* W_m\Omega^\bullet_U$ is injective.
Since this map factors through 
$W_m\Omega^\bullet_X(-D) \to W_m\Omega^\bullet_{(X,D)} \inj W_m\Omega^\bullet_X$,
the lemma follows.
\end{proof}

\begin{lem}\label{lem:pro-iso-rel-local}
Let $A$ be a commutative $\F_p$-algebra and $I = (f) \subseteq A$ a
principal ideal. Let $m \ge 1$ be any integer and $n = p^m$.
Then the inclusion
$W_m\Omega^r_{(A, I^n)} \subseteq W_m\Omega^r_A$ of $W_m(A)$-modules 
factors through $W_m\Omega^r_{(A, I^n)} \subseteq  [f] \cdot W_m\Omega^r_A$
for every $r \ge 0$.
\end{lem}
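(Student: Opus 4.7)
The plan is to handle the $r = 0$ (Witt-vector) case by direct manipulation of the $F,V$-operators, deduce the analogous fact for $d$ of Witt vectors via the Leibniz rule, and finally lift everything to general $r$ by invoking the standard presentation of the relative de Rham-Witt complex as a differential ideal.

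For the degree-zero case, the crucial identity is
\[V^i\bigl([f^{p^m}b]_{m-i}\bigr) \;=\; [f]_m^{p^{m-i}}\cdot V^i\bigl([b]_{m-i}\bigr) \quad \text{in } W_m(A), \quad 0 \le i \le m-1,\ b \in A,\]
which follows by $p^{m-i}$ iterations of the projection formula $V^i(F^i(x)\cdot y) = x\cdot V^i(y)$ (a consequence of property (4)) applied with $x = [f]_m$, using $F^i[f]_m = [f]_{m-i}^{p^i}$ to peel off one factor of $[f]_m$ at a time from $[f]_{m-i}^{p^m} = (F^i[f]_m)^{p^{m-i}}$. Since any $(b_0,\dots,b_{m-1}) \in W_m(I^{p^m})$ decomposes as $\sum_{i=0}^{m-1} V^i([f^{p^m}c_i]_{m-i})$ (write $b_i = f^{p^m}c_i$ using $I^{p^m} = (f^{p^m})$), this proves $W_m(I^{p^m}) \subseteq [f]_m \cdot W_m(A)$. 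Now applying $d$ to the identity and invoking Leibniz gives
\[dV^i\bigl([f^{p^m}b]_{m-i}\bigr) \;=\; p^{m-i}\,[f]_m^{p^{m-i}-1}\,d[f]_m \cdot V^i([b]_{m-i}) \;+\; [f]_m^{p^{m-i}}\, dV^i([b]_{m-i}),\]
and since $p^{m-i} - 1 \ge p - 1 \ge 1$ for $i \le m - 1$, both summands are divisible by $[f]_m$; hence $d(W_m(I^{p^m})) \subseteq [f]_m \cdot W_m\Omega^1_A$.

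To finish in general degree, I would invoke the standard presentation
\[W_m\Omega^r_{(A,J)} \;=\; W_m(J) \cdot W_m\Omega^r_A \;+\; dW_m(J) \wedge W_m\Omega^{r-1}_A\]
valid for any ideal $J \subseteq A$, which is the de Rham-Witt analogue of the Kähler formula $\Omega^r_{(A,J)} = J\Omega^r_A + dJ \wedge \Omega^{r-1}_A$. It follows from the Langer-Zink / Illusie universal property of the $V$-pro-complex: the right-hand side is a differential pro-ideal of $W_\bullet\Omega^\bullet_A$ stable under $F$, $V$, $R$, and the quotient satisfies the universal property characterizing $W_\bullet\Omega^\bullet_{A/J}$. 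Taking $J = I^{p^m}$ and combining with the two Witt-vector facts above gives
\[W_m\Omega^r_{(A,I^{p^m})} \;\subseteq\; [f]_m W_m(A) \cdot W_m\Omega^r_A + [f]_m W_m\Omega^1_A \wedge W_m\Omega^{r-1}_A \;\subseteq\; [f]_m \cdot W_m\Omega^r_A,\]
which is the desired inclusion. The main obstacle is precisely this last step: the Witt-vector computations of the first two paragraphs are routine, but a clean verification of the kernel presentation requires either a detailed check of the universal property on the quotient pro-complex or an induction on $m$ via the standard filtration sequence $0 \to V W_{m-1}\Omega^r + dV W_{m-1}\Omega^{r-1} \to W_m\Omega^r \to \Omega^r \to 0$ reducing to the classical Kähler case.
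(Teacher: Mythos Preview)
Your approach is essentially the same as the paper's: both reduce to the projection formula $V^i(F^i(x)\,y) = x\,V^i(y)$ applied with $x = [f]_m^{p^{m-i}}$ to pull the Teichm\"uller factor outside $V^i$, followed by the Leibniz rule on $d([f^{p^{m-i}}]_m\cdot V^i[a']_{m-i})$. The organisational difference is minor: you isolate the $r=0$ Witt-vector case first and then feed it into a presentation of $W_m\Omega^r_{(A,J)}$, whereas the paper works directly with a degree-$r$ generator $a_0\,da_1\wedge\cdots\wedge da_r$ with some $a_i \in W_m(I^n)$.

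The ``main obstacle'' you flag --- the presentation $W_m\Omega^r_{(A,J)} = W_m(J)\cdot W_m\Omega^r_A + dW_m(J)\wedge W_m\Omega^{r-1}_A$ --- is not something you need to redo via the $V$-pro-complex universal property. The paper simply cites Hesselholt \cite[Lemma~1.2.2]{Hesselholt-2004}, which says $W_m\Omega^r_{(A,J)}$ is generated as a $W_m(A)$-module by forms $a_0\,da_1\wedge\cdots\wedge da_r$ with $a_i \in W_m(J)$ for some $i$; after the trivial rewriting $a_0\,da_1 = d(a_0a_1) - a_1\,da_0$ this is exactly your formula. So your proof is complete once you replace the final paragraph by that citation.
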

\begin{proof}
By \cite[Lemma~1.2.2]{Hesselholt-2004}, $W_m\Omega^r_{(A,I)}$ is the
$W_m(A)$-submodule of $W_m\Omega^r_A$ generated by the de Rham-Witt forms
of the type $a_0da_1 \wedge \cdots \wedge da_r$,
where $a_i \in W_m(A)$ for all $i$ and $a_i \in W_m(I)$ for some $i$.
We let $\omega = a_0da_1 \wedge \cdots \wedge da_r \in 
W_m \Omega^r_{(A, I^n)}$ such that $a_i \in W_m(I^{n})$ for some 
$i \neq 0$. 
We can assume after a permutation that $i = 1$. We let
$\omega' = da_2 \wedge \cdots \wedge da_r$.
We can write $a_1 = \stackrel{m-1}{\underset{i = 0}\sum} 
V^i([f^{n}]_{m-i}[a'_i]_{m-i})$ for some $a'_i \in A$.
It follows that
$\omega = \stackrel{m-1}{\underset{i = 0}\sum} 
a_0dV^i([f^{n}]_{m-i}[a'_i]_{m-i}) \wedge \omega'$.

We fix an integer $0 \le i \le m-1$ and let
$\omega_i = a_0dV^i([f^{n}]_{m-i}[a'_i]_{m-i}) \wedge \omega'$.
It suffices to show that each $\omega_i \in W_m\Omega^r_X(-D)$, where
$X = \Spec(A)$ and $D = \Spec(A/I)$.
However, we have
\[
\begin{array}{lll}
\omega_i  & = & a_0dV^i([f^{n}]_{m-i}[a'_i]_{m-i}) \wedge \omega' \\
& = & a_0d([f^{p^{m-i}}]_mV^i[a'_i]_{m-i}) \wedge \omega' \\
& = & [f^{p^{m-i}}]_ma_0dV^i[a'_i]_{m-i} \wedge \omega' +
p^{m-i} [f^{p^{m-i}-1}]_ma_0V^i[a'_i]_{m-i}d[f]_m \wedge \omega'.
\end{array}
\]
It is clear that the last term lies in $W_m\Omega^r_X(-D)$.
This finishes the proof.
\end{proof}

The following result will play a key role
in our exposition.

\begin{cor}\label{cor:pro-iso-rel*}
If $(X,D)$ is a regular modulus pair, then the 
canonical map of pro-sheaves 
\[
\{W_m\Omega^\bullet_X(-nD)\}_n \to \{W_m\Omega^\bullet_{(X,nD)}\}_n
\]
on $X_\nis$ is an isomorphism for every $m \ge 1$.
\end{cor}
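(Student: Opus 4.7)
The plan is to realize this pro-isomorphism as two mutually inverse morphisms in the pro-category, assembled from the two preceding lemmas. On the one hand, the second assertion of \lemref{lem:pro-iso-rel} applied to the Cartier divisor $nD$ on the regular scheme $X$ yields level-wise injective inclusions $W_m\Omega^\bullet_X(-nD) \hookrightarrow W_m\Omega^\bullet_{(X,nD)}$ for every $n \ge 1$. These are compatible with the evident transition maps on both sides (all terms sit inside the ambient $W_m\Omega^\bullet_X$), and so define a morphism $\phi$ of pro-sheaves which is injective at each level.

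For the reverse direction, I would invoke \lemref{lem:pro-iso-rel-local} locally. At a point $x \in X$, let $f \in \sO^h_{X,x}$ be a local equation for $D$; then $f^\ell$ is a local equation for $\ell D$, and the lemma applied to the pair $(\sO^h_{X,x}, (f^\ell))$ with exponent $p^m$ yields
\[
W_m\Omega^r_{(\sO^h_{X,x}, \, (f^{\ell p^m}))} \subseteq [f^\ell] \cdot W_m\Omega^r_{\sO^h_{X,x}}.
\]
The right-hand side is the stalk of $W_m\Omega^r_X(-\ell D)$, and this inclusion is independent of the choice of local parameter $f$, because replacing $f$ by a unit multiple $uf$ scales $[f^\ell]$ by the Teichm\"uller lift $[u^\ell]$, which is a unit in $W_m\sO_X$. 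Sheafifying therefore produces inclusions $W_m\Omega^\bullet_{(X, p^m \ell D)} \hookrightarrow W_m\Omega^\bullet_X(-\ell D)$ for every $\ell \ge 1$. Since the subsystem indexed by $n = p^m \ell$ is cofinal in $\{W_m\Omega^\bullet_{(X, nD)}\}_n$, these inclusions assemble into a pro-morphism $\psi$ running in the opposite direction.

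Finally, because both $\phi$ and $\psi$ are induced by the identity on stalks of the ambient $W_m\Omega^\bullet_X$, the compositions $\psi \circ \phi$ and $\phi \circ \psi$ are, up to the cofinal reindexing built into $\psi$, just the transition maps of the two pro-systems, hence pro-isomorphisms. Thus $\phi$ and $\psi$ are mutually inverse in the pro-category, which is the claim. The genuine work lies in the local \lemref{lem:pro-iso-rel-local}; this corollary is essentially its packaging into a pro-isomorphism statement. The one point requiring care is checking that the locally defined inclusions glue and are compatible under the transition maps to yield the global pro-morphism $\psi$, which is routine given the invertibility of $W_m\sO_X(-D)$ as a $W_m\sO_X$-module.
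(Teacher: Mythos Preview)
Your argument is correct and matches the paper's intended proof: the corollary is stated without proof precisely because it is the combination of the two assertions of \lemref{lem:pro-iso-rel} (level-wise injectivity for regular $X$, and pro-surjectivity via \lemref{lem:pro-iso-rel-local}). Your explicit construction of the pro-inverse $\psi$ is exactly what underlies the pro-surjectivity claim in \lemref{lem:pro-iso-rel}, so the two arguments are the same in substance.
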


\subsection{The relative logarithmic sheaves}
\label{sec:Rel-lo}
In the remaining part of \S~\ref{sec:Hodge-Witt}, our default topology will
be the {\'e}tale topology. All other topologies will be mentioned specifically.
Let $k$ be a field of characteristic $p > 0$ and $D \subseteq X$ a closed
immersion in $\Sch_k$.
Recall from \cite{Illusie} that $W_m\Omega^r_{X, \log}$ is the {\'e}tale
subsheaf of $W_m\Omega^r_X$ which is the image of the
map $\dlog \colon {\wt{\sK}^M_{r,X}}/{p^m} \to W_m\Omega^r_X$, given by
$\dlog(\{a_1, \ldots , a_r\}) = \dlog[a_1]_m \wedge \cdots \wedge \dlog[a_r]_m$.
It is easily seen that this map exists.
One knows (e.g., see \cite[Remark~1.6]{Luders-Morrow}) that this map in fact 
factors through 
$\dlog \colon {\wh{\sK}^M_{r,X}}/{p^m} \surj W_m\Omega^r_{X, \log}$.
Moreover, this map is multiplicative.
The naturality of the dlog map gives rise to its relative version
\begin{equation}\label{eqn:dlog-rel}
\dlog \colon \ov{{\wh{\sK}^M_{r,(X,D)}}/{p^m}} \to W_m\Omega^r_{(X,D), \log}
:= \Ker(W_m\Omega^r_{X, \log} \surj W_m\Omega^r_{D, \log})
\end{equation}
which is a morphism of {\'e}tale sheaves of ${\wh{\sK}^M_{*,X}}/{p^m}$-modules
as $r \geq 0$ varies.

Recall that the Frobenius map
$F \colon W_{m+1}\Omega^r_X \to  W_{m}\Omega^r_X$ sends $\Ker(R)$ into the 
subsheaf $dV^{m-1}\Omega^{r-1}_X$ so that there is an induced map
$\ov{F} \colon W_{m}\Omega^r_X \to {W_{m}\Omega^r_X}/{dV^{m-1}\Omega^{r-1}_X}$.
We let $\pi \colon W_{m}\Omega^r_X \to {W_{m}\Omega^r_X}/{dV^{m-1}\Omega^{r-1}_X}$
denote the projection map. Since $R - F \colon
W_{m+1}\Omega^r_X \to  W_{m}\Omega^r_X$ is surjective by 
\cite[Proposition~I.3.26]{Illusie}, it follows that $\pi -\ov{F}$ is also
surjective. 
Indeed, this surjectivity is proven for smooth schemes over $k$ in loc. cit., but 
then the claim follows because $X$ can be seen locally as a closed subscheme 
of a regular scheme and a regular scheme is a
 filtered inductive limit of smooth schemes 
 over $k$, by N\'eron–Popescu desingularisation.
 We thus get a commutative diagram 
\begin{equation}\label{eqn:dlog-2-*}
\xymatrix@C.8pc{
0 \ar[r] &  W_{m}\Omega^r_{X, \log} \ar[r] \ar@{->>}[d] &
W_{m}\Omega^r_X \ar@{->>}[d] \ar[r]^-{\pi - \ov{F}} & 
\frac{W_{m}\Omega^r_X}{dV^{m-1}\Omega^{r-1}_X} \ar[r] \ar@{->>}[d] & 0 \\
0 \ar[r] &  W_{m}\Omega^r_{D, \log} \ar[r] &
W_{m}\Omega^r_D \ar[r]^-{\pi - \ov{F}}  & 
\frac{W_{m}\Omega^r_D}{dV^{m-1}\Omega^{r-1}_D} \ar[r] & 0.}
\end{equation}
The middle and the right vertical arrows are surjective by definition 
 and the two rows are exact by loc. cit. and \cite[Corollary~4.2]{Morrow-ENS}. 
The
surjectivity of the left vertical arrow follows by applying the dlog map to
the surjection $\wh{\sK}^M_{r,X} \surj \wh{\sK}^M_{r,D}$.
Taking the kernels of the vertical arrows, we get a short exact sequence
\begin{equation}\label{eqn:dlog-2}
0 \to W_m\Omega^r_{(X,D), \log} \to W_m\Omega^r_{(X,D)} \to 
\frac{W_m\Omega^r_{(X,D)}}{W_m\Omega^r_{(X,D)} \cap dV^{m-1} \Omega^{r-1}_{X}} \to 0.
\end{equation}

The following property of the relative dlog map will be important to us.

\begin{lem}\label{lem:Rel-dlog-iso}
Assume that $k$ is perfect and $X$ is regular. Then the 
map of {\'e}tale pro-sheaves
\[
\dlog \colon \{\ov{{\wh{\sK}^M_{r,(X,nD)}}/{p^m}}\}_n \to
\{W_m\Omega^r_{(X,nD), \log}\}_n 
\]
is an isomorphism for every $m, r \ge 1$.
\end{lem}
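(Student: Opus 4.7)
The plan is to reduce the lemma to a pro-vanishing statement via a snake-lemma chase using Bloch--Gabber--Kato for $X$, and then to establish that pro-vanishing by matching the Bloch--Kato filtration on Milnor $K$-theory with the filtration on logarithmic Hodge--Witt sheaves.

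\emph{Step 1: Reduction via Bloch--Gabber--Kato.} The claim is étale-local, so I pass to strict henselian stalks. Fix a geometric point $\bar x \to X$, set $A = \sO^{sh}_{X,\bar x}$, choose a local equation $f \in A$ for $D$ (possible since $D$ is Cartier), and let $I = (f)$ and $R_n = A/I^n$. Because $X$ is regular and essentially of finite type over the perfect field $k$ it is smooth at $\bar x$, and since the residue field of $A$ is algebraically closed, Bloch--Gabber--Kato (as in \cite[Theorem~8.4]{Geisser-Levine} and \cite[Remark~1.6]{Luders-Morrow}) gives $\dlog \colon \wh K^M_r(A)/p^m \xrightarrow{\cong} W_m\Omega^r_{A,\log}$. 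This isomorphism sits as the middle column of a commutative square that maps the exact sequence (the $\wh{\sK}^M$-analogue of~\eqref{eqn:Rel-Milnor})
\[
0 \to \ov{\wh K^M_r(A,I^n)/p^m} \to \wh K^M_r(A)/p^m \to \wh K^M_r(R_n)/p^m \to 0
\]
via the dlog maps to the defining sequence of $W_m\Omega^r_{(A,I^n),\log}$, the right-most dlog being surjective by the definition of the log sheaf. A snake-lemma chase then shows that the left vertical dlog is injective for every $n$ and that its cokernel is isomorphic to $\ker(\dlog_{R_n})$. The lemma therefore reduces to showing that the pro-étale sheaf $\{\ker(\dlog_{R_n})\}_n$ vanishes.

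\emph{Step 2: Filtration comparison.} Transferring via the Bloch--Gabber--Kato isomorphism, the families $\{G^n_{m,r}(A) = \ov{\wh K^M_r(A,I^n)/p^m}\}_n$ and $\{W_m\Omega^r_{(A,I^n),\log}\}_n$ are two decreasing filtrations of $\wh K^M_r(A)/p^m$ with $G^n_{m,r}(A) \subseteq W_m\Omega^r_{(A,I^n),\log}$ by Step 1; the pro-vanishing I need is precisely the cofinality in the reverse direction. To produce it, I apply Corollary~\ref{cor:pro-iso-rel*} to replace $W_m\Omega^r_{(A,I^{n'})}$ by $[f^{n'}] W_m\Omega^r_A$ in the pro-sense, so that sequence~\eqref{eqn:dlog-2} identifies $W_m\Omega^r_{(A,I^{n'}),\log}$ with $\ker(\pi - \ov F)$ on that module. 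For $n'$ sufficiently large relative to $n$ and $p^m$, the Bloch--Kato symbol $\rho$ of~\eqref{eqn:Rel-filtn-3} together with the graded-piece identifications of Lemma~\ref{lem:Coherence} expresses each such log form as $\dlog$ of a symbol in $E_r(A,I^{n''}) = \ov{\wh K^M_r(A,I^{n''})}$ for an appropriate $n'' \le n'$, and the chain $F^n \supseteq G^{n+1}$ of Lemma~\ref{lem:Rel-filtn-2} guarantees that this symbol in fact lies in $G^n_{m,r}(A)$ once $n'$ is large enough, yielding the desired cofinality.

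\emph{Main obstacle.} The delicate ingredient is the symbol realization in Step 2: writing a log form in $W_m\Omega^r_{(A,I^{n'}),\log}$, viewed inside $[f^{n'}] W_m\Omega^r_A$, as a concrete relative Milnor symbol. Lemma~\ref{lem:Coherence} accomplishes this when $R = A/I$ is regular, but in the generality of the lemma $D_\red$ need not be regular at $\bar x$. In the pro-setting the matching can still be pushed through because any obstruction lives in a fixed finite quotient of the filtration and is absorbed by going deeper along the pro-system, but doing this cleanly---carefully tracking the Frobenius--Verschiebung identities that propagate $f$-divisibility across filtration levels and interact with $\pi - \ov F$---is the main technical burden of the proof.
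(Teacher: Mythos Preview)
Your Step~1 is exactly the reduction the paper carries out: set up the commutative diagram with exact rows
\[
\xymatrix@C.8pc{
0 \ar[r] & \{\ov{{\wh{\sK}^M_{r,(X,nD)}}/{p^m}}\}_n \ar[r] \ar[d]_-{\dlog} &
{\wh{\sK}^M_{r,X}}/{p^m} \ar[d]^-{\dlog}_-{\cong} \ar[r] &
\{{\wh{\sK}^M_{r,nD}}/{p^m}\}_n \ar[d]^-{\dlog} \ar[r] & 0 \\
0 \ar[r] & \{W_m\Omega^r_{(X,nD), \log}\}_n \ar[r] & W_m\Omega^r_{X,\log} \ar[r] &
\{W_m\Omega^r_{nD,\log}\}_n \ar[r] & 0,}
\]
observe that the middle arrow is an isomorphism by Bloch--Gabber--Kato plus Gersten, and conclude that the left arrow is a pro-isomorphism as soon as the right one is. Where you diverge is in how to handle the right column. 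The paper does not attempt to prove this by hand: it simply invokes \cite[Theorem~0.3]{Luders-Morrow}, which establishes that $\dlog \colon \{{\wh{\sK}^M_{r,nD}}/{p^m}\}_n \to \{W_m\Omega^r_{nD,\log}\}_n$ is a pro-isomorphism for arbitrary closed subschemes of a regular $\F_p$-scheme. That is the entire content of the lemma once Step~1 is in place.

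Your Step~2, by contrast, has a genuine gap. The filtration machinery you invoke --- the map $\rho$ of~\eqref{eqn:Rel-filtn-3} and the graded-piece identifications of Lemma~\ref{lem:Coherence} --- is developed in \S\ref{sec:Filt*} under the standing hypothesis that $R = A/I$ is \emph{regular}. You acknowledge this in your ``main obstacle'' paragraph but then assert that ``in the pro-setting the matching can still be pushed through because any obstruction lives in a fixed finite quotient of the filtration and is absorbed by going deeper along the pro-system.'' This is not an argument: when $D_\red$ is singular at $\bar x$ the Bloch--Kato graded computations genuinely fail, and there is no a~priori reason the obstruction is bounded in the way you suggest. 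What you are attempting to reprove is essentially the L{\"u}ders--Morrow theorem itself, and their proof does not proceed via these filtrations. You should replace Step~2 by a direct citation of \cite[Theorem~0.3]{Luders-Morrow}.
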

\begin{proof}
We consider the commutative diagram of pro-sheaves:
\begin{equation}\label{eqn:dlog-0}
\xymatrix@C.8pc{
0 \ar[r] & \{\ov{{\wh{\sK}^M_{r,(X,nD)}}/{p^m}}\}_n \ar[r] \ar[d]_-{\dlog} &
{\wh{\sK}^M_{r,X}}/{p^m} \ar[d]^-{\dlog}_-{\cong} \ar[r] & 
\{{\wh{\sK}^M_{r,nD}}/{p^m}\}_n \ar[d]^-{\dlog} \ar[r] & 0 \\ 
0 \ar[r] & \{W_m\Omega^r_{(X,nD), \log}\}_n \ar[r] & W_m\Omega^r_{X,\log} \ar[r] &
\{W_m\Omega^r_{nD,\log}\}_n \ar[r] & 0.}
\end{equation}

The middle vertical arrow is an isomorphism by the Bloch-Gabber-Kato theorem
for fields \cite[Corollary~2.8]{Bloch-Kato} and the proof of the Gersten 
conjecture for Milnor $K$-theory (\cite[Proposition~10]{Kerz-10}) and
logarithmic Hodge-Witt sheaves (\cite[Th{\`e}or{\'e}me~1.4]{Gross-Suwa-88}). 
The right vertical arrow is an
isomorphism by \cite[Theorem~0.3]{Luders-Morrow}. Since the rows are exact,
it follows that the left vertical arrow is also an isomorphism.
This finishes the proof.
\end{proof}

We now prove a Nisnevich version of \lemref{lem:Rel-dlog-iso}.
For any $k$-scheme $X$, we let $W_m\Omega^r_{X, \log, \nis}$ be the image of the
map of Nisnevich sheaves
$\dlog \colon {\wh{\sK}^M_{r,X}}/{p^m} \to W_m\Omega^r_X$, given by
$\dlog(\{a_1, \ldots , a_r\}) = \dlog[a_1]_m \wedge \cdots \wedge \dlog[a_r]_m$
(see \cite[Remark~1.6]{Luders-Morrow} for the existence of this map).
The naturality of this map gives rise to its relative version
\begin{equation}\label{eqn:dlog-rel-*}
\dlog \colon \ov{{\wh{\sK}^M_{r,(X,D)}}/{p^m}} \to W_m\Omega^r_{(X,D), \log, \nis}
:= \Ker(W_m\Omega^r_{X, \log, \nis} \surj W_m\Omega^r_{D, \log, \nis})
\end{equation}
if $D \subseteq X$ is a closed immersion.

\begin{lem}\label{lem:Rel-dlog-iso-nis}
Assume that $k$ is perfect and $X$ is regular. Then the 
map of Nisnevich pro-sheaves
\[
\dlog \colon \{\ov{{\wh{\sK}^M_{r,(X,nD)}}/{p^m}}\}_n \to
\{W_m\Omega^r_{(X,nD), \log, \nis}\}_n 
\]
is an isomorphism for every $m, r \ge 1$.
\end{lem}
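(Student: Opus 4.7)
The plan is to repeat the argument of Lemma~\ref{lem:Rel-dlog-iso} in Nisnevich topology. I form the commutative diagram of Nisnevich pro-sheaves whose rows are
\[
0 \to \{\ov{\wh{\sK}^M_{r,(X,nD)}/p^m}\}_n \to \wh{\sK}^M_{r,X}/p^m \to \{\wh{\sK}^M_{r,nD}/p^m\}_n \to 0
\]
and
\[
0 \to \{W_m\Omega^r_{(X,nD),\log,\nis}\}_n \to W_m\Omega^r_{X,\log,\nis} \to \{W_m\Omega^r_{nD,\log,\nis}\}_n \to 0,
\]
with the three vertical arrows being the dlog maps. Exactness of the top row is immediate from the defining presentation of $\ov{\wh{\sK}^M_{r,(X,nD)}/p^m}$ together with the (tautological) surjectivity of $\wh{\sK}^M_{r,X} \surj \wh{\sK}^M_{r,nD}$ as Nisnevich sheaves. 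For the bottom row, applying dlog to this surjection produces the surjection $W_m\Omega^r_{X,\log,\nis} \surj W_m\Omega^r_{nD,\log,\nis}$, whose kernel is, by construction, $W_m\Omega^r_{(X,nD),\log,\nis}$. A five-lemma chase then reduces the lemma to showing that the middle and the right vertical arrows are isomorphisms of (pro-)sheaves.

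For the middle vertical arrow I would verify the assertion on Nisnevich stalks: at a point $x \in X$ one must show that for the Henselian regular local ring $A = \sO^h_{X,x}$ the map $\dlog\colon \wh{K}^M_r(A)/p^m \to W_m\Omega^r_{A,\log}$ is an isomorphism. This is the same local input used in Lemma~\ref{lem:Rel-dlog-iso}, resting on the Bloch--Gabber--Kato theorem for fields \cite[Corollary~2.8]{Bloch-Kato} together with the Gersten resolutions for improved Milnor $K$-theory \cite[Proposition~10]{Kerz-10} and for logarithmic de Rham--Witt sheaves \cite[Th{\`e}or{\'e}me~1.4]{Gross-Suwa-88}; both of the latter hold Nisnevich-locally on regular schemes, so no passage to the strict Henselization is required, and we do not lose anything by working in the Nisnevich topology.

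For the right vertical arrow, I would invoke a Nisnevich refinement of \cite[Theorem~0.3]{Luders-Morrow}. Their proof proceeds via an inductive presentation of each $\wh{K}^M_r(\sO^h_{nD,x})/p^m$ and of its image in $W_m\Omega^r_{\sO^h_{nD,x}}$ by explicit generators, so the same argument produces the desired pro-isomorphism $\{\wh{\sK}^M_{r,nD}/p^m\}_n \xrightarrow{\cong} \{W_m\Omega^r_{nD,\log,\nis}\}_n$ of Nisnevich pro-sheaves. The main obstacle I anticipate is precisely this last point: certifying that the Lüders--Morrow pro-isomorphism, as stated étale-ly, genuinely descends to Nisnevich. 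Since their argument is essentially local at Henselian stalks and does not use any étale-cohomological input beyond Gersten-type resolutions, this should amount to careful bookkeeping rather than substantively new input, but it is the step that requires the most care. Combining these two isomorphisms with the five-lemma yields the lemma.
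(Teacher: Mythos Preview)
Your proposal is correct and follows exactly the paper's approach: the paper's proof simply says to repeat the argument of Lemma~\ref{lem:Rel-dlog-iso}, observing that the middle and right vertical arrows remain isomorphisms in the Nisnevich topology by the very same references. Your only hesitation, about whether \cite[Theorem~0.3]{Luders-Morrow} descends to Nisnevich, is unnecessary: that result is already formulated for Henselian local rings, so it computes Nisnevich stalks directly and no separate ``refinement'' is needed.
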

\begin{proof}
The proof is completely identical to that of \lemref{lem:Rel-dlog-iso}, where
we only have to observe that the middle and the right vertical arrows
are isomorphisms even in the Nisnevich topology, by the same references that
we used for the {\'e}tale case.
\end{proof}

\begin{lem}\label{lem:Kato-complex}
Assume that $k$ is a finite field and $X$ is regular of pure dimension 
$d \ge 1$. Then the canonical map
\[
H^i_\nis(X, {\wh{\sK}^M_{d,X}}/{p^m}) \to H^i_\etl(X, {\wh{\sK}^M_{d,X}}/{p^m})
\]
is an isomorphism for $i = d$ and surjective for $i = d-1$.
\end{lem}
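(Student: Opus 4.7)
The plan is to identify both sides with the cohomology of logarithmic Hodge–Witt sheaves via the dlog map, reduce the isomorphism in degree $d$ to the Kerz–Zhao comparison theorem, and then analyze a Leray spectral sequence to obtain the surjectivity in degree $d-1$.

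First I would use the dlog map, exactly as in the proof of \lemref{lem:Rel-dlog-iso} applied with $D = \emptyset$, to identify $\wh{\sK}^M_{d,X}/p^m$ with $W_m\Omega^d_{X,\log}$ in the \'etale topology and with $W_m\Omega^d_{X,\log,\nis}$ in the Nisnevich topology. This uses Bloch--Kato--Gabber together with the Gersten resolutions for Milnor $K$-theory \cite[Proposition~10]{Kerz-10} and for logarithmic Hodge--Witt sheaves \cite{Gross-Suwa-88}, both available since $X$ is regular over the perfect field $k$. The statement thus reduces to showing that
\[
H^i_\nis(X, W_m\Omega^d_{X,\log,\nis}) \to H^i_\etl(X, W_m\Omega^d_{X,\log})
\]
is an isomorphism for $i = d$ and surjective for $i = d-1$, and the case $i = d$ is the case $D = \emptyset$ of \thmref{thm:RS-comp}, i.e.\ \cite[Theorem~3.3.1]{Kerz-Zhau}.

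For the surjectivity at $i = d-1$ I would analyze the Leray spectral sequence
\[
E_2^{p,q} = H^p_\nis(X, R^q\epsilon_* W_m\Omega^d_{X,\log}) \Rightarrow H^{p+q}_\etl(X, W_m\Omega^d_{X,\log}).
\]
By Henselian invariance of \'etale cohomology, the Nisnevich stalk of $R^q\epsilon_* W_m\Omega^d_{X,\log}$ at $x$ equals $H^q_\etl(k(x), W_m\Omega^d_{k(x),\log})$, and by Kato's vanishing for the cohomological $p$-dimension of fields of characteristic $p$ \cite{Kato-86} the latter is zero for $q \ge 2$. Hence $R^q\epsilon_* W_m\Omega^d_{X,\log} = 0$ for $q \ge 2$, only the rows $q \in \{0,1\}$ survive, the sole higher differential is $d_2 \colon E_2^{p,1} \to E_2^{p+2,0}$, and $E_3 = E_\infty$. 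The abutment sits in short exact sequences $0 \to E_\infty^{i,0} \to H^i_\etl \to E_\infty^{i-1,1} \to 0$, and the comparison map $H^i_\nis \to H^i_\etl$ factors as $H^i_\nis = E_2^{i,0} \surj E_\infty^{i,0} \inj H^i_\etl$, with cokernel $E_\infty^{i-1,1}$.

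The isomorphism at $i = d$, combined with $E_2^{d+1,0} = H^{d+1}_\nis = 0$ (by dimension), forces $E_2^{d-1,1} = 0$ and makes $d_2 \colon E_2^{d-2,1} \to E_2^{d,0}$ the zero map. Consequently the cokernel of $H^{d-1}_\nis \to H^{d-1}_\etl$ becomes $E_\infty^{d-2,1} = E_2^{d-2,1} = H^{d-2}_\nis(X, R^1\epsilon_* W_m\Omega^d_{X,\log})$, and the sought surjectivity reduces to the vanishing of this last group. This vanishing is the main obstacle; I expect it to follow from a Kato-type analysis of the Bloch--Ogus type resolution of $R^1\epsilon_* W_m\Omega^d_{X,\log}$, whose $p$-th term is concentrated on codimension-$p$ points and whose graded pieces involve degree-one Galois cohomology of residue fields with logarithmic Hodge--Witt coefficients.
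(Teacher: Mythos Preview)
Your reduction via dlog and the $i=d$ case are correct and agree with the paper. The problem lies entirely in the $i=d-1$ surjectivity.

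First, a minor slip: the Nisnevich stalk of $R^q\epsilon_* W_m\Omega^d_{X,\log}$ at $x$ is $H^q_\etl(\sO^h_{X,x}, W_m\Omega^d_{\log})$, which (by Gabber's affine analogue of proper base change) is Galois cohomology of $k(x)$ with coefficients in $W_m\Omega^d_{\sO^{sh}_{X,x},\log}$, \emph{not} in $W_m\Omega^d_{k(x),\log}$. Your conclusion $R^{\ge 2}\epsilon_* = 0$ still holds, because $cd_p(k(x)) \le 1$ applies to any $p$-torsion Galois module; but the wrong stalk description would mislead any attempt to analyse $R^1\epsilon_*$ further.

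The genuine gap is the final step. You correctly reduce the surjectivity to the vanishing of $H^{d-2}_\nis(X, R^1\epsilon_* W_m\Omega^d_{X,\log})$, but then only say you ``expect'' it. That vanishing is the entire content of the $i=d-1$ case and is not a formality: the sheaf $R^1\epsilon_* W_m\Omega^d_{X,\log}$ has no evident support or acyclicity property that would kill its $(d-2)$-th Nisnevich cohomology, and a Bloch--Ogus resolution of it does not obviously help. The paper does not use the Leray spectral sequence here at all. It quotes \cite[Propositions~3.3.2, 3.3.3]{Kerz-Zhau}, which compare the Nisnevich and \'etale theories via the coniveau (Kato-complex) filtration, and the decisive extra input is Milne's local purity
\[
H^b_x(X_\etl, W_m\Omega^d_{X,\log}) = 0 \quad \text{for } x \in X^{(a)} \text{ and } b < a
\]
from \cite[Corollary~2.2]{Milne-Zeta}; this forces the obstruction term appearing in \cite[(3.3.3)]{Kerz-Zhau} to vanish. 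That purity theorem is the missing idea in your outline, and the coniveau framework packages it more directly than the Leray spectral sequence does.
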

\begin{proof}
We have seen in the proofs of Lemmas~\ref{lem:Rel-dlog-iso} and
~\ref{lem:Rel-dlog-iso-nis} that for every $r \ge 0$,
there are natural isomorphisms
\begin{equation}\label{eqn:Kato-complex-0}
{({\wh{\sK}^M_{r,X}}/{p^m})}_{\nis} \cong W_m\Omega^r_{X, \log, \nis} \  \
{\rm and} \ \
{({\wh{\sK}^M_{r,X}}/{p^m})}_{\etl} \cong W_m\Omega^r_{X, \log}.
\end{equation}

Using these isomorphisms, the lemma follows from 
\cite[Propositions~3.3.2, 3.3.3]{Kerz-Zhau}. The
only additional input one has to use is that 
$H^b_x(X_\etl, W_m\Omega^d_{X, \log}) = 0$ if $x \in X^{(a)}$ and $b < a$ by
\cite[Corollary~2.2]{Milne-Zeta} so that the right hand side of
[loc. cit.,  (3.3.3)] is zero. This is needed in the proof of the
$i = d-1$ case of the lemma. 
\end{proof}

Assume now that $k$ is a perfect field, $X \in \Sch_k$ 
is a regular scheme of pure dimension 
$d \ge 1$ and $D \subset X$  is a nowhere dense closed
subscheme. We fix integers $m \geq 1$ and $i, r\geq 0$. 
Under these assumptions, we shall prove the 
following results.

\begin{lem}\label{lem:log-HW}
There is a short exact sequence
\[
0 \to W_{m-i}\Omega^r_{X, \log} \xrightarrow{\un{p}^i}
W_{m}\Omega^r_{X, \log} \xrightarrow{R^{m-i}}  W_{i}\Omega^r_{X, \log}
\to 0
\]
of sheaves on $X$ in Nisnevich and {\'e}tale 
topologies.
\end{lem}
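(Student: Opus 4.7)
The plan is to reduce the claim to a tautological exact sequence in improved Milnor $K$-theory via the dlog isomorphism, and then verify the only nontrivial point (injectivity of $\un{p}^i$) by showing that the relevant stalks have no $p$-torsion. First, I would apply Lemmas~\ref{lem:Rel-dlog-iso} and~\ref{lem:Rel-dlog-iso-nis} with $D = \emptyset$ to identify, for each $s \in \{i, m-i, m\}$ and each topology $\tau \in \{\nis, \etl\}$, the sheaf $W_s\Omega^r_{X,\log}$ with $\wh{\sK}^M_{r,X}/p^s$ via the dlog map. Under these identifications $\un{p}^i$ corresponds to the canonical map induced by multiplication by $p^i$, which is well defined from $\wh{\sK}^M_{r,X}/p^{m-i}$ to $\wh{\sK}^M_{r,X}/p^m$ since $p^i \cdot p^{m-i} = p^m$, and $R^{m-i}$ corresponds to the canonical projection onto $\wh{\sK}^M_{r,X}/p^i$. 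The statement therefore reduces to exactness of
\[
0 \to \wh{\sK}^M_{r,X}/p^{m-i} \xrightarrow{p^i} \wh{\sK}^M_{r,X}/p^m \to \wh{\sK}^M_{r,X}/p^i \to 0
\]
as a sequence of Nisnevich (resp. \'etale) sheaves on $X$. Right exactness and exactness at the middle term are purely formal for any abelian sheaf, so only the injectivity of $p^i$ requires a genuine argument.

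That injectivity is equivalent to the stalks of $\wh{\sK}^M_{r,X}$ being $p$-torsion-free. At a point $x \in X$, these stalks are $\wh{K}^M_r(\sO^h_{X,x})$ in the Nisnevich topology and $\wh{K}^M_r(\sO^{sh}_{X,x})$ in the \'etale topology, and both rings are regular local and contain $\F_p$. I would then invoke Kerz's Gersten conjecture for improved Milnor $K$-theory \cite[Proposition~10]{Kerz-10}, combined with N\'eron-Popescu approximation to extend the statement from smooth local $k$-algebras to arbitrary regular local ones, to obtain an injection $\wh{K}^M_r(A) \inj K^M_r(\mathrm{Frac}(A))$. Since $\mathrm{char}(\mathrm{Frac}(A)) = p$, Izhboldin's theorem (which is also a consequence of Geisser-Levine, already used in the proof of \lemref{lem:BK-RS}) guarantees that $K^M_r(\mathrm{Frac}(A))$ is $p$-torsion-free, and the injection transports this property to $\wh{K}^M_r(A)$.

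The main point requiring care, rather than a real obstacle, is to verify that the operator $\un{p}^i$ on the logarithmic Hodge-Witt sheaves really corresponds under dlog to multiplication by $p^i$ on the pro-system $\{\wh{\sK}^M_{r,X}/p^\bullet\}$. This amounts to tracing through the construction of $\un{p}$ via Teichm\"uller lifts on Witt vectors together with the multiplicativity of the dlog map; once this bookkeeping is settled, the rest of the argument is a routine assembly of results already established in the paper.
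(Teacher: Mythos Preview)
Your proposal is correct and follows essentially the same route as the paper: both identify $W_s\Omega^r_{X,\log}$ with $\wh{\sK}^M_{r,X}/p^s$ via dlog (the paper cites~\eqref{eqn:Kato-complex-0}) and then deduce exactness from the $p$-torsion-freeness of $\wh{\sK}^M_{r,X}$, invoking Geisser--Levine. The only cosmetic difference is that the paper disposes of the \'etale case by a direct citation to \cite[Lemma~3]{CTSS} rather than running the Milnor $K$-theory argument in both topologies.
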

\begin{proof}
The {\'e}tale version of the lemma is already known by
\cite[Lemma~3]{CTSS}. To prove its Nisnevich version, we can use
the first isomorphism of ~\eqref{eqn:Kato-complex-0}.
This reduces the proof to showing that tensoring the exact sequence
\[
0 \to {\Z}/{p^{m-i}} \xrightarrow{p^i} {\Z}/{p^{m}} \xrightarrow{\pi_{m-i}} 
{\Z}/{p^{i}} \to 0
\]
with ${\wh{\sK}^M_{r,X}}$ yields an exact sequence
\begin{equation}\label{eqn:log-HW-0}
0 \to  {\wh{\sK}^M_{r,X}}/{p^{m-i}} \xrightarrow{p^i} 
{\wh{\sK}^M_{r,X}}/{p^{m}} \xrightarrow{\pi_{m-i}} {\wh{\sK}^M_{r,X}}/{p^{i}} \to 0
\end{equation}
of Nisnevich sheaves on $X$.
But this follows directly from the fact that ${\wh{\sK}^M_{r,X}}$ has
no $p^i$-torsion (see \cite[Theorem~8.1]{Geisser-Levine}).
\end{proof}

\begin{lem}\label{lem:log-HW-pro}
 There is a short exact sequence
\[
0 \to \{W_{m-i}\Omega^r_{nD, \log}\}_n \xrightarrow{\un{p}^i}
\{W_{m}\Omega^r_{nD, \log}\}_n \xrightarrow{R^{m-i}}  \{W_{i}\Omega^r_{nD, \log}\}_n
\to 0
\]
of pro-sheaves on $D$ in Nisnevich and {\'e}tale 
topologies.
\end{lem}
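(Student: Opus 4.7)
The approach is a $3\times 3$ diagram chase combined with the $9$-lemma in the category of pro-sheaves. For each $m'\in\{m-i,m,i\}$ and each $n\ge 1$, the definition $W_{m'}\Omega^r_{(X,nD),\log}=\Ker\bigl(W_{m'}\Omega^r_{X,\log}\surj W_{m'}\Omega^r_{nD,\log}\bigr)$ together with the surjectivity of the restriction map (established in the context of \eqref{eqn:dlog-2-*}) gives the short exact sequence
\[
0\to W_{m'}\Omega^r_{(X,nD),\log}\to W_{m'}\Omega^r_{X,\log}\to W_{m'}\Omega^r_{nD,\log}\to 0
\]
in both topologies. I assemble these as the three rows (for $m'=m-i,m,i$) of a commutative diagram whose vertical maps in each column are $\un{p}^i$ followed by $R^{m-i}$. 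The middle column is exactly Lemma~\ref{lem:log-HW} and the target of the current lemma is the right column. By the $9$-lemma for pro-sheaves, the desired pro-short-exactness of the right column will follow once I establish the pro-short-exactness of the left column, namely
\[
0\to\{W_{m-i}\Omega^r_{(X,nD),\log}\}_n\xrightarrow{\un{p}^i}\{W_m\Omega^r_{(X,nD),\log}\}_n\xrightarrow{R^{m-i}}\{W_i\Omega^r_{(X,nD),\log}\}_n\to 0.
\]

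To handle the left column I invoke the pro-isomorphism of Lemma~\ref{lem:Rel-dlog-iso} (and its Nisnevich counterpart Lemma~\ref{lem:Rel-dlog-iso-nis}), which is natural in $m'$ and intertwines $\un{p}^i$ and $R^{m-i}$ with multiplication by $p^i$ and the obvious projection on the Milnor $K$ side. The problem therefore reduces to showing that
\[
0\to\{\ov{{\wh{\sK}^M_{r,(X,nD)}}/{p^{m-i}}}\}_n\xrightarrow{p^i}\{\ov{{\wh{\sK}^M_{r,(X,nD)}}/{p^m}}\}_n\to\{\ov{{\wh{\sK}^M_{r,(X,nD)}}/{p^i}}\}_n\to 0
\]
is pro-exact. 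Since $\wh{\sK}^M_{r,X}$ has no $p$-torsion (Geisser-Levine, as used in the proof of Lemma~\ref{lem:log-HW}), the absolute analogue $0\to{\wh{\sK}^M_{r,X}}/{p^{m-i}}\xrightarrow{p^i}{\wh{\sK}^M_{r,X}}/{p^m}\to{\wh{\sK}^M_{r,X}}/{p^i}\to 0$ is exact; using the defining description $\ov{{\wh{\sK}^M_{r,(X,D)}}/{p^{m'}}}=\mathrm{Image}(\wh{\sK}^M_{r,(X,D)}\to{\wh{\sK}^M_{r,X}}/{p^{m'}})$, pro-injectivity and pro-surjectivity are immediate.

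The main obstacle is exactness in the middle of the last pro-sequence: given $x\in\wh{\sK}^M_{r,(X,nD)}$ whose class in ${\wh{\sK}^M_{r,X}}/{p^m}$ is of the form $p^i y$, one must produce an $n'\le n$ and a lift $y'\in\wh{\sK}^M_{r,(X,n'D)}$ with $x\equiv p^i y'\pmod{p^m\wh{\sK}^M_{r,X}}$. I expect this to be resolved by appealing to Corollary~\ref{cor:pro-iso-rel*}, which identifies $\{W_m\Omega^\bullet_{(X,nD)}\}_n$ with $\{W_m\Omega^\bullet_X(-nD)\}_n$; the latter is obtained from the absolute de Rham-Witt complex by tensoring with the invertible $W_m\sO_X$-module $W_m\sO_X(-nD)$, so the analogue of the exact sequence of Lemma~\ref{lem:log-HW} remains exact after this twist. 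Transferring this back via the dlog pro-isomorphism yields the required pro-exactness in the middle, completing the $9$-lemma argument.
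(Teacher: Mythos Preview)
Your strategy reverses the paper's logic: the paper proves the $nD$ case directly via Morrow's results \cite[Theorem~4.6, Proposition~2.14]{Morrow-ENS} and then deduces the $(X,nD)$ case (Proposition~\ref{prop:log-HW-pro-*}) by combining with Lemma~\ref{lem:log-HW}. You instead try to establish the $(X,nD)$ column first through Milnor $K$-theory and then get the $nD$ column by the $9$-lemma. That is a legitimate alternative plan, but your execution of the key step has a genuine gap.

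The problem is your final paragraph. You correctly identify that middle exactness of
\[
\{\ov{{\wh{\sK}^M_{r,(X,nD)}}/{p^{m-i}}}\}_n\xrightarrow{p^i}\{\ov{{\wh{\sK}^M_{r,(X,nD)}}/{p^m}}\}_n\to\{\ov{{\wh{\sK}^M_{r,(X,nD)}}/{p^i}}\}_n
\]
amounts to the following: if $p^i y\in\wh{\sK}^M_{r,(X,n'D)}$ for some $y\in\wh{\sK}^M_{r,X}$, then the image of $y$ in $\wh{\sK}^M_{r,nD}$ must vanish for a suitable $n\le n'$. Unwinding, this is exactly the pro-vanishing of $\{{}_{p^i}\wh{\sK}^M_{r,nD}\}_n$. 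Your proposed fix via Corollary~\ref{cor:pro-iso-rel*} does not address this: that corollary concerns the \emph{full} de Rham-Witt sheaves $W_m\Omega^r_{(X,nD)}$, and for these the sequence with $\un{p}^i$ and $R^{m-i}$ is \emph{not} short exact---the kernel of $R^{m-i}$ on $W_m\Omega^r_X$ is $V^{m-i}W_i\Omega^r_X+dV^{m-i}W_i\Omega^{r-1}_X$, which strictly contains the image of $\un{p}^i$. So there is no ``twisted analogue of Lemma~\ref{lem:log-HW}'' to transfer back through $\dlog$. The missing pro-vanishing of $p$-power torsion on the thickenings $nD$ is precisely the content supplied by Morrow's pro results, which is why the paper invokes them directly on $nD$ rather than taking your detour through $(X,nD)$.
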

\begin{proof}
The argument below works for either of Nisnevich and {\'e}tale 
topologies.
We shall prove the lemma by modifying the proof of 
\cite[Theorem~4.6]{Morrow-ENS}. The latter result says that there is an 
exact sequence
\begin{equation}\label{eqn:log-HW-pro-0}
\{W_{m}\Omega^r_{nD, \log}\}_n \xrightarrow{{p}^i}
\{W_{m}\Omega^r_{nD, \log}\}_n \xrightarrow{R^{m-i}}  \{W_{i}\Omega^r_{nD, \log}\}_n
\to 0.
\end{equation}
It suffices therefore to show that the first arrow in this sequence has a
factorization 
\begin{equation}\label{eqn:log-HW-pro-1}
\{W_{m}\Omega^r_{nD, \log}\}_n \stackrel{R^i}{\surj} 
\{W_{m-i}\Omega^r_{nD, \log}\}_n \stackrel{\un{p}^i}{\inj} 
\{W_{m}\Omega^r_{nD, \log}\}_n.
\end{equation}

To prove this factorization, we look at the commutative diagram
\begin{equation}\label{eqn:log-HW-pro-2}
\xymatrix@C.8pc{
\{W_{m}\Omega^r_{nD, \log}\}_n \ar[r]^-{{p}^i} \ar@{^{(}->}[d] &
\{W_{m}\Omega^r_{nD, \log}\}_n \ar@{^{(}->}[d] \\
\{W_{m}\Omega^r_{nD}\}_n \ar[r]^-{{p}^i}  &
\{W_{m}\Omega^r_{nD}\}_n.}
\end{equation}
It follows from \cite[Proposition~2.14]{Morrow-ENS} that the map $p^i$ at
the bottom has a factorization (see \S~\ref{sec:dRW} for the definitions of
various filtrations of $W_m\Omega^\bullet_{nD}$)
\begin{equation}\label{eqn:log-HW-pro-3}
\{W_{m}\Omega^r_{nD}\}_n  \stackrel{R^i}{\surj} 
\{W_{m-i}\Omega^r_{nD}\}_n \stackrel{\un{p}^i}{\inj} 
\{W_{m}\Omega^r_{nD}\}_n.
\end{equation}
Since the image of $\{W_{m}\Omega^r_{nD, \log}\}_n$ under $R^i$ is
$\{W_{m-i}\Omega^r_{nD, \log}\}_n$ (note the surjectivity of the second
arrow in ~\eqref{eqn:log-HW-pro-0}), 
it follows at once  
from ~\eqref{eqn:log-HW-pro-3} that the map $p^i$ on the 
top in ~\eqref{eqn:log-HW-pro-2} has a factorization as
desired in ~\eqref{eqn:log-HW-pro-1}. This concludes the proof.
\end{proof}

\begin{prop}\label{prop:log-HW-pro-*}
There is a short exact sequence
\[
0 \to \{W_{m-i}\Omega^r_{(X,nD), \log}\}_n \xrightarrow{\un{p}^i}
\{W_{m}\Omega^r_{(X,nD), \log}\}_n \xrightarrow{R^{m-i}}  
\{W_{i}\Omega^r_{(X,nD), \log}\}_n
\to 0
\]
of pro-sheaves on $X$ in Nisnevich and {\'e}tale 
topologies.
\end{prop}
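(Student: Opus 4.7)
\medskip

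\noindent\textbf{Proof plan.} The plan is to combine the two preceding lemmas by means of a $3 \times 3$ (nine-lemma) diagram chase in the abelian category of pro-sheaves on $X$, for each of the two topologies $\tau \in \{\nis, \etl\}$. The key input is that the relative sheaf $W_m\Omega^r_{(X,nD),\log}$ is defined as the kernel of the surjection $W_m\Omega^r_{X,\log} \surj W_m\Omega^r_{nD,\log}$; the surjectivity of this map in either topology was verified for the {\'e}tale case in the discussion preceding \eqref{eqn:dlog-2-*} by applying $\dlog$ to the canonical surjection $\wh{\sK}^M_{r,X} \surj \wh{\sK}^M_{r,nD}$, and an identical argument handles the Nisnevich case.

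First, for each $\ell \in \{i, m-i, m\}$ and each $n$, we have a short exact sequence of sheaves
\[
0 \to W_\ell\Omega^r_{(X,nD),\log} \to W_\ell\Omega^r_{X,\log} \to W_\ell\Omega^r_{nD,\log} \to 0,
\]
and these assemble into a short exact sequence of pro-sheaves (the middle term being the constant pro-system). These three pro-SES's are the columns of a commutative $3 \times 3$ diagram whose middle row is the short exact sequence of \lemref{lem:log-HW} (viewed as a short exact sequence of constant pro-sheaves), and whose bottom row is the pro-short exact sequence of \lemref{lem:log-HW-pro}. The horizontal maps of the top row are induced by restriction from the middle row; in particular, the first map is $\un{p}^i$ and the second is $R^{m-i}$.

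Since the category of pro-objects in an abelian category is again abelian, and since in this $3 \times 3$ diagram all columns and the middle and bottom rows are short exact, the nine-lemma produces the short exactness of the top row
\[
0 \to \{W_{m-i}\Omega^r_{(X,nD),\log}\}_n \xrightarrow{\un{p}^i}
\{W_m\Omega^r_{(X,nD),\log}\}_n \xrightarrow{R^{m-i}}
\{W_i\Omega^r_{(X,nD),\log}\}_n \to 0.
\]
The diagram chase itself is routine: injectivity of $\un{p}^i$ on the top row is inherited from its injectivity in the middle row; surjectivity of $R^{m-i}$ is obtained by lifting a section $c$ of $\{W_i\Omega^r_{(X,nD),\log}\}_n$ to $e \in W_m\Omega^r_{X,\log}$ via the surjectivity in the middle row, then correcting $e$ by an element of $W_{m-i}\Omega^r_{X,\log}$ using that its image in $\{W_m\Omega^r_{nD,\log}\}_n$ lies in the image of $\{W_{m-i}\Omega^r_{nD,\log}\}_n$ (bottom row exactness), so that the correction lands in the relative subsheaf; exactness in the middle is similarly standard.

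The main (mild) obstacle is simply to have the defining SES for $W_\ell\Omega^r_{(X,nD),\log}$ available as an exact sequence of pro-sheaves in both Nisnevich and {\'e}tale topologies; this is ensured by the surjectivity of $W_\ell\Omega^r_{X,\log} \surj W_\ell\Omega^r_{nD,\log}$ already observed in \eqref{eqn:dlog-2-*}. Once this is in place, the nine-lemma delivers the result uniformly in $\tau$.
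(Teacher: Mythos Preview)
Your proof is correct and follows essentially the same approach as the paper: the paper's proof simply says to ``combine the previous two lemmas and use that the maps $W_{m}\Omega^r_{X,\log} \to W_{m}\Omega^r_{nD,\log}$ and $W_{m}\Omega^r_{X,\log,\nis} \to W_{m}\Omega^r_{nD,\log,\nis}$ are surjective,'' and your $3\times 3$ nine-lemma argument is exactly the mechanism by which this combination is carried out.
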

\begin{proof} 
Combine the previous two lemmas and use that the maps
$W_{m}\Omega^r_{X, \log} \to W_{m}\Omega^r_{nD, \log}$ and
$W_{m}\Omega^r_{X, \log, \nis} \to W_{m}\Omega^r_{nD, \log, \nis}$ are surjective.
\end{proof}

Applying the cohomology functor, we get

\begin{cor}\label{cor:log-HW-pro-coh}
There is a long exact 
sequence of pro-abelian groups
\[
\cdots \to \{H^j_\etl(X, W_{m-i}\Omega^r_{(X,nD), \log})\}_n 
\xrightarrow{\un{p}^i} \{H^j_\etl(X, W_{m}\Omega^r_{(X,nD), \log})\}_n
\hspace*{3cm} 
\]
\[ 
\hspace*{7.5cm} 
\xrightarrow{R^{m-i}} \{H^j_\etl(X, W_{i}\Omega^r_{(X,nD), \log})\}_n 
\xrightarrow{\partial^i}
\cdots
\] 
The same also holds in the Nisnevich topology.
\end{cor}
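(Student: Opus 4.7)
The plan is to apply the long exact sequence of sheaf cohomology to the short exact sequence of pro-sheaves established in \propref{prop:log-HW-pro-*}, with appropriate care for the pro-structure. First I would recall that the category of pro-sheaves on $X_\tau$ (for $\tau \in \{\nis, \etl\}$) is an abelian category, and that sheaf cohomology extends to a functor from pro-sheaves to pro-abelian groups via the level-wise assignment $\{\sF_n\}_n \mapsto \{H^j_\tau(X, \sF_n)\}_n$.

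Next, I would invoke the standard fact from the theory of pro-categories: any short exact sequence of pro-objects in an abelian category admits, after passing to a cofinal subsystem via a strictly increasing reindexing $\phi \colon \N \to \N$, a representative consisting of honest level-wise short exact sequences. Applied to \propref{prop:log-HW-pro-*}, this yields, for each $n$ in a cofinal subsystem, an ordinary short exact sequence of sheaves
\[
0 \to W_{m-i}\Omega^r_{(X,nD),\log} \to W_{m}\Omega^r_{(X,nD),\log} \to W_{i}\Omega^r_{(X,nD),\log} \to 0.
\]
Applying $H^*_\tau(X,-)$ level-wise then produces the usual long exact sequence of cohomology groups for each such $n$.

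Finally, I would observe that these level-wise long exact sequences are compatible with the transition maps of the pro-systems by naturality of the connecting homomorphism $\partial^i$, and hence assemble into the asserted long exact sequence of pro-abelian groups. The same argument applies verbatim in both the Nisnevich and {\'e}tale topologies. The only minor subtlety is the passage to the cofinal subsystem in the second step, but since cofinal replacement does not alter the resulting pro-system of cohomology groups, no information is lost. Thus the main obstacle in this plan is not really an obstacle at all, but simply a careful bookkeeping of the pro-category formalism underlying the proof of \propref{prop:log-HW-pro-*}.
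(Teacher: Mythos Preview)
Your proposal is correct and follows essentially the same approach as the paper, which simply states ``Applying the cohomology functor, we get'' before the corollary. You have spelled out the pro-category bookkeeping (cofinal reindexing to obtain level-wise short exact sequences, then naturality of the connecting map) that the paper leaves implicit; one minor imprecision is that the level-wise representative after reindexing need not have the \emph{same} index $n$ in all three terms, but since cofinal replacement does not change the pro-object this is harmless.
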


\vskip .3cm

\subsection{Some cohomology exact sequences}\label{sec:Exact}
Let us now assume that $k$ is a finite field and $X \in \Sch_k$ is regular of 
pure dimension $d \ge 1$. For any $p^m$-torsion abelian group $V$, we let $V^\star = 
\Hom_{{\Z}/{p^m}}(V,{\Z}/{p^m})$.
Let $D \subset X$ be an effective Cartier divisor
with complement $U$.
We let $F^j_{m,r}(n) = H^j_\etl(X, W_{m}\Omega^{r}_{(X,nD), \log})$
and $F^j_{m,r}(U) = {\varprojlim}_n F^j_{m,r}(n)$.
Each group ${(F^j_{m,r}(n))}^\star$
is a profinite abelian group (see \cite[Theorem~2.9.6]{Pro-fin}).

\begin{lem}\label{lem:Lim-exact}
The sequence
\[
\cdots \to F^j_{m-1,r}(U) \to F^j_{m,r}(U) \to F^j_{1,r}(U) \to
F^{j+1}_{m-1,r}(U) \to \cdots
\]
is exact.
\end{lem}
\begin{proof}
We first prove by induction on $m$ that ${\varprojlim}^1_n  F^j_{m,r}(n) = 0$.
The $m =1$ case follows from \lemref{lem:Finite-0}. In general, 
we break the long exact sequence of \corref{cor:log-HW-pro-coh}
into short exact sequences. This yields exact sequences
of pro-abelian groups
\begin{equation}\label{eqn:Lim-exact-0}
0 \to {\rm Image}(\partial^{1}) \to \{F^j_{m-1,r}(n)\}_n 
\to \Ker(R^{m-1}) \to 0;
\end{equation}
\begin{equation}\label{eqn:Lim-exact-1}
0 \to \Ker(R^{m-1}) \to \{F^j_{m,r}(n)\}_n \to {\rm Image}(R^{m-1}) \to 0;
\end{equation}
\begin{equation}\label{eqn:Lim-exact-2}
0 \to {\rm Image}(R^{m-1}) \to \{F^j_{1,r}(n)\}_n \to {\rm Image}(\partial^1) 
\to 0.
\end{equation}

\lemref{lem:Finite-0} says that $\{F^j_{1,r}(n)\}_n$ is
a pro-system of finite abelian groups. This implies by 
~\eqref{eqn:Lim-exact-2} that each of 
${\rm Image}(R^{m-1})$ and ${\rm Image}(\partial^1)$ is isomorphic to a
pro-system of finite abelian groups. In particular, the
pro-systems of ~\eqref{eqn:Lim-exact-2} do not admit higher derived
lim functors. On the other hand, ${\varprojlim}^1_n  F^j_{m-1,r}(n) = 0$ by
induction. It follows from ~\eqref{eqn:Lim-exact-0} that
${\varprojlim}^1_n \Ker(R^{m-1}) = 0$.
Using ~\eqref{eqn:Lim-exact-1}, we get ${\varprojlim}^1_n  F^j_{m,r}(n) = 0$.

It follows from what we have shown is that the above three short
exact sequences of pro-abelian groups remain exact after we
apply the inverse limit functor. But this implies that  
the long exact sequence of \corref{cor:log-HW-pro-coh} also remains exact
after applying the inverse limit functor.
This proves the lemma.
\end{proof}

\begin{lem}\label{lem:long-exact-lim}
There is a long exact
sequence 
\[
\cdots \to {\varinjlim}_n {(F^j_{i,r}(n))}^\star \xrightarrow{(R^{m-i})^\star}
{\varinjlim}_n {(F^j_{m,r}(n))}^\star  \xrightarrow{(\un{p}^i)^\star}
{\varinjlim}_n {(F^j_{m-i,r}(n))}^\star  \to \cdots
\]
of abelian groups for every $i \ge 0$.
\end{lem}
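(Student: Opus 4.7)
The plan is to first generalize Lemma~\ref{lem:Lim-exact} to arbitrary $i \ge 1$, obtaining a long exact sequence of inverse limits
\[
\cdots \to F^j_{m-i,r}(U) \xrightarrow{\un{p}^i} F^j_{m,r}(U) \xrightarrow{R^{m-i}} F^j_{i,r}(U) \xrightarrow{\partial^i} F^{j+1}_{m-i,r}(U) \to \cdots,
\]
and then apply Pontryagin duality to this sequence.

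For the generalization, I would observe that the induction in the first part of the proof of Lemma~\ref{lem:Lim-exact} actually yields two things simultaneously: that $\varprojlim^1_n F^j_{m,r}(n) = 0$ for all $m, j, r$, and that $\{F^j_{m,r}(n)\}_n$ is pro-isomorphic to a pro-system of finite abelian groups. Indeed, the latter follows because, via the short exact sequences \eqref{eqn:Lim-exact-0}--\eqref{eqn:Lim-exact-2} for $i=1$, $\{F^j_{m,r}(n)\}_n$ is an iterated extension of subquotients of $\{F^j_{1,r}(n)\}_n$, which is a pro-system of finite groups by Lemma~\ref{lem:Finite-0}. With this stronger finiteness input in hand, the three short exact sequences extracted from Corollary~\ref{cor:log-HW-pro-coh} for arbitrary $i$ consist entirely of pro-systems of finite groups, hence all have vanishing $\varprojlim^1$. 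Applying $\varprojlim_n$ thus preserves exactness of these short sequences, and splicing the results gives the generalized long exact sequence above.

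To conclude, each $F^j_{m,r}(U) = \varprojlim_n F^j_{m,r}(n)$ is a profinite abelian group of $p^m$-torsion. Standard Pontryagin duality provides a natural isomorphism
\[
\varinjlim_n (F^j_{m,r}(n))^\star \cong (F^j_{m,r}(U))^\star,
\]
since the pro-system is pro-isomorphic to one of finite groups and duality commutes with limits/colimits in that setting. Moreover, Pontryagin duality is an exact contravariant anti-equivalence between profinite abelian $p^m$-torsion groups and discrete $p^m$-torsion abelian groups. Applying it termwise to the generalized long exact sequence reverses the arrows and produces the long exact sequence of the statement, where the connecting maps are identified with $(R^{m-i})^\star$ and $(\un{p}^i)^\star$ respectively.

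The main technical point is propagating the finiteness property (and hence the $\varprojlim^1$-vanishing) to the relevant images and kernels in the arbitrary-$i$ case; this closely parallels the inductive argument already carried out in Lemma~\ref{lem:Lim-exact} for $i=1$ and introduces no essentially new difficulty, because the inductive hypothesis only relies on $\{F^j_{1,r}(n)\}_n$ being finite, which is independent of $i$.
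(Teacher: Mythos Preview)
Your approach is correct but takes a considerably longer route than the paper. The paper argues directly: it applies the Pontryagin dual functor $(-)^\vee$ to the long exact sequence of pro-abelian groups in \corref{cor:log-HW-pro-coh}, observes that this functor is exact on discrete torsion abelian groups, and then takes the direct limit (which is also exact). Since each $F^j_{m,r}(n)$ is discrete $p^m$-torsion, one has $(F^j_{m,r}(n))^\vee = (F^j_{m,r}(n))^\star$, and the desired long exact sequence falls out immediately. No passage through inverse limits, no $\varprojlim^1$-vanishing, and no finiteness input beyond the torsion hypothesis are needed.

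By contrast, you first upgrade \lemref{lem:Lim-exact} to arbitrary $i$, which forces you to establish that $\{F^j_{m,r}(n)\}_n$ is pro-isomorphic to a pro-system of finite groups (so that all kernels and images inherit vanishing $\varprojlim^1$), then dualize the resulting exact sequence of profinite groups, and finally invoke the identification $\varinjlim_n (F^j_{m,r}(n))^\star \cong (\varprojlim_n F^j_{m,r}(n))^\vee$. Each of these steps is valid, but none is necessary for the lemma at hand: dualizing \emph{before} taking limits, as the paper does, converts the inverse-limit problem into a direct-limit problem where exactness is automatic. Your detour does buy something, namely the auxiliary fact that the pro-systems $\{F^j_{m,r}(n)\}_n$ are essentially pro-finite for all $m$; but the paper does not need this here, and when it does need a related statement later (\lemref{lem:long-exact-lim-d}), it obtains it by different means.
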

\begin{proof}
This is an easy consequence of \corref{cor:log-HW-pro-coh} using the fact
that the Pontryagin dual functor (recalled in \S~\ref{sec:Cont})
is exact on the category of discrete torsion
abelian groups (see \cite[Theorem~2.9.6]{Pro-fin}) and the direct limit 
functor is exact on the category of ind-abelian groups.
We also have to note that the Pontryagin dual of a discrete $p^m$-torsion 
abelian group $V$ coincides with $V^\star$.
\end{proof}

\begin{lem}\label{lem:long-exact-lim-d}
Assume further that  $D \subset X$ is a simple normal crossing 
divisor. 
Then the group
${\varprojlim}_n F^d_{m,d}(n)$ is profinite and the canonical map
\[
{\varinjlim}_n {(F^d_{m,d}(n))}^\star \to
({\varprojlim}_n F^d_{m,d}(n))^\star 
\]
is an isomorphism of topological abelian groups if 
either $k \neq \F_2$ or $d \neq 2$.
\end{lem}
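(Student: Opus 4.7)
The plan is to deduce both assertions from the fact that the pro-abelian group $\{F^d_{m,d}(n)\}_n$ is isomorphic, in the category of pro-abelian groups, to a pro-system of finite abelian groups. Granting this, $\varprojlim_n F^d_{m,d}(n)$ is profinite and the asserted Pontryagin duality statement is formal.

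To establish the pro-finiteness of $\{F^d_{m,d}(n)\}_n$, I would argue by induction on $m \ge 1$. The base case $m = 1$ is the content of \lemref{lem:Finite-0}, whose hypotheses include the condition $k \neq \F_2$ or $d \neq 2$ and thus account for the same restriction in the present lemma. For the inductive step, \corref{cor:log-HW-pro-coh} with $i = m-1$ furnishes an exact sequence of pro-abelian groups
\[
\{F^d_{m-1,d}(n)\}_n \xrightarrow{\un{p}^{m-1}} \{F^d_{m,d}(n)\}_n \xrightarrow{R^{m-1}} \{F^d_{1,d}(n)\}_n.
\]
The left-hand term is pro-finite by the inductive hypothesis and the right-hand term is pro-finite by the base case. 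Since the full subcategory of pro-systems of finite abelian groups is closed under extensions inside pro-abelian groups, the middle term is also pro-isomorphic to a pro-system of finite abelian groups. Let $\{G_n\}_n$ be such a pro-finite representative together with the chosen isomorphism in the pro-category. Passing to inverse limits gives $\varprojlim_n F^d_{m,d}(n) \cong \varprojlim_n G_n$, and the right-hand side is naturally profinite; transporting this topology proves the first assertion.

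For the second assertion, every $F^d_{m,d}(n)$ is $p^m$-torsion and hence so is every $G_n$, so for finite $G_n$ the group $G_n^\star = \Hom_{\Z/p^m}(G_n, \Z/p^m)$ coincides with the Pontryagin dual $G_n^\vee = \Hom_{\cont}(G_n, \Q/\Z)$. Dualising the pro-isomorphism produces an isomorphism of ind-systems $\{(F^d_{m,d}(n))^\star\}_n \cong \{G_n^\vee\}_n$ of discrete abelian groups, and passing to the colimit gives
\[
\varinjlim_n (F^d_{m,d}(n))^\star \;\cong\; \varinjlim_n G_n^\vee.
\]
Classical Pontryagin duality between profinite abelian groups and discrete torsion abelian groups identifies the right-hand side with $(\varprojlim_n G_n)^\vee \cong (\varprojlim_n F^d_{m,d}(n))^\star$, and both sides carry the discrete topology, so we obtain the desired isomorphism of topological abelian groups.

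The only nontrivial step is the base case carried out in \lemref{lem:Finite-0}; once that pro-finiteness in weight $m = 1$ is in hand, the rest of the argument is purely formal manipulation in the category of pro-abelian groups combined with standard Pontryagin duality. No further input is required.
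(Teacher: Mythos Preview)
Your approach differs from the paper's and, once patched, would actually prove more than stated. Two points need attention.

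First, a factual slip: \lemref{lem:Finite-0} carries neither the simple-normal-crossing hypothesis nor the restriction $k \neq \F_2$ or $d \neq 2$. Your inductive argument therefore never invokes either hypothesis of the present lemma, which should prompt you to ask where they are really needed rather than to attribute them to a lemma that does not have them.

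Second, and more substantively, the final step---asserting $\varinjlim_n G_n^\vee \cong (\varprojlim_n G_n)^\vee$ by ``classical Pontryagin duality''---is exactly the content of \lemref{lem:Top}, and that lemma only yields the isomorphism when the transition maps of $\{G_n\}$ are \emph{surjective}; \remref{remk:Top-0} explicitly cautions that injectivity can fail otherwise. Your construction produces a pro-system of finite groups but says nothing about surjectivity of its transition maps. The gap is closable: an $\N$-indexed inverse system of finite abelian groups is automatically Mittag--Leffler, so replacing each $G_n$ by the stable image $\bigcap_{m\ge n}\mathrm{Im}(G_m\to G_n)$ gives a pro-isomorphic system with surjective transition maps, after which \lemref{lem:Top} applies. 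But this step must be made explicit; as written, the appeal to ``classical Pontryagin duality'' is precisely the point at issue.

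The paper's proof does not induct on $m$ at all. It chains together \lemref{lem:Rel-dlog-iso}, \corref{cor:RS-K-comp}, \propref{prop:Milnor-iso}, and \cite[Theorem~9.1]{Kato-Saito} to identify $\{F^d_{m,d}(n)\}_n$ directly with $\{C_{KS}(X,nD)/p^m\}_n$, a pro-system of finite groups whose transition maps are surjective; the simple-normal-crossing and $k\neq\F_2$ or $d\neq 2$ hypotheses enter through \corref{cor:RS-K-comp}. That route is heavier on external input but lands immediately in the surjective regime where \lemref{lem:Top} applies cleanly. Your route is more self-contained and---once the Mittag--Leffler step is inserted---dispenses with the auxiliary hypotheses.
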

\begin{proof}
By \cite[Theorem~9.1]{Kato-Saito},  \propref{prop:Milnor-iso}, 
 \corref{cor:RS-K-comp} and
\lemref{lem:Rel-dlog-iso}, $\{F^d_{m,d}(n)\}_n$ is isomorphic to
a pro-abelian group $\{E_n\}_n$ such that each $E_n$ is finite and
the map $E_{n+1} \to E_n$ is surjective. We can therefore apply
\lemref{lem:Top}.
\end{proof}

\section{Cartier map for twisted de 
Rham-Witt complex}\label{sec:Cartier}
In this section, we shall prove the existence and some properties of the 
Cartier homomorphism for the twisted de Rham-Witt complex. 
We fix a perfect field $k$ of characteristic $p >0$ and a modulus pair 
$(X,D)$ in $\Sch_k$ such that $X$ is connected and regular 
of dimension $d \ge 1$. Let $\iota \colon D \inj X$
and $j \colon U \inj X$ be the inclusions, where $U = X \setminus D$. 
Let $F$ denote the function field of $X$. 
We also fix integers $m \geq 1$, 
 $n \in \Z$ and $r \geq 0$.
Let $W_m \sO_X(nD) $ be the Nisnevich sheaf on $X$ 
defined in \S~\ref{sec:dRW}. We begin with following result describing the
behavior of the Frobenius and Verschiebung operators on 
$W_\bullet\sO_X(nD)$. We shall then use this result to describe these 
operators on the full twisted de Rham-Witt complex.
Until we talk about the topology of $X$ again in this section, we shall assume 
it to be the Nisnevich topology. 

\begin{lem}\label{lem:F-V-Witt-0}
We have
\begin{enumerate}
\item
$F(W_{m+1}\sO_X(nD)) \subseteq W_{m}\sO_X(pnD)$.
\item
$V(W_{m}\sO_X(pnD)) \subseteq W_{m+1}\sO_X(nD)$.
\item
$R(W_{m+1}\sO_X(nD)) \subseteq W_{m}\sO_X(nD)$.
\end{enumerate}
\end{lem}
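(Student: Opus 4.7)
The plan is to verify all three inclusions by a direct local computation, since the assertions are entirely local and the definition of $W_m\sO_X(nD)$ is already given locally. Fix a point $x \in X$ and a local equation $f \in \sO_{X,x}$ for $D$; then on a neighborhood we have
\[
W_m\sO_X(nD) = [f^{-n}] \cdot W_m\sO_X \subset j_* W_m\sO_U,
\]
where the Teichm\"uller $[f^{-n}]$ lives in $W_m\sO_U$ (since $f$ is invertible on $U$). So a typical local section of $W_{m+1}\sO_X(nD)$ has the form $[f^{-n}]\cdot a$ with $a \in W_{m+1}\sO_X$, and similarly for $W_m\sO_X(pnD)$. Since $F$, $V$, and $R$ are all defined on $j_*W_\bullet\sO_U$, it suffices to compute the images of such sections and check that they again lie in a suitable submodule of the form $[f^{-n'}] \cdot W_\bullet\sO_X$.

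First I would handle (1) and (3), which are immediate from the fact that $F$ and $R$ are ring homomorphisms and that the Teichm\"uller map is multiplicative and compatible with $R$. For (1), using $F([f^{-n}]) = [f^{-n}]^p = [f^{-pn}]$ and $F(W_{m+1}\sO_U) \subset W_m\sO_U$,
\[
F([f^{-n}] \cdot a) \;=\; [f^{-pn}] \cdot F(a) \;\in\; [f^{-pn}]\cdot W_m\sO_X \;=\; W_m\sO_X(pnD).
\]
For (3), since $R([f^{-n}]_{m+1}) = [f^{-n}]_m$ and $R(W_{m+1}\sO_X) \subseteq W_m\sO_X$, we get $R([f^{-n}]\cdot a) = [f^{-n}]\cdot R(a) \in W_m\sO_X(nD)$.

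Next I would prove (2) using the projection formula $V(F(u)\cdot v) = u \cdot V(v)$, which is listed as part of property (4) of the de Rham–Witt operators. A local section of $W_m\sO_X(pnD)$ is of the form $[f^{-pn}]\cdot b$ with $b\in W_m\sO_X$. Since $[f^{-pn}] = [f^{-n}]^p = F([f^{-n}])$,
\[
V([f^{-pn}]\cdot b) \;=\; V\bigl(F([f^{-n}])\cdot b\bigr) \;=\; [f^{-n}]\cdot V(b) \;\in\; [f^{-n}]\cdot W_{m+1}\sO_X \;=\; W_{m+1}\sO_X(nD),
\]
which proves (2). The only point to double-check is that the Teichm\"uller lift of $f^{-n}$ in $W_{m+1}\sO_U$ restricts under $F$ to that of $f^{-pn}$ in $W_m\sO_U$, but this is the standard relation $F([a]) = [a^p]$ which is listed explicitly.

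I do not expect any genuine obstacle: this is essentially a bookkeeping lemma about how the twist $[f^{-n}]$ is transformed by $F$, $V$, and $R$. The only subtlety worth flagging in the write-up is that the Teichm\"uller symbol $[f^{-n}]$ is interpreted inside $W_\bullet\sO_U$, so the computations genuinely take place in $j_*W_\bullet\sO_U$; the conclusion in each case is that the result lands in the specified $\sO_X$-submodule.
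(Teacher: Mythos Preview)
Your proof is correct and essentially identical to the paper's: both work locally at a point with a local equation $f$ for $D$, use multiplicativity of $F$ and the Teichm\"uller map for (1), the projection formula $V(F(u)v)=uV(v)$ for (2), and the fact that $R$ is a ring homomorphism for (3). The only cosmetic difference is that the paper separates out the trivial case $n \le 0$ before doing the computation, whereas your argument handles all $n$ uniformly.
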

\begin{proof}
We can check the lemma locally at a point $x \in X$. We let $A = \sO^h_{X,x}$
and let $f \in A$ define $D$ at $x$. The lemma is easy to check when $n \le 0$.
We therefore assume $n \ge 1$. For $w \in 
W_{m+1}(A)$, we have  $F(w[f^{-n}]) = F(w) \cdot F([f^{-n}]) = F(w) \cdot [f^{-pn}]
\in W_m\sO^h_{X,x}(pnD)$. This proves (1). For (2), we use the projection
formula to get $V(w'[f^{-pn}]) = V(w' \cdot F([f^{-n}])) = V(w') \cdot
[f^{-n}] \in  W_{m+1}\sO^h_{X,x}(nD)$ for $w' \in W_m(A)$. The part (3) is obvious.
\end{proof}

\begin{lem}\label{lem:F-V-Witt-1}
For $i \geq 1$, there is a short exact sequence
\[
0 \to W_{m-i}\sO_X(p^inD) \xrightarrow{V^i} W_m \sO_X(nD) \xrightarrow{R^{m-i}}
W_i\sO_X(nD) \to 0.
\]
\end{lem}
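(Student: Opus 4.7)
The plan is to reduce the exactness assertion, which is a statement about a sequence of Nisnevich sheaves on $X$, to the classical short exact sequence of Witt vectors
\[
0 \to W_{m-i}(A) \xrightarrow{V^i} W_m(A) \xrightarrow{R^{m-i}} W_i(A) \to 0
\]
by ``untwisting'' via multiplication by a Teichm\"uller lift of a local equation of $D$. To begin, the two displayed arrows are well-defined: iterating parts (2) and (3) of \lemref{lem:F-V-Witt-0} yields $V^i(W_{m-i}\sO_X(p^inD)) \subseteq W_m\sO_X(nD)$ and $R^{m-i}(W_m\sO_X(nD)) \subseteq W_i\sO_X(nD)$.

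I would then check exactness on stalks. Fix $x \in X$, set $A = \sO^h_{X,x}$, and let $f \in A$ be a local equation for $D$, so that for every pair $(\ell, m')$ one has $W_{m'}\sO^h_{X,x}(\ell D) = W_{m'}(A) \cdot [f^{-\ell}]_{m'}$ by the definition in \S~\ref{sec:dRW}. Multiplication by $[f^{-\ell}]_{m'}$ defines an isomorphism of $W_{m'}(A)$-modules
\[
\mu_{\ell, m'} \colon W_{m'}(A) \xrightarrow{\cong} W_{m'}(A) \cdot [f^{-\ell}]_{m'}.
\]
Using the Teichm\"uller identity $F^i([f^{-n}]_m) = [f^{-p^in}]_{m-i}$ together with the projection formula $V^i(F^i(a) \cdot b) = a \cdot V^i(b)$, one computes
\[
V^i([f^{-p^in}]_{m-i} \cdot \alpha) = V^i(F^i([f^{-n}]_m) \cdot \alpha) = [f^{-n}]_m \cdot V^i(\alpha),
\]
which is the compatibility $V^i \circ \mu_{p^in, m-i} = \mu_{n, m} \circ V^i$. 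The analogous compatibility $R^{m-i} \circ \mu_{n, m} = \mu_{n, i} \circ R^{m-i}$ follows from the multiplicativity of $R$ and the identity $R([f^{-n}]_m) = [f^{-n}]_{m-1}$.

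These two commuting squares, assembled together, furnish an isomorphism of complexes from the classical Witt vector short exact sequence above to the twisted sequence at the stalk $x$. The classical sequence being exact (\cite[Proposition~I.3.4]{Illusie}), so is the twisted one, and since $x$ was arbitrary the lemma follows. There is no substantive obstacle; the only point requiring care is the bookkeeping of indices of the Teichm\"uller lifts in the two squares, but this is entirely formal once one invokes $F^i([a]_m) = [a^{p^i}]_{m-i}$ and the projection formula.
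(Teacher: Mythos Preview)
Your proof is correct and follows essentially the same strategy as the paper: reduce to the classical Witt-vector exact sequence by untwisting via multiplication by Teichm\"uller lifts of a local equation of $D$. The only difference is in packaging: you build an isomorphism of three-term complexes using the projection formula $V^i(F^i(x)\cdot y)=x\cdot V^i(y)$ and the multiplicativity of $R$, whereas the paper verifies exactness in the middle by an explicit Witt-coordinate computation (using $[a]\cdot(c_0,\dots,c_{m-1})=(ac_0,a^pc_1,\dots,a^{p^{m-1}}c_{m-1})$ to identify $V(j_*W_{m-1}\sO_U)\cap W_m\sO_X(nD)$ with $V(W_{m-1}\sO_X(pnD))$), after first handling surjectivity of $R^{m-i}$ separately; your formulation is a bit cleaner but the content is the same.
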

\begin{proof}
Since $R$ is a ring homomorphism on $j_* W_m\sO_U$ on which it is surjective,
it follows that it is surjective on $W_m \sO_X(*D)$.
Since the sequence
\begin{equation}\label{eqn:F-V-Witt-1-0}
0 \to W_{m-i}\sO_X \xrightarrow{V^i} W_m\sO_X \xrightarrow{R^{m-i}} W_i\sO_X \to
0
\end{equation}
is anyway exact, and since $V^i$ and $R$ are defined by 
\lemref{lem:F-V-Witt-0}, we only have to show that 
$V^i(j_*W_{m-i}\sO_U) \cap W_m\sO_X(nD) = V^i(W_{m-i}\sO_X(p^inD))$.
Arguing by induction on $i \ge 1$, it suffices to check this for $i = 1$. 

Now, we let $w = (a_1, \ldots , a_{m-1}) \in j_*W_{m-1}\sO_U$ be such that 
$V(w) = (0, a_1, \ldots , a_{m-1}) = (b_1, \ldots , b_{m}) \cdot [f^{-n}]
= (b_1f^{-n}, b_2f^{-pn}, \ldots , b_{m}f^{-p^{m-1}n})$, where $b_i \in \sO_X$.
This implies that $b_1 = 0$ and $a_i = b_{i+1}f^{-p^{i}n}$ for $i \ge 1$.
Hence, $w = (b_2f^{-pn}, \ldots , b_{m}f^{-p^{m-1}n}) = (b_2, \ldots , b_{m})
\cdot [f^{-pn}] \in W_{m-1}\sO_X(pnD)$.
\end{proof}

We shall now extend \lemref{lem:F-V-Witt-0} to the de Rham-Witt forms of
higher degrees.
One knows using the Gersten conjecture for the de Rham-Witt sheaves
due to Gros \cite{Gross} that the map 
$W_m \Omega^r_X \to W_m \Omega^r_K$ is
injective. It follows that the canonical map
$W_m \Omega^r_X \to j_* W_m \Omega^r_U$ is injective too.
Using the invertibility of the $W_m\sO_X$-module $W_m \sO_X(nD)$,
we get an injection
\begin{equation}\label{eqn:Injection}
\xymatrix@C.8pc{
W_m \Omega^r_X(nD) \ar@{=}[r] \ar@{^{(}->}[dr]  & 
W_m \sO_X(nD) \otimes_{W_m\sO_X} W_m \Omega^r_X 
\ar@{^{(}->}[r]  & W_m \sO_X(nD) \otimes_{W_m\sO_X} j_* W_m \Omega^r_U 
\ar@{=}[d] \\
& j_* W_m \Omega^r_U \ar@{=}[r] & W_m \sO_X(nD) \cdot j_* W_m \Omega^r_U.}
\end{equation}

\begin{lem}\label{lem:Limit}
Let $q \ge 1$ be a positive integer.  Then the inclusions
$W_m \sO_X(qnD) \inj W_m \sO_X(q(n+1)D)$ induce an
isomorphism 
\[
{\underset{n \ge 1}\varinjlim} \ W_m \sO_X(qnD) 
\xrightarrow{\cong} j_* W_m \sO_U.
\]
\end{lem}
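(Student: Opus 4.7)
The plan is to verify the isomorphism on Nisnevich stalks, so fix a point $x \in X$ with Henselian local ring $A = \sO^h_{X,x}$. If $x \notin D$, both sides are just $W_m(A)$ and the claim is trivial, so assume $f \in A$ is a local equation for $D$. By the very definition of $W_m\sO_X(qnD)$, its stalk at $x$ equals $[f^{-qn}] \cdot W_m(A) \subseteq W_m(A_f)$, and the transition maps are the obvious inclusions inside $W_m(A_f) = (j_*W_m\sO_U)_x$. Thus the colimit on the left is identified with the increasing union $\bigcup_{n \ge 1} [f^{-qn}] \cdot W_m(A)$ inside $W_m(A_f)$, so injectivity of the map is automatic.

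For surjectivity, I would argue by a direct computation in the Witt ring. The key formula, obtained by comparing ghost components, is
\[
(c_0, c_1, \ldots, c_{m-1}) \cdot [f^{-qn}] = (c_0 f^{-qn},\, c_1 f^{-pqn},\, \ldots,\, c_{m-1} f^{-p^{m-1}qn}).
\]
Given an arbitrary element $\alpha = (a_0, a_1, \ldots, a_{m-1}) \in W_m(A_f)$, write each component as $a_i = b_i f^{-n_i}$ with $b_i \in A$ and $n_i \ge 0$. Choosing any integer $n \ge 1$ with $p^i qn \ge n_i$ for all $0 \le i \le m-1$ (which is possible because $m$ is fixed), we obtain
\[
\alpha = (b_0 f^{qn-n_0},\, b_1 f^{pqn-n_1},\, \ldots,\, b_{m-1} f^{p^{m-1}qn - n_{m-1}}) \cdot [f^{-qn}],
\]
which manifestly lies in $[f^{-qn}] \cdot W_m(A) = W_m\sO_X(qnD)_x$. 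This produces the required preimage in the colimit.

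There is no substantial obstacle: the only point requiring care is the multiplicative formula in $W_m$ when multiplying a Witt vector by a Teichm\"uller lift, but since $[f^{-qn}] = (f^{-qn}, 0, \ldots, 0)$ has trivial higher components, the product is computed componentwise as above. The argument does not depend on $q$, and we could have taken $q = 1$; the statement for arbitrary $q$ is simply a cofinality remark on the indexing system of submodules $\{[f^{-qn}] \cdot W_m(A)\}_{n \ge 1}$ inside $W_m(A_f)$.
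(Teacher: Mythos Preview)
Your argument is correct and more direct than the paper's. The paper proceeds by induction on $m$: the case $m=1$ is immediate, and for $m\ge 2$ one takes the filtered colimit over $n$ of the short exact sequence
\[
0 \to W_{m-1}\sO_X(qpnD) \xrightarrow{V} W_m\sO_X(qnD) \xrightarrow{R^{m-1}} W_1\sO_X(qnD) \to 0
\]
established in \lemref{lem:F-V-Witt-1}, and compares it with the analogous sequence for $j_*W_\bullet\sO_U$; the outer terms are isomorphisms by induction (with $q$ replaced by $qp$) and the $m=1$ case, so the five-lemma finishes. Your approach instead computes directly on stalks using the universal identity $(c_0,\ldots,c_{m-1})\cdot[a]=(c_0a,c_1a^p,\ldots,c_{m-1}a^{p^{m-1}})$ in $W_m$, which bypasses both the induction and the auxiliary exact sequence. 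The only cosmetic point is that ``comparing ghost components'' is not literally available in characteristic $p$; the formula is rather proved over a $p$-torsion-free universal ring and then specialized, or alternatively by writing $(c_0,\ldots,c_{m-1})=\sum_i V^i[c_i]$ and using $[a]\cdot V^i[c_i]=V^i(F^i[a]\cdot[c_i])=V^i[a^{p^i}c_i]$. Your route is shorter and self-contained; the paper's route has the side benefit that \lemref{lem:F-V-Witt-1} is needed elsewhere anyway.
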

\begin{proof}
We shall prove the lemma by induction on $m$.
For $m = 1$, the statement is clear. We now assume $m \ge 2$ and use the
commutative diagram
\begin{equation}\label{eqn:Limit-0}
\xymatrix@C.8pc{
0 \ar[r] & {\underset{n \ge 1}\varinjlim} \ W_{m-1} \sO_X(qpnD) \ar[r]^-{V} 
\ar[d] &
{\underset{n \ge 1}\varinjlim} \ W_{m} \sO_X(qnD) \ar[r]^-{R^{m-1}} \ar[d] &
{\underset{n \ge 1}\varinjlim} \ W_1 \sO_X(qnD) \ar[r] \ar[d] & 0 \\
0 \ar[r] & j_*  W_{m-1} \sO_U \ar[r]^-{V} & j_*  W_{m} \sO_U \ar[r]^-{R^{m-1}}  &
j_*  W_{1} \sO_U \ar[r] & 0.}
\end{equation}

It is easy to check that the bottom row is exact. The top row is
exact by \lemref{lem:F-V-Witt-1}.
The right vertical arrow is an isomorphism by $m =1$ case and the
left vertical arrow is an isomorphism by induction on $m$
(with $q$ replaced by $qp$).
The lemma follows.
\end{proof}

\begin{cor}\label{cor:Limit-1}
For $q \ge 1$, the canonical map
$W_m\Omega^r_X(qnD) \to W_m\Omega^r_X(q(n+1)D)$ is injective and the map
${\underset{n \ge 1}\varinjlim} \ W_m\Omega^r_X(qnD) \to j_*  W_m\Omega^r_U$
is an isomorphism of $W_m\sO_X$-modules.
\end{cor}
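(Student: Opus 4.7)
The plan is to reduce \corref{cor:Limit-1} to \lemref{lem:Limit} by exploiting the fact that $W_m\sO_X(qnD)$ is an invertible $W_m\sO_X$-module and that tensoring with an invertible module and taking filtered colimits are both exact operations.

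First, I would handle injectivity of the transition map $W_m\Omega^r_X(qnD) \to W_m\Omega^r_X(q(n+1)D)$. The key observation, already packaged in \eqref{eqn:Injection}, is that both source and target embed as $W_m\sO_X$-submodules of $j_*W_m\Omega^r_U$, via the composite
\[
W_m\sO_X(q\ell D) \otimes_{W_m\sO_X} W_m\Omega^r_X
\hookrightarrow W_m\sO_X(q\ell D) \otimes_{W_m\sO_X} j_*W_m\Omega^r_U
= j_*W_m\Omega^r_U,
\]
where the last equality uses invertibility of $W_m\sO_X(q\ell D)$ and the fact that its restriction to $U$ is $W_m\sO_U$. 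Since the transition map $W_m\Omega^r_X(qnD) \to W_m\Omega^r_X(q(n+1)D)$ is induced by $W_m\sO_X(qnD) \hookrightarrow W_m\sO_X(q(n+1)D)$ inside $j_*W_m\sO_U$, it is compatible with these embeddings, which forces injectivity.

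Second, for the limit statement, I would take the colimit of the embeddings above to obtain an injection $\varinjlim_n W_m\Omega^r_X(qnD) \hookrightarrow j_*W_m\Omega^r_U$. Using that tensor product commutes with filtered colimits and invoking \lemref{lem:Limit}, one has
\[
\varinjlim_n W_m\Omega^r_X(qnD)
\;=\; \bigl(\varinjlim_n W_m\sO_X(qnD)\bigr) \otimes_{W_m\sO_X} W_m\Omega^r_X
\;=\; j_*W_m\sO_U \otimes_{W_m\sO_X} W_m\Omega^r_X,
\]
so the task reduces to showing that the natural map $j_*W_m\sO_U \otimes_{W_m\sO_X} W_m\Omega^r_X \to j_*W_m\Omega^r_U$ is an isomorphism.

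The main obstacle is the surjectivity of this last map, which I would verify on Nisnevich stalks. Away from $D$ it is trivial, so the content is at a point $x \in D$: with $A = \sO^h_{X,x}$ and $f$ a local equation for $D$, the goal becomes showing that every element of $W_m\Omega^r_{A_f}$ lies in $[f^{-qn}]\cdot W_m\Omega^r_A$ for $n$ sufficiently large. This follows from the basic fact that the de Rham--Witt complex commutes with étale base change, in particular with the localization $A \to A_f$, giving the identification $W_m\Omega^r_{A_f} \cong W_m\Omega^r_A \otimes_{W_m(A)} W_m(A_f)$. Combined with the stalk version of \lemref{lem:Limit}, namely $W_m(A_f) = \varinjlim_n [f^{-qn}]\cdot W_m(A)$, and with the exactness of filtered colimits, this yields the required surjectivity and completes the argument.
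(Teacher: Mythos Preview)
Your proposal is correct and follows essentially the same route as the paper. Both argue injectivity via \eqref{eqn:Injection}, then identify the colimit with $j_*W_m\sO_U \otimes_{W_m\sO_X} W_m\Omega^r_X$ using \lemref{lem:Limit}, and finally reduce to showing this tensor product is $j_*W_m\Omega^r_U$; the paper simply cites the {\'e}tale descent property of the de Rham--Witt complex \cite[Proposition~I.1.14]{Illusie} for this last step, whereas you unpack that same property on Nisnevich stalks.
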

\begin{proof}
The injectivity claim follows from ~\eqref{eqn:Injection}.
To prove the isomorphism, we take the tensor product with
the $W_m \sO_X$-module  $W_m \Omega^r_X$
 on the two sides
of the isomorphism in \lemref{lem:Limit}.
This yields an isomorphism
${\underset{n \geq 1}\varinjlim} \ W_m \Omega^r_X(qnD) \cong (j_* W_m \sO_U)
\otimes_{W_m\sO_X}  W_m \Omega^r_X$.
On the other hand, the {\'e}tale descent property (see \cite[Proposition I.1.14]{Illusie})
of the de Rham-Witt sheaves says that the canonical map
$(j_* W_m \sO_U) \otimes_{W_m\sO_X}  W_m \Omega^r_X \to j_* W_m\Omega^r_U$
is an isomorphism. The corollary follows.
\end{proof}

In view of \corref{cor:Limit-1}, we shall hereafter consider 
the sheaves $W_m \Omega^r_X(nD)$ as $W_m \sO_X$-submodules of 
$j_* W_m\Omega^r_U$.

\begin{lem}\label{lem:FRV}
The $V,F$ and $R$ operators of $j_* W_m\Omega^r_U$ satisfy the following.
\begin{enumerate}
\item
$F(W_m \Omega^r_X(nD)) \subseteq W_{m-1} \Omega^r_X(pnD)$.
\item
$V(W_m \Omega^r_X(pnD)) \subseteq W_{m+1} \Omega^r_X(nD)$.
\item
$R(W_m \Omega^r_X(nD)) \subseteq W_m \Omega^r_X(nD)$.
\end{enumerate}
\end{lem}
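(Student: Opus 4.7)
The plan is to reduce all three assertions to their counterparts on $W_\bullet\sO_X(nD)$, namely \lemref{lem:F-V-Witt-0}, by exploiting that $W_m\sO_X(nD)$ is an invertible $W_m\sO_X$-module. Working Nisnevich-locally around a point $x \in X$, choose a local equation $f \in A := \sO^h_{X,x}$ for $D$; then $[f^{-n}] \in j_* W_m\sO_U$ is a generator of $W_m\sO_X(nD)$ as a rank-one free $W_m\sO_X$-module. Tensoring with $W_m\Omega^r_X$ and using the embedding in~\eqref{eqn:Injection}, every local section of $W_m\Omega^r_X(nD)$ is of the form $[f^{-n}] \cdot \omega$ with $\omega \in W_m\Omega^r_X$, so it suffices to verify the three inclusions on such sections.

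For (1), the Frobenius $F$ on $j_* W_\bullet\Omega^\bullet_U$ is a map of graded rings and satisfies $F([f^{-n}]) = [f]^{-pn} = [f^{-pn}]$ by multiplicativity of the Teichm\"uller lift, so
\[
F([f^{-n}] \cdot \omega) = [f^{-pn}] \cdot F(\omega) \in W_{m-1}\Omega^r_X(pnD),
\]
because $F(\omega) \in W_{m-1}\Omega^r_X$. For (2), rewrite $[f^{-pn}] = F([f^{-n}])$ and invoke the projection formula $V(F(a)\cdot b) = a \cdot V(b)$ recalled in \S\ref{sec:dRW}:
\[
V([f^{-pn}] \cdot \omega) \;=\; V(F([f^{-n}]) \cdot \omega) \;=\; [f^{-n}] \cdot V(\omega) \;\in\; W_{m+1}\Omega^r_X(nD).
\]
For (3), the restriction map $R$ is a morphism of graded rings compatible with Teichm\"uller lifts, so $R([f^{-n}] \cdot \omega) = [f^{-n}] \cdot R(\omega)$, which belongs to the claimed subsheaf because $R$ sends $W_m\Omega^r_X$ into itself under the natural transition.

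No serious obstacle is expected. The only point that needs care is the local-generator description of $W_m\Omega^r_X(nD)$ that underlies every elementwise manipulation above: this is legitimate because $W_m\sO_X(nD)$ is invertible of rank one with local generator $[f^{-n}]$, and the embedding~\eqref{eqn:Injection} identifies $W_m\Omega^r_X(nD)$ with $[f^{-n}] \cdot W_m\Omega^r_X$ inside $j_* W_m\Omega^r_U$ in a manner compatible with the $W_m\sO_X$-module structure, hence compatible with the operators $F$, $V$, and $R$ inherited from $j_* W_\bullet\Omega^\bullet_U$.
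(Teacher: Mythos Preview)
Your proof is correct and follows essentially the same route as the paper: both reduce to local sections of the form $[f^{-n}]\omega$ with $\omega \in W_m\Omega^r_X$, then use multiplicativity of $F$ for (1), the projection formula $V(F(a)b)=aV(b)$ for (2), and the ring-homomorphism property of $R$ for (3). The only difference is that you spell out more explicitly why the local-generator description is available, whereas the paper takes this as implicit from the invertibility of $W_m\sO_X(nD)$.
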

\begin{proof}
We have $F([f^{-n}]\omega) = F(\omega)
F([f^{-n}]) \in W_{m-1} \Omega^r_X \cdot W_{m-1} \sO_X(pnD) =
W_{m-1} \Omega^r_X(pnD)$, where the first inclusion holds by 
\lemref{lem:F-V-Witt-0}.
We have $V([f^{-pn}]\omega) = V(F([f^{-n}]) \omega) = [f^{-n}] V(\omega)
\in  W_{m+1} \sO_X(nD) \cdot W_{m+1} \Omega^r_X =  W_{m+1} \Omega^r_X(nD)$,
where the second inclusion holds again by \lemref{lem:F-V-Witt-0}.
The last assertion is clear.
\end{proof}

\begin{lem}\label{lem:mult-p}
The multiplication by $p$ 
map $p \colon W_{m+1}\Omega^r_X(nD) \to W_{m+1}\Omega^r_X(nD)$ has a 
factorization
\[
W_{m+1}\Omega^r_X(nD) \stackrel{R}{\surj} W_{m}\Omega^r_X(nD) 
\stackrel{\un{p}}{\inj} W_{m+1}\Omega^r_X(nD)
\]
in the category of sheaves of $W_{m+1}\sO_X$-modules. This factorization
is natural in $X$.
\end{lem}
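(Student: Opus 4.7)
The plan is to reduce this factorization to the corresponding non-twisted statement on $U = X \setminus D$, namely Proposition~2.14 of \cite{Morrow-ENS}, which provides a factorization $p = \un{p}_U \circ R$ on $W_{m+1}\Omega^r_U$ for a natural map $\un{p}_U \colon W_m\Omega^r_U \to W_{m+1}\Omega^r_U$. By \corref{cor:Limit-1} the sheaf $W_\bullet\Omega^r_X(nD)$ embeds into $j_*W_\bullet\Omega^r_U$, and by \lemref{lem:FRV} the operators $R$ and multiplication by $p$ preserve these twisted subsheaves under the inclusion. Hence it suffices to prove that $p$ on $W_{m+1}\Omega^r_X(nD)$ factors through the restriction of $R$ to the twisted sheaves.

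This amounts to verifying two things: (i) $R \colon W_{m+1}\Omega^r_X(nD) \to W_m\Omega^r_X(nD)$ is surjective, and (ii) its kernel is annihilated by $p$. Claim (ii) follows immediately from the untwisted factorization on $U$ together with the injectivity of \corref{cor:Limit-1}: if $\tilde\omega \in W_{m+1}\Omega^r_X(nD)$ satisfies $R\tilde\omega = 0$, then inside $W_{m+1}\Omega^r_U$ we have $p\tilde\omega = \un{p}_U(R\tilde\omega) = 0$, and therefore $p\tilde\omega = 0$ in $W_{m+1}\Omega^r_X(nD)$ as well. The substantive point is (i), which I would verify Nisnevich locally. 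Regularity of $X$ supplies a local equation $f \in \sO_X$ of $D$, so that $W_m\sO_X(nD) = [f^{-n}]_m \cdot W_m\sO_X$ and every local section of $W_m\Omega^r_X(nD)$ can be written as $\omega = [f^{-n}]_m\eta$ with $\eta \in W_m\Omega^r_X$. Lifting $\eta$ to $\tilde\eta \in W_{m+1}\Omega^r_X$ via the classical surjectivity of $R$ on untwisted sheaves and setting $\tilde\omega := [f^{-n}]_{m+1}\tilde\eta \in W_{m+1}\Omega^r_X(nD)$, one checks $R(\tilde\omega) = R([f^{-n}]_{m+1}) \cdot R(\tilde\eta) = [f^{-n}]_m\eta = \omega$.

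Combining (i) and (ii) produces the desired unique map $\un{p} \colon W_m\Omega^r_X(nD) \to W_{m+1}\Omega^r_X(nD)$ with $\un{p} \circ R = p$. Uniqueness of the factorization together with the $W_{m+1}\sO_X$-linearity of both $p$ and $R$ (with the standard convention that a $W_m\sO_X$-module is viewed as a $W_{m+1}\sO_X$-module via $R$) forces $\un{p}$ to be $W_{m+1}\sO_X$-linear, and naturality in $X$ follows from uniqueness since every ingredient of the construction is natural. The main obstacle, such as it is, is the surjectivity in (i); everything else is essentially formal once one has the untwisted factorization of \cite{Morrow-ENS}.
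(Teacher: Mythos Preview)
Your approach is sound and different from the paper's, but you have not addressed the injectivity of $\un{p}$, which is part of the statement. Your items (i) and (ii) give only $\Ker(R)\subseteq\Ker(p)$ on the twisted sheaf, which yields existence of $\un{p}$; injectivity requires the reverse inclusion $\Ker(p)\subseteq\Ker(R)$. This is easy to supply by the same embedding you are already using: your $\un{p}$ is, by uniqueness, the restriction of $j_*\un{p}_U$ to $W_m\Omega^r_X(nD)\hookrightarrow j_*W_m\Omega^r_U$, and $\un{p}_U$ is injective because $U$ is regular (equivalently, $\Ker(p)=\Ker(R)$ on $W_{m+1}\Omega^r_U$; see the remarks in \S\ref{sec:dRW}). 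A minor bibliographic point: the factorization you invoke on $U$ is really the classical one for regular schemes from \cite[Proposition~I.3.4]{Illusie}; the paper's citation of \cite[Proposition~2.14]{Morrow-ENS} is for the pro-system over infinitesimal thickenings, which is a different situation.

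For comparison, the paper's proof is more algebraic and avoids the passage to $U$ entirely: since both $R$ and multiplication by $p$ are $W_{m+1}\sO_X$-linear and $W_{m+1}\sO_X(nD)$ is an invertible $W_{m+1}\sO_X$-module, tensoring the untwisted identities $\Ker(R)=V^mW_1\Omega^r_X+dV^mW_1\Omega^{r-1}_X=\Ker(p)$ (the last equality using regularity of $X$) with $W_{m+1}\sO_X(nD)$ immediately gives $\Ker(R)=\Ker(p)$ on the twisted sheaf and hence both existence and injectivity at once. Your route has the virtue of making the compatibility with the ambient $j_*W_\bullet\Omega^r_U$ explicit (which is used elsewhere in the section), while the paper's route is shorter and handles surjectivity of $R$ and injectivity of $\un{p}$ in a single stroke.
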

\begin{proof}
Since $R$ is $W_{m+1}\sO_X$-linear, we get an exact sequence
(see \S~\ref{sec:dRW})
\begin{equation}\label{eqn:mult-p-0}
0 \to (V^{m}W_1\Omega^r_X + dV^{m}W_1\Omega^{r-1}_X) \cdot W_{m+1}\sO_X(nD) \to
W_{m+1}\Omega^r_X(nD) \xrightarrow{R} W_{m} \Omega^r_X(nD) \to 0.
\end{equation}

On the other hand, we note that 
$p \colon  W_{m+1}\Omega^r_X \to W_{m+1}\Omega^r_X$ is also a
$W_{m+1}\sO_X$-linear homomorphism. Since $W_{m+1}\sO_X(nD)$ is an
invertible $W_{m+1} \sO_X$-module, we get an exact sequence
\begin{equation}\label{eqn:mult-p-1}
0 \to {\Ker(p)} \cdot  W_{m+1}\sO_X(nD) \to
W_{m+1}\Omega^r_X(nD) \xrightarrow{p} W_{m+1} \Omega^r_X(nD).
\end{equation}
Since $\Ker(p) = \Ker(R)$ as $X$ is regular (see \S~\ref{sec:dRW}), we get
${\Ker(p)} \cdot  W_{m+1}\sO_X(nD) = 
(V^{m}W_1\Omega^r_X + dV^{m}W_1\Omega^{r-1}_X) \cdot W_{m+1}\sO_X(nD)$.
The first part of the lemma now follows. The naturality is clear from the
proof.
\end{proof}

\begin{lem}\label{lem:Cartesian-X}
The square
\begin{equation}\label{eqn:Cartesian-X-0}
\xymatrix@C.8pc{
W_m\Omega^r_X \ar[r]^-{\un{p}} \ar[d]_-{j^*} & W_{m+1}\Omega^r_X \ar[d]^-{j^*} \\
j_* W_m\Omega^r_U \ar[r]^-{\un{p}} &  j_* W_{m+1}\Omega^r_U}
\end{equation}
is Cartesian.
\end{lem}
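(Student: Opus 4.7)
The plan is to exploit the injectivity of both horizontal and vertical maps in the square to reduce the Cartesian property to a sheaf-theoretic statement, and then verify that statement using local structure. First, I would note that the map $\un{p}$ is injective on both $X$ and $U$: the identity $p = \un{p}\circ R$ from \lemref{lem:mult-p} combined with the regularity equality $\Ker(p) = \Ker(R)$ means that if $\un{p}\beta = 0$, any lift $\tilde{\beta}$ with $R\tilde{\beta} = \beta$ satisfies $p\tilde{\beta} = 0$, hence $\tilde{\beta} \in \Ker(R)$ and $\beta = 0$. The vertical arrow $j^* \colon W_m\Omega^r_X \to j_* W_m\Omega^r_U$ is also injective by the Gersten-resolution argument used in the proof of \lemref{lem:pro-iso-rel}. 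Viewing everything as subsheaves of $j_* W_{m+1}\Omega^r_U$, the Cartesian property becomes the equality
\[
\un{p}(W_m\Omega^r_X) \;=\; W_{m+1}\Omega^r_X \,\cap\, \un{p}(j_*W_m\Omega^r_U).
\]

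The inclusion $\subseteq$ is immediate from the naturality of $\un{p}$ established in \lemref{lem:mult-p}. For the reverse inclusion, I would use the fact that $\Image(\un{p}) = p\, W_{m+1}\Omega^r$ (because $p = \un{p}\circ R$ with $R$ surjective from $W_{m+1}$ onto $W_m$) to rephrase what we need as the injectivity of
\[
\mathcal{Q} := W_{m+1}\Omega^r_X / p\,W_{m+1}\Omega^r_X \longrightarrow j_*\bigl(W_{m+1}\Omega^r_U/p\,W_{m+1}\Omega^r_U\bigr),
\]
i.e.\ that $\mathcal{Q}$ has no nonzero sections supported on $D$.

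For this injectivity, I would reduce to the local DVR setting: since $W_{m+1}\Omega^r_X$ admits a Gersten-type embedding into its generic stalk (the sheaves compared are both torsion-free along $D$ in the sense that they inject into $j_*$ of their restriction to $U$), it suffices to check at the henselian local rings $A = \sO^h_{X,x}$ at codimension-one points $x \in D$. Here $A$ is a DVR with local equation $f$ of $D$, and $B = A[1/f]$. Given $\omega \in W_{m+1}\Omega^r_A$ with $\omega = p\eta$ for some $\eta \in W_{m+1}\Omega^r_B$, I would write $\eta = [f^{-n}]\eta'$ with $\eta' \in W_{m+1}\Omega^r_A$ for the minimal $n \geq 0$ (available by \corref{cor:Limit-1}) and attempt to show $n=0$. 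Using the identity $p[f^{-n}] = VF[f^{-n}] = V[f^{-pn}]$ together with the projection formula, one obtains $\omega = V([f^{-pn}] F\eta')$, and a comparison of the resulting representation of $\omega$ with the filtration $W_{m+1}\Omega^r_A(nD)$ of \lemref{lem:FRV} should force $n=0$.

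The main obstacle will be the last step: extracting the conclusion $n=0$ cleanly. The difficulty is that, in contrast with the logarithmic setting of \lemref{lem:log-HW}, one does not have $pW_{m+1}\Omega^r = \Ker(R^m)$ on the non-logarithmic de Rham-Witt complex, because $F \colon W_{m+1}\Omega^r \to W_m\Omega^r$ is not surjective (Frobenius twist). So one cannot identify $\mathcal{Q}$ directly with the torsion-free coherent sheaf $\Omega^r_X$. I expect that the correct way to handle this will be to use the filtration $\{W_{m+1}\Omega^r_X(nD)\}_n$ together with \lemref{lem:FRV}, extracting a contradiction from the fact that $V$ is injective and that $\omega$ has minimal pole order zero while $V([f^{-pn}]F\eta')$ with $n \geq 1$ genuinely lives in $W_{m+1}\Omega^r_X(nD) \setminus W_{m+1}\Omega^r_X((n-1)D)$; alternatively, one can argue by induction on $m$ starting from the transparent case $m=0$ where the statement is just the injectivity $\Omega^r_X \hookrightarrow j_*\Omega^r_U$.
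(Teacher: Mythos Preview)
Your reformulation of the Cartesian property as injectivity of $W_{m+1}\Omega^r_X/p \to j_*(W_{m+1}\Omega^r_U/p)$ is correct and is in fact exactly the consequence invoked in the proof of \propref{prop:Cartier-map}. However, the proposed proof of that injectivity has two genuine gaps.

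First, the reduction to codimension-one points is circular. You justify it by saying the relevant sheaves ``inject into $j_*$ of their restriction to $U$,'' but for $\mathcal Q = W_{m+1}\Omega^r_X/p$ this is precisely the assertion you are trying to prove. The Gersten embedding applies to $W_{m+1}\Omega^r_X$ itself, not to its quotient by $p$; nothing prevents the kernel of $\mathcal Q \to j_*(W_{m+1}\Omega^r_U/p)$ from being supported on a higher-codimension locus of $D$ while vanishing at the generic points. Second, even at a DVR your argument for $n=0$ is, as you concede, incomplete: from $\omega = V([f^{-pn}])\cdot\eta'$ one cannot directly read off $n=0$, since $\eta'$ may itself be highly $f$-divisible in the de~Rham--Witt sense, and neither of your two suggested fixes is carried out.

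The paper avoids both issues by working at an arbitrary local ring and using Illusie's structural identities rather than pole-order filtrations. The key inputs, valid over any regular $X$, are
\[
(j_* V W_m\Omega^r_U)\cap W_{m+1}\Omega^r_X = V W_m\Omega^r_X,
\qquad
(j_* F W_{m+1}\Omega^r_U)\cap W_m\Omega^r_X = F W_{m+1}\Omega^r_X,
\]
which follow from $V W_m\Omega^r = \Ker(F^m)$ and $F W_{m+1}\Omega^r = \Ker(F^{m-1}d)$, together with $\Ker(V) = F dV^m\Omega^{r-1}$ and $\Ker(F) = V^m\Omega^r$. Starting from $x \in j_* W_m\Omega^r_U$ with $\un p(x) \in W_{m+1}\Omega^r_X$, one lifts to $\tilde x$ with $p\tilde x = VF\tilde x \in W_{m+1}\Omega^r_X$; the first identity gives $y' \in W_m\Omega^r_X$ with $VF\tilde x = Vy'$, so $F\tilde x - y' \in \Ker(V) = FdV^m\Omega^{r-1}_U$; adjusting $\tilde x$ by an element of $dV^m\Omega^{r-1}_U \subset \Ker(R)$ brings $F\tilde x$ into $W_m\Omega^r_X$, and the second identity then produces $y'' \in W_{m+1}\Omega^r_X$ with $\tilde x - y'' \in \Ker(F) + dV^m\Omega^{r-1}_U = \Ker(R)$. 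Hence $x = R\tilde x = Ry'' \in W_m\Omega^r_X$. This element chase is the missing idea.
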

\begin{proof}
We can check this locally. So let $x \in j_* W_m\Omega^r_U$ be such that
$\un{p}(x) \in  W_{m+1}\Omega^r_X$. Since $j$ is affine and
$W_m\Omega^r_X$ is an $W_m\sO_X$-module, it follows that
$j_* W_{m+1}\Omega^r_U \xrightarrow{j_* R} j_* W_m\Omega^r_U$ is surjective.
We can therefore find $\tilde{x} \in j_* W_{m+1}\Omega^r_U$ such that
$R(\tilde{x}) = x$. In particular, $p \tilde{x} = \un{p}(x) \in 
W_{m+1}\Omega^r_X$. 
We thus get $VF(\tilde{x}) = p \tilde{x} \in W_{m+1}\Omega^r_X$. Since
\[
(j_* V W_m \Omega^r_U) \cap W_{m+1} \Omega^r_X =
\Ker(F^{m} \colon W_{m+1} \Omega^r_X \to j_* \Omega^r_U) =
\Ker(F^{m} \colon W_{m+1} \Omega^r_X \to \Omega^r_X)
\]
\[
 = VW_m \Omega^r_X,
\]
it follows that there exists $y' \in W_m \Omega^r_X$ such that
$VF(\tilde{x}) = Vy'$.

Since $\Ker(V \colon j_*W_m \Omega^r_U \to j_* W_{m+1} \Omega^r_U) = 
j_* FdV^{m} \Omega^{r-1}_U $
(see \cite[I.3.21.1.4]{Illusie}), it follows that there exists 
$z' \in j_* \Omega^{r-1}_U$ such that
$FdV^m(z') = F(\tilde{x}) - y'$. Equivalently, $F(\tilde{x} - dV^m(z')) = y'
\in W_m \Omega^r_X$.
Since
\[
j_* FW_{m+1} \Omega^r_U \cap W_m \Omega^r_X =
\Ker(F^{m-1}d \colon W_m \Omega^r_X \to j_* \Omega^{r+1}_U) =
\Ker(F^{m-1}d \colon W_m \Omega^r_X \to \Omega^{r+1}_X)
\]
\[
= F W_{m+1} \Omega^r_X,
\]
we can find $y'' \in W_{m+1} \Omega^r_X$ such that $F(\tilde{x} - dV^m(z'))
= F(y'')$.

Since $\Ker(F \colon j_* W_{m+1} \Omega^r_{U} \to j_*  W_m \Omega^r_U)
= j_* V^m \Omega^r_U$ (see \cite[I.3.21.1.2]{Illusie}),
we can find $z'' \in j_* \Omega^r_U$ such that 
$V^m(z'') = \tilde{x} - dV^m(z') - y''$. Equivalently,
$\tilde{x} - y'' = V^m(z'') + dV^m(z')$.
On the other hand, we have
\[
V^m j_* \Omega^r_U  + dV^m j_* \Omega^{r-1}_U =
\Ker(R \colon j_* W_{m+1} \Omega^r_U \to j_* W_m \Omega^r_U).
\]
We thus get $x = R(\tilde{x}) = R(y'')$. Since $y'' \in  W_{m+1} \Omega^r_X$,
we get $x \in W_{m} \Omega^r_X$. This proves the lemma.
\end{proof}

It is easy to see that for $n \in \Z$, the differential 
$d \colon  j_* W_{m}\Omega^r_X \to   j_* W_{m}\Omega^{r+1}_X$ restricts to a
homomorphism $d \colon W_{m}\Omega^r_X(nD) \to W_{m}\Omega^{r+1}_X((n+1)D)$
such that the composite map
$d^2 \colon W_{m}\Omega^r_X(nD) \to W_{m}\Omega^{r+2}_X((n+2)D)$ is zero
by \corref{cor:Limit-1}.
The map $d \colon  W_{m}\Omega^r_X(p^mnD) \to W_{m}\Omega^{r+1}_X((p^mn+1)D)$
actually factors through $d \colon
 W_{m}\Omega^r_X(p^mnD) \to W_{m}\Omega^{r+1}_X(p^mnD)$ as one easily checks.
In particular, $W_m\Omega^\bullet_X(p^mnD)$ is a complex for every $m\ge 1$ and
$n \in \Z$.

Let 
\[
Z_1 W_m\Omega^r_X(nD) = (j_* Z_1 W_m\Omega^r_U) \cap W_m\Omega^r_X(nD)
= j_*\Ker(F^{m-1}d) \cap  W_m\Omega^r_X(nD) 
\]
\[
= \Ker(F^{m-1}d \colon W_m\Omega^r_X(nD) \to j_* \Omega_U) =
\Ker(F^{m-1}d \colon W_m\Omega^r_X(nD) \to \Omega^{r+1}_X(p^{m-1}(n+1)D)),
\]
where the third equality is by the left exactness of $j_*$ and
the last equality by \lemref{lem:FRV}.
If $m =1$, we get
\begin{equation}\label{eqn:Loc-free-0}
Z_1W_1\Omega^r_X(nD) = Z\Omega^r_X(nD) = \Ker(d \colon 
\Omega^r_X(nD) \to \Omega^{r+1}_X((n+1)D)).
\end{equation}
We let $B\Omega^r_X(nD) = {\rm Image}(d \colon \Omega^r_X((n-1)D) \to
\Omega^r_X(nD))$.

\begin{prop}\label{prop:Cartier-map}
There exists a homomorphism $C \colon Z_1 W_m\Omega^r_X(pnD)
\to W_m\Omega^r_X(nD)$ such that the diagram
\begin{equation}\label{eqn:Cartier-map-0}
\xymatrix@C.8pc{
Z_1 W_m\Omega^r_X(pnD) \ar[rr]^-{V} \ar[dr]_-{C} & & W_{m+1}\Omega^r_X(nD) \\
& W_m\Omega^r_X(nD) \ar[ur]_-{\un{p}}}
\end{equation}
is commutative. The map $C$ induces an isomorphism of $\sO_X$-modules
\[
C \colon \sH^r(\psi_* W_1\Omega^\bullet_X(pnD)) \xrightarrow{\cong} 
W_1\Omega^r_X(nD).
\]
\end{prop}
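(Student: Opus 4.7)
The plan is to construct $C$ by extending Illusie's classical Cartier operator from $j_*W_m\Omega^\bullet_U$ and then to verify that it takes $Z_1 W_m\Omega^r_X(pnD)$ into the subsheaf $W_m\Omega^r_X(nD) \subseteq j_*W_m\Omega^r_U$. Since $k$ is perfect and $U$ is smooth, Illusie's construction (\cite{Illusie}, Chap.~I) furnishes a morphism $C_U \colon Z_1 W_m\Omega^r_U \to W_m\Omega^r_U$ characterized by $\un{p}\circ C_U = V$, and $\un{p}$ is injective on a regular scheme (via the short exact sequence analogous to that of \lemref{lem:log-HW}). Pushing forward along $j$ and using the inclusion $Z_1 W_m\Omega^r_X(pnD) \inj j_* Z_1 W_m\Omega^r_U$ coming from \corref{cor:Limit-1}, I obtain a candidate $C \colon Z_1 W_m\Omega^r_X(pnD) \to j_*W_m\Omega^r_U$ satisfying $\un{p}(C(\omega)) = V(\omega)$ in $j_*W_{m+1}\Omega^r_U$; its uniqueness is then automatic.

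The crux is to verify that $C(\omega)$ lies in $W_m\Omega^r_X(nD)$, which I would reduce to a twisted analogue of \lemref{lem:Cartesian-X}. By \lemref{lem:FRV}(2), we have $V(\omega) \in W_{m+1}\Omega^r_X(nD)$, so since $\un{p}(C(\omega)) = V(\omega)$, it suffices to prove that the square
\[
\xymatrix@C.8pc{
W_m\Omega^r_X(nD) \ar[r]^-{\un{p}} \ar@{^{(}->}[d] & W_{m+1}\Omega^r_X(nD) \ar@{^{(}->}[d] \\
j_*W_m\Omega^r_U \ar[r]^-{\un{p}} & j_*W_{m+1}\Omega^r_U
}
\]
is Cartesian. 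I would prove this by reducing to \lemref{lem:Cartesian-X} using the local compatibility $\un{p}([f^{-n}]\sigma) = [f^{-n}]\un{p}(\sigma)$, which I would derive by lifting $\sigma$ along $R$ to $\tilde\sigma \in W_{m+1}\Omega^r_X$, invoking $\un{p}\circ R = p$, and using the multiplicativity of Teichm\"uller representatives combined with the projection formula. Given $\tau \in j_*W_m\Omega^r_U$ with $\un{p}(\tau) \in W_{m+1}\Omega^r_X(nD)$, multiplying by a local equation $[f^n]$ yields $\un{p}([f^n]\tau) = [f^n]\un{p}(\tau) \in W_{m+1}\Omega^r_X$, so \lemref{lem:Cartesian-X} forces $[f^n]\tau \in W_m\Omega^r_X$, whence $\tau \in W_m\Omega^r_X(nD)$.

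For the isomorphism statement when $m=1$, my plan is to reduce to the classical Cartier isomorphism by twisting with the invertible sheaf $\sO_X(nD)$. The key observation is that $d(f^{-pn}) = -pn\, f^{-pn-1}df = 0$ in characteristic $p$, so the differential on $\Omega^\bullet_X(pnD)$ preserves the pole order $pn$ exactly, and locally $\Omega^\bullet_X(pnD) = \Omega^\bullet_X \otimes_{\sO_X} \sO_X(pnD)$ as complexes. Since the absolute Frobenius satisfies $\psi^*\sO_X(nD) \cong \sO_X(pnD)$, the projection formula yields an isomorphism $\psi_*\Omega^\bullet_X(pnD) \cong \psi_*\Omega^\bullet_X \otimes_{\sO_X} \sO_X(nD)$ of complexes of $\sO_X$-modules. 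Taking cohomology and using the flatness of $\sO_X(nD)$, the constructed map $C$ is identified with $C_{\mathrm{classical}} \otimes \mathrm{id}_{\sO_X(nD)}$, which is an isomorphism by the classical Cartier theorem on smooth $k$-schemes. The main obstacle I foresee is the sheaf-theoretic proof of the twisted Cartesian square; while stalkwise the multiplication-by-$[f^n]$ trick is clean, I would need a little care to assemble it into a sheaf-level statement, but once the local compatibility $\un{p}([f^{-n}]\sigma) = [f^{-n}]\un{p}(\sigma)$ is in place, everything else is formal.
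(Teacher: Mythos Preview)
Your proposal is correct and follows essentially the same approach as the paper: both construct $C$ via the Cartier map on $U$ and reduce existence to the Cartesian property of the twisted $\un{p}$-square, and both deduce the $m=1$ isomorphism by twisting the classical Cartier isomorphism via the projection formula and $\psi^*\sO_X(nD)\cong\sO_X(pnD)$. The only stylistic difference is in verifying the Cartesian square: the paper tensors the injection ${W_{m+1}\Omega^r_X}/p \hookrightarrow j_*{W_{m+1}\Omega^r_U}/p$ (from \lemref{lem:Cartesian-X}) with the invertible sheaf $W_{m+1}\sO_X(nD)$, while you multiply stalkwise by $[f^n]$ using the semilinearity $\un{p}(R(a)\cdot\omega)=a\cdot\un{p}(\omega)$---these are equivalent arguments, and your reference for the injectivity of $\un{p}$ should be \lemref{lem:mult-p} rather than \lemref{lem:log-HW}.
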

\begin{proof}
We consider the diagram
\begin{equation}\label{eqn:Cartier-map-10}
\xymatrix@C.8pc{
Z_1 W_m\Omega^r_X(pnD) \ar@{.>}[r] \ar[d] \ar@/^1cm/[rr]^-{V} & W_m\Omega^r_X(nD)
\ar[r]^-{\un{p}} \ar[d] & W_{m+1}\Omega^r_X(nD) \ar[d] \\
j_* Z_1 W_m\Omega^r_U \ar[r]^-{C} \ar@/_1cm/[rr]^-V 
& j_* W_m\Omega^r_U \ar[r]^-{\un{p}} & j_* W_{m+1}\Omega^r_U,}
\end{equation}
where the vertical arrows are inclusions from \corref{cor:Limit-1}.
The right square exists and commutes by \lemref{lem:mult-p}.
The big outer square clearly commutes. It suffices therefore to show
that the right square is Cartesian.

We have a commutative diagram of $W_{m+1}\sO_X$-modules
\begin{equation}\label{eqn:Cartier-map-1}
\xymatrix@C.8pc{
W_{m+1}\Omega^r_X(nD) \ar[r]^{p} \ar[d] & W_{m+1}\Omega^r_X(nD) \ar[r] \ar[d]
& {W_{m+1}\Omega^r_X(nD)}/p \ar[r] \ar[d] & 0 \\
j_* W_{m+1}\Omega^r_U \ar[r]^{p} \ar[r] & j_* W_{m+1}\Omega^r_U \ar[r] 
& j_* {W_{m+1}\Omega^r_U}/p \ar[r] & 0.}
\end{equation} 
The top sequence is clearly exact and the bottom sequence is exact
by the Serre vanishing because all sheaves are $W_{m+1}\sO_X$-modules and 
$j$ is an affine morphism. By \lemref{lem:mult-p}, it suffices to show that
the right vertical arrow is injective.

To prove this injectivity, we note that the top row of 
~\eqref{eqn:Cartier-map-1} is same as the sequence
\[
W_{m+1}\Omega^r_X \otimes_{\sO} \sO(nD)
\xrightarrow{p \otimes 1} 
W_{m+1}\Omega^r_X \otimes_{\sO} \sO(nD)
\to {W_{m+1}\Omega^r_X}/p \otimes_{\sO} \sO(nD) \to 0,
\]
where $\sO = W_{m+1}\sO_X$. Similarly, we have
\[
j_* {W_{m+1}\Omega^r_U}/p \otimes_{\sO} \sO(nD) \cong 
j_*({W_{m+1}\Omega^r_U}/p \otimes_{W_{m+1}\sO_U} j^* \sO(nD))
\cong 
\]
\[
j_*({W_{m+1}\Omega^r_U}/p \otimes_{W_{m+1}\sO_U} W_{m+1}\sO_U)
\cong j_* {W_{m+1}\Omega^r_U}/p,
\]
where the first isomorphism follows from the projection formula for
$\sO$-modules using the fact that $\sO(nD)$ is an invertible $\sO$-module
(see \cite[Exercise~II.5.1]{Hartshorne-AG}).

Moreover, it is clear that the right vertical arrow in 
~\eqref{eqn:Cartier-map-1} is
the map 
\[
{W_{m+1}\Omega^r_X}/p \otimes_{\sO} \sO(nD) 
\xrightarrow{j^* \otimes 1} (j_* {W_{m+1}\Omega^r_U}/p) \otimes_{\sO} \sO(nD).
\]
Since $\sO(nD)$ is an invertible $\sO$-module, it suffices to show that
the map $j^* \colon {W_{m+1}\Omega^r_X}/p \to  j_* {W_{m+1}\Omega^r_U}/p$
is injective. But this follows from \lemref{lem:Cartesian-X} since $j^*$
in ~\eqref{eqn:Cartesian-X-0} is injective. This proves the first part of 
the lemma.

We now prove the second part for which we can assume $m =1$.
We know classically that the map $C$ on $Z\Omega^r_X$ induces an
$\sO_X$-linear isomorphism $C \colon \sH^r(\psi_* \Omega^\bullet_X) 
\xrightarrow{\cong} \Omega^r_X$.
Taking its inverse, we get an $\sO_X$-linear isomorphism 
\[
C^{-1} \colon \Omega^r_X \xrightarrow{\cong} \sH^r(\psi_* \Omega^\bullet_X).
\]
Since $\psi_* \Omega^\bullet_X \in D^{+}({\rm Coh}_X)$, we see that
$\sH^r(\psi_* \Omega^\bullet_X)$ is a coherent $\sO_X$-module.
In particular, we get an isomorphism
\[
C^{-1} \colon \Omega^r_X(nD) \xrightarrow{\cong} 
\sH^r(\psi_* \Omega^\bullet_X)(nD).
\]

On the other hand, we have
\[
\sH^r(\psi_* \Omega^\bullet_X)(nD) \cong \sH^r((\psi_* \Omega^\bullet_X)(nD))
\cong \sH^r(\psi_*(\Omega^\bullet_X \otimes_{\sO_X} \psi^*(\sO(nD)))) 
\cong \sH^r(\psi_*(\Omega^\bullet_X (pnD))).
\]
This proves the second part.
\end{proof}

\begin{lem}\label{lem:Z_1-mod}
We have the following.
\begin{enumerate}
\item
$\Ker(F^{m-1} \colon W_{m} \Omega^r_X(nD) \to \Omega^r_X(p^{m-1}nD)) =
V W_{m-1} \Omega^r_X(pnD)$.
\item
$Z_1 W_m\Omega^r_X(pnD) = F W_{m+1} \Omega^r_X(nD)$.
\end{enumerate}
\end{lem}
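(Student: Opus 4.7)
The plan is to reduce both assertions to their untwisted analogues
\[
\Ker\bigl(F^{m-1}\colon W_m\Omega^r_X\to\Omega^r_X\bigr)=VW_{m-1}\Omega^r_X
\quad\text{and}\quad Z_1W_m\Omega^r_X=FW_{m+1}\Omega^r_X,
\]
which are classical results for smooth $k$-schemes (see \cite[Chap.~I]{Illusie}) and extend to regular $X$ via N\'eron--Popescu approximation together with the Gersten resolution for de Rham--Witt sheaves, exactly as already invoked in \lemref{lem:pro-iso-rel} and \corref{cor:Limit-1}. Since both assertions are local on $X$, I would work at a stalk $A=\sO^h_{X,x}$ and choose a local equation $f$ for $D$, so that $W_m\sO_X(nD)=[f^{-n}]\cdot W_m\sO_X$ and, using \corref{cor:Limit-1}, $W_m\Omega^r_X(nD)=[f^{-n}]\cdot W_m\Omega^r_X$ as subsheaves of $j_*W_m\Omega^r_U$. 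The whole argument will consist in transporting the untwisted identities through this trivialisation using multiplicativity of $F$ (so $F([f^{-n}])=[f^{-pn}]$) together with the projection formulas $V([f^{-pn}]\gamma)=[f^{-n}]V\gamma$ and $F([f^{-n}]\beta)=[f^{-pn}]F\beta$.

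For part (1), the inclusion $\supseteq$ is immediate from $F^{m-1}V=F^{m-2}(FV)=p\cdot F^{m-2}$ combined with $p\cdot\Omega^r_X=0$. For $\subseteq$, I would write $\omega=[f^{-n}]\alpha$ with $\alpha\in W_m\Omega^r_X$; then $F^{m-1}\omega=[f^{-p^{m-1}n}]F^{m-1}\alpha$, and the hypothesis says this vanishes in $\Omega^r_X(p^{m-1}nD)$, hence in $j_*\Omega^r_U$. Since $[f^{-p^{m-1}n}]$ is a unit in $j_*\sO_U$ and $\Omega^r_X\hookrightarrow j_*\Omega^r_U$ is injective by the Gersten property, this forces $F^{m-1}\alpha=0$ in $\Omega^r_X$. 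The untwisted identity then gives $\alpha=V\gamma$ for some $\gamma\in W_{m-1}\Omega^r_X$, and the projection formula yields $\omega=[f^{-n}]V\gamma=V([f^{-pn}]\gamma)\in VW_{m-1}\Omega^r_X(pnD)$.

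For part (2), the inclusion $\supseteq$ follows from the relation $dF=pFd$ on $W_{m+1}\Omega^\bullet$, which gives $F^{m-1}d(F\omega)=pF^m(d\omega)=0$ in $\Omega^{r+1}_X$, together with $F\colon W_{m+1}\Omega^r_X(nD)\to W_m\Omega^r_X(pnD)$ provided by \lemref{lem:FRV}. For $\subseteq$, I would write $\omega=[f^{-pn}]\alpha$ with $\alpha\in W_m\Omega^r_X$. Using the Leibniz rule together with $d[f^{-pn}]=-pn[f^{-pn-1}]d[f]$ (which carries an explicit factor of $p$) and the standard identity $F^{m-1}(d[f])=[f^{p^{m-1}-1}]d[f]$, one computes
\[
F^{m-1}d\omega \;=\; -pn[f^{-p^mn-1}]d[f]\wedge F^{m-1}\alpha
+ [f^{-p^mn}]F^{m-1}d\alpha
\;=\; [f^{-p^mn}]F^{m-1}d\alpha
\]
in $j_*\Omega^{r+1}_U$, the first summand dying because $p\cdot\Omega^{r+1}_X=0$. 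Vanishing of the left-hand side, invertibility of $[f^{-p^mn}]$ on $U$, and injectivity of $\Omega^{r+1}_X\hookrightarrow j_*\Omega^{r+1}_U$ then give $F^{m-1}d\alpha=0$, i.e.\ $\alpha\in Z_1W_m\Omega^r_X$. The untwisted identity yields $\alpha=F\beta$ with $\beta\in W_{m+1}\Omega^r_X$, and $\omega=[f^{-pn}]F\beta=F([f^{-n}])F\beta=F([f^{-n}]\beta)\in FW_{m+1}\Omega^r_X(nD)$, as desired. The only real obstacle is bookkeeping: one must track the interaction of the Teichm\"uller trivialisation with $F,V,d$ and confirm that the untwisted identities are available for regular (as opposed to smooth) $X$; no new ingredient beyond what \S~\ref{sec:Cartier} already uses is required.
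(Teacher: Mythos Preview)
Your proof is correct and follows essentially the same strategy as the paper: trivialise locally via the Teichm\"uller lift $W_m\Omega^r_X(nD)=[f^{-n}]\cdot W_m\Omega^r_X$, use multiplicativity of $F$ and the projection formula $V([f^{-pn}]\gamma)=[f^{-n}]V\gamma$ to transport the problem to the untwisted identities $\Ker(F^{m-1})=VW_{m-1}\Omega^r_X$ and $Z_1W_m\Omega^r_X=FW_{m+1}\Omega^r_X$, and invoke injectivity of the restriction to $U$ to strip off the unit $[f^{-N}]$. The only cosmetic difference is that the paper phrases the reduction as computing the intersections $(j_*VW_{m-1}\Omega^r_U)\cap W_m\Omega^r_X(nD)$ and $(j_*FW_{m+1}\Omega^r_U)\cap W_m\Omega^r_X(pnD)$ rather than applying $F^{m-1}$ (resp.\ $F^{m-1}d$) directly to $[f^{-n}]\alpha$; your explicit Leibniz computation in (2), exploiting the factor of $p$ in $d[f^{-pn}]=-pn[f^{-pn-1}]d[f]$, is a clean alternative to the paper's route through $j_*FW_{m+1}\Omega^r_U$.
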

\begin{proof}
We first prove (1). It is clear that the right hand side is contained in the
left hand side. We prove the other inclusion.
It suffices to show that
\[
(j_* V W_{m-1} \Omega^r_U) \cap W_{m} \Omega^r_X(nD) \subset  
V W_{m-1} \Omega^r_X(pnD).
\]

We can check this locally. So let $D$ be defined by $f \in \sO_X$ and
let $y = Vx = [f^{-n}]\omega$,
where $x \in j_* W_{m-1} \Omega^r_U$ and $\omega \in  W_{m} \Omega^r_X$.
This yields $\omega = [f^n] Vx = V(F([f^n]) x) = V([f^{pn}]x)$.
This implies that $\omega \in W_{m} \Omega^r_X \cap j_* V W_{m-1} \Omega^r_U$.
On the other hand, we have
\[
W_{m} \Omega^r_X \cap j_* V W_{m-1} \Omega^r_U =
\Ker(F^{m-1} \colon W_{m} \Omega^r_X \to j_*\Omega^r_U) =
\Ker(F^{m-1} \colon W_{m} \Omega^r_X \to \Omega^r_X)
\]
\[
= V W_{m-1}\Omega^r_X.
\]
We thus get $\omega \in V W_{m-1}\Omega^r_X$. Let $y' \in W_{m-1}\Omega^r_X$
be such that $\omega = Vy'$. This yields
$y = [f^{-n}]\omega = [f^{-n}]Vy' = V(F([f^{-n}])y') = V([f^{-pn}]y')
\in VW_{m-1}\Omega^r_X(pnD)$. This proves (1).

We now prove (2). Since
$FW_{m+1} \Omega^r_X(nD) \subset j_* F W_{m+1} \Omega^r_U \cap
W_m\Omega^r_X(pnD)$ by \lemref{lem:FRV}, it follows that
$FW_{m+1} \Omega^r_X(nD) \subset Z_1 W_m \Omega^r_X(pnD)$.
We show the other inclusion.

We let $z \in Z_1 W_m \Omega^r_X(pnD)$ so that
$z = Fx = [f^{-pn}]\omega$ for some $x \in j_* W_{m+1}\Omega^r_U$ and
$\omega \in W_m\Omega^r_X$. 
We can then write $\omega = [f^{pn}]Fx = F([f^n]x)$.
This implies that $\omega \in W_m\Omega^r_X \cap j_* F W_{m+1} \Omega^r_U$.
On the other hand, we have
\[
W_m\Omega^r_X \cap j_* F W_{m+1} \Omega^r_U = 
\Ker(F^{m-1} d \colon W_m\Omega^r_X \to j_* \Omega^{r+1}_U)
= \Ker(F^{m-1} d \colon W_m\Omega^r_X \to \Omega^{r+1}_X)
\]
\[
= F W_{m+1} \Omega^{r}_X.
\]
We can thus write  $\omega = Fx'$ for some $x' \in W_{m+1} \Omega^{r}_X$. 
This gives $z = [f^{-pn}]\omega = [f^{-pn}]Fx' = F([f^{-n}]) Fx' =
F([f^{-n}]x') \in F W_{m+1}\Omega^r_X(nD)$. This proves (2).
\end{proof}

\section{The pairing of cohomology groups}\label{sec:DT}
We fix a finite field $k$ of characteristic $p$ 
and an integral smooth projective scheme $X$ 
of dimension $d \ge 1$ over $k$. Let $D \subset X$ be an effective Cartier
divisor with complement $U$. Let $\iota \colon D \inj X$ and 
$j \colon U \inj X$ be the inclusions.
In this section, we shall establish the pairing for
our duality theorem for the $p$-adic {\'e}tale 
cohomology of $U$. We fix integers $m, n \ge 1$ and $r \ge 0$.
We shall consider the {\'e}tale topology throughout our discussion of 
duality.

\subsection{The complexes}\label{sec:Complexes}
We consider the complex of {\'e}tale sheaves 
\begin{equation}\label{eqn:F-com}
W_m\sF^{r, \bullet}_{n} = [Z_1W_m\Omega^r_{X}(nD) \xrightarrow{1 - C}
W_m\Omega^r_{X}(nD)].
\end{equation}
The differential of this complex is induced by the composition
\[
Z_1W_m\Omega^r_{X}(nD) \inj Z_1W_m\Omega^r_{X}(pnD) \xrightarrow{C}
W_m\Omega^r_{X}(nD),
\]
where the last map is defined by virtue of \propref{prop:Cartier-map}.

We now consider the map $F \colon W_{m+1}\Omega^r_X(-nD) \to W_m\Omega^r_X(-pnD)$
whose existence is shown in \lemref{lem:FRV}.
We have shown in ~\eqref{eqn:dlog-2-*} that
$F(\Ker(R)) = F((V^mW_1\Omega^r_X + dV^mW_1\Omega^{r-1}_X) \cap
W_{m+1}\Omega^r_X(-nD)) \subseteq dV^{m-1}\Omega^{r-1}_X \cap 
W_m\Omega^r_X(-pnD)$. It follows that
$F$ induces a map
$\ov{F} \colon W_{m}\Omega^r_X(-nD) \to 
{W_m\Omega^r_X(-pnD)}/{(dV^{m-1}\Omega^{r-1}_X \cap 
W_m\Omega^r_X(-pnD))}$.
We denote the composition
\[
W_{m}\Omega^r_X(-nD) \to 
\frac{W_m\Omega^r_X(-pnD)}{dV^{m-1}\Omega^{r-1}_X \cap 
W_m\Omega^r_X(-pnD)} \inj \frac{W_m\Omega^r_X(-nD)}{dV^{m-1}\Omega^{r-1}_X \cap 
W_m\Omega^r_X(-nD)}
\]
also by $\ov{F}$ and consider the complex of {\'e}tale sheaves 
\begin{equation}\label{eqn:G-com}
W_m\sG^{r, \bullet}_{n} = \left[W_{m}\Omega^r_X(-nD) \xrightarrow{\pi - \ov{F}}
\frac{W_m\Omega^r_X(-nD)}{dV^{m-1}\Omega^{r-1}_X \cap 
W_m\Omega^r_X(-nD)}\right],
\end{equation}
where $\pi$ is the quotient map.
We let
\begin{equation}\label{eqn:H-com}
W_m\sH^{d,\bullet} = [W_m\Omega^d_X \xrightarrow{1 -C} W_m\Omega^d_X],
\end{equation}
where the map $C$ (see \propref{prop:Cartier-map} for $n = 0$) is defined 
because $Z_1W_m\Omega^d_X = W_m\Omega^d_X$.

\subsection{The pairing of complexes}\label{sec:pair}
We consider the pairing
\begin{equation}\label{eqn:Pair-0}
\<, \>_1 \colon Z_1W_m\Omega^r_{X}(nD) \times W_{m}\Omega^{d-r}_X(-nD) 
\xrightarrow{\wedge} W_m\Omega^d_X
\end{equation}
by letting $\<w_1, w_2\>_1 = w_1 \wedge w_2$.
This is defined by the definition of the twisted de Rham-Witt sheaves.

We define a pairing
$\<, \>_2 \colon Z_1W_m\Omega^r_{X}(nD) \times W_{m}\Omega^{d-r}_X(-nD) 
\to W_m\Omega^d_X$ by $\<w_1, w_2\>_2 = - C(w_1 \wedge w_2)$. 
We claim that $C(w_1 \wedge w_2) = 0$ if
$w_2 \in dV^{m-1}\Omega^{d-r-1}_X \cap W_m\Omega^{d-r}_X(-nD)$.
To prove it, we write $w_1 = F( w'_1)$ for some $w'_1 \in 
j_* W_{m+1} \Omega^r_U$. 
This gives
$V(w_1 \wedge j^* w_2) = V(F(w'_1) \wedge  j^* w_2) = w'_1 \wedge  j^* V w_2 = 0$, where 
the last equality holds because $V d V^{m-1} \Omega_X^{d-r-1} \subset 
p d V^{m} \Omega_X^{d-r-1}$ and $p \Omega_X^{d-r-1} = 0$.
In particular, 
$j^*(w_1 \wedge w_2) \in \Ker (V: W_m \Omega_U^{d} \to W_{m+1} \Omega_U^d) = 
Fd V^m \Omega_U^{d-1} = d V^{m-1} \Omega_U^{d-1} = \Ker(C_U)$,
where $C_U$ is the Cartier map for $U$
(see \cite[Chapitre~III]{I-R-83} for the last equality). 
It follows that $j^* \circ C(w_1 \wedge w_2) = 
C_U \circ j^*(w_1 \wedge w_2) = 0$. 
Since $W_m \Omega^d_X \inj j_* W_m \Omega^d_U$, the claim follows.
Using the claim, we get a pairing
\begin{equation}\label{eqn:Pair-1}
\<, \>_2 \colon Z_1W_m\Omega^r_{X}(nD) \times
\frac{W_m\Omega^{d-r}_X(-nD)}{dV^{m-1}\Omega^{d-r-1}_X \cap 
W_m\Omega^{d-r}_X(-nD)} \to W_m\Omega^d_X.
\end{equation}

We define our third pairing of {\'e}tale sheaves
\begin{equation}\label{eqn:Pair-2}
\<, \>_3 \colon W_m\Omega^r_{X}(nD) \times W_m\Omega^{d-r}_X(-nD) \to
W_m\Omega^d_X
\end{equation}
by $\<w_1, w_2\>_3 = w_1 \wedge w_2$. 

\begin{lem}\label{lem:Pairing-main-sheaf}
The above pairings of {\'e}tale sheaves give rise to a pairing of
two-term complexes of sheaves 
\begin{equation}\label{eqn:Pair-3}
\<, \> \colon W_m\sF^{r, \bullet}_{n} \times W_m\sG^{d-r, \bullet}_{n} \to
W_m\sH^{d, \bullet}.
\end{equation}
\end{lem}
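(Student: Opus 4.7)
The plan is to recognize the three pairings $\langle,\rangle_1, \langle,\rangle_3, \langle,\rangle_2$ as the $(0,0)$, $(1,0)$, and $(0,1)$-components of a morphism of two-term complexes out of the tensor product $W_m\sF^{r,\bullet}_n \otimes W_m\sG^{d-r,\bullet}_n$. Because $W_m\sH^{d,\bullet}$ is concentrated in degrees $0$ and $1$, the $(1,1)$-component of the tensor must be (and trivially can be taken to be) the zero map, and compatibility of the differentials in degree $1 \to 2$ is vacuous, since $d_C : C^1 \to C^2 = 0$ vanishes. Only the Leibniz rule in degree $0 \to 1$ requires verification.

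Fix local sections $a \in Z_1W_m\Omega^r_X(nD)$ and $b \in W_m\Omega^{d-r}_X(-nD)$. Using the surjectivity of $R$, lift $b$ to $\tilde b \in W_{m+1}\Omega^{d-r}_X(-nD)$, so that $\ov{F}(b)$ is represented by $F(\tilde b) \in W_m\Omega^{d-r}_X(-pnD) \subseteq W_m\Omega^{d-r}_X(-nD)$ (via \lemref{lem:FRV}). A direct unwinding of the definitions of $d_A = 1 - C$, $d_B = \pi - \ov{F}$, $d_C = 1 - C$, and of the three pairings collapses the Leibniz identity
\[
d_C\langle a, b\rangle_1 \;=\; \langle d_A a, b\rangle_3 + \langle a, d_B b\rangle_2
\]
to the single sheaf-level identity
\[
C(a) \wedge b \;=\; C\bigl(a \wedge F(\tilde b)\bigr) \qquad \text{in } W_m\Omega^d_X,
\]
where on the left $C$ is the twisted Cartier map of \propref{prop:Cartier-map} precomposed with the inclusion $Z_1W_m\Omega^r_X(nD) \inj Z_1W_m\Omega^r_X(pnD)$, and on the right $C$ is its untwisted counterpart on $W_m\Omega^d_X = Z_1W_m\Omega^d_X$. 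Independence of the choice of lift $\tilde b$ is precisely the claim the authors have already verified for $\langle,\rangle_2$.

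The heart of the argument is to establish this identity, and I would deduce it from three ingredients: (i) the projection formula $V(x) \wedge \tilde y = V(x \wedge F(\tilde y))$, which holds in $j_* W_{m+1}\Omega^\bullet_U$ by property (4) of \S\ref{sec:dRW} and transports to our twisted sheaves via the inclusions $W_m\Omega^\bullet_X(\star D) \inj j_* W_m\Omega^\bullet_U$ of \corref{cor:Limit-1}; (ii) the factorization $V = \un p \circ C$ of \propref{prop:Cartier-map}, applied to $a \in Z_1W_m\Omega^r_X(pnD)$ and separately to $a \wedge F(\tilde b) \in W_m\Omega^d_X$ (both land in $Z_1$ in their respective twists, the latter because $\Omega^{d+1}_X = 0$); and (iii) the elementary identity $\un p(x) \wedge \tilde y = \un p(x \wedge R(\tilde y))$, immediate from the defining property $\un p \circ R = p$ of \lemref{lem:mult-p}. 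Combining these yields
\[
\un p\bigl(C(a) \wedge b\bigr) \;=\; V(a) \wedge \tilde b \;=\; V\bigl(a \wedge F(\tilde b)\bigr) \;=\; \un p\bigl(C(a \wedge F(\tilde b))\bigr)
\]
in $W_{m+1}\Omega^d_X$, whence the desired equality follows from the injectivity of $\un p$. The latter is a consequence of the equality $\Ker(p) = \Ker(R)$ on $W_{m+1}\Omega^\bullet_X$ recorded in \S\ref{sec:dRW}, valid because $X$ is regular: any $x$ with $\un p(x) = 0$ lifts to $\tilde x \in \Ker(p) = \Ker(R)$, so $x = R(\tilde x) = 0$.

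The principal obstacle is not conceptual but bookkeeping: correctly juggling the passage between levels $W_m$ and $W_{m+1}$, tracking the twists $\pm nD$ versus $\pm pnD$ at each step, and verifying that each manipulation above preserves containment in the appropriate twisted subsheaf of $j_* W_\bullet\Omega^\bullet_U$. Once these preservation statements are in hand (all of them immediate from \lemref{lem:FRV} and \lemref{lem:Z_1-mod}), the identity is a direct consequence of the projection formula.
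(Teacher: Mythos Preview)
Your proof is correct and lands on the same Leibniz compatibility the paper verifies, namely $(1-C)(a \wedge b) = (1-C)(a) \wedge b - C(a \wedge (\pi - \ov{F})b)$, which after cancellation is exactly your identity $C(a) \wedge b = C(a \wedge F(\tilde b))$. The paper establishes this last identity differently: it invokes the multiplicativity of $C$ on $Z_1$ together with the relation $C \circ \ov{F} = \id$ (cited from \cite{Zhau}), so that $C(a) \wedge b = C(a) \wedge C(\ov{F}(b)) = C(a \wedge \ov{F}(b))$. Your route via the projection formula $V(a) \wedge \tilde b = V(a \wedge F\tilde b)$, the factorization $V = \un{p} \circ C$ of \propref{prop:Cartier-map}, and injectivity of $\un{p}$ is really an unpacking of those same two facts---indeed, $C \circ \ov{F} = \id$ is proved precisely by your method (apply $\un{p}$, use $VF = p = \un{p}R$, cancel $\un{p}$). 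So the two arguments are equivalent; yours is a bit more self-contained, while the paper's is shorter once the citation is granted.
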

\begin{proof}
By \cite[\S~1, p.175]{Milne-Duality}, we only have to show that 
\begin{equation}\label{eqn:Pair-3-0}
(1-C)(w_1 \wedge w_2) = (1-C)(w_1) \wedge w_2 - C(w_1 \wedge (\pi - \ov{F})w_2)
\end{equation}
for all $w_1 \in Z_1W_m\Omega^r_{X}(nD)$ 
and $w_2 \in W_m\Omega^r_{X}(-nD)$. 
But this follows from the equalities
\[
\begin{array}{lll}
(1-C)(w_1 \wedge w_2) & = & w_1 \wedge w_2 - C(w_1 \wedge w_2) \\
& = & w_1 \wedge w_2 - C(w_1) \wedge C(w_2) \\
& = & w_1 \wedge w_2 - C(w_1) \wedge w_2 + C(w_1) \wedge w_2 - 
C(w_1) \wedge C(w_2) \\
& = & (1-C)(w_1) \wedge w_2 - C(w_1) \wedge (C-1)(w_2) \\
& {=}^\dagger & (1-C)(w_1) \wedge w_2 - C(w_1 \wedge (\pi - \ov{F})(w_2)),
\end{array}
\]
where ${=}^\dagger$ holds because $C \circ \ov{F} = {\rm id}$
(see \cite[Proposition~1.1.4]{Zhau}). 
\end{proof}

\subsection{The pairing of {\'e}tale cohomology}\label{sec:Coh-pair}
Using \lemref{lem:Pairing-main-sheaf}, we get a pairing of hypercohomology
groups
\begin{equation}\label{eqn:Pair-4-0}
\H^i_\etl(X, W_m\sF^{r, \bullet}_{n}) \times 
\H^{d+1-i}_\etl(X, W_m\sG^{d-r, \bullet}_{n}) \to 
\H^{d+1}_\etl(X, W_m\sH^{d, \bullet}).
\end{equation}

By \cite[Proposition~1.1.7]{Zhau} and \cite[Corollary~1.12]{Milne-Zeta}, 
there is a quasi-isomorphism
$W_m\Omega^d_{X, \log} \xrightarrow{\cong} W_m\sH^{d, \bullet}$, and a
bijective trace homomorphism
$\Tr \colon H^{d+1}_\etl(X, W_m\Omega^d_{X, \log}) \xrightarrow{\cong} 
{\Z}/{p^m}$. We thus get a pairing of ${\Z}/{p^m}$-modules
\begin{equation}\label{eqn:Pair-4-1}
\H^i_\etl(X, W_m\sF^{r, \bullet}_{n}) \times 
\H^{d+1-i}_\etl(X, W_m\sG^{d-r, \bullet}_{n}) \to {\Z}/{p^m}.
\end{equation}
Since this pairing is compatible with the change in values of $m$ and $n$, we
get a pairing of ind-abelian (in first coordinate) and pro-abelian 
(in second coordinate) groups
\begin{equation}\label{eqn:Pair-4-2}
\{\H^i_\etl(X, W_m\sF^{r, \bullet}_{n})\}_n \times 
\{\H^{d+1-i}_\etl(X, W_m\sG^{d-r, \bullet}_{n})\}_n \to {\Z}/{p^m}.
\end{equation}

It follows from \corref{cor:Limit-1} that
${\varinjlim}_n \ W_m\sF^{r, \bullet}_{n} 
\xrightarrow{\cong} j_*([Z_1W_m\Omega^r_{U} \xrightarrow{1 - C}
W_m\Omega^r_{U}])$.
Since $j$ is affine, we get
\begin{equation}\label{eqn:Pair-4-3}
{\varinjlim}_n  \ W_m\sF^{r, \bullet}_{n} 
\xrightarrow{\cong} 
{\bf R}j_*([Z_1W_m\Omega^r_{U} \xrightarrow{1 - C}
W_m\Omega^r_{U}]) \cong {\bf R}j_*(W_m\Omega^r_{U, \log}).
\end{equation}

In order to understand the pro-complex $\{W_m\sG^{r, \bullet}_{n}\}_n$,
we let 
\[
W_m\wt{\sG}^{r, \bullet}_{n} =
\left[W_m\Omega^r_{(X,nD)} \xrightarrow{\pi - \ov{F}} 
\frac{W_m\Omega^r_{(X,nD)}}{dV^{m-1}\Omega^{r-1}_X \cap W_m\Omega^r_{(X,nD)}}\right].
\]
This complex is defined by the exact sequence
~\eqref{eqn:dlog-2}.
It follows then from \corref{cor:pro-iso-rel*} that
the canonical inclusion
$\{W_m\sG^{r, \bullet}_{n}\}_n \inj \{W_m\wt{\sG}^{r, \bullet}_{n}\}_n$
is an isomorphism of pro-complexes of sheaves.
Using the exact sequence ~\eqref{eqn:dlog-2} and \lemref{lem:Rel-dlog-iso},
we get that there is a canonical isomorphism of pro-complexes
\begin{equation}\label{eqn:Pair-4-4}
\dlog \colon \{\ov{{\wh{\sK}^M_{r, (X, nD)}}/{p^m}}\}_n \xrightarrow{\cong}
\{W_m\Omega^r_{(X,nD), \log}\}_n \xrightarrow{\cong} \{W_m\sG^{r, \bullet}_{n}\}_n.
\end{equation}
We conclude that ~\eqref{eqn:Pair-4-2} is equivalent to the pairing
of ind-abelian (in first coordinate) and pro-abelian 
(in second coordinate) groups
\begin{equation}\label{eqn:Pair-4-5}
\{\H^i_\etl(X, W_m\sF^{r, \bullet}_{n})\}_n \times
\{H^{d+1-i}_\etl(X, W_m\Omega^{d-r}_{(X,nD), \log})\}_n \to {\Z}/{p^m}.
\end{equation}

\subsection{Continuity of the pairing}\label{sec:Cont}
Recall from \S~\ref{sec:Exact}
that for a ${\Z}/{p^m}$-module $V$, one has $V^\star = 
\Hom_{{\Z}/{p^m}}(V, {\Z}/{p^m})$. 
For any profinite or discrete torsion abelian group
$A$, let $A^\vee = \Hom_{\cont}(A, {\Q}/{\Z})$ denote the Pontryagin dual of 
$A$ with the compact-open topology (see \cite[\S~7.4]{Gupta-Krishna-REC}).
If $A$ is discrete and $p^m$-torsion, then we have
$A^\vee = A^\star$. 
We shall use the following topological results.

\begin{lem}\label{lem:lim-colim}
Let $\{A_n\}_n$ be a direct system of discrete torsion 
topological abelian groups
whose limit is $A$ with the direct limit topology. Then
the canonical map $\lambda \colon A^\vee \to {\varprojlim}_n A^\vee_n$
is an isomorphism of profinite groups.
\end{lem}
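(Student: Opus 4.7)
The plan is to exhibit $\lambda$ as a continuous bijection between compact Hausdorff topological groups, whereupon it is automatically a topological isomorphism. First I would construct $\lambda$ by functoriality: each transition map $A_n \to A$ is continuous (since $A$ carries the direct limit topology), so dualising gives continuous homomorphisms $A^\vee \to A_n^\vee$ compatible with the inverse system, and these assemble into $\lambda$.

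For injectivity, the key input is that $A = \varinjlim_n A_n$ as an abstract abelian group, so any $\chi \in A^\vee$ whose restriction to every $A_n$ vanishes must be zero. For surjectivity, I would take a compatible family $(\chi_n)_n \in \varprojlim_n A_n^\vee$ and use the universal property of the colimit in abelian groups to produce a unique homomorphism $\chi \colon A \to \Q/\Z$ with $\chi|_{A_n} = \chi_n$. Continuity of $\chi$ is precisely where the direct limit topology on $A$ is crucial: a map out of a topological colimit is continuous iff each restriction is, so the continuity of the $\chi_n$ forces $\chi \in A^\vee$.

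Finally, to promote the algebraic bijection to a topological isomorphism, I would invoke that the Pontryagin dual of a discrete torsion abelian group is profinite (it embeds as a closed subgroup of $\prod_F F^\vee$, where $F$ ranges over finite subgroups), and an inverse limit of profinite groups is profinite. Both sides of $\lambda$ are therefore compact Hausdorff, so it suffices to check continuity; but $\lambda$ is defined by the projections $\chi \mapsto \chi|_{A_n}$, each continuous in the compact-open topology, hence $\lambda$ is continuous. A continuous bijection of compact Hausdorff groups is a homeomorphism.

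Honestly, there is no serious obstacle in this argument; it is the standard fact that Pontryagin duality turns colimits of discrete torsion groups into limits of profinite groups. The only point requiring mild care is bookkeeping around topologies, specifically verifying that the compact-open topology on $A^\vee$ and the inverse-limit topology on $\varprojlim_n A_n^\vee$ match under $\lambda$, and this reduces via compactness to the continuity of $\lambda$ noted above.
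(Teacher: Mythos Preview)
Your proposal is correct and follows essentially the same strategy as the paper: both arguments establish that $\lambda$ is a bijection (the paper dismisses this as ``well known and elementary'' while you spell it out via universal properties) and then upgrade to a topological isomorphism by noting that a continuous bijection between compact Hausdorff groups is a homeomorphism. The only minor difference is in the continuity step: you argue directly that each projection $\chi \mapsto \chi|_{A_n}$ is continuous for the compact-open topology, whereas the paper first reduces to the case where each $A_n$ is a subgroup of $A$ (by replacing $A_n$ with its image in $A$) and then invokes \cite[Lemma~2.9.3, Theorem~2.9.6]{Pro-fin}.
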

\begin{proof}
The fact that $\lambda$ is an isomorphism of abelian groups is
well known and elementary.
We show that $\lambda$ is a homeomorphism. Since its source and targets
are compact Hausdorff, it suffices to show that it is continuous.
We let $B_n = {\rm Image}(A_n \to A)$.
Then, we have a surjection of finite ind-groups $\{A_n\}_n \surj \{B_n\}_n$
whose limits coincides with $A$.
We thus get maps $A^\vee \xrightarrow{\lambda'} {\varprojlim}_n B^\vee_n 
\xrightarrow{\lambda''} {\varprojlim}_n A^\vee_n$ such that
$\lambda = \lambda'' \circ \lambda'$.
Since $\lambda''$ is clearly continuous, we have to only show that
$\lambda'$ is continuous. We can therefore assume that
each $A_n$ is a subgroup of $A$. Lemma now 
follows from \cite[Lemma~2.9.3 and Theorem~2.9.6]{Pro-fin}. 
\end{proof}

\begin{lem}\label{lem:Top}
Let $\{A_n\}_n$ be an inverse system of discrete  topological
abelian groups. Let $A$ be the limit of $\{A_n\}_n$ with the inverse
limit topology. Then any $f \in A^\vee$ factors through the
projection $\lambda_n \colon A \to A_n$ for some $n \ge 1$.
In particular, the canonical map
${\varinjlim}_n A^\vee_n \xrightarrow{\eta} A^\vee$ is continuous and surjective.
This map is an isomorphism if the transition maps of $\{A_n\}_n$
are surjective.
\end{lem}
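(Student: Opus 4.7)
The plan is to exploit that $\Q/\Z$ carries the discrete topology, so that continuity of $f$ becomes openness of $\ker(f)$, and then to match this with the basic opens coming from the inverse limit topology.

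First, I would observe that since $\Q/\Z$ is discrete, any $f \in A^\vee$ has $\ker(f) = f^{-1}(0)$ open in $A$. The inverse limit topology on $A \subset \prod_n A_n$ (with each $A_n$ discrete) has a basis at $0$ consisting of finite intersections of sets of the form $\lambda_{n_i}^{-1}(0) = \ker(\lambda_{n_i})$. Because $\{A_n\}_n$ is a directed (inverse) system, any such finite intersection contains $\ker(\lambda_N)$ for $N$ sufficiently large. Hence there exists $n$ with $\ker(\lambda_n) \subseteq \ker(f)$, so $f$ factors through the subgroup $A/\ker(\lambda_n) \hookrightarrow A_n$ (via $\lambda_n$). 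Since $\Q/\Z$ is divisible, hence injective in $\Ab$, this map extends to a homomorphism $\tilde{f} \colon A_n \to \Q/\Z$ (which is automatically continuous because $A_n$ is discrete), satisfying $f = \tilde{f} \circ \lambda_n$. This proves the factorization assertion, and it also proves surjectivity of $\eta$ since $[\tilde{f}] \in \varinjlim_n A_n^\vee$ maps to $f$.

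For the continuity of $\eta$, I would use the universal property of the direct limit topology on $\varinjlim_n A_n^\vee$: each structure map $A_n^\vee \to A^\vee$, $\tilde{f} \mapsto \tilde{f}\circ \lambda_n$, is a continuous homomorphism between topological groups equipped with the compact-open topology (functoriality of the compact-open topology under pullback along the continuous map $\lambda_n$), and these maps are compatible with the transition maps of $\{A_n^\vee\}_n$. The induced map from the colimit is therefore continuous.

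Finally, under the surjective transition hypothesis, I would first argue that each projection $\lambda_n \colon A \to A_n$ is surjective (the standard Mittag–Leffler argument: given $a_n \in A_n$, inductively lift it to compatible elements of $A_m$ for $m \ge n$ using surjectivity of the transition maps). This forces each structure map $A_n^\vee \to A^\vee$ to be injective, for if $\tilde{f} \circ \lambda_n = 0$ with $\lambda_n$ surjective then $\tilde{f} = 0$. The same surjectivity of transition maps in $\{A_n\}_n$ dualises to injectivity of the transition maps in $\{A_n^\vee\}_n$, so the colimit $\varinjlim_n A_n^\vee$ is the filtered union of its images in $A^\vee$ and $\eta$ is injective. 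Combined with the surjectivity established above, $\eta$ is a bijection of topological groups. The main subtlety to treat carefully is the use of directedness in reducing finite intersections of kernels to a single $\ker(\lambda_n)$ in paragraph one, and the use of surjective transitions (plus a countable cofinality assumption implicit in the inverse system, as is standard here) to ensure surjectivity of $\lambda_n$ in the final step.
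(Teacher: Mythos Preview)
Your proof is correct and follows essentially the same route as the paper's: both use openness of $\Ker(f)$ to find $\Ker(\lambda_n)\subseteq\Ker(f)$, then extend $f$ from $A/\Ker(\lambda_n)$ to $A_n$ via injectivity of $\Q/\Z$ (the paper quotes \cite[Lemma~7.10]{Gupta-Krishna-REC} for this step), and deduce continuity of $\eta$ from continuity of each $A_n^\vee\to A^\vee$. The only difference is that the paper dismisses the injectivity claim under surjective transitions as ``obvious'', whereas you spell out the argument via surjectivity of the projections $\lambda_n$; your caveat about countable cofinality is well placed, and is satisfied here since all inverse systems in the paper are indexed by $\N$.
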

\begin{proof}
Since $f \colon A \to {\Q}/{\Z}$ is continuous and its target is discrete,
it follows that $\Ker(f)$ is open.  By the definition of the inverse limit
topology, the latter contains $\Ker(\lambda_n)$ for some $n \ge 1$.
Letting $A'_n = {A}/{\Ker(\lambda_n)}$, this implies that $f$ factors through 
$f'_n \colon A'_n \to {\Q}/{\Z}$. Since $A_n$ is discrete, the map
$(A_n)^\vee \to (A'_n)^\vee$ is surjective by 
\cite[Lemma~7.10]{Gupta-Krishna-REC}. Choosing a lift $f_n$ of $f'_n$
under this surjection, we see that $f$ factors through
$A \xrightarrow{\lambda_n} A_n \xrightarrow{f_n} {\Q}/{\Z}$.
The continuity of $\eta$ is equivalent to the assertion that
the map $A^\vee_n \to A^\vee$ is continuous for each $n$.
But this is well known since $\lambda_n$ is continuous. 
The remaining parts of the lemma are now obvious.
\end{proof}

\begin{remk}\label{remk:Top-0}
The reader should note that the map ${\varinjlim}_n A^\vee_n \to A^\vee$
may not in general be injective even if each $A_n$ is finite.
\end{remk}

\vskip .3cm

For $n \ge 1$, we endow each of $\H^i_\etl(X, W_m\sF^{r, \bullet}_{n})$ and
$H^{i}_\etl(X, W_m\Omega^{d-r}_{(X,nD), \log})$ with the discrete topology.
We endow ${\varprojlim}_n H^{i}_\etl(X, W_m\Omega^{d-r}_{(X,nD), \log})$
with the inverse limit topology and 
${\varinjlim}_n \H^i_\etl(X, W_m\sF^{r, \bullet}_{n}) \cong 
H^i_\etl(U, W_m\Omega^r_{U, \log})$ with the direct limit topology.
Note that the latter topology is discrete.

If we let $x \in {\varinjlim}_n \H^i_\etl(X, W_m\sF^{r, \bullet}_{n})$,
then we can find some $n \gg 0$ such that
$x = f_n(x')$ for some $x' \in \H^i_\etl(X, W_m\sF^{r, \bullet}_{n})$,
where $f_n \colon \H^i_\etl(X, W_m\sF^{r, \bullet}_{n}) \to
{\varinjlim}_n \H^i_\etl(X, W_m\sF^{r, \bullet}_{n})$ is the canonical map 
to the limit.
This gives a map
\begin{equation}\label{eqn:Pair-4-6}
\<x, \cdot\> \colon {\varprojlim}_n  
H^{d+1-i}_\etl(X, W_m\Omega^{d-r}_{(X,nD), \log}) \to {\Z}/{p^m}
\end{equation}
which sends $y$ to $\<x', \pi(y)\>$ under the pairing ~\eqref{eqn:Pair-4-1},
where $\pi$ is the composite map
\[
{\varprojlim}_n H^{d+1-i}_\etl(X, W_m\Omega^{d-r}_{(X,nD), \log})
\cong {\varprojlim}_n  \H^{d+1-i}_\etl(X, W_m\sG^{d-r, \bullet}_{n})
\xrightarrow{g_n} \H^{d+1-i}_\etl(X, W_m\sG^{d-r, \bullet}_{n}).
\]
One checks that ~\eqref{eqn:Pair-4-6} is defined.
Since $H^i_\etl(U, W_m\Omega^r_{U, \log})^\vee$ is profinite, 
this shows that the map (see \lemref{lem:lim-colim})
\begin{equation}\label{eqn:Pair-5-0}
\theta_m \colon {\varprojlim}_n  
H^{d+1-i}_\etl(X, W_m\Omega^{d-r}_{(X,nD), \log}) \to
H^i_\etl(U, W_m\Omega^r_{U, \log})^\vee
\end{equation}
is continuous.
Since $H^i_\etl(U, W_m\Omega^r_{U, \log})$ is discrete, the map
\begin{equation}\label{eqn:Pair-5-1}
\vartheta_m \colon H^i_\etl(U, W_m\Omega^r_{U, \log}) \to 
({\varprojlim}_n  H^{d+1-i}_\etl(X, W_m\Omega^{d-r}_{(X,nD), \log}))^\vee
\end{equation}
is clearly continuous. We have thus shown that after taking the limits,
~\eqref{eqn:Pair-4-5} gives rise to a 
continuous pairing of topological abelian groups
\[
H^i_\etl(U, W_m\Omega^r_{U, \log}) \times {\varprojlim}_n \
H^{d+1-i}_\etl(X, W_m\Omega^{d-r}_{(X,nD), \log}) \to {\Q}/{\Z}.
\]
Since each $H^{d+1-i}_\etl(X, W_m\Omega^{d-r}_{(X,nD), \log})$ is $p^m$-torsion
and discrete, we get 
the following.

\begin{prop}\label{prop:Cont-pair}
There is a continuous pairing of topological abelian groups
\begin{equation}\label{eqn:Pair-4-7}
H^i_\etl(U, W_m\Omega^r_{U, \log}) \times {\varprojlim}_n \
H^{d+1-i}_\etl(X, W_m\Omega^{d-r}_{(X,nD), \log}) \to {\Z}/{p^m}.
\end{equation}
\end{prop}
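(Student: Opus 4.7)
The plan is to assemble the claimed pairing by taking a direct limit (on the left) and inverse limit (on the right) of the pairings already constructed at each finite stage in $n$, and then to verify continuity using the explicit description of the two topologies.

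First, I would observe that essentially all the algebraic content is already in place: the pairing of two-term complexes in Lemma~\ref{lem:Pairing-main-sheaf} gives the hypercohomology pairing ~\eqref{eqn:Pair-4-1}, and naturality in $n$ promotes this to a pairing ~\eqref{eqn:Pair-4-2} of an ind-system with a pro-system. The identification ~\eqref{eqn:Pair-4-3} shows that the direct limit of $\H^i_\etl(X, W_m\sF^{r,\bullet}_n)$ is $H^i_\etl(U, W_m\Omega^r_{U,\log})$, while the pro-isomorphism ~\eqref{eqn:Pair-4-4}, obtained by combining \corref{cor:pro-iso-rel*} with \lemref{lem:Rel-dlog-iso} and the sequence ~\eqref{eqn:dlog-2}, identifies the inverse system of $\H^{d+1-i}_\etl(X, W_m\sG^{d-r,\bullet}_n)$ with that of $H^{d+1-i}_\etl(X, W_m\Omega^{d-r}_{(X,nD),\log})$. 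Putting these together yields the bilinear map of abelian groups underlying the pairing ~\eqref{eqn:Pair-4-7}.

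The second step is to check continuity. Both groups are topologized naturally: the left-hand side carries the discrete topology since it is the filtered colimit of the discrete finite-level groups, and the right-hand side carries the inverse limit topology from the discrete ${\Z}/{p^m}$-modules $H^{d+1-i}_\etl(X, W_m\Omega^{d-r}_{(X,nD),\log})$. Fixing $x \in H^i_\etl(U, W_m\Omega^r_{U,\log})$, I would lift it to some $x' \in \H^i_\etl(X, W_m\sF^{r,\bullet}_n)$ for $n$ large, and then the induced map $\langle x, \cdot\rangle$ on the right-hand side factors through projection to the $n$-th term of the pro-system, hence is continuous; this is exactly the map $\theta_m$ of ~\eqref{eqn:Pair-5-0}. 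In the other variable the map $\vartheta_m$ in ~\eqref{eqn:Pair-5-1} is automatically continuous because its source is discrete. Jointly continuous bilinearity into the discrete group ${\Z}/{p^m}$ follows, giving the desired pairing.

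There is essentially no hard step left—the only minor point to be careful about is that ~\eqref{eqn:Pair-4-6} is well-defined, i.e. that replacing $x'$ by another preimage in $\H^i_\etl(X, W_m\sF^{r,\bullet}_{n'})$ yields the same linear functional after composing with the projection from the limit; this follows at once from the compatibility of the finite-level pairings ~\eqref{eqn:Pair-4-1} with the transition maps in $n$. With this verified, combining \lemref{lem:lim-colim} (to recognize the target of $\theta_m$ as profinite) with the continuity checks above produces the continuous pairing of topological abelian groups asserted by the proposition.
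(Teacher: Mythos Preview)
Your proposal is correct and follows essentially the same approach as the paper's own argument in \S\ref{sec:Cont}: assemble the finite-level pairings ~\eqref{eqn:Pair-4-1} into ~\eqref{eqn:Pair-4-2}, identify the two sides via ~\eqref{eqn:Pair-4-3} and ~\eqref{eqn:Pair-4-4}, and check continuity of $\theta_m$ and $\vartheta_m$ exactly as you describe. The only point you make explicit that the paper leaves implicit is the joint continuity, but since the target is discrete this is immediate from the separate continuity you verify.
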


Equivalently, there is a continuous pairing of topological abelian groups
\begin{equation}\label{eqn:Pair-4-8}
H^i_\etl(U, W_m\Omega^r_{U, \log}) \times {\varprojlim}_n \
H^{d+1-i}_\etl(X, \ov{{\wh{\sK}^M_{d-r, (X, nD)}}/{p^m}}) \to {\Z}/{p^m}.
\end{equation}
These pairings are compatible with respect to the canonical surjection
${\Z}/{p^m} \surj {\Z}/{p^{m-1}}$ and the inclusion
${\Z}/{p^{m-1}} \stackrel{p}{\inj} {\Z}/{p^m}$.

It follows from ~\eqref{eqn:Pair-4-5} that the map $\vartheta_m$
in ~\eqref{eqn:Pair-5-1} has a factorization
\begin{equation}\label{eqn:Pair-5-2}
H^i_\etl(U, W_m\Omega^r_{U, \log}) \xrightarrow{\theta'_m}
{\varinjlim}_n H^{d+1-i}_\etl(X, W_m\Omega^{d-r}_{(X,nD), \log})^\vee
\stackrel{\vartheta'_m}{\surj} \hspace*{3cm}
\end{equation}
\[
\hspace*{8cm}
({\varprojlim}_n  H^{d+1-i}_\etl(X, W_m\Omega^{d-r}_{(X,nD), \log}))^\vee,
\]
where $\vartheta'_m$ is the canonical map. This map is continuous and surjective
by \lemref{lem:Top}. The map $\theta'_m$ is continuous since
its source is discrete.
Our goal in the next section will be to show that the maps
$\theta_m$ and $\theta'_m$ are isomorphisms.

\section{Perfectness of the pairing}\label{sec:Perfectness}
We continue with the setting of \S~\ref{sec:DT}.
Our goal in this section is to prove a perfectness statement for the pairing
~\eqref{eqn:Pair-4-5}.  To make our statement precise, it is
convenient to have the following definition.

\begin{defn}\label{defn:PF-lim-colim}
Let $\{A_n\}_n$ and $\{B_n\}_n$ be ind-system and pro-system
of $p^m$-torsion discrete abelian groups, respectively.
Suppose that there is a pairing 
\[
(\star) \hspace*{1cm}  \{A_n\}_n \times \{B_n\}_n \to {\Z}/{p^m}.
\]
We shall say that this pairing is continuous and semi-perfect if the induced
maps
\[
\theta \colon {\varprojlim}_n  B_n \to ({\varinjlim}_n A_n)^\vee
\ \ {\rm and} \ \
\theta' \colon {\varinjlim}_n  A_n \to {\varinjlim}_n B^\vee_n
\]
are continuous and bijective homomorphisms of topological abelian groups.
Recall here
that $ ({\varinjlim}_n A_n)^\vee \cong {\varprojlim}_n A^\vee_n$
by \lemref{lem:lim-colim}. 
We shall say that $(\star)$ is perfect if it is semi-perfect,
the surjective map (see \lemref{lem:Top})
${\varinjlim}_n B^\vee_n \to ( {\varprojlim}_n  B_n)^\vee$
is bijective and $\theta$ is a homeomorphism.
Note that perfectness implies that $\theta'$ is also a homeomorphism.
\end{defn}

The following is easy to check.

\begin{lem}\label{lem:Semi-perfect*}
The pairing $(\star)$ is perfect if it is semi-perfect and 
$\{B_n\}_n$ is isomorphic to a surjective inverse system of compact groups.
\end{lem}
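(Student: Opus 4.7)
The plan is to unpack \defref{defn:PF-lim-colim} and observe that perfectness amounts to two extra statements beyond semi-perfectness: (i) the surjection $\varinjlim_n B_n^\vee \to (\varprojlim_n B_n)^\vee$ of \lemref{lem:Top} is bijective, and (ii) $\theta$ is a homeomorphism. Both will follow cleanly from the hypothesis that $\{B_n\}_n$ is isomorphic, as a pro-system, to a surjective inverse system of compact groups. Since replacing $\{B_n\}_n$ by an isomorphic pro-system changes neither $\varprojlim_n B_n$ nor $\varinjlim_n B_n^\vee$, I will assume throughout the proof that $\{B_n\}_n$ is itself a surjective inverse system of compact abelian groups.

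For (i), I will simply invoke the last assertion of \lemref{lem:Top}: the surjectivity of the transition maps in $\{B_n\}_n$ forces $\varinjlim_n B_n^\vee \xrightarrow{\cong} (\varprojlim_n B_n)^\vee$ to be an isomorphism of abelian groups. For (ii), the map $\theta\colon \varprojlim_n B_n \to (\varinjlim_n A_n)^\vee$ is already a continuous bijection by the semi-perfectness hypothesis. Its source $\varprojlim_n B_n$ is a closed subgroup of the compact group $\prod_n B_n$, hence compact by Tychonoff's theorem. Its target, being the Pontryagin dual of a discrete torsion group, is a profinite (in particular, Hausdorff) topological abelian group. A continuous bijection from a compact space to a Hausdorff space is automatically a homeomorphism, and this settles (ii).

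It remains to verify the parenthetical observation that perfectness forces $\theta'$ to be a homeomorphism as well. But $\varinjlim_n A_n$ carries the discrete topology (each $A_n$ is discrete torsion and the colimit is formed with its direct limit topology), and each $B_n^\vee$ is discrete since $B_n$ is compact; hence $\varinjlim_n B_n^\vee$ is also discrete. Thus $\theta'$ is a continuous bijection between discrete groups, which is trivially a homeomorphism.

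There is no real obstacle here: the argument is a compactness plus Pontryagin duality exercise, and the only non-formal input is the already-proved \lemref{lem:Top} together with the standard fact that continuous bijections from compact to Hausdorff spaces are homeomorphisms. The one point to be careful about is to pass from the abstract pro-isomorphism hypothesis to an honest surjective inverse system of compact groups before applying \lemref{lem:Top} and Tychonoff; this is harmless because both $\varprojlim_n B_n$ and $\varinjlim_n B_n^\vee$ are invariants of the pro-isomorphism class of $\{B_n\}_n$.
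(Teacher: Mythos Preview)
Your proof is correct and fills in precisely what the paper leaves implicit: the lemma is stated without proof, with only the remark that it ``is easy to check.'' Your decomposition into (i) bijectivity of $\varinjlim_n B_n^\vee \to (\varprojlim_n B_n)^\vee$ via \lemref{lem:Top} and (ii) the compact--Hausdorff argument for $\theta$ being a homeomorphism is exactly the intended verification, and your observation that pro-isomorphism preserves both $\varprojlim_n B_n$ and $\varinjlim_n B_n^\vee$ justifies the reduction to the surjective compact system.

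One small remark: in the paper's setting the $B_n$ are discrete $p^m$-torsion groups, so ``compact'' effectively means finite (and this is exactly what happens in the applications, see the proof of \lemref{lem:long-exact-lim-d}). Under this reading \lemref{lem:Top} applies verbatim and your topological replacement is unproblematic. If one wanted to read ``compact'' more generally, \lemref{lem:Top} as stated requires discreteness, but the conclusion of (i) still holds by standard Pontryagin duality for inverse limits of compact abelian groups; so either way the argument goes through.
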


Our goal is to show that ~\eqref{eqn:Pair-4-5} is semi-perfect by
induction on $m \ge 1$. 
We first consider the case $m =1$.
We shall prove this case using our earlier observation that $W_m\Omega^\bullet_X(p^mnD)$ is a 
complex for every $m \ge 1$ and $n \in \Z$.

In this case, we have $Z_1\Omega^r_X(pnD) =
\Ker(d \colon \Omega^r_X(pnD) \to \Omega^{r+1}_X(pnD))$ by 
~\eqref{eqn:Loc-free-0}. We claim that the inclusion
$d\Omega^{r-1}_X(-pnD) \inj \Omega^r_X(-pnD) \cap d\Omega^{r-1}_X$ is actually
a bijection when $n \ge 1$. Equivalently, the map
${\Omega^r_X(-pnD)}/{d\Omega^{r-1}_X(-pnD)} \to {\Omega^r_X}/{d\Omega^{r-1}_X}$
is injective. Since $\psi$ is identity on the topological space 
$X$, the latter injectivity is
equivalent to showing that the map
\begin{equation}\label{eqn:Pair-1-psi}
\frac{(\psi_* \Omega^r_X)(-nD)}{d (\psi_* \Omega^{r-1}_X)(-nD)} \cong
\frac{\psi_* \Omega^r_X(-pnD)}{d \psi_* \Omega^{r-1}_X(-pnD)} \to
\frac{\psi_* \Omega^r_X}{d \psi_* \Omega^{r-1}_X}
\end{equation}
is injective. Using a snake lemma argument, it suffices to show that
the map $\psi_* Z_1\Omega^{r-1}_X \to  \psi_* Z_1\Omega^{r-1}_X \otimes_{\sO_X}
\sO_{nD}$ is surjective. But this is obvious.

\begin{lem}\label{lem:LFPP}
For $n \in \Z$, the sheaves
$\psi_* (Z_1\Omega^r_X(pnD))$ and 
$\psi_*({\Omega^r_X(pnD)}/{d\Omega^{r-1}_X(pnD)})$
are locally free $\sO_X$-modules. For $n \ge 1$, the map
\[
\psi_* (Z_1\Omega^r_X(pnD)) \to 
\sHom_{\sO_X}(\psi_*({\Omega^{d-r}_X(-pnD)}/{d\Omega^{d-r-1}_X(-pnD)}), \Omega^d_X),
\]
induced by ~\eqref{eqn:Pair-1}, is an isomorphism.
\end{lem}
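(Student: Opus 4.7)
The plan is to reduce everything to the untwisted case $n=0$ by exploiting the fact that $\sO_X(nD)$ is invertible and that, in characteristic $p$, the differential annihilates the local generators of the twists $\sO_X(pnD)$. Concretely, for a local equation $f$ of $D$, one has $d(f^{-pn}) = -pn\, f^{-pn-1}\,df = 0$, so that $d\colon \Omega^r_X(pnD) \to \Omega^{r+1}_X(pnD)$ is literally the tensor product $d\otimes{\rm id}_{\psi^*\sO_X(nD)}$ under the identification $\sO_X(pnD) = \psi^*\sO_X(nD)$ (using that $\psi^*$ raises invertible sheaves to the $p$-th power). The projection formula then yields
\begin{equation*}
\psi_* Z_1\Omega^r_X(pnD) \cong \psi_* Z_1\Omega^r_X \otimes_{\sO_X}\sO_X(nD),
\end{equation*}
and similarly for $\psi_*\bigl(\Omega^r_X(pnD)/d\Omega^{r-1}_X(pnD)\bigr) \cong \bigl(\psi_*\Omega^r_X/\psi_* d\Omega^{r-1}_X\bigr)\otimes\sO_X(nD)$. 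Both assertions of the lemma are then reduced to the case $n=0$ by tensoring with an invertible sheaf.

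For local freeness in the untwisted case I would argue by descending induction on $r$. Since $k$ is perfect and $X$ is smooth, Kunz's theorem makes $\psi$ finite flat, so each $\psi_*\Omega^r_X$ is locally free. The Cartier isomorphism then yields short exact sequences
\begin{equation*}
0 \to \psi_* B_1\Omega^r_X \to \psi_* Z_1\Omega^r_X \xrightarrow{C} \Omega^r_X \to 0, \qquad 0 \to \psi_* Z_1\Omega^r_X \to \psi_*\Omega^r_X \xrightarrow{d} \psi_* B_1\Omega^{r+1}_X \to 0,
\end{equation*}
which, starting from the trivial identity $\psi_* Z_1\Omega^d_X = \psi_*\Omega^d_X$ and using that an extension of two finite-rank locally free sheaves is locally free, inductively establishes local freeness of each $\psi_* Z_1\Omega^r_X$ and $\psi_* B_1\Omega^r_X$. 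Splicing the two sequences gives $0 \to \Omega^r_X \to \psi_*\Omega^r_X/\psi_* B_1\Omega^r_X \to \psi_* B_1\Omega^{r+1}_X \to 0$, so $\psi_*(\Omega^r_X/d\Omega^{r-1}_X)$ is locally free as well.

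After the reduction to $n=0$, the duality statement amounts to showing that the wedge-followed-by-Cartier pairing
\begin{equation*}
\psi_* Z_1\Omega^r_X \otimes_{\sO_X} \psi_*\bigl(\Omega^{d-r}_X/d\Omega^{d-r-1}_X\bigr) \to \Omega^d_X, \qquad (w_1,\ov{w_2}) \mapsto -C(w_1\wedge w_2),
\end{equation*}
is perfect. Well-definedness on the quotient is immediate since $dw_1=0$ forces $w_1\wedge d\eta = \pm d(w_1\wedge\eta) \in \psi_* B_1\Omega^d_X = \ker C$. This is the classical Cartier duality for smooth varieties (see Illusie and Milne for variants). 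Both sides of the induced map are locally free $\sO_X$-modules of the same rank, as one reads off from the filtration above, so it suffices to prove injectivity \'etale-locally. Choosing \'etale coordinates $x_1,\ldots,x_d$ one may exhibit a basis of $\psi_*\Omega^r_X$ by monomials $x^\alpha\,dx_I$ with $\alpha \in \{0,\ldots,p-1\}^d$ and $|I|=r$, and evaluate the pairing explicitly using the normalising formula $C\bigl((x_1\cdots x_d)^{p-1}\,dx_1\wedge\cdots\wedge dx_d\bigr) = dx_1\wedge\cdots\wedge dx_d$. I expect the only real obstacle to be this explicit verification of non-degeneracy on the Frobenius basis, which is more a matter of careful bookkeeping than of conceptual difficulty; alternatively one could invoke Grothendieck--Serre duality for the finite flat morphism $\psi$ and identify $\psi^!\Omega^d_X$ via Cartier, bypassing local coordinates entirely.
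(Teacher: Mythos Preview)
Your proposal is correct and follows essentially the same route as the paper: reduce to the untwisted case via the projection formula (using $\sO_X(pnD)\cong\psi^*\sO_X(nD)$ so that the differential is $d\otimes{\rm id}$), and then handle $n=0$. The only difference is that where you supply a self-contained inductive argument for local freeness via the Cartier short exact sequences and sketch the non-degeneracy of the pairing, the paper simply cites \cite[Lemma~1.7]{Milne-Duality} for both facts in the untwisted case; your sketch is in effect a proof of that lemma.
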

\begin{proof}
We first note that
\begin{equation}\label{eqn:LFPP-0}
\begin{array}{lll}
\psi_* (Z_1\Omega^r_X(pnD)) & \cong & 
\psi_*(\Ker(\Omega^r_X(pnD) \to \Omega^{r+1}_X(pnD))) \\
& \cong & \Ker(\psi_*\Omega^r_X(pnD) \to \psi_*\Omega^{r+1}_X(pnD)) \\
& \cong & \Ker((\psi_*\Omega^r_X)(nD) \to (\psi_*\Omega^{r+1}_X)(nD)) \\
& \cong & (\Ker(\psi_*\Omega^r_X \to \psi_*\Omega^{r+1}_X))(nD) \\
& \cong & (\psi_*Z_1\Omega^r_X)(nD).
\end{array}
\end{equation}
Since $\psi_*$ also commutes with ${\rm Coker}(d)$, we similarly get
\begin{equation}\label{eqn:LFPP-1}
\psi_*({\Omega^r_X(pnD)}/{d\Omega^{r-1}_X(pnD)}) \cong
({\psi_* \Omega^{r}_X}/{d\psi_* \Omega^{r-1}_X})(nD).
\end{equation}

In the exact sequence
\[
0 \to \psi_* Z_1\Omega^r_X \to \psi_* \Omega^r_X \xrightarrow{d}
\psi_* \Omega^{r+1}_X \to {\psi_* \Omega^{r+1}_X}/{d\psi_* \Omega^{r}_X} \to 0
\]
of $\sO_X$-linear maps between coherent $\sO_X$-modules, 
all terms are locally free by \cite[Lemma~1.7]{Milne-Duality}.
It therefore remains an exact sequence of locally free $\sO_X$-modules after
tensoring with $\sO_X(nD)$ for any $n \in \Z$.
The first part of the lemma now follows by using
~\eqref{eqn:LFPP-0} and ~\eqref{eqn:LFPP-1} (for different values of $r$).

To prove the second part, we can again use 
~\eqref{eqn:LFPP-0} and ~\eqref{eqn:LFPP-1}. Since $\sO_X(nD)$ is
invertible, it suffices to show that
$(\psi_* Z_1\Omega^r_X)(nD) \to 
\sHom_{\sO_X}({\psi_* \Omega^{d-r}_X}/{d\psi_* \Omega^{d-r-1}_X}, \Omega^d_X(nD))$
is an isomorphism. But the term on the right side of this map is 
isomorphic to the sheaf 
$\sHom_{\sO_X}({\psi_* \Omega^{d-r}_X}/{d\psi_* \Omega^{d-r-1}_X}, \Omega^d_X)(nD)$
via the canonical isomorphism 
\[
\sHom_{\sO_X}(\sA, \sB) \otimes_{\sO_X} \sE \xrightarrow{\cong} 
\sHom_{\sO_X}(\sA, \sB\otimes_{\sO_X} \sE),
\]
which locally sends $(f \otimes b) \to (a \mapsto f(a) \otimes b)$
if $\sE$ is locally free.
We therefore have to only show that the map
$\psi_* Z_1\Omega^r_X \to 
\sHom_{\sO_X}({\psi_* \Omega^{d-r}_X}/{d\psi_* \Omega^{d-r-1}_X}, \Omega^d_X)$
is an isomorphism. But this follows from \cite[Lemma~1.7]{Milne-Duality}.
\end{proof}

\begin{lem}\label{lem:Finite-0}
The groups $\H^i_\etl(X, W_1\sF^{r, \bullet}_{n})$ and
$H^i_{\etl}(X, \Omega^r_{(X,nD), \log})$
are finite for $i, r \ge 0$.
\end{lem}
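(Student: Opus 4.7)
The plan is to reduce both finiteness statements to the classical fact that for any coherent $\sO_X$-module $\sG$ on the projective variety $X$ over the finite field $k$, the étale cohomology $H^i_\etl(X,\sG)$ agrees with Zariski cohomology and is a finite-dimensional $k$-vector space, hence a finite set. The key observation is that although several of the sheaves appearing here fail to be $\sO_X$-coherent (because the differential $d$ and the Cartier operator $C$ are only $\sO_X^p$-linear, not $\sO_X$-linear), they do become coherent $\sO_X$-modules after applying $\psi_*$; since $\psi$ is the identity on the underlying topological space of $X$, the functor $\psi_*$ does not alter the underlying sheaf of abelian groups, and consequently does not affect étale cohomology computed as an abelian group.

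For the first claim, the hypercohomology spectral sequence for the two-term complex $W_1\sF^{r,\bullet}_n$ of~\eqref{eqn:F-com} reduces the finiteness of $\H^i_\etl(X,W_1\sF^{r,\bullet}_n)$ for all $i$ to the finiteness of the groups $H^i_\etl(X,Z_1\Omega^r_X(nD))$ and $H^i_\etl(X,\Omega^r_X(nD))$. The sheaf $\Omega^r_X(nD)$ is already $\sO_X$-coherent; by \lemref{lem:LFPP}, $\psi_*(Z_1\Omega^r_X(nD))$ is also coherent (in fact locally free), so the general principle above applies to both and yields finiteness.

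For the second claim, setting $m=1$ in~\eqref{eqn:dlog-2} (so that $V^{m-1}=\id$) produces the short exact sequence
\[
0 \to \Omega^r_{(X,nD),\log} \to \Omega^r_{(X,nD)} \xrightarrow{\pi-\ov{F}}
\frac{\Omega^r_{(X,nD)}}{\Omega^r_{(X,nD)}\cap d\Omega^{r-1}_X} \to 0,
\]
whose associated long exact sequence reduces matters to the finiteness of the étale cohomology of the middle and right-hand terms. The middle term $\Omega^r_{(X,nD)}$ is $\sO_X$-coherent as the kernel of the $\sO_X$-linear surjection $\Omega^r_X \surj \iota_* \Omega^r_{nD}$. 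For the right-hand term, $\psi_*(\Omega^r_{(X,nD)}\cap d\Omega^{r-1}_X)$ is a subsheaf of the coherent sheaf $\psi_*\Omega^r_{(X,nD)}$ and is therefore coherent by Noetherianity; hence $\psi_*$ of the quotient is coherent too, and the general principle applies once more.

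There is no substantial obstacle beyond careful bookkeeping of the sheaf structures involved: the crucial point is that the a priori non-$\sO_X$-coherent subsheaves $Z_1\Omega^r_X(nD)$ and $\Omega^r_{(X,nD)}\cap d\Omega^{r-1}_X$ are coherent after applying $\psi_*$, which is supplied by \lemref{lem:LFPP} in the first instance and by an elementary Noetherianity argument in the second.
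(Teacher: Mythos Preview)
Your argument is correct and follows the same strategy as the paper: reduce via the two-term descriptions~\eqref{eqn:F-com} and~\eqref{eqn:dlog-2} to {\'e}tale cohomology of sheaves whose $\psi_*$ is $\sO_X$-coherent. One minor imprecision: \lemref{lem:LFPP} only treats $\psi_*(Z_1\Omega^r_X(pnD))$, so for general $n$ you should instead invoke the same Noetherianity argument you already use for $\Omega^r_{(X,nD)}\cap d\Omega^{r-1}_X$, observing that $\psi_*(Z_1\Omega^r_X(nD))$ is an $\sO_X$-submodule of the coherent sheaf $\psi_*\Omega^r_X(nD)$ (this is how the paper phrases it as well).
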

\begin{proof}
Using ~\eqref{eqn:dlog-2}, the finiteness of 
$H^i_{\etl}(X, \Omega^r_{(X,nD), \log})$ is reduced to showing that
the group
$H^i_{\etl}(X, \frac{\Omega^r_{(X,nD)}}{\Omega^r_{(X,nD)} \cap d\Omega^{r-1}_{X}})$
is finite. By \eqref{eqn:dlog-2-*}, it suffices to show that 
$H^i_{\etl}(Z, \frac{\Omega^r_Z}{d\Omega^{r-1}_Z})$ is finite if
$Z \in \{X, nD\}$. Since $k$ is perfect, the absolute Frobenius $\psi$ is 
a finite morphism. In particular, $\psi_*$ is exact.
It suffices therefore to show that
$H^i_{\etl}(Z, \frac{\psi_*\Omega^r_Z}{\psi_* d\Omega^{r-1}_Z})$ is finite.
But this is clear because $k$ is finite, $Z$ is projective over $k$ and
${\psi_*\Omega^r_Z}/{\psi_* d\Omega^{r-1}_Z}$ is coherent.
The finiteness of $\H^i_\etl(X, W_1\sF^{r, \bullet}_{n})$
follows by a similar argument because $\psi_*(Z_1\Omega^r_X(nD))$ is coherent. 
\end{proof}

\begin{lem}\label{lem:PP-pro-1}
The pairing
\[
\{\H^i_\etl(X, W_1\sF^{r, \bullet}_{n})\}_n \times 
\{\H^{d+1-i}_\etl(X, W_1\sG^{d-r, \bullet}_{n})\}_n \to {\Z}/{p}
\]
is a perfect pairing of the ind-abelian and pro-abelian finite groups.
\end{lem}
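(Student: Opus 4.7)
The plan is to prove perfectness at each finite level $n$ and then pass to the ind/pro limit using \lemref{lem:Semi-perfect*}. Since $\{W_1\sF^{r,\bullet}_n\}_n$ and $\{W_1\sG^{d-r,\bullet}_n\}_n$ are respectively ind- and pro-isomorphic to their cofinal subsystems indexed by $n\mapsto pn$, I may freely substitute $n$ by $pn$. After this substitution, the computation just before \lemref{lem:LFPP} yields $d\Omega^{d-r-1}_X \cap \Omega^{d-r}_X(-pnD) = d\Omega^{d-r-1}_X(-pnD)$, so $W_1\sG^{d-r,\bullet}_{pn}$ simplifies to the complex $[\Omega^{d-r}_X(-pnD) \xrightarrow{\pi-\ov{F}} \Omega^{d-r}_X(-pnD)/d\Omega^{d-r-1}_X(-pnD)]$.

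The main technical input is coherent Serre--Grothendieck duality applied term-by-term. Since the absolute Frobenius $\psi$ is finite, $\psi_*$ is exact and preserves cohomology. \lemref{lem:LFPP} shows that $\psi_*(Z_1\Omega^r_X(pnD))$ and $\psi_*(\Omega^{d-r}_X(-pnD)/d\Omega^{d-r-1}_X(-pnD))$ are locally free with the wedge-product pairing \eqref{eqn:Pair-1} identifying them as Serre duals with coefficients in $\Omega^d_X$; the standard perfect pairing $\Omega^r_X \otimes \Omega^{d-r}_X \to \Omega^d_X$ (twisted by $\pm pnD$) identifies $\Omega^r_X(pnD)$ and $\Omega^{d-r}_X(-pnD)$ as Serre duals as well. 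Composing the Serre trace $H^d_\etl(X, \Omega^d_X) \to k$ with $\Tr_{k/\F_p}$ gives, for every $i$, perfect pairings of finite $\F_p$-vector spaces
\[
H^i_\etl(X, Z_1\Omega^r_X(pnD)) \times H^{d-i}_\etl(X, \Omega^{d-r}_X(-pnD)/d\Omega^{d-r-1}_X(-pnD)) \to \F_p,
\]
\[
H^i_\etl(X, \Omega^r_X(pnD)) \times H^{d-i}_\etl(X, \Omega^{d-r}_X(-pnD)) \to \F_p.
\]

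Next, the complex-level pairing \lemref{lem:Pairing-main-sheaf} induces a morphism from the hypercohomology long exact sequence of $W_1\sF^{r,\bullet}_{pn}$ to the $\F_p$-linear dual of the long exact sequence of $W_1\sG^{d-r,\bullet}_{pn}$, in which four of every five consecutive vertical arrows are the term-wise Serre duality isomorphisms above. Milne's trace $\H^{d+1}_\etl(X, W_1\sH^{d,\bullet}) \cong \F_p$ agrees, via $0 \to \Omega^d_{X,\log} \to \Omega^d_X \xrightarrow{1-C} \Omega^d_X \to 0$, with the composite $H^d_\etl(X, \Omega^d_X) \twoheadrightarrow \coker(1-C) \xrightarrow{\Tr_{k/\F_p}\circ \Tr_X} \F_p$, ensuring compatibility of the pairings. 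The five-lemma then yields that $\H^i_\etl(X, W_1\sF^{r,\bullet}_{pn}) \to \H^{d+1-i}_\etl(X, W_1\sG^{d-r,\bullet}_{pn})^\vee$ is an isomorphism of finite $\F_p$-vector spaces for every $i,n$. Taking ind/pro limits, semi-perfectness in the sense of \defref{defn:PF-lim-colim} is immediate, and \lemref{lem:Semi-perfect*} upgrades it to perfectness because $\{\H^{d+1-i}_\etl(X, W_1\sG^{d-r,\bullet}_{pn})\}_n$ is a pro-system of finite (hence compact) groups, and therefore pro-isomorphic to a surjective inverse system.

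The hard part will be verifying compatibility of the differentials: one must check that under the term-wise Serre duality isomorphisms the differential $1-C$ of $W_1\sF^{r,\bullet}_{pn}$ transposes to the differential $\pi-\ov{F}$ of $W_1\sG^{d-r,\bullet}_{pn}$, which is what makes the five-lemma ladder commute. This is precisely the content of equation \eqref{eqn:Pair-3-0} proved in \lemref{lem:Pairing-main-sheaf} and rests on the identity $C\circ\ov{F}=\id$; it is the twisted-coefficient analogue of the classical argument of Milne \cite{Milne-Zeta}.
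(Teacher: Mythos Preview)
Your proposal is correct and follows essentially the same approach as the paper: reduce to level $pn$, apply \lemref{lem:LFPP} plus Grothendieck--Serre duality to get term-wise perfect pairings, and run the five-lemma on the ladder of long exact sequences induced by \lemref{lem:Pairing-main-sheaf}. The only difference is cosmetic: the paper simply asserts that level-wise perfectness of finite groups suffices, while you spell out the passage to the ind/pro limit via \lemref{lem:Semi-perfect*} and the (standard) fact that a pro-system of finite groups is pro-isomorphic to a surjective one.
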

\begin{proof}
The finiteness follows from \lemref{lem:Finite-0}.
For perfectness, it suffices to prove that without using the pro-systems,
the pairing in question is a perfect pairing
of finite abelian groups if we replace $n$ by $pn$. We shall show the latter.
For an $\F_p$-vector space $V$, we let $V^\star = \Hom_{\F_p}(V, \F_p)$.

Using the definitions of various pairings of sheaves and complexes of
sheaves, we have a commutative diagram of long exact sequences
\begin{equation}\label{eqn:PP-pro-1-0}
\xymatrix@C.4pc{
\cdots \ar[r] & H^{i-1}_\etl(X, Z_1\Omega^r_X(pnD)) \ar[r] \ar[d] &
H^{i-1}_\etl(X, \Omega^r_X(pnD)) \ar[r] \ar[d] &
\H^{i}_\etl(X, W_1\sF^{r, \bullet}_{pn}) \ar[r] \ar[d] &
 \cdots \\
\cdots \ar[r] & 
H^{d+1-i}_\etl(X, \frac{\Omega^{d-r}_X(-pnD)}{d\Omega^{d-r-1}_X(-pnD)})^\star
\ar[r] &
H^{d+1-i}_\etl(X, \Omega^{d-r}_X(-pnD))^\star \ar[r] &
\H^{d+1-i}_\etl(X, W_1\sG^{d-r, \bullet}_{pn})^\star \ar[r] &
\cdots .}
\end{equation}

The exactness of the bottom row follows from \lemref{lem:Finite-0} and
\cite[Lemma~7.10]{Gupta-Krishna-REC}.
Using \lemref{lem:LFPP} and Grothendieck duality for the structure map
$X \to \Spec(\F_p)$, the map
\[
H^{i}_\etl(X, \psi_*(Z_1\Omega^r_X(pnD))) \to
H^{d-i}_\etl(X, 
\psi_*(\frac{\Omega^{d-r}_X(-pnD)}{d\Omega^{d-r-1}_X(-pnD)}))^\star
\]
is an isomorphism. 
Since $\psi_*$ is exact, it follows that the map
\begin{equation}\label{eqn:PP-pro-1-1}
H^{i}_\etl(X, Z_1\Omega^r_X(pnD)) \to 
H^{d-i}_\etl(X, \frac{\Omega^{d-r}_X(-pnD)}{d\Omega^{d-r-1}_X(-pnD)})^\star
\end{equation}
is an isomorphism.
By the same reason, the map
\begin{equation}\label{eqn:PP-pro-1-2}
H^{i}_\etl(X, \Omega^r_X(pnD)) \to H^{d-i}_\etl(X, \Omega^{d-r}_X(-pnD))^\star 
\end{equation}
is an isomorphism.
Using ~\eqref{eqn:PP-pro-1-0}, ~\eqref{eqn:PP-pro-1-1} and
~\eqref{eqn:PP-pro-1-2}, we conclude that the right vertical arrow 
in ~\eqref{eqn:PP-pro-1-0} is an isomorphism. 
An identical argument shows that the
map $\H^{d+1-i}_\etl(X, W_1\sG^{d-r, \bullet}_{pn}) \to 
\H^{i}_\etl(X, W_1\sF^{r, \bullet}_{pn})^\star$
is an isomorphism. This finishes 
the proof of the perfectness of the 
pairing of the hypercohomology groups.
\end{proof}

We can now prove the main duality theorem of this paper.

\begin{thm}\label{thm:Duality-main}
Let $k$ be a finite field and $X$ a smooth and projective scheme
of pure dimension $d \ge 1$ over $k$. Let $D \subset X$ be an effective
Cartier divisor with complement $U$.
Let $m \ge 1$ and $i, r \ge 0$ be integers. Then 
\[
\{\H^i_\etl(X, W_m\sF^{r, \bullet}_{n})\}_n \times 
\{\H^{d+1-i}_\etl(X, W_m\sG^{d-r, \bullet}_{n})\}_n \to {\Z}/{p^m}
\]
is a semi-perfect pairing of ind-abelian and pro-abelian groups.
\end{thm}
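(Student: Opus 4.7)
The plan is to prove the theorem by induction on $m \ge 1$. The base case $m = 1$ is \lemref{lem:PP-pro-1}: Grothendieck--Serre coherent duality on $X$, applied to the locally free $\sO_X$-modules provided by \lemref{lem:LFPP}, yields a perfect pairing of the \emph{finite} hypercohomology groups $\H^i_\etl(X, W_1\sF^{r,\bullet}_n)$ and $\H^{d+1-i}_\etl(X, W_1\sG^{d-r,\bullet}_n)$ at each $n$. In particular, the required bijections and continuity of \defref{defn:PF-lim-colim} hold trivially in the base case, and, using the dlog identification \eqref{eqn:Pair-4-4}, the pro-system $\{\H^{d+1-i}_\etl(X, W_1\sG^{d-r,\bullet}_n)\}_n$ consists of finite groups.

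For the inductive step, assume the theorem for $m-1$ and for $1$, and write $A^m_n = \H^i_\etl(X, W_m\sF^{r,\bullet}_n)$, $B^m_n = \H^{d+1-i}_\etl(X, W_m\sG^{d-r,\bullet}_n)$. By \eqref{eqn:Pair-4-3} and \eqref{eqn:Pair-4-4} we have $\varinjlim_n A^m_n \cong H^i_\etl(U, W_m\Omega^r_{U,\log})$ and $\varprojlim_n B^m_n \cong \varprojlim_n H^{d+1-i}_\etl(X, W_m\Omega^{d-r}_{(X,nD),\log})$. \propref{prop:log-HW-pro-*} provides a short exact sequence of pro-sheaves on $X$ relating the relative logarithmic Hodge--Witt sheaves at levels $m-1$, $m$, $1$ (via $\un{p}$ and $R^{m-1}$); the analogous short exact sequence on $U$ comes from \lemref{lem:log-HW}. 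Taking cohomology yields the two long exact sequences of pro-abelian, resp.\ abelian, groups relating the $B^{\bullet}$ and $A^{\bullet}$ groups.

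I would then mount a five-lemma argument on the natural ladder formed by these long exact sequences and the comparison maps $\theta_m, \theta_m'$ of \eqref{eqn:Pair-5-0}--\eqref{eqn:Pair-5-1}. Exactness of $\varinjlim$ on ind-abelian groups preserves the $A$-side long exact sequence; \lemref{lem:Lim-exact} (which proves $\varprojlim^1_n B^m_n = 0$) preserves the $B$-side long exact sequence after inverse limit; \lemref{lem:lim-colim} and exactness of Pontryagin dual on discrete torsion groups preserve the $A$-side sequence after applying $(\varinjlim_n(-))^\vee$; and \lemref{lem:long-exact-lim} preserves the $B$-side sequence after applying $\varinjlim_n (-)^\vee$. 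Placing the $A$-side exact sequence alongside the Pontryagin dual of the $B$-side sequence, and vice versa, the inductive hypothesis at $m-1$ and $1$ applied to four of the five terms, followed by the five-lemma, forces $\theta_m$ and $\theta_m'$ to be bijections. Continuity of $\theta_m$ is the content of \eqref{eqn:Pair-5-0}, and $\theta_m'$ is continuous because its source $H^i_\etl(U, W_m\Omega^r_{U,\log})$ carries the discrete topology.

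The principal technical obstacle will be establishing that the above ladder actually commutes, i.e.\ that the sheaf-theoretic pairings of \secref{sec:pair} are compatible with the maps $\un{p}$ and $R^{m-1}$ producing the short exact sequences of \propref{prop:log-HW-pro-*} and \lemref{lem:log-HW} on both sides of the pairing. This reduces to checking a compatibility of the wedge-product-with-Cartier pairing under Frobenius, Verschiebung and restriction, using the standard identities $FV = VF = p$, $F d V = d$, $C \circ \ov{F} = \id$, and the fact that the factorization $\un{p} = \un{p} \circ R$ of \lemref{lem:mult-p} is $W_{m+1}\sO_X$-linear; it is a direct but careful diagram chase on the two-term complexes $W_m\sF^{r,\bullet}_n$ and $W_m\sG^{d-r,\bullet}_n$.
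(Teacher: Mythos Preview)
Your proposal is correct and follows essentially the same route as the paper: induction on $m$, with the base case handled by \lemref{lem:PP-pro-1} via coherent duality, and the inductive step carried out by a five-lemma argument on the ladder whose rows come from \lemref{lem:Lim-exact} (for $\theta_m$) and \lemref{lem:long-exact-lim} (for $\theta'_m$) on one side, and from \lemref{lem:log-HW} together with exactness of the Pontryagin dual on the other. The paper does not spell out the compatibility check you flag in your final paragraph, so your explicit attention to it is a welcome addition rather than a deviation.
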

\begin{proof}
We have shown already (see ~\eqref{eqn:Pair-4-7}) that the pairing is 
continuous after taking limits. We need to show that the maps
$\theta_m$ (see ~\eqref{eqn:Pair-5-0}) and $\theta'_m$ (see
~\eqref{eqn:Pair-5-2}) are isomorphisms of abelian groups.
We shall prove this by induction on $m \ge 1$.

We first assume $m = 1$. Then \lemref{lem:PP-pro-1} implies that the
map
\[
\theta_1 \colon 
\{\H^{d+1-i}_\etl(X, W_1\sG^{d-r, \bullet}_{n})\}_n \to
\{\H^i_\etl(X, W_1\sF^{r, \bullet}_{n})^\vee\}_n
\]
is an isomorphism of pro-abelian groups.
Taking the limit and using ~\eqref{eqn:Pair-4-4}, we get an isomorphism
\[
\theta_1 \colon F^{d+1-i}_{1,r}(U) \xrightarrow{\cong} 
{\varprojlim}_n \H^i_\etl(X, W_1\sF^{r, \bullet}_{n})^\vee.
\]
But the term on the right is same as $H^i_\etl(U, W_1\Omega^r_{U, \log})^\vee$
by  ~\eqref{eqn:Pair-4-3} and \lemref{lem:lim-colim}.
Lemma~\ref{lem:PP-pro-1} also implies that $\theta'_1$ is an isomorphism.
This proves $m =1$ case of the theorem.

We now assume $m \ge 2$ and recall the definitions of 
$F^j_{m,r}(n)$ and $F^j_{m,r}(U)$ from
\S~\ref{sec:Exact}. We consider the commutative diagram
\begin{equation}\label{eqn::Duality-main-1}
\xymatrix@C.8pc{
\cdots \ar[r] & F^{d+1-i}_{m-1,r}(U) \ar[r] \ar[d]_-{\theta_{m-1}} &  
F^{d+1-i}_{m,r}(U) \ar[r] \ar[d]^-{\theta_m} &  
F^{d+1-i}_{1,r}(U) \ar[r] \ar[d]^-{\theta_1} &  \cdots \\
\cdots \ar[r] & H^i_\etl(U, W_{m-1}\Omega^r_{U, \log})^\vee \ar[r]^-{R^\vee} &
H^i_\etl(U, W_{m}\Omega^r_{U, \log})^\vee \ar[r]^-{({\un{p}}^{m-1})^\vee} &
H^i_\etl(U, W_{1}\Omega^r_{U, \log})^\vee \ar[r] & \cdots.}
\end{equation}

The top row is exact by \lemref{lem:Lim-exact}.
The bottom row is exact by \lemref{lem:log-HW} and  
\cite[Lemma~7.10]{Gupta-Krishna-REC}.
The maps $\theta_i$ are isomorphisms for $i \le m-1$ by induction.
We conclude that $\theta_m$ is also an isomorphism.
An identical argument, where we apply \lemref{lem:long-exact-lim}
instead of \lemref{lem:Lim-exact}, shows that $\theta'_m$ is an
isomorphism. This finishes the proof.
\end{proof}

\begin{remk}\label{remk:Perf-semi-perf}
In \cite[Theorem~4.1.4]{JSZ} and \cite[Theorem~3.4.2]{Zhau}, 
a pairing using a logarithmic version of the pro-system 
$\{H^{d+1-r}_\etl(X, W_m\Omega^{d-r}_{(X,nD), \log})\}_n$ is 
given under the assumption that
$D_\red$ is a simple normal crossing divisor. The authors in these
papers say that their pairing is perfect. Although they do not explain 
their interpretations of perfectness,
what they actually prove is the semi-perfectness in the sense of
Definition~\ref{defn:PF-lim-colim}, according to
our understanding.
\end{remk}

\begin{cor}\label{cor:Perfect**}
The pairing of \thmref{thm:Duality-main} is perfect if
$D_\red$ is a simple normal crossing divisor, $i =1$,  $r =0$ and one of the 
conditions $\{d \neq 2, k \neq \F_2\}$ holds.
\end{cor}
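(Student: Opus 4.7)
The plan is to deduce this corollary from Theorem~\ref{thm:Duality-main} together with Lemma~\ref{lem:Semi-perfect*}. Since semi-perfectness is already given by \thmref{thm:Duality-main}, the only additional input we need to verify is that the pro-system $\{H^d_\etl(X, W_m\Omega^d_{(X,nD),\log})\}_n$ (which is the pro-system on the right-hand side of the pairing when $i=1$ and $r=0$) is isomorphic to an inverse system of compact groups with surjective transition maps.

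First, I would observe that for $i=1$ and $r=0$, the second factor of the pairing is $\varprojlim_n H^d_\etl(X, W_m\Omega^d_{(X,nD),\log}) = F^d_{m,d}(U)$ in the notation of \S\ref{sec:Exact}. Under the assumption that $D_\red$ is a simple normal crossing divisor and $\{d\neq 2\}$ or $\{k\neq \F_2\}$ holds, \lemref{lem:long-exact-lim-d} already shows precisely what we want: combining the Kato-Saito finiteness theorem (\cite[Theorem 9.1]{Kato-Saito}) with the identifications coming from \propref{prop:Milnor-iso}, \corref{cor:RS-K-comp}, and \lemref{lem:Rel-dlog-iso}, one finds that $\{F^d_{m,d}(n)\}_n$ is pro-isomorphic to a system $\{E_n\}_n$ of finite abelian groups whose transition maps $E_{n+1}\twoheadrightarrow E_n$ are surjective.

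With this identification in hand, $\varprojlim_n F^d_{m,d}(n)$ is profinite (hence compact Hausdorff), and the system satisfies the hypothesis of \lemref{lem:Semi-perfect*}. Applying that lemma to the semi-perfect pairing furnished by \thmref{thm:Duality-main} yields perfectness in the sense of \defref{defn:PF-lim-colim}: both the map $\varinjlim_n (F^d_{m,0}(n))^\vee \to (\varprojlim_n F^d_{m,0}(n))^\vee$ becomes bijective (so $\theta'_m$ is a homeomorphism) and $\theta_m$ is a homeomorphism onto its image.

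The main work has really already been done: the finiteness theorem of Kato-Saito, which is only available for simple normal crossing divisors, is the essential ingredient that forces the SNC hypothesis in the corollary, and the hypothesis $\{d\neq 2, k\neq \F_2\}$ enters through \corref{cor:RS-K-comp} in order to match the Kato-Saito Milnor $K$-cohomology with the cohomology of the logarithmic Hodge-Witt sheaves. Once these identifications are marshaled via \lemref{lem:long-exact-lim-d}, the passage from semi-perfect to perfect is a purely formal topological consequence of \lemref{lem:Semi-perfect*}.
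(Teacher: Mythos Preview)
Your proposal is correct and follows essentially the same approach as the paper: the paper's proof simply lists the ingredients (\thmref{thm:Duality-main}, \lemref{lem:Semi-perfect*}, \corref{cor:RS-K-comp}, \propref{prop:Milnor-iso}, \cite[Theorem~9.1]{Kato-Saito}, and the equivalence of ~\eqref{eqn:Pair-4-7} and ~\eqref{eqn:Pair-4-8}) directly, while you package the same chain of identifications via the proof of \lemref{lem:long-exact-lim-d}. A minor typo: in your last paragraph you write $F^d_{m,0}(n)$ where you mean $F^d_{m,d}(n)$.
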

\begin{proof}
Combine \thmref{thm:Duality-main}, \lemref{lem:Semi-perfect*}, 
\corref{cor:RS-K-comp}, \propref{prop:Milnor-iso},
 \cite[Theorem~9.1]{Kato-Saito} and the 
equivalence of ~\eqref{eqn:Pair-4-7} and  ~\eqref{eqn:Pair-4-8}.
\end{proof}

We let $C^{\etl}_{KS}(X,D;m) = H^d_\etl(X, \ov{{\wh{\sK}^M_{d,(X,D)}}/{p^m}})$.
We shall study the following special case in the next 
section.

\begin{cor}\label{cor:Duality-main-d}
Under the assumptions of \thmref{thm:Duality-main}, the 
map
\[
\theta_m \colon {\varprojlim}_n C^{\etl}_{KS}(X,nD;m) \to
{\pi^{\ab}_1(U)}/{p^m}
\]
is a bijective and continuous homomorphism between topological abelian groups.
This is an isomorphism of topological groups under the assumptions of
\corref{cor:Perfect**}.
\end{cor}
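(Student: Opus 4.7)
The plan is to obtain this statement as a direct specialization of \thmref{thm:Duality-main} (together with \corref{cor:Perfect**}) to the indices $i=1$, $r=0$, followed by the two identifications of the ind/pro systems that have already been recorded in the paper.

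First I would identify the first factor. Since $W_m\Omega^0_{U,\log}$ is canonically the constant sheaf $\Z/p^m$, the quasi-isomorphism ~\eqref{eqn:Pair-4-3} gives
\[
\underset{n}\varinjlim \ \H^1_\etl(X, W_m\sF^{0,\bullet}_n) \;\cong\; H^1_\etl(U, \Z/p^m).
\]
Under the standard identification $H^1_\etl(U, \Z/p^m) \cong \Hom_{\cont}(\pi_1^{\ab}(U), \Z/p^m)$, Pontryagin duality supplies a canonical isomorphism of topological groups $(H^1_\etl(U, \Z/p^m))^{\vee} \cong \pi_1^{\ab}(U)/p^m$, where the right side has its canonical profinite topology.

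Next I would identify the second factor by applying ~\eqref{eqn:Pair-4-4} with index $r=d$:
\[
\H^d_\etl(X, W_m\sG^{d,\bullet}_n) \;\cong\; H^d_\etl(X, W_m\Omega^d_{(X,nD),\log}) \;\cong\; H^d_\etl(X, \ov{\wh{\sK}^M_{d,(X,nD)}/p^m}) \;=\; C^{\etl}_{KS}(X,nD;m).
\]
This turns the limit $\varprojlim_n \H^d_\etl(X, W_m\sG^{d,\bullet}_n)$ into $\varprojlim_n C^{\etl}_{KS}(X,nD;m)$ as topological groups, each factor being discrete and the limit carrying its profinite topology.

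Plugging these identifications into the $(i,r)=(1,0)$ case of the pairing of \thmref{thm:Duality-main} and reading off semi-perfectness in the sense of \defref{defn:PF-lim-colim}, the induced map $\theta$ becomes exactly
\[
\theta_m \colon \underset{n}\varprojlim \ C^{\etl}_{KS}(X,nD;m) \longrightarrow (H^1_\etl(U,\Z/p^m))^{\vee} \cong \pi_1^{\ab}(U)/p^m,
\]
and semi-perfectness gives that this is a continuous bijective homomorphism of topological abelian groups, proving the first assertion. For the second assertion, \corref{cor:Perfect**} upgrades the pairing to a perfect one under the stated hypotheses, and the last clause of \defref{defn:PF-lim-colim} is precisely that $\theta$ is then a homeomorphism, hence an isomorphism of topological groups. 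There is no substantial obstacle here beyond carefully unwinding the identifications; all the work has been done in the preceding sections.
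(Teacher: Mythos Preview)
Your proposal is correct and matches the paper's approach: the paper states the corollary without proof, presenting it as the immediate specialization of \thmref{thm:Duality-main} and \corref{cor:Perfect**} to $i=1$, $r=0$ via the identifications ~\eqref{eqn:Pair-4-3}, ~\eqref{eqn:Pair-4-4} and the standard isomorphism $H^1_\etl(U,\Z/p^m)^\vee \cong \pi^{\ab}_1(U)/p^m$, exactly as you unwind it. One minor terminological point: the inverse limit $\varprojlim_n C^{\etl}_{KS}(X,nD;m)$ carries the inverse limit topology, which is not known to be profinite in general (the individual terms are discrete but not a priori finite); this does not affect your argument.
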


\subsection{The comparison theorem}\label{sec:Comp}
We shall continue with the assumptions of \thmref{thm:Duality-main}.
We fix an integer $m \ge 1$. Let $\pi^{\ab}_1(U)$ be the abelianized 
{\'e}tale fundamental group of $U$ and $\pi^\adiv_1(X,D)$ the 
co-1-skeleton {\'e}tale fundamental group of $X$ with modulus $D$,
introduced in \cite[Definition~7.5]{Gupta-Krishna-REC}. The latter
characterizes finite abelian covers of $U$ whose ramifications are
bounded by $D$ at each of its generic point, where the bound is
given by means of Matsuda's Artin conductor.
There is a natural surjection $\pi^{\ab}_1(U) \surj \pi^\adiv_1(X,D)$.

Let $C_{KS}(X,D) = H^d_\nis(X, \sK^M_{d, (X,D)})$ and
$C_{KS}(X,D; m) = H^d_\nis(X, {\sK^M_{d, (X,D)}}/{p^m}) \cong
{C_{KS}(X,D)}/{p^m}$. By \propref{prop:Milnor-iso}, we have
\begin{equation}\label{eqn:KS-NiS}
C_{KS}(X,D; m) \xrightarrow{\cong} H^d_\nis(X, \ov{{\wh{\sK}^M_{d,(X,D)}}/{p^m}}).
\end{equation}
We let $\wt{C}_{U/X} = \varprojlim_n C_{KS}(X,D)$.
The canonical map ${\wt{C}_{U/X}}/{p^m} \to \varprojlim_n C_{KS}(X,D; m)$
is an isomorphism by \cite[Lemma~5.9]{Gupta-Krishna-BF}.
The groups $C_{KS}(X,D)$ and $C_{KS}(X,D; m)$
have the discrete topology while $\wt{C}_{U/X}$ and
${\wt{C}_{U/X}}/{p^m}$ have the inverse limit topology.
The groups $\pi^{\ab}_1(U)$ and $\pi^\adiv_1(X,D)$ have the profinite topology.
By \cite[Theorem~1.2]{Gupta-Krishna-REC}, there is a commutative diagram
of continuous homomorphisms of topological abelian groups
\begin{equation}\label{eqn:REC}
\xymatrix@C.8pc{
\wt{C}_{U/X} \ar[r]^-{\rho_{U/X}} \ar@{->>}[d] & \pi^{\ab}_1(U) \ar@{->>}[d] \\
C(X,D) \ar[r]^-{\rho_{X|D}} & \pi^{\adiv}_1(X,D).}
\end{equation}
The horizontal arrows are injective with dense images.
They become isomorphisms after tensoring with
${\Z}/{p^m}$ by \cite[Corollary~5.15]{Gupta-Krishna-BF}.

We let $\wt{C}^{\etl}_{U/X}(m) = {\varprojlim}_n C^{\etl}_{KS}(X,nD;m)$.
By \corref{cor:Duality-main-d}, we have a
bijective and continuous homomorphism between topological abelian groups
$\rho^{\etl}_{U/X} \colon \wt{C}^{\etl}_{U/X}(m) \rightarrow
{\pi^{\ab}_1(U)}/{p^m} \cong H^1_{\etl}(U, {\Z}/{p^m})^\star$.
We therefore have a diagram
\begin{equation}\label{eqn:REC-0}
\xymatrix@C.8pc{
{\wt{C}_{U/X}}/{p^m} \ar[dr]^-{\rho_{U/X}} \ar[d]_-{\eta} & \\
\wt{C}^{\etl}_{U/X}(m) \ar[r]^-{\rho^{\etl}_{U/X}} & {\pi^{\ab}_1(U)}/{p^m}}
\end{equation}
of continuous homomorphisms,
where $\eta$ is the change of topology homomorphism.
We wish to prove the following.

\begin{prop}\label{prop:Nis-et-rec}
The diagram ~\eqref{eqn:REC-0} is commutative.
\end{prop}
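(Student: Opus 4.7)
The plan is to check commutativity at each finite level $n$ and then pass to the limit. For a fixed $n$, both composite maps $\wt{C}_{U/X}/p^m \to \pi^{\ab}_1(U)/p^m$ factor through the quotient $C_{KS}(X,nD;m) = H^d_{\nis}(X, \sK^M_{d,(X,nD)}/p^m)$, so it suffices to show that for every $n\ge 1$ the composite
\[
C_{KS}(X,nD;m) \xrightarrow{\eta_n} C^{\etl}_{KS}(X,nD;m) \xrightarrow{\rho^{\etl}_{X|nD}} H^1_\etl(U, \Z/p^m)^\vee
\]
coincides with the map obtained from $C_{KS}(X,nD;m) \to \pi^{\ab}_1(U)/p^m \hookrightarrow H^1_\etl(U,\Z/p^m)^\vee$, where the first arrow factors $\rho_{U/X}$ by \cite[Corollary~5.15]{Gupta-Krishna-BF} and the isomorphism \eqref{eqn:KS-NiS}. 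Both maps associate to an element $\alpha \in C_{KS}(X,nD;m)$ and a continuous character $\chi \in H^1_\etl(U,\Z/p^m)$ a value in $\Z/p^m$, and the task is to identify the two resulting pairings.

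First, I would reinterpret the Kato-Saito reciprocity map via cup product. By the construction in \cite{Kato-Saito} and \cite[\S~5]{Gupta-Krishna-BF}, the pairing induced by $\rho_{U/X}$ on $(\alpha,\chi)$ is obtained as follows: lift $\chi \in H^1_\etl(U,\Z/p^m)$ through $H^0_\etl(U, W_m\Omega^0_{U,\log}) \cong \Z/p^m$ via the Artin–Schreier–Witt identification, apply the dlog map to $\alpha$ to obtain an element in $H^d_\etl(X, W_m\Omega^d_{(X,nD),\log})$, and then take the cup product followed by the trace $\Tr \colon H^{d+1}_\etl(X, W_m\Omega^d_{X,\log}) \xrightarrow{\cong} \Z/p^m$. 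Second, by our construction of the pairing in \S~\ref{sec:pair} and \S~\ref{sec:Coh-pair}, specialized to $i=1$, $r=0$, the map $\rho^{\etl}_{X|nD}$ sends $\alpha$ to the functional $\chi \mapsto \Tr(\chi \cup \dlog(\alpha))$ where we identify $\ov{\wh{\sK}^M_{d,(X,nD)}/p^m}$ with $W_m\Omega^d_{(X,nD),\log}$ through the isomorphism of \lemref{lem:Rel-dlog-iso}. Hence both pairings have the same shape, provided one checks the compatibilities of cup products, dlog, and trace.

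The verification of this agreement reduces to three separate naturality statements: (i) the dlog map from $\wh{\sK}^M_{*,X}/p^m$ to $W_m\Omega^*_{X,\log}$ is multiplicative, so that the product in Milnor $K$-theory matches the wedge product used to define $\<,\>_3$ in \eqref{eqn:Pair-2}; (ii) the identification $H^1_\etl(U,\Z/p^m) \cong H^0_\etl(U,W_m\Omega^0_{U,\log})^\vee$ in the $r=0$ case of \eqref{eqn:Pair-4-5} is precisely the Artin–Schreier–Witt isomorphism used to define $\rho_{U/X}$; and (iii) the trace $\Tr$ used in \eqref{eqn:Pair-4-1} is the same as the trace used by Kato–Saito to define their reciprocity, as established (for the absolute case) in \cite[Corollary~1.12]{Milne-Zeta} and compatible with the relative version through the snake-lemma sequence \eqref{eqn:dlog-2-*}. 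Each of these is a well-known compatibility; the only work is bookkeeping through the complexes $W_m\sF^{0,\bullet}_n$, $W_m\sG^{d,\bullet}_n$, and $W_m\sH^{d,\bullet}$.

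The principal obstacle, and the step that needs the most care, is statement (iii): making precise that the trace homomorphism arising from the two-term complex $W_m\sH^{d,\bullet}$ via the quasi-isomorphism $W_m\Omega^d_{X,\log} \simeq W_m\sH^{d,\bullet}$ is compatible with the trace used classically to define the Kato-Saito pairing. Once this identification is pinned down, the commutativity of \eqref{eqn:REC-0} follows from the continuity of all maps involved together with the density of the image of $\wt{C}_{U/X}/p^m$ in $\wt{C}^{\etl}_{U/X}(m)$ arising from \eqref{eqn:KS-NiS} and the fact that the Nisnevich-to-{\'e}tale change of topology factors $\eta$ through levels $n$.
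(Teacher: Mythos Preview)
Your approach differs substantially from the paper's, and while the underlying idea is reasonable, the key step is asserted rather than proven. You claim that the Kato--Saito reciprocity $\rho_{U/X}$, when paired with a character $\chi$, is given by $\Tr(\chi \cup \dlog(\alpha))$. But $\rho_{U/X}$ is defined in \cite{Kato-Saito} and \cite{Gupta-Krishna-REC} via the local reciprocity maps at Parshin chains (see ~\eqref{eqn:Kato-rec} and ~\eqref{eqn:Kato-rec-0}), not as a global cup-product pairing; identifying the two descriptions is precisely the non-trivial content you need, and you cannot simply cite the construction for it. Your item (iii) is not the real obstacle---the real obstacle is justifying the cup-product description of $\rho_{U/X}$ in the first place. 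Also, your item (ii) as stated is garbled: there is no identification $H^1_\etl(U,\Z/p^m) \cong H^0_\etl(U,W_m\Omega^0_{U,\log})^\vee$ in ~\eqref{eqn:Pair-4-5}; for connected $U$ the right side is just $\Z/p^m$.

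The paper bypasses all of this with a density argument. Since $\rho_{U/X} \colon \wt{C}_{U/X}/p^m \to \pi^{\ab}_1(U)/p^m$ is an isomorphism of profinite groups by \cite[Theorem~1.2]{Gupta-Krishna-BF}, and since the composite $\sZ_0(U) \xrightarrow{\cyc_{U/X}} \wt{C}_{U/X}/p^m \xrightarrow{\rho_{U/X}} \pi^{\ab}_1(U)/p^m$ has dense image by Chebotarev, the image of $\cyc_{U/X}$ is dense in $\wt{C}_{U/X}/p^m$. As $\pi^{\ab}_1(U)/p^m$ is Hausdorff and all maps are continuous, it suffices to check $\rho_{U/X} \circ \cyc_{U/X} = \rho^{\etl}_{U/X} \circ \eta \circ \cyc_{U/X}$. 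This reduces to showing that for each closed point $x \in U$, the element $(\rho^{\etl}_{U/X} \circ \eta \circ \cyc_{U/X})([x])$ is the image of Frobenius under $\Gal(\ov{k}/k(x)) \to \pi^{\ab}_1(U)/p^m$---a known fact (see \cite[Theorem~3.4.1]{Kerz-Zhau}). This avoids entirely the comparison of trace maps and sheaf-level pairings.
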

\begin{proof}
It follows from \cite[Theorem~1.2]{Gupta-Krishna-BF} that
${\rho_{U/X}}$ is an isomorphism of profinite groups.
Since the image of the composite map
$\sZ_0(U) \xrightarrow{\cyc_{U/X}} {\wt{C}_{U/X}}/{p^m} \xrightarrow{\rho_{U/X}} 
{\pi^{\ab}_1(U)}/{p^m}$ is dense by the generalized Chebotarev density theorem,
it follows that the image of $\cyc_{U/X}$ is also dense in
${\wt{C}_{U/X}}/{p^m}$. Since ${\pi^{\ab}_1(U)}/{p^m}$ is Hausdorff, it suffices
to show that $\rho_{U/X} \circ \cyc_{U/X} = \rho^{\etl}_{U/X} \circ \eta
\circ \cyc_{U/X} $.
Equivalently, we have to show that for every closed point $x \in U$, 
one has that $(\rho^{\etl}_{U/X} \circ \eta \circ \cyc_{U/X})([x])$ 
is the image of the 
Frobenius element under the map $\Gal({\ov{k}}/{k(x)}) \to 
{\pi^{\ab}_1(U)}/{p^m}$.  
But this is well known (e.g., see \cite[Theorem~3.4.1]{Kerz-Zhau}). 
\end{proof}

\subsection{A new filtration of $H^1_\etl(U, {\Q_p}/{\Z_p})$}
\label{sec:Filt-et}
We keep the assumptions of \thmref{thm:Duality-main}.
By ~\eqref{eqn:Pair-4-4} and \thmref{thm:Duality-main}, we 
have the isomorphism of abelian groups
\[
\theta'_m \colon H^1(U, {\Z}/{p^m}) \xrightarrow{\cong}{\varinjlim}_n C^{\etl}_{KS}(X,nD;m)^{\vee},
\]
where $H^1(U, {\Z}/{p^m})$ denote the \'etale cohomology 
$ H_{\etl}^1(U, {\Z}/{p^m})$. 
 We let 
\[
\Fil^{\etl}_D H^1(U, {\Z}/{p^m}) = 
(\theta'_m)^{-1}({\rm Image}(C^{\etl}_{KS}(X,nD;m)^\vee \to
{\varinjlim}_n C^{\etl}_{KS}(X,nD;m)^{\vee})).
\]
We set
\begin{equation}\label{eqn:Filt-et-0}
\Fil^{\etl}_D H^1(U, {\Q_p}/{\Z_p}) = 
{\varinjlim}_m \Fil^{\etl}_D H^1(U, {\Z}/{p^m}).
\end{equation}
It follows that $\{\Fil^{\etl}_{nD} H^1(U, {\Q_p}/{\Z_p})\}_n$ defines
an increasing filtration of $H^1(U, {\Q_p}/{\Z_p})$. This is an
{\'e}tale version of the filtration $\Fil_{D} H^1(U, {\Q_p}/{\Z_p})$
defined in \cite[definition~7.12]{Gupta-Krishna-REC}.
This new filtration is clearly exhaustive.
We do not know if $\Fil_{D} H^1(U, {\Q_p}/{\Z_p})$ and
$\Fil^{\etl}_D H^1(U, {\Q_p}/{\Z_p})$ are comparable in general.
We can however prove the following.

\begin{thm}\label{thm:Comp-Main}
Let $k$ be a finite field and $X$ a smooth and projective scheme
of pure dimension $d \ge 1$ over $k$. Let $D \subset X$ be an effective
Cartier divisor with complement $U$ such that $D_\red$ is a simple
normal crossing divisor.
Assume that either $d \neq 2$ or $k \neq \F_2$.
Then one has 
\[
\Fil^{\etl}_D H^1(U, {\Z}/{p^m}) \subseteq
\Fil_D H^1(U, {\Z}/{p^m}) \ \ {\rm and} \ \
\Fil^{\etl}_D H^1(U, {\Q_p}/{\Z_p}) \subseteq 
\Fil_D H^1(U, {\Q_p}/{\Z_p})
\]
as subgroups of $H^1(U, {\Z}/{p^m})$ and 
$ H^1(U, {\Q_p}/{\Z_p})$, respectively.
These inclusions are equalities if $D_\red$ is regular.
\end{thm}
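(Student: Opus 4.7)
The plan is to give both filtrations parallel descriptions as preimages under Pontryagin duals of reciprocity maps, and then compare them using \propref{prop:Nis-et-rec} and \thmref{thm:Comparison-3}.

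First I would rewrite $\Fil_D H^1(U, \Z/p^m)$ in a form analogous to the definition of $\Fil^{\etl}_D H^1(U, \Z/p^m)$. By \cite[Corollary~5.15]{Gupta-Krishna-BF}, the Nisnevich reciprocity induces an isomorphism $\rho_{U/X}/p^m \colon \wt{C}_{U/X}/p^m \xrightarrow{\cong} \pi^{\ab}_1(U)/p^m$. Pontryagin dualizing yields an isomorphism
\[
\vartheta_m \colon H^1(U, \Z/p^m) \xrightarrow{\cong} (\wt{C}_{U/X}/p^m)^\vee \cong {\varinjlim}_n C_{KS}(X, nD; m)^\vee,
\]
and (using the compatibility with $\pi_1^{\adiv}(X, D)$ from \cite[Theorem~1.2]{Gupta-Krishna-BF}) $\Fil_D H^1(U, \Z/p^m) = \vartheta_m^{-1}({\rm Image}(C_{KS}(X, D; m)^\vee \to {\varinjlim}_n C_{KS}(X, nD; m)^\vee))$, which is parallel to the definition of $\Fil^{\etl}_D$.

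Next, \propref{prop:Nis-et-rec} gives $\rho_{U/X}/p^m = \rho^{\etl}_{U/X} \circ \eta$, where $\eta$ is the change-of-topology map. Dualizing yields a commutative triangle
\[
\xymatrix@C.8pc{
H^1(U, \Z/p^m) \ar[r]^-{\theta'_m} \ar[dr]_-{\vartheta_m}^-{\cong} &
{\varinjlim}_n C^{\etl}_{KS}(X, nD; m)^\vee \ar[d]^-{\eta^\vee} \\
& {\varinjlim}_n C_{KS}(X, nD; m)^\vee.}
\]
The change-of-topology map is induced level-wise by maps $\eta_n \colon C_{KS}(X, nD; m) \to C^{\etl}_{KS}(X, nD; m)$, so $\eta^\vee$ sends ${\rm Image}(C^{\etl}_{KS}(X, D; m)^\vee)$ into ${\rm Image}(C_{KS}(X, D; m)^\vee)$. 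Taking preimages gives $\Fil^{\etl}_D H^1(U, \Z/p^m) \subseteq \Fil_D H^1(U, \Z/p^m)$, proving the first inclusion.

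When $D_\red$ is regular (and $d \neq 2$ or $k \neq \F_2$), I would apply \thmref{thm:Comparison-3} together with \propref{prop:Milnor-iso} and \lemref{lem:Milnor-top-coh} to conclude that each $\eta_n \colon C_{KS}(X, nD; m) \to C^{\etl}_{KS}(X, nD; m)$ is an isomorphism. Hence $\eta^\vee$ identifies ${\rm Image}(C^{\etl}_{KS}(X, nD; m)^\vee)$ with ${\rm Image}(C_{KS}(X, nD; m)^\vee)$ in the limit, and the commutative triangle forces $\Fil^{\etl}_D H^1(U, \Z/p^m) = \Fil_D H^1(U, \Z/p^m)$. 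For the $\Q_p/\Z_p$ statement, one simply passes to the direct limit over $m$ using \eqref{eqn:Filt-et-0} and the analogous definition in \cite[Definition~7.12]{Gupta-Krishna-REC}, since filtered colimits preserve both inclusions and equalities.

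The main (rather modest) obstacle is verifying the initial reformulation of $\Fil_D H^1(U, \Z/p^m)$ as the preimage of the image of $C_{KS}(X, D; m)^\vee$; everything else is formal manipulation of Pontryagin duals of an already-commutative diagram, leveraging the nontrivial inputs from \thmref{thm:Comparison-3} and \cite{Gupta-Krishna-BF} at the right places.
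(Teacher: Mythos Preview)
Your proposal is correct and follows essentially the same route as the paper's proof: both use \propref{prop:Nis-et-rec} to obtain the inclusion by comparing the Nisnevich and {\'e}tale reciprocity maps, and both invoke \thmref{thm:Comparison-3} to upgrade this to an equality when $D_\red$ is regular. Your presentation via a single commutative triangle of Pontryagin duals is slightly more economical than the paper's construction of the auxiliary quotient $\pi^{\etl}_1(X,nD;m)$ and diagrams~\eqref{eqn:REG-*0} and~\eqref{eqn:REG-*0.5}, but the substance and the key inputs are identical.
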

\begin{proof}
The claim about the two inclusions follows directly from the
definitions of the filtrations in view of \propref{prop:Nis-et-rec}
and \corref{cor:Perfect**}. 

Moreover, \corref{cor:Duality-main-d}
yields that $\rho^{\etl}_{U/X}$ is an isomorphism of
profinite topological groups. Hence, there exists a
unique quotient $\pi^{\etl}_1(X,nD;m)$ of ${\pi^{\ab}_1(U)}/{p^m}$ such that
the diagram
\begin{equation}\label{eqn:REG-*0}
\xymatrix@C.8pc{
\wt{C}^{\etl}_{U/X}(m) \ar[r]^-{\rho^{\etl}_{U/X}} \ar@{->>}[d] 
& {\pi^{\ab}_1(U)}/{p^m} \ar@{->>}[d] \\
\pi^{\etl}_1(X,nD;m) \ar[r]^-{\rho^{\etl}_{X|nD}} & 
{\pi^{\adiv}_1(X,nD)}/{p^m}}
\end{equation}
commutes and the horizontal arrows are isomorphisms of topological groups.
One knows that $\Fil_{nD} H^1(U, {\Z}/{p^m}) =
({\pi^{\adiv}_1(X,nD)}/{p^m})^\vee$.  By \thmref{thm:Comparison-3}, 
it is easy to see that $\Fil^{\etl}_{nD} H^1(U, {\Z}/{p^m})
\cong \pi^{\etl}_1(X,nD;m)^\vee$ when $D_\red$ is regular. 

More precisely,  if $D_\red$ is a simple
normal crossing divisor and 
either $d \neq 2$ or $k \neq \F_2$, we have the following
 diagram. 
\begin{equation}\label{eqn:REG-*0.5}
\xymatrix@C.8pc{
 H^1U, {\Z}/{p^m})  \ar[r]^-{\theta'}_-{\cong} &
 {\varinjlim}_n C^{\etl}_{KS}(X,nD;m)^{\vee}
 \ar[r]^-{\cong} &   ({\varprojlim}_n  C^{\etl}_{KS}(X,nD;m))^{\vee}\\
\Fil^{\etl}_D H^1(U, {\Z}/{p^m}) \ar@{_{(}->}[d] \ar@{^{(}->}[u]
\ar@{.>}[r] 
& C^{\etl}_{KS}(X,nD;m)^{\vee} \ar[u] \ar[d]& \\
\Fil_D H^1(U, {\Z}/{p^m})  \ar[r]^-{\cong}&C_{KS}(X,nD;m)^{\vee}. }
\end{equation}
Assume now that $D_{\red}$ is regular. By  \thmref{thm:Comparison-3}, 
it then follows that the transition maps in the pro-system 
$\{C^{\etl}_{KS}(X,nD;m))\}_n $ are surjective. In particular, the 
right top vertical arrow is injective and hence the middle 
horizontal dotted arrow is a honest arrow such that the top  square
 commutes. It follows from \propref{prop:Nis-et-rec}, that the 
 bottom square commutes as well. 
 By the definition of $\Fil^{\etl}_D H^1(U, {\Z}/{p^m})$, 
it is clear that  the middle 
horizontal arrow is now surjective. 
To prove its injectivity, it suffices to show
that the right bottom vertical arrow is injective. But this follows from  \thmref{thm:Comparison-3}. 
\end{proof}

\section{Reciprocity theorem for $C_{KS}(X|D)$}\label{sec:REC*}
In this section, we shall prove the reciprocity theorem for the idele
class group $C_{KS}(X|D)$. Before going into this, we recall the 
definition of some filtrations of $H^1_\etl(K, {\Q}/{\Z})$ for
a Henselian discrete valuation field $K$.

\subsection{The Brylinski-Kato and Matsuda filtrations}
\label{sec:BKM}
Let $K$ be a Henselian discrete valuation field of 
characteristic $p > 0$ with ring of integers
$\sO_K$, maximal ideal $\fm_K \neq 0$ and residue field $\ff$.
We let $H^q(A) = H^q_\etl(A, {\Q}/{\Z}(q-1))$ for any commutative ring $A$.
The generalized Artin-Schreier sequence gives rise to an exact sequence
\begin{equation}\label{eqn:DRC-3}
0 \to {\Z}/{p^r} \to W_r(K) \xrightarrow{1 - F} W_r(K)
\xrightarrow{\partial} H^1_{\et}(K, {\Z}/{p^r}) \to 0,
\end{equation}
where $F((a_{r-1}, \ldots , a_{0})) = (a^p_{r-1}, \ldots , a^p_{0})$.
We write the Witt vector $(a_{r-1}, \ldots , a_{0})$ as $\un{a}$ in short.
Let $v_K \colon K^{\times} \to \Z$ be the normalized valuation.
Let $\delta_r$ denote the composite map
$W_r(K) \xrightarrow{\partial} H^1_{\et}(K, {\Z}/{p^r}) \inj H^1(K)$.
Then one knows that $\delta_r = \delta_{r+1} \circ V$.
For an integer $m \ge 1$, let
${\rm ord}_p(m)$ denote the $p$-adic order of $m$ and
let $r' = \min(r, {\rm ord}_p(m))$. We let ${\rm ord}_p(0) = - \infty$.

For $m \ge 0$, we let
\begin{equation}\label{eqn:DRC-4}
\Fil^{\bk}_m W_r(K) = \{\un{a}| p^{i}v_K(a_i) \ge - m\}; \ \ {\rm and}
\end{equation}
\begin{equation}\label{eqn:DRC-5}
\Fil^{\ms}_m W_r(K) = \Fil^{\bk}_{m-1} W_r(K) + 
V^{r-r'}(\Fil^{\bk}_{m} W_{r'}(K)).
\end{equation}
We have $\Fil^{\bk}_0 W_r(K) = \Fil^{\ms}_0 W_r(K) = W_r(\sO_K)$.
We let $\Fil^{\bk}_{-1} W_r(K) = \Fil^{\ms}_{-1} W_r(K) = 0$.

For $m \geq 1$, we let
\begin{equation}\label{eqn:DRC-6}
\Fil^{\bk}_{m-1} H^1(K) = H^1(K)\{p'\} \bigoplus {\underset{r \ge 1}\bigcup}
\delta_r(\Fil^{\bk}_{m-1} W_r(K)) \ \ \mbox{and}
\end{equation}
\[
\Fil^{\ms}_m H^1(K) = H^1(K)\{p'\} \bigoplus {\underset{r \ge 1}\bigcup}
\delta_r(\Fil^{\ms}_m W_r(K)).
\]
Moreover, we let 
$\Fil^{\ms}_0 H^1(K) = \Fil^{\bk}_{-1} H^1(K) = H^1(\sO_K)$,
 the subgroup
of unramified characters.
The filtrations $\Fil^{\bk}_\bullet H^1(K)$ and $\Fil^{\ms}_\bullet H^1(K)$
are due to Brylinski-Kato \cite{Kato-89} and Matsuda \cite{Matsuda},
respectively. We refer to \cite[Theorem~6.1]{Gupta-Krishna-REC} for the
following.

\begin{thm}\label{thm:Fil-main}
The two filtrations defined above satisfy the following relations.
\begin{enumerate}
\item
$H^1(K) = {\underset{m \ge 0}\bigcup} \Fil^{\bk}_m H^1(K) 
= {\underset{m \ge 0}\bigcup} \Fil^{\ms}_m H^1(K) $.
\item
$\Fil^{\ms}_m H^1(K) \subset  \Fil^{\bk}_m H^1(K) \subset \Fil^{\ms}_{m+1} H^1(K)$
for all $m \ge -1$. 
\item If $m \geq 1$ such that ${\rm ord}_p(m)=0$, then 
$\Fil^{\bk}_{m-1} H^1(K) = \Fil^{\ms}_m H^1(K)$. In particular, 
$\Fil^{\bk}_0 H^1(K) = \Fil^{\ms}_1 H^1(K)$, which  is the subgroup of
tamely ramified characters.
\end{enumerate}
\end{thm}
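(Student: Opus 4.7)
The plan is to reduce everything to calculations at the Witt-vector level, since the three filtrations are obtained from each other by applying the surjection $\delta_r \colon W_r(K) \twoheadrightarrow \delta_r(W_r(K))$ and taking unions over $r$. The key inputs beyond the definitions are: (i) the relation $\delta_r = \delta_{r+1}\circ V$, which makes the Verschiebung-shifted summand in the Matsuda definition meaningful; (ii) exactness of \eqref{eqn:DRC-3}, so that every $p$-primary character of $H^1(K)$ is of the form $\delta_r(\underline{a})$ for some $\underline{a}\in W_r(K)$; and (iii) the classical Artin--Schreier--Witt description of unramified characters as $\delta_r(W_r(\mathcal{O}_K))$.

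For part (2), I would first establish the inclusions at the Witt-vector level, namely $\Fil^{\ms}_m W_r(K) \subseteq \Fil^{\bk}_m W_r(K) \subseteq \Fil^{\ms}_{m+1} W_r(K)$. The second inclusion is immediate from the definition $\Fil^{\ms}_{m+1}W_r(K) = \Fil^{\bk}_m W_r(K) + V^{r-r'}(\Fil^{\bk}_{m+1}W_{r'}(K))$, since $\Fil^{\bk}_m W_r(K)$ appears as a direct summand. For the first, $\Fil^{\bk}_{m-1}W_r(K)\subseteq \Fil^{\bk}_m W_r(K)$ is trivial, and one checks that $V^{r-r'}(\Fil^{\bk}_m W_{r'}(K))\subseteq \Fil^{\bk}_m W_r(K)$: the Verschiebung pads with zeros at the leading coordinates, which trivially satisfy $p^i v_K \geq -m$, and the surviving coordinates already lie in $\Fil^{\bk}_m W_{r'}(K)$. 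Applying $\delta_r$ and taking unions over $r$ produces the desired inclusions in $H^1(K)$.

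For part (3), when $\operatorname{ord}_p(m) = 0$ one has $r' = \min(r, 0) = 0$, so $W_{r'}(K) = 0$ and the $V$-shifted summand in $\Fil^{\ms}_m W_r(K)$ vanishes identically, giving $\Fil^{\ms}_m W_r(K) = \Fil^{\bk}_{m-1}W_r(K)$; taking $\delta_r$ yields the equality of filtration subgroups in $H^1(K)$. The tame characterization of $\Fil^{\bk}_0 H^1(K) = \Fil^{\ms}_1 H^1(K)$ then reads as: one has $\Fil^{\bk}_0 H^1(K) = H^1(K)\{p'\} + \bigcup_r \delta_r(W_r(\mathcal{O}_K))$, and the second summand is $H^1(\mathcal{O}_K) = H^1_{\nr}(K)$ by naturality of \eqref{eqn:DRC-3} applied to $\mathcal{O}_K$ together with surjectivity of $\delta_r$ onto $p$-primary unramified characters. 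To identify $H^1_{\nr}(K) + H^1(K)\{p'\}$ with the tame subgroup, invoke the standard fact that the tame quotient of inertia is pro-prime-to-$p$: a prime-to-$p$ character is automatically tame, while a $p$-primary character is tame iff it is unramified.

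For part (1), exhaustion by the Brylinski--Kato filtration is immediate: any $\underline{a} = (a_{r-1},\ldots,a_0) \in W_r(K)$ has $v_K(a_i)\geq -N$ for some common $N$, hence $p^i v_K(a_i) \geq -p^{r-1}N$ and $\underline{a}\in \Fil^{\bk}_{p^{r-1}N}W_r(K)$. Combined with the canonical splitting $H^1(K) = H^1(K)\{p'\}\oplus H^1(K)\{p\}$ and the surjection $\bigcup_r \delta_r \twoheadrightarrow H^1(K)\{p\}$ from \eqref{eqn:DRC-3}, this yields $H^1(K) = \bigcup_m \Fil^{\bk}_m H^1(K)$; exhaustion by $\Fil^{\ms}_\bullet$ then follows from part (2). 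The main obstacle in the whole argument is purely bookkeeping: all three parts reduce via $\delta_r$ to direct manipulations of the definitions, and the only non-formal input is the classical Artin--Schreier--Witt description of unramified $p$-primary characters, which is standard.
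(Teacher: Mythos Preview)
Your argument is correct and essentially follows the standard route (going back to Matsuda \cite{Matsuda} and Kato \cite{Kato-89}). Note, however, that the present paper does not supply a proof of this statement at all: it is quoted from \cite[Theorem~6.1]{Gupta-Krishna-REC}, so there is no in-paper proof to compare against. One small point worth making explicit in your write-up of part~(2): the claim $V^{r-r'}(\Fil^{\bk}_m W_{r'}(K))\subseteq \Fil^{\bk}_m W_r(K)$ depends on the indexing convention for Witt coordinates, and in any case can be bypassed by observing that $\delta_r\circ V^{r-r'}=\delta_{r'}$, so $\delta_r\bigl(V^{r-r'}(\Fil^{\bk}_m W_{r'}(K))\bigr)=\delta_{r'}(\Fil^{\bk}_m W_{r'}(K))$ already sits inside the union defining $\Fil^{\bk}_m H^1(K)$.
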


For integers $m \ge 0$ and $r \ge 1$, let $U_m K^M_r(K)$ be
the subgroup $\{1 + \fm^m_K, K^{\times}, \ldots , K^{\times}\}$ of $K^M_r(K)$.
We let $U'_mK^M_r(K)$ be the subgroup $\{1 + \fm^m_K, \sO^{\times}_K, \ldots ,
\sO^{\times}_K\}$ of $K^M_r(K)$. 
It follows from \cite[Lemma~6.2]{Gupta-Krishna-REC} that 
$U_{m+1}K^M_r(K) \subseteq U'_mK^M_r(K) \subseteq U_{m}K^M_r(K)$ for every
integer $m \ge 0$.
If $K$ is a $d$-dimensional Henselian local field 
(see \cite[\S~5.1]{Gupta-Krishna-REC}), there is a pairing
\begin{equation}\label{eqn:Kato-pair}
\{,\} \colon K^M_d(K) \times H^1(K) \to H^{d+1}(K) \cong {\Q}/{\Z}.
\end{equation}

The following result is due to Kato and Matsuda (when $p \neq 2$).
We refer to \cite[Theorem~6.3]{Gupta-Krishna-REC} for a proof.

\begin{thm}\label{thm:Filt-Milnor-*}
Let $\chi \in H^1(K)$ be a character. Then the following hold.
\begin{enumerate}
\item
For every integer $m \ge 0$, we have that 
$\chi \in \Fil^{\bk}_m H^1(K)$ if and only if $\{\alpha, \chi\} = 0$
for all $\alpha \in U_{m+1} K^M_d(K)$.
\item
For every integer $m \ge 1$, we have that $\chi \in \Fil^{\ms}_m H^1(K)$ if and 
only if $\{\alpha, \chi\} = 0$ for all $\alpha \in U'_{m} K^M_d(K)$.
\end{enumerate}
\end{thm}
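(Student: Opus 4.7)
The plan is to reduce the theorem to a computation with the explicit Artin--Schreier--Witt description of $p$-primary characters and Kato's symbol pairing. First I would split $H^1(K) = H^1(K)\{p'\} \oplus H^1(K)\{p\}$ and handle the prime-to-$p$ part by tame ramification: since every element of $U_1 K^M_d(K) = U'_0 K^M_d(K)$ is a sum of symbols containing a principal unit, and the pairing of such a symbol with a tamely ramified character is trivial via the tame symbol, the prime-to-$p$ summand contributes to the annihilator in both statements. This reduces us to proving, for each $r \ge 1$ and $\un{a}\in W_r(K)$, the equivalences
\begin{align*}
\delta_r(\un{a}) \in \Fil^{\bk}_m H^1(K) &\iff \{U_{m+1}K^M_d(K),\,\delta_r(\un{a})\} = 0, \\
\delta_r(\un{a}) \in \Fil^{\ms}_m H^1(K) &\iff \{U'_m K^M_d(K),\,\delta_r(\un{a})\} = 0.
\end{align*}

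Next I would invoke Kato's explicit formula for the residue pairing of a Milnor symbol with an Artin--Schreier--Witt character. For a symbol of the form $\alpha = \{1 + bf^m, u_2,\ldots,u_d\}$ with $b\in \sO_K$, $f$ a uniformizer and $u_i \in K^\times$, the pairing $\{\alpha, \delta_r(\un{a})\}$ can be expressed (via the trace-residue formula) as a sum of residues $\mathrm{Res}(\un{a}\cdot \mathrm{d}\log u_2\wedge\cdots\wedge \mathrm{d}\log u_d \cdot (1+bf^m)^{\cdot})$ involving the Witt vector $\un{a}$. Starting from this, a direct valuation count shows that $\{U_{m+1}K^M_d(K),\delta_r(\un{a})\} = 0$ is equivalent to $p^i v_K(a_i) \ge -m$ for all $i$, which is precisely the condition $\un{a}\in \Fil^{\bk}_m W_r(K)$. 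By the definition ~\eqref{eqn:DRC-6} of the Brylinski--Kato filtration on $H^1(K)$, this yields part (1) after observing that $\delta_r = \delta_{r+1}\circ V$ so the union over $r$ is well-defined.

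For part (2), the idea is the same but uses the refined filtration $\Fil^{\ms}_m W_r(K)$. The extra term $V^{r-r'}(\Fil^{\bk}_m W_{r'}(K))$ in Matsuda's definition exactly accounts for the additional characters that are annihilated by $U'_m K^M_d(K)$ but not by $U_{m+1}K^M_d(K)$: these are the characters whose top Witt components are ``slightly wilder'' but which nevertheless pair trivially with symbols all of whose non-unit entries lie in $\sO_K^\times$. The comparison $U_{m+1}K^M_d(K) \subseteq U'_m K^M_d(K) \subseteq U_m K^M_d(K)$ (noted just before the theorem) then guarantees the consistency of the two statements, once one checks that the Witt-vector condition defining $\Fil^{\ms}_m$ is indeed the annihilator of $U'_m$ under Kato's formula.

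The main obstacle will be the explicit symbol computation in step two: the residue-formula calculation is delicate because of Steinberg relations and because one needs to control the pairing for symbols with entries that may be non-units. The $p=2$ case is especially subtle since the Steinberg relation interacts with the Artin-Schreier-Witt formalism differently, which is why Matsuda's original argument required $p\neq 2$; for us, this has been handled uniformly in \cite[Theorem~6.3]{Gupta-Krishna-REC}, so we may simply invoke that result to conclude.
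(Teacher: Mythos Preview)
The paper does not prove this theorem at all: it is stated as a known result due to Kato \cite{Kato-89} and Matsuda \cite{Matsuda} (the latter under the assumption $p \neq 2$), and the authors simply refer to \cite[Theorem~6.3]{Gupta-Krishna-REC} for a uniform treatment covering $p = 2$. Your proposal ultimately lands on exactly the same citation, so in that sense you and the paper agree.

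That said, the sketch you give before invoking the citation is rougher than the actual argument. The step you call ``a direct valuation count shows that $\{U_{m+1}K^M_d(K),\delta_r(\un{a})\} = 0$ is equivalent to $p^i v_K(a_i) \ge -m$'' is the entire content of part (1), and in Kato's paper it is not a direct count but rather goes through his structure theorem for the graded pieces $\gr^{\bk}_m H^1(K)$ and the refined Swan conductor map into $\Omega^1_\ff \oplus \ff$. Similarly, your description of the pairing via a ``trace--residue formula'' involving $\mathrm{d}\log$ forms is suggestive but not the way the pairing is actually computed in this setting; one works instead with the cup product in Galois cohomology and Kato's higher local duality. For part (2), the statement that Matsuda's extra Verschiebung term ``exactly accounts for'' the discrepancy between $U'_m$ and $U_{m+1}$ is correct as a slogan, but the verification requires the explicit analysis of $\gr^{\ms}_m H^1(K)$ carried out in \cite[\S3]{Matsuda}. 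None of this is a gap in your proposal, since you defer to the reference at the end; but if you intend the sketch to stand on its own, it does not.
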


We have the inclusions $K \inj K^{sh} \inj \ov{K}$, where $\ov{K}$ is 
a fixed separable closure of $K$ and $K^{sh}$ is the strict Henselization of 
$K$. We shall use the following key result in the proof of our reciprocity
theorem.

\begin{prop}\label{prop:Fil-SH-BK}
For $m \ge 0$, the canonical square
\[
\xymatrix@C.8pc{
\Fil^{\bk}_m H^1(K) \ar[r] \ar[d] & H^1(K) \ar[d] \\
\Fil^{\bk}_m H^1(K^{sh}) \ar[r]  & H^1(K^{sh})}
\]
is Cartesian.
\end{prop}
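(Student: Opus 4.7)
The plan is to reduce, via Artin–Schreier–Witt theory, to a concrete Witt-vector descent problem along the unramified pro-extension $K^{sh}/K$. For $\chi \in H^1(K)$, decompose $\chi = \chi' + \chi''$ into its prime-to-$p$ and $p$-primary parts. By the definition of $\Fil^{\bk}_m$ in \eqref{eqn:DRC-6} one has $H^1(K)\{p'\} \subseteq \Fil^{\bk}_m H^1(K)$ for all $m \ge 0$, and since the torsion decomposition commutes with restriction to $K^{sh}$, we may assume $\chi$ is $p$-primary, say $p^r \chi = 0$. By the surjectivity of $\delta_r$ in \eqref{eqn:DRC-3}, write $\chi = \delta_r(\un{a})$ with $\un{a} \in W_r(K)$. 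The hypothesis provides $s \ge 1$ and $\un{b} \in \Fil^{\bk}_m W_s(K^{sh})$ with $\delta_s(\un{b}) = \chi|_{K^{sh}}$; by absorbing $\un{a}$ into $V^{s-r}\un{a}$ (or the reverse) we may pass to a common level $s$, and by choosing a finite unramified Galois extension $L/K$ inside $K^{sh}$ we may assume $\un{b} \in W_s(L)$ and that $V^{s-r}\un{a} - \un{b} = (1-F)\un{c}$ holds for some $\un{c} \in W_s(L)$. The goal becomes to produce $\un{c}' \in W_s(K)$ so that $\un{b}' := V^{s-r}\un{a} - (1-F)\un{c}'$ lies in $\Fil^{\bk}_m W_s(L)$, for then $\un{b}' \in W_s(K) \cap \Fil^{\bk}_m W_s(L) = \Fil^{\bk}_m W_s(K)$ (the valuation $v_K$ extends unchanged to $L$), and $\chi = \delta_s(\un{b}') \in \Fil^{\bk}_m H^1(K)$.

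The central ingredient for the descent is a separable–inseparable rigidity for Frobenius: writing $\ff$ for the residue field of $K$ and $\bar{\ff}$ for its separable closure (the residue field of $K^{sh}$), any $y \in \bar{\ff}$ with $y^p \in \ff$ already lies in $\ff$. Indeed such a $y$ is separable over $\ff$ and satisfies the purely inseparable polynomial $X^p - y^p = (X - y)^p$, whose only separable monic divisor is $X - y$. Applied coordinate-wise to Witt vectors this yields $F^{-1}(W_s(\ff)) \cap W_s(\bar{\ff}) = W_s(\ff)$. The Artin–Schreier–Witt equation $V^{s-r}\un{a} - \un{b} = (1-F)\un{c}$ is then solved layer-by-layer in $W_s(K)$: reducing modulo successively smaller fractional ideals and subtracting the previously descended contributions produces, at each step, an equation of shape $\bar{c}^p = \bar{\alpha}$ in the residue field, with $\bar{\alpha} \in \ff$ built from earlier $K$-rational choices and with a known solution $\bar{c} \in \bar{\ff}$ coming from the given $\un{c}$. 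The rigidity lemma forces $\bar{c} \in \ff$, the descent proceeds, and in the limit one obtains $\un{c}' \in W_s(K)$ with $\un{b}' = V^{s-r}\un{a} - (1-F)\un{c}' \in \Fil^{\bk}_m W_s(L)$, concluding the proof.

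The main obstacle is bookkeeping the filtration index through Witt-vector arithmetic. The Verschiebung carries $\Fil^{\bk}_m W_r$ only into $\Fil^{\bk}_{pm} W_{r+1}$ (shifting the entry from index $i$ to $i+1$ relaxes $p^i v(\cdot) \ge -m$ by a factor of $p$), and the Witt sum formulas involve mixed polynomials in the components that further degrade valuation bounds, so neither naive passage to a common level nor a naive Galois average over $\Gal(L/K)$ preserves $\Fil^{\bk}_m$. The choice of $s$ and the sequencing of the layer-by-layer descent must therefore be coordinated so that the strengthened valuation bounds at the larger level absorb all arithmetic slack introduced along the way; this coordination is the technical heart of the argument. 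A conceptually cleaner but essentially equivalent route is to identify $\Fil^{\bk}_m H^1(K)$ with the subgroup of characters whose Swan conductor is $\le m$ and invoke Kato's theorem that the Swan conductor of $\chi \in H^1(K)$ depends only on $\chi|_{I_K} = \chi|_{K^{sh}}$, which yields the Cartesian property directly; the Witt-vector descent outlined above is the content of Kato's invariance theorem rendered in the filtration language of \eqref{eqn:DRC-6}.
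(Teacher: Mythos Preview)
Your route is genuinely different from the paper's, and while the underlying idea is sound, the sketch is not yet a proof.

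The paper's argument is short and structural. Since $\Fil^{\bk}_\bullet$ is exhaustive, it suffices to show that the map on graded pieces
\[
\gr^{\bk}_m H^1(K) \longrightarrow \gr^{\bk}_m H^1(K^{sh})
\]
is injective for every $m \ge 1$. For this the paper invokes Kato's refined Swan conductor $\mathrm{rsw}_{\pi_K}\colon \gr^{\bk}_m H^1(K) \hookrightarrow \Omega^1_{\ff}\oplus\ff$, which depends only on a choice of uniformizer. Because $K^{sh}/K$ is unramified one may use the same $\pi_K$ upstairs, giving a commutative square with $\Omega^1_{\bar\ff}\oplus\bar\ff$ on the right; injectivity then reduces to $\Omega^1_{\ff}\hookrightarrow\Omega^1_{\bar\ff}$, which is immediate since $\bar\ff/\ff$ is separable. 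No Witt-vector bookkeeping is needed.

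Your approach attempts to bypass $\mathrm{rsw}$ and descend directly at the level of Witt vectors. The separable/inseparable rigidity lemma you isolate ($y\in\bar\ff$, $y^p\in\ff \Rightarrow y\in\ff$) is correct and is indeed the mechanism behind the length-one case: if $c-c^p\equiv a \pmod{\sO_L}$ with $a\in K$, the leading residue of $c$ satisfies a pure $p$-th power relation over $\ff$ and therefore descends. But the step from length one to arbitrary $s$ is precisely where your write-up stops. For $s\ge 2$ the components of $(1-F)\un{c}$ and of Witt sums are governed by the universal Witt polynomials, so the ``leading term'' of the $i$-th ghost component is a polynomial mixture of lower components, and the reduction to an equation $\bar c^p=\bar\alpha$ with $\bar\alpha\in\ff$ is no longer automatic. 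You acknowledge this (``this coordination is the technical heart'') but do not carry it out; as written there is no argument controlling how the already-descended components feed into the next layer without destroying the $\Fil^{\bk}_m$ bound. Your final sentence, citing ``Kato's theorem that the Swan conductor depends only on $\chi|_{K^{sh}}$'', is essentially the statement being proved, so it cannot be invoked as an alternative.

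In short: the paper trades your inductive Witt descent for a single appeal to $\mathrm{rsw}$, which packages exactly the delicate filtration bookkeeping you flag as the missing technical heart. If you want to make your approach self-contained, you must actually execute the length-$s$ induction, tracking the Witt polynomials explicitly; otherwise, the clean way forward is to pass to graded pieces and use $\mathrm{rsw}$ as the paper does.
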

\begin{proof}
Since $\Fil^{\bk}_{\bullet} H^1(K)$ is an exhaustive
filtration of $H^1(K)$ by \thmref{thm:Fil-main}(1), it 
suffices to show that for every $m \ge 1$, the square
\[
\xymatrix@C.8pc{
\Fil^{\bk}_{m-1} H^1(K) \ar[r] \ar[d] & \Fil^{\bk}_{m} H^1(K) \ar[d] \\
\Fil^{\bk}_{m-1} H^1(K^{sh}) \ar[r]  & \Fil^{\bk}_{m} H^1(K^{sh})}
\]
is Cartesian. Equivalently, it suffices to show that for every $m \ge 1$, the
map 
\[
\phi^*_m \colon \gr^{\bk}_m H^1(K) \to \gr^{\bk}_m H^1(K^{sh}),
\]
induced by the inclusion $\phi \colon K \inj K^{sh}$, is injective.

We fix $m \ge 1$. By \cite[Corollary~5.2]{Kato-89}, 
there exists an injective (non-canonical)
homomorphism 
${\rm rsw}_{\pi_K} \colon \gr^{\bk}_m H^1(K) \inj \Omega^1_{\ff}  \oplus \ff$. 
By Theorem~5.1 of loc. cit., this refined swan conductor 
${\rm rsw}_{\pi_K} $
depends only on the choice of a uniformizer $\pi_K$ of $K$. 
Since $\sO^{sh}_K$ is unramified over $\sO_K$, we can choose
$\pi_K$ to be a uniformizer of $K^{sh}$ as well. It therefore 
follows that for all $m\geq 1$, the diagram  
\[
\xymatrix@C2pc{
\gr^{\bk}_m H^1(K) \ar@{^{(}->}[r]^-{{\rm rsw}_{\pi_K}} \ar[d] &
\Omega^1_{\ff}  \oplus \ff \ar[d] \\
\gr^{\bk}_m H^1(K^{sh}) \ar@{^{(}->}[r]^-{{\rm rsw}_{\pi_K}}  &
\Omega^1_{\ov{\ff}}  \oplus \ov{\ff}}
\]
is commutative, where $\ov{\ff}$ is a separable closure of 
$\ff$.
Since the horizontal arrows in the above diagram are injective, it suffices to 
show 
that the natural map $\Omega^1_{\ff}  \to \Omega^1_{\ov{\ff}}$ is injective. 
But this is clear.
\end{proof}

\subsection{Logarithmic fundamental group with modulus}
\label{sec:LFGM}
Let $k$ be a finite field of characteristic $p$ and $X$ an integral
projective scheme over $k$ of dimension $d \ge 1$. Let $D \subset X$
be an effective Cartier divisor with complement $U$. Let $K = k(\eta)$ denote 
the function field of $X$. We let $C = D_\red$. 
We fix a separable closure $\ov{K}$ of $K$
and let $G_K$ denote the absolute Galois group of $K$.
Recall the following notations from \cite[\S~3.3]{Kato-Saito} or
\cite[\S~2.3]{Gupta-Krishna-REC}. 

Assume that $X$ is normal.
Let $\lambda$ be a generic point of $D$.
Let $K_\lambda$ denote the Henselization of $K$ at $\lambda$.
Let $P = (p_0, \ldots , p_{d-2}, \lambda, \eta)$ be a Parshin
chain on $(U \subset X)$. 
Let $V \subset K$ be a $d$-DV which dominates $P$. 
Let $V = V_0 \subset \cdots \subset V_{d-2}
\subset V_{d-1} \subset V_d = K$ be the chain of valuation rings
in $K$ induced by $V$. Since $X$ is normal, it is easy to check
that for any such chain, one must have $V_{d-1} = \sO_{X,\lambda}$.
Let $V'$ be the image of $V$ in $k(\lambda)$. Let $\wt{V}_{d-1}$
be the unique Henselian discrete valuation ring 
having an ind-{\'e}tale local homomorphism 
 $V_{d-1} \to \wt{V}_{d-1}$ such that its residue field $E_{d-1}$
is the quotient field of $(V')^h$. Then $V^h$ is the inverse image of
$(V')^h$ under the quotient map $\wt{V}_{d-1} \surj E_{d-1}$.
It follows that its function field $Q(V^h)$ is a $d$-dimensional
Henselian discrete valuation field whose ring of integers is 
$\wt{V}_{d-1}$ (see \cite[\S~3.7.2]{Kato-Saito}).
It then follows that
there are canonical inclusions of discrete valuation rings 
\begin{equation}\label{eqn:Fil-SH-3}
\sO_{X,\lambda}  \inj \wt{V}_{d-1} \inj \sO^{sh}_{X,\lambda}.
\end{equation}
Moreover, we have (see the proof of \cite[Proposition~3.3]{Kato-Saito})
\begin{equation}\label{eqn:Fil-SH-5}
\sO^h_{X,P'} \cong {\underset{V \in \sV(P)}\prod} \wt{V}_{d-1},
\end{equation}
where $\sV(P)$ is the set of $d$-DV's in $K$ which dominate $P$.
As an immediate consequence of \propref{prop:Fil-SH-BK},
we therefore get the following.

\begin{cor}\label{cor:Fil-SH-4}
For every $m \ge 0$, the square
\[
\xymatrix@C.8pc{
\Fil^{\bk}_m H^1(K_\lambda) \ar[r] \ar[d] & H^1(K_\lambda) \ar[d] \\
\Fil^{\bk}_m H^1(Q(V^h)) \ar[r]  & H^1(Q(V^h))}
\]
is Cartesian.
\end{cor}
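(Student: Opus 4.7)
The plan is to derive the corollary from \propref{prop:Fil-SH-BK} applied twice, combined with the standard pasting lemma for pullback squares. The first step is to identify $\sO^{sh}_{X,\lambda}$ as a strict Henselization of $\wt{V}_{d-1}$: the residue field $E_{d-1} = Q((V')^h)$ of $\wt{V}_{d-1}$ is a separable algebraic extension of $k(\lambda)$, since $V' \to (V')^h$ is ind-{\'e}tale local and therefore induces an ind-{\'e}tale (hence separable algebraic) extension of fraction fields. Consequently the separable closures of $E_{d-1}$ and of $k(\lambda)$ agree canonically, and the local inclusion $\wt{V}_{d-1} \hookrightarrow \sO^{sh}_{X,\lambda}$ from ~\eqref{eqn:Fil-SH-3} realizes the target as the strict Henselization of the source. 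Writing $K_\lambda^{sh} := \operatorname{Frac}(\sO^{sh}_{X,\lambda})$, we conclude that $K_\lambda^{sh}$ is simultaneously a strict Henselization of both the Henselian discrete valuation fields $K_\lambda$ and $Q(V^h)$.

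Next, I would apply \propref{prop:Fil-SH-BK} to $K_\lambda$ and to $Q(V^h)$, obtaining two Cartesian squares. Using the factorization $K_\lambda \hookrightarrow Q(V^h) \hookrightarrow K_\lambda^{sh}$, these can be stacked into the three-row rectangle
\[
\xymatrix@C.8pc{
\Fil^{\bk}_m H^1(K_\lambda) \ar[r] \ar[d] & H^1(K_\lambda) \ar[d] \\
\Fil^{\bk}_m H^1(Q(V^h)) \ar[r] \ar[d] & H^1(Q(V^h)) \ar[d] \\
\Fil^{\bk}_m H^1(K_\lambda^{sh}) \ar[r] & H^1(K_\lambda^{sh}),
}
\]
whose outer rectangle is Cartesian (by \propref{prop:Fil-SH-BK} applied to $K_\lambda$) and whose bottom square is Cartesian (by \propref{prop:Fil-SH-BK} applied to $Q(V^h)$). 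The standard pasting lemma for pullback diagrams then forces the top square --- which is precisely the square asserted in the corollary --- to be Cartesian as well.

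The only delicate point in this plan is the identification of $\sO^{sh}_{X,\lambda}$ with a strict Henselization of $\wt{V}_{d-1}$, and this reduces to verifying that the residue field extension $k(\lambda) \hookrightarrow E_{d-1}$ is separable algebraic. Once this is granted, the remainder of the argument is a purely formal consequence of \propref{prop:Fil-SH-BK} and the pasting lemma for pullbacks.
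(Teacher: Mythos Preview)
Your proposal is correct and is essentially a careful unpacking of what the paper means by ``an immediate consequence of \propref{prop:Fil-SH-BK}'': the crucial point is that the inclusions ~\eqref{eqn:Fil-SH-3} identify $K_\lambda^{sh}$ with the strict Henselization of $Q(V^h)$, after which the result follows formally. A slightly leaner variant needs \propref{prop:Fil-SH-BK} only once (for $K_\lambda$), since the forward implication of the top square is just functoriality of the Brylinski--Kato filtration under the unramified extension $K_\lambda \hookrightarrow Q(V^h)$; but your double application plus pasting is equally valid.
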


Let $\Irr_C$ denote the set of all generic points of $C$ and
let $C_\lambda$ denote the closure of an element $\lambda \in \Irr_C$.
We write $D = {\underset{\lambda \in \Irr_C}\sum} n_\lambda C_\lambda$.
We can also write $D = {\underset{x \in X^{(1)}}\sum}  n_x \ov{\{x\}}$, where $n_x =0$ for all $x\in U$.
We allow $D$ to be empty in which case we 
write $D = 0$.
For any $x \in X^{(1)} \cap C$, we let $\wh{K}_x$ denote the quotient field of 
the $\fm_x$-adic completion $\wh{\sO_{X,x}}$ of $\sO_{X,x}$. 
Let $\sO^{sh}_{X,x}$ denote the strict Henselization of $\sO_{X,x}$
and let $K^{sh}_x$ denote its quotient field.
Then it is clear from the definitions that there are
inclusions 
\begin{equation}\label{eqn:Field-incln}
K \inj K_x \inj K^{sh}_x \inj \ov{K} \ \mbox{and} \ \ K \inj K_x \inj \wh{K}_x. 
\end{equation}

\begin{defn}\label{defn:Fid_D}
Let $\Fil^{\bk}_D H^1(K)$ denote the subgroup of characters
$\chi \in H^1(K)$ such that for every $x \in X^{(1)}$, 
the image $\chi_{x}$ of
$\chi$ under the canonical surjection $H^1(K) \surj H^1(K_{x})$ 
lies in $\Fil^{\bk}_{n_{x}-1} H^1(K_{x})$.
It is easy to check that $\Fil^{\bk}_D H^1(K) \subset H^1(U)$.
We let $\Fil^{\bk}_D H^1_{\etl}(U, {\Z}/{m}) = 
H^1_{\etl}(U, {\Z}/{m}) \cap \Fil^{\bk}_D H^1(K)$.
\end{defn}

Recall that  $H^1(K)$ is a torsion abelian group. We consider it a
topological abelian group with discrete topology. In particular, 
all the subgroups $H^1(U)$ (e.g., $\Fil^{\bk}_D H^1(K)$) are 
also considered as discrete topological abelian groups. 
Recall from \cite[Definition~7.12]{Gupta-Krishna-REC} that
$\Fil_D H^1(K)$ is a subgroup of $H^1(U)$ which is
defined similar to $\Fil^{\bk}_D H^1(K)$, where we only
replace $\Fil^{\bk}_{n_{\lambda}-1} H^1(K_{\lambda})$ by
$\Fil^{\ms}_{n_{\lambda}} H^1(K_{\lambda})$.

\begin{defn}\label{defn:Fun-D-BK}
We define the quotient $\pi_1^{\abk}(X, D)$ of $\pi_1^{\ab}(U)$
 to be the Pontryagin dual of $\Fil^{\bk}_D H^1(K) \subset H^1(U)$, 
 i.e., 
 \[
 \pi_1^{\abk}(X, D) := \Hom_{\cont}(\Fil^{\bk}_D H^1(K), \Q/\Z) = 
\Hom(\Fil^{\bk}_D H^1(K), \Q/\Z).
 \]
\end{defn}
Since $H^1_{\etl}(U, {\Z}/{p^m}) = \ _{p^m}H^1(U, {\Q}/{\Z})$,
it follows that 
\[
{\pi_1^{\abk}(X, D)}/{p^m} \cong 
\Hom_{\cont}(\Fil^{\bk}_D H^1_{\etl}(U, {\Z}/{p^m}), {\Q}/{\Z}).
\]
Since $\Fil^{\bk}_D H^1(K)$ is a discrete topological group, it follows that
$\pi_1^{\abk}(X, D)$ is a profinite group. Moreover,  
\cite[Theorem~2.9.6]{Pro-fin} implies that 
\begin{equation}\label{eqn:dual-D-BK}
\pi_1^{\abk}(X, D)^\vee \cong \Fil^{\bk}_D H^1(K) \ \ {\rm and} \ \
({\pi_1^{\abk}(X, D)}/{p^m})^\vee \cong \Fil^{\bk}_D H^1_{\etl}(U, {\Z}/{p^m}).
\end{equation}  

\begin{lem} \label{lem:adiv-abk}
The quotient map $\pi^{\ab}_1(U) \to \pi_1^{\abk}(X, D)$ 
factors through $\pi_1^{\adiv}(X, D)$. 
\end{lem}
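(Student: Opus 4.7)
The plan is to deduce this lemma by Pontryagin duality from a pointwise inclusion of filtrations on $H^1(K)$. Namely, by \defref{defn:Fun-D-BK} and the analogous definition $\pi_1^{\adiv}(X,D) = \Hom_{\cont}(\Fil_D H^1(K), \Q/\Z)$, the desired factorization $\pi^{\ab}_1(U) \surj \pi_1^{\adiv}(X,D) \surj \pi_1^{\abk}(X,D)$ is equivalent, after dualizing, to an inclusion
\[
\Fil^{\bk}_D H^1(K) \subseteq \Fil_D H^1(K)
\]
of subgroups of $H^1(U) \subset H^1(K)$.

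To verify this inclusion, I would proceed pointwise along $x \in X^{(1)}$. By \defref{defn:Fid_D}, a character $\chi \in \Fil^{\bk}_D H^1(K)$ satisfies $\chi_x \in \Fil^{\bk}_{n_x - 1} H^1(K_x)$ for every $x \in X^{(1)}$. By \thmref{thm:Fil-main}(2) applied with $m = n_x - 1 \ge -1$, one has
\[
\Fil^{\bk}_{n_x - 1} H^1(K_x) \subseteq \Fil^{\ms}_{n_x} H^1(K_x),
\]
so $\chi_x \in \Fil^{\ms}_{n_x} H^1(K_x)$ for every such $x$. By the definition of $\Fil_D H^1(K)$ recalled right before \defref{defn:Fun-D-BK}, this is precisely the condition $\chi \in \Fil_D H^1(K)$.

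There is no serious obstacle here; the lemma is essentially a direct consequence of the comparison between the Brylinski--Kato and Matsuda filtrations of $H^1(K_x)$ at each generic point of $D$, combined with the compatibility of Pontryagin duality with inclusions of discrete subgroups. The only mild point to keep in mind is that all three groups $\pi^{\ab}_1(U)$, $\pi_1^{\adiv}(X,D)$ and $\pi_1^{\abk}(X,D)$ carry profinite topologies and their defining filtrations are discrete, so the dualization is legitimate and functorial, yielding the claimed continuous surjections.
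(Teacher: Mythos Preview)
Your proof is correct and follows essentially the same approach as the paper's: both dualize to the inclusion $\Fil^{\bk}_D H^1(K) \subseteq \Fil_D H^1(K)$ and deduce it from the pointwise comparison $\Fil^{\bk}_{m} H^1(K_x) \subseteq \Fil^{\ms}_{m+1} H^1(K_x)$ of \thmref{thm:Fil-main}(2). The paper's proof is simply more terse, citing \thmref{thm:Fil-main} together with \cite[Theorem~7.16]{Gupta-Krishna-REC} for the identification $\pi_1^{\adiv}(X,D) = \Hom(\Fil_D H^1(K), \Q/\Z)$, while you spell out the pointwise verification explicitly.
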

\begin{proof}
This is a straightforward consequence of \thmref{thm:Fil-main} and
\cite[Theorem~7.16]{Gupta-Krishna-REC} once we recall that
\begin{equation}\label{eqn:adiv-abk-0}
\pi_1^{\adiv}(X, D) = \Hom(\Fil_D H^1(K), \Q/\Z).
\end{equation}
\end{proof}

\begin{remk}\label{remk:Tannakian-defn}
Using \cite[\S~3]{Abbes-Saito}, one can mimic the
construction of \cite[\S~7]{Gupta-Krishna-REC} to show that 
$\pi_1^{\abk}(X, D)$ is isomorphic to the abelianization of the
automorphism group of the fiber functor of a certain Galois subcategory of
the category of finite {\'e}tale covers of $U$. 
Under this Tannakian interpretation, 
$\pi_1^{\abk}(X, D)$ characterizes the finite
{\'e}tale covers of $U$ whose ramifications are bounded at each generic
point of $D$ by means of Kato's Swan conductor.
\end{remk}

\vskip .3cm

\subsection{Reciprocity for $C_{KS}(X|D)$}\label{sec:REC-KS}
We shall continue with the setting of \S~\ref{sec:LFGM}.
Before we prove the reciprocity theorem for $C_{KS}(X|D)$, 
we recall the construction of the reciprocity map for
$C_{U/X}$ from \cite[\S~5.4]{Gupta-Krishna-REC}.
We let $P = (p_0, \ldots , p_s)$ be any Parshin chain on $X$ with
the condition that $p_s \in U$.
Let $X(P) = \ov{\{p_s\}}$ be the integral closed subscheme of $X$
and let $U(P) = U \cap X(P)$.
We let $V \subset k(p_s)$ be an $s$-DV dominating $P$.
Then $Q(V^h)$ is an
$s$-dimensional Henselian local field. By 
\cite[Proposition~5.1]{Gupta-Krishna-REC}, we therefore 
have 
the reciprocity map  
\begin{equation}\label{eqn:Kato-rec}
\rho_{Q(V^h)} \colon K^M_s(Q(V^h)) \to \Gal({{Q(V^h)}^{\ab}}/{Q(V^h)})
\cong \pi^{\ab}_1(\Spec(Q(V^h))).
\end{equation}
Taking the sum of these maps over $\sV(P)$ and using 
\cite[Lemma~5.10]{Gupta-Krishna-REC}, we get a reciprocity map
\begin{equation}\label{eqn:Kato-rec-0}
\rho_{k(P)} \colon K^M_s(k(P)) \to \pi^{\ab}_1(\Spec(k(P))) \to \pi^{\ab}_1(U), 
\end{equation}
where last map exists because $p_s \in U$. Taking sum over all Parshin 
chains on the pair $(U\subset X)$, we get a reciprocity map 
$\rho_{U/X} \colon I_{U/X} \to \pi^{\ab}_1(U)$.
By \cite[Theorem~5.13, Proposition~5.15]{Gupta-Krishna-REC},
this descends to a continuous homomorphism of topological groups
\begin{equation}\label{eqn:Kato-rec-4}
\rho_{U/X} \colon C_{U/X} \to 
\pi^{\ab}_1(U).
\end{equation}

Recall that $\pi_1^{\adiv}(X, D)_0$ is the kernel of the
composite map $\pi_1^{\adiv}(X, D) \surj \pi_1^{\ab}(X) \to \wh{\Z}$.
One defines $\pi_1^{\abk}(X, D)_0$ similarly.
One of the main results of this paper is the following.

\begin{thm}\label{thm:REC-RS-Main}
There is a continuous homomorphism 
\[
\rho'_{X|D} \colon C(X|D) \to \pi_1^{\abk}(X, D)
\]
with dense image such that the diagram
\begin{equation}\label{eqn:Rec-D-map-0}
\xymatrix@C.8pc{
{C}_{U/X} \ar[r]^-{\rho_{U/X}} \ar@{->>}[d]_-{p'_{X|D}} & \pi^{\ab}_1(U) 
\ar@{->>}[d]^-{q'_{X|D}} \\
C(X|D) \ar[r]^-{\rho'_{X|D}} & \pi^{\abk}_1(X,D)}
\end{equation}
is commutative. If $X$ is 
normal and $U$ is regular, then $\rho'_{X|D}$ induces an
isomorphism of finite groups
\[
\rho'_{X|D} \colon C(X|D)_0 \xrightarrow{\cong} \pi_1^{\abk}(X, D)_0.
\]
\end{thm}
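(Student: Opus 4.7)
The plan is to follow the strategy used in \cite{Gupta-Krishna-BF} for $\rho_{X|D}: C(X,D) \to \pi_1^{\adiv}(X,D)$, but with the Kato filtration $\Fil^{\bk}$ replacing the Matsuda filtration $\Fil^{\ms}$, and with $\wh{K}^M_\bullet(A|I)$ replacing ${K}^M_\bullet(A,I)$. The starting point is the continuous reciprocity $\rho_{U/X}: C_{U/X} \to \pi_1^{\ab}(U)$ from \cite{Gupta-Krishna-REC}. Composing with the canonical surjection $q'_{X|D}: \pi_1^{\ab}(U) \twoheadrightarrow \pi_1^{\abk}(X,D)$, I would define $\rho'_{X|D}$ by showing that the composite $q'_{X|D}\circ\rho_{U/X}$ factors through the quotient $p'_{X|D}: C_{U/X} \twoheadrightarrow C(X|D)$. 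In view of the definitions \eqref{eqn:Idele-D} and \eqref{eqn:IC-1}, this is equivalent to checking that for every maximal Parshin chain $P$, the composition
\[
\wh{K}^M_{d_X(P)}(\sO^h_{X,P'}|I_D) \to I_{U/X} \xrightarrow{\rho_{U/X}} \pi_1^{\ab}(U) \xrightarrow{q'_{X|D}} \pi_1^{\abk}(X,D)
\]
is zero. By \eqref{eqn:Fil-SH-5}, $\sO^h_{X,P'} \cong \prod_{V \in \sV(P)} \wt{V}_{d-1}$, so it suffices to treat one $d$-DV $V$ at a time.

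For the key vanishing, fix $V \in \sV(P)$, let $\lambda = p_{d-1}$, and let $n = n_\lambda$ be the multiplicity of $D$ along the component through $\lambda$. A direct inspection shows that
\[
\wh{K}^M_d(\wt{V}_{d-1} \mid (\pi^n)) = U_n K^M_d(Q(V^h)),
\]
where $\pi$ is a uniformizer of $\wt{V}_{d-1}$, since $(\wt{V}_{d-1}[\pi^{-1}])^\times = Q(V^h)^\times$. Now fix a character $\chi \in \Fil^{\bk}_D H^1(K)$. By definition, the image $\chi_\lambda \in H^1(K_\lambda)$ lies in $\Fil^{\bk}_{n-1} H^1(K_\lambda)$. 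The further image $\chi_V \in H^1(Q(V^h))$ then lies in $\Fil^{\bk}_{n-1} H^1(Q(V^h))$ by the compatibility provided by \corref{cor:Fil-SH-4} (which itself rests on \propref{prop:Fil-SH-BK}). Kato's criterion \thmref{thm:Filt-Milnor-*}(1) then implies that $\chi_V$ annihilates $U_n K^M_d(Q(V^h))$ under the pairing \eqref{eqn:Kato-pair}. Since $\pi_1^{\abk}(X,D) = \Fil^{\bk}_D H^1(K)^\vee$ by \eqref{eqn:dual-D-BK}, and the reciprocity map $\rho_{Q(V^h)}$ of \eqref{eqn:Kato-rec} is characterized through this pairing, the required vanishing follows. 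This gives the well-defined $\rho'_{X|D}$, with commutativity of \eqref{eqn:Rec-D-map-0} built into the construction; continuity follows since $C(X|D)$ carries the quotient topology from $I_{U/X}$ and $\pi_1^{\abk}(X,D)$ is profinite.

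Density of the image of $\rho'_{X|D}$ is then automatic: the image of $\sZ_0(U) \to \pi_1^{\ab}(U)$ is dense by the generalized Chebotarev density theorem, so its image in $\pi_1^{\abk}(X,D)$ is dense, and this image factors through $C(X|D)$ via \eqref{eqn:idele}. For the finite-group isomorphism on degree-zero parts, assume $X$ normal and $U$ regular and invoke the reciprocity $\rho_{X|D}: C(X,D)_0 \xrightarrow{\cong} \pi_1^{\adiv}(X,D)_0$ of \cite[Theorem~1.2]{Gupta-Krishna-BF}, both sides being finite. From \eqref{eqn:IC-2} and \lemref{lem:adiv-abk}, one has surjections $\alpha: C(X,D)_0 \twoheadrightarrow C(X|D)_0$ and $\beta: \pi_1^{\adiv}(X,D)_0 \twoheadrightarrow \pi_1^{\abk}(X,D)_0$, and the commutative square identifies $\rho'_{X|D}$ on degree-zero parts with the induced map between the quotients. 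Surjectivity is immediate; both quotients are finite.

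The injectivity on degree-zero parts is the main obstacle and reduces to showing $\rho_{X|D}(\ker\alpha) = \ker\beta$. One inclusion ($\subseteq$) is immediate from what has been proven above. For the reverse, applying Pontryagin duality to the surjections $\alpha, \beta$, and using \eqref{eqn:adiv-abk-0} and \eqref{eqn:dual-D-BK}, one identifies
\[
(\ker\beta)^\vee \cong \Fil_D H^1(K)/\Fil^{\bk}_D H^1(K),
\]
while $(\ker\alpha)^\vee$ is identified with the subgroup of $\Fil_D H^1(K)/H^1(k)$ consisting of characters that additionally annihilate the image in $C(X,D)_0$ of $\bigoplus_P \wh{K}^M_d(\sO^h_{X,P'}|I_D)/K^M_d(\sO^h_{X,P'},I_D)$. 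The local computation $\wh{K}^M_d(\wt{V}_{d-1}|(\pi^n)) = U_n K^M_d(Q(V^h))$ together with the two parts of \thmref{thm:Filt-Milnor-*} (the ``if and only if'' of Kato's criterion) shows that a character $\chi \in \Fil_D H^1(K)$ annihilates these extra symbols at every $V$ if and only if $\chi_\lambda \in \Fil^{\bk}_{n_\lambda-1} H^1(K_\lambda)$ for every generic point $\lambda$ of $D$, that is, if and only if $\chi \in \Fil^{\bk}_D H^1(K)$. Hence the two dual subgroups of $\Fil_D H^1(K)/H^1(k)$ coincide, giving $\rho_{X|D}(\ker\alpha) = \ker\beta$ and completing the proof of the isomorphism. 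The delicate point, and what I expect to be the principal technical hurdle, is the careful justification of the local identification $\wh{K}^M_d(\wt{V}_{d-1}|(\pi^n)) = U_n K^M_d(Q(V^h))$ together with its compatibility under the decomposition \eqref{eqn:Fil-SH-5}, ensuring that the global pairing $C(X,D) \times \Fil_D H^1(K) \to \Q/\Z$ restricts correctly to the Kerz-Zhao quotient.
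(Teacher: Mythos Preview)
Your approach is essentially the paper's: factor $q'_{X|D}\circ\rho_{U/X}$ through $C(X|D)$ by pairing with characters in $\Fil^{\bk}_D H^1(K)$ and applying \thmref{thm:Filt-Milnor-*}(1) at each $Q(V^h)$; then deduce injectivity by dualizing and comparing with the known isomorphism for $C(X,D)$ and $\pi^{\adiv}_1$ from \cite{Gupta-Krishna-BF}. The paper phrases the injectivity slightly differently (it shows $\rho'^{\vee}_{X|D}$ is surjective on all of $C_{KS}(X|D)^\vee$, not just the degree-zero part, after identifying $C(X|D)\cong C_{KS}(X|D)$ via \thmref{thm:KZ-main}), but the content is the same.

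One point in your injectivity argument needs tightening. You write that Kato's criterion shows $\chi$ annihilates the extra symbols at every $V$ if and only if $\chi_\lambda \in \Fil^{\bk}_{n_\lambda-1} H^1(K_\lambda)$. But \thmref{thm:Filt-Milnor-*}(1) is a statement about $Q(V^h)$: it only yields $\chi_V \in \Fil^{\bk}_{n_\lambda-1} H^1(Q(V^h))$. To descend this to $\chi_\lambda \in \Fil^{\bk}_{n_\lambda-1} H^1(K_\lambda)$ you must invoke the Cartesian square of \corref{cor:Fil-SH-4} (resting on \propref{prop:Fil-SH-BK}) a second time, just as you did in the existence step. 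This descent, not the elementary identification $\wh{K}^M_d(\wt{V}_{d-1}\mid(\pi^n)) = U_n K^M_d(Q(V^h))$, is the genuine technical point; the latter is immediate from the definition once one notes $\wt{V}_{d-1}[\pi^{-1}] = Q(V^h)$. Also, a small slip: $(\ker\alpha)^\vee$ is a \emph{quotient} of $(C(X,D)_0)^\vee$, not a subgroup; what you describe as a subgroup is $(C(X|D)_0)^\vee$, and the equality you need is that this subgroup coincides with $\Fil^{\bk}_D H^1(K)/H^1(k)$ inside $\Fil_D H^1(K)/H^1(k)$.
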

\begin{proof}
We first show the existence of $\rho'_{X|D}$.
Its continuity and density of its image will then follow by 
the corresponding assertions for $\rho_{U/X}$, shown in 
\cite{Gupta-Krishna-REC}. 
In view of ~\eqref{eqn:Kato-rec-4}, we only have to show that
if $\chi$ is a character of $\pi_1^{\abk}(X, D)$, then the composite
$\chi \circ q'_{X|D} \circ \rho_{U/X}$ annihilates $\Ker(p'_{X|D})$.
By ~\eqref{eqn:Idele-D}, we only need to show that 
$\chi \circ q'_{X|D} \circ \rho_{U/X}$ annihilates 
the image of $\wh{K}^M_{d}(\sO^h_{X,P'}|I_D) \to C_{U/X}$, where $P$ is any
maximal Parshin chain on $(U \subset X)$.
But the proof of this is completely identical to that of
\cite[Theorem~8.1]{Gupta-Krishna-REC}, only difference being 
that we have to use part (1) of \thmref{thm:Filt-Milnor-*}
instead of part (2).

We now assume that $X$ is normal and $U$ is regular.
In this case, we can replace $C(X|D)$ by $C_{KS}(X|D)$ by
\thmref{thm:KZ-main}. We show that $\rho'_{X|D}$ is injective on all of 
$C_{KS}(X|D)$.
By Lemmas~\ref{lem:KS-GK-iso} and ~\ref{lem:adiv-abk}, there is a diagram
\begin{equation}\label{eqn:Rec-D-map-1}
\xymatrix@C.8pc{
\Fil^{\bk}_D H^1(K) \ar[r]^-{\rho'^\vee_{X|D}} \ar[d]_-{\alpha^\vee} & 
C_{KS}(X|D)^\vee \ar[d]^-{\beta^\vee} \\
\Fil_D H^1(K) \ar[r]^-{\rho^\vee_{X|D}} & C_{KS}(X,D)^\vee,}
\end{equation} 
whose vertical arrows are injective. It is clear from the construction of 
the reciprocity maps that this diagram is commutative.

To show that $\rho'_{X|D}$ is injective, it suffices to show that 
$\rho'^\vee_{X|D}$ is surjective (see \cite[Lemma~7.10]{Gupta-Krishna-REC}). 
We fix a character 
$\chi \in C_{KS}(X|D)^\vee$ and let $\wt{\chi} = \beta^\vee(\chi)$.
Since $\rho^\vee_{X|D}$ is surjective by 
\cite[Theorem~1.1]{Gupta-Krishna-BF}, we can find a character
$\chi' \in \Fil_D H^1(K)$ such that $\wt{\chi} = \rho^\vee_{X|D}(\chi')$. 
We need to show that $\chi' \in \Fil^{\bk}_D H^1(K)$.

We fix a point $x \in \Irr_C$ and let $\chi'_x$ be the 
image of $\chi'$ in
$H^1(K_x)$. We need to show that $\chi'_x \in \Fil^{\bk}_{n_x -1} H^1(K_x)$,
where $n_x$ is the multiplicity of $D$ at $x$.
By \corref{cor:Fil-SH-4}, it suffices to show that
for some maximal Parshin chain $P = (p_0, \ldots , p_{d-2}, x, \eta)$
on $(U \subset X)$
and $d$-DV $V \subset K$ dominating $P$, the image of $\chi'_x$ in 
$H^1(Q(V^h))$ lies in the subgroup $\Fil^{\bk}_{n_x -1} H^1(Q(V^h))$.
But this is proven by repeating the proof of 
\cite[Theorem~1.1]{Gupta-Krishna-BF} mutatis mutandis by using only
one modification. Namely, we need to use part (1) of 
\thmref{thm:Filt-Milnor-*} instead of part (2).

The surjectivity of $\rho'_{X|D}$ on the degree zero part follows because
the top horizontal arrow in the
commutative diagram 
\begin{equation}\label{eqn:Rec-mod-bk-2}
\xymatrix@C.8pc{  
C_{KS}(X,D)_0 \ar[r]^-{\rho_{X|D}} \ar@{->>}[d]_-{\beta} & \pi^{\adiv}_1(X,D)_0 
\ar@{->>}[d]^-{\alpha} \\
C_{KS}(X|D)_0 \ar[r]^-{\rho'_{X|D}} & \pi^{\abk}_1(X,D)_0}
\end{equation}
is surjective by \cite[Theorem~1.1]{Gupta-Krishna-BF}. 
The finiteness of $C_{KS}(X|D)_0$ follows because 
$\beta$ is surjective by \lemref{lem:KS-GK-iso} and
$C_{KS}(X,D)_0$ is finite by \cite[Theorem~4.8]{Gupta-Krishna-BF}.
This concludes the proof.
\end{proof}

Using \thmref{thm:REC-RS-Main} and \cite[Lemma~8.4]{Gupta-Krishna-REC},
we get the following.
\begin{cor}\label{cor:REC-RS-Main-fin}
Under the additional assumptions of \thmref{thm:REC-RS-Main}, the map
\[
\rho'_{X|D} \colon {C_{KS}(X|D)}/m \to {\pi^{\abk}_1(X,D)}/m
\]
is an isomorphism of finite groups for every integer $m \ge 1$.
\end{cor}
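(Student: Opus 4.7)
The plan is to deduce this corollary from Theorem~\ref{thm:REC-RS-Main} by a standard five-lemma argument applied to the degree exact sequences, which is exactly the content of \cite[Lemma~8.4]{Gupta-Krishna-REC}. First I would form the commutative diagram of short exact sequences
\[
\xymatrix@C.8pc{
0 \ar[r] & C_{KS}(X|D)_0 \ar[r] \ar[d]_-{\cong} & C_{KS}(X|D) \ar[r]^-{\deg} \ar[d]^-{\rho'_{X|D}} & A \ar[r] \ar[d] & 0 \\
0 \ar[r] & \pi^{\abk}_1(X,D)_0 \ar[r] & \pi^{\abk}_1(X,D) \ar[r]^-{\deg'} & B \ar[r] & 0,
}
\]
where $A \subseteq \Z$ and $B \subseteq \wh{\Z}$ are the respective images of the degree maps. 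The left vertical arrow is an isomorphism of finite groups by Theorem~\ref{thm:REC-RS-Main}, and $\rho'_{X|D}$ has dense image by the same result.

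Next I would verify that the induced map $A/m \to B/m$ is an isomorphism for every $m \ge 1$. Since $\pi^{\abk}_1(X,D)_0$ is the kernel of $\deg'$ and is hit isomorphically by $C_{KS}(X|D)_0$, the density of the image of $\rho'_{X|D}$ forces the image of $A \to B$ to be dense in $B$. As $A$ is a (necessarily cyclic) subgroup $n\Z \subseteq \Z$, its closure inside $\wh{\Z}$ is $n\wh{\Z}$, and $B$, being closed, must coincide with $n\wh{\Z}$. The induced map $A/m \to B/m$ is then the canonical identification $n\Z/(mn\Z) \xrightarrow{\cong} n\wh{\Z}/(mn\wh{\Z})$, both sides being canonically isomorphic to $\Z/m$.

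Finally I would tensor both rows with $\Z/m$. Since $A$ and $B$ are torsion free, the sequences remain short exact, and the five lemma combined with the isomorphisms on the two outer columns yields the desired isomorphism ${C_{KS}(X|D)}/m \xrightarrow{\cong} {\pi^{\abk}_1(X,D)}/m$. Finiteness follows at once because each of these groups is an extension of the finite group $A/m \cong \Z/m$ by the finite group $C_{KS}(X|D)_0/m$ (respectively $\pi^{\abk}_1(X,D)_0/m$). There is no real obstacle in this argument: every ingredient is already supplied by Theorem~\ref{thm:REC-RS-Main} together with the abstract lemma cited from \cite{Gupta-Krishna-REC}.
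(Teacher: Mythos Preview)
Your argument is correct and is precisely the approach the paper has in mind: the paper simply cites \thmref{thm:REC-RS-Main} together with \cite[Lemma~8.4]{Gupta-Krishna-REC}, and what you have written is an explicit unpacking of that lemma's five-lemma argument on the degree exact sequences. The only implicit point worth noting is the compatibility $\deg' \circ \rho'_{X|D} = \iota \circ \deg$ (with $\iota \colon \Z \hookrightarrow \wh{\Z}$), which is standard since both degree maps are induced by the structure morphism to $\Spec(k)$.
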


\subsection{Filtration of Kerz-Zhao}
\label{sec:Etale-bk}
Let $k$ be a finite field of characteristic $p$ and $X$ an integral regular
projective scheme over $k$ of dimension $d \ge 1$. Let $D \subset X$
be an effective Cartier divisor with complement $U$ such that
$D_\red$ is a simple normal crossing divisor. Let $K$ denote 
the function field of $X$.
By \cite[Theorems~1.1.5, 4.1.4]{JSZ}, there is an isomorphism
\begin{equation}\label{eqn:JSJ-0}
\lambda_m \colon H^1_{\etl}(U, {\Z}/{p^m}) \xrightarrow{\cong}
{\varinjlim}_n H^d_{\etl}(X, \ov{{\wh{\sK}^M_{d,X|nD}}/{p^m}})^\vee.
\end{equation}
Each group $H^d_{\etl}(X, \ov{{\wh{\sK}^M_{d,X|nD}}/{p^m}})$ is finite
by \cite[Theorem~3.3.1]{Kerz-Zhau} and \corref{cor:REC-RS-Main-fin}.
It follows that ~\eqref{eqn:JSJ-0} 
is an isomorphism of discrete torsion topological abelian groups.

Since $H^d_{\etl}(X, \ov{{\wh{\sK}^M_{d,X|D}}/{p^m}})^\vee
\inj {\varinjlim}_n H^d_{\etl}(X, \ov{{\wh{\sK}^M_{d,X|nD}}/{p^m}})^\vee$,
we can define
\[
\Fil^{\bk}_{D} H^1_\etl(U, {\Z}/{p^m}) :=
(\lambda_m)^{-1}(H^d_{\etl}(X, \ov{{\wh{\sK}^M_{d,X|D}}/{p^m}})^\vee).
\]
This filtration was defined by Kerz-Zhao \cite[Definition~3.3.7]{Kerz-Zhau}.
We let $\Fil^{\bk}_{D} H^1_\etl(U, {\Q_p}/{\Z_p}) = {\varinjlim}_m 
\Fil^{\bk}_{D} H^1_\etl(U, {\Z}/{p^m})$ and
$\Fil^{\bk}_{D, \etl} H^1(K) = H^1(K)\{p'\} \oplus 
\Fil^{\bk}_{D} H^1_\etl(U, {\Q_p}/{\Z_p})$.

Using \cite[Theorem~3.3.1]{Kerz-Zhau} and \corref{cor:REC-RS-Main-fin}, we
get the following logarithmic version of \thmref{thm:Comp-Main} (without assuming that
$D_\red$ is regular).
This identifies the filtration due to  Kerz-Zhao to
the one induced by the Brylinski-Kato filtration.

\begin{cor}\label{cor::REC-RS-Main-fin-0}
As subgroups of $H^1(K)$, one has 
\[
\Fil^{\bk}_{D, \etl} H^1(K) = \Fil^{\bk}_D H^1(K).
\]
\end{cor}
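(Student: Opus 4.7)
My plan is to reduce the comparison of filtrations to the $p^m$-torsion level and then exploit the reciprocity isomorphism of \corref{cor:REC-RS-Main-fin} together with the Nisnevich--étale comparison \thmref{thm:RS-comp}. Since the prime-to-$p$ parts of both filtrations coincide by definition, it will suffice to prove
\[
\Fil^{\bk}_D H^1_\etl(U, \Z/p^m) = \Fil^{\bk}_{D,\etl} H^1_\etl(U, \Z/p^m)
\]
inside $H^1_\etl(U, \Z/p^m)$ for every $m \ge 1$, and then take $\varinjlim_m$.

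The first step is to observe that both subgroups are finite of the same cardinality. By the defining property of $\Fil^{\bk}_{D,\etl}$ via $\lambda_m$, the Kerz--Zhao subgroup is isomorphic to $H^d_\etl(X, \ov{{\wh{\sK}^M_{d,X|D}}/p^m})^\vee$, which equals $C_{KS}(X|D;m)^\vee$ by \thmref{thm:RS-comp}. By \eqref{eqn:dual-D-BK}, the Brylinski--Kato subgroup equals $(\pi_1^{\abk}(X,D)/p^m)^\vee$, and the two finite groups are identified by \corref{cor:REC-RS-Main-fin}. It therefore suffices to establish a single inclusion.

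I would then prove $\Fil^{\bk}_D H^1_\etl(U, \Z/p^m) \subseteq \Fil^{\bk}_{D,\etl} H^1_\etl(U, \Z/p^m)$ using the commutative square
\[
\xymatrix@C.8pc{
\wt{C}^{\etl}_{U/X}(m) \ar[r]^-{\psi} \ar@{->>}[d]_-{\pi_D} & \pi_1^{\ab}(U)/p^m \ar@{->>}[d]^-{q} \\
C_{KS}(X|D;m) \ar[r]^-{\rho'_{X|D}}_-{\cong} & \pi_1^{\abk}(X,D)/p^m,
}
\]
where $\wt{C}^{\etl}_{U/X}(m) := \varprojlim_n C_{KS}(X|nD;m)$, $\psi$ is the inverse limit of the reciprocity maps $\rho'_{X|nD}$ (an isomorphism because the $\Fil^{\bk}_{nD}$ exhaust $H^1(U, \Q_p/\Z_p)$), and $\pi_D$ and $q$ are the canonical projections. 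Granting the identification of $\psi^\vee$ with $\lambda_m$ under \lemref{lem:lim-colim}, dualizing the square shows that any $\chi \in \Fil^{\bk}_D H^1_\etl(U, \Z/p^m)$, which by \eqref{eqn:dual-D-BK} factors through $q$, has $\lambda_m(\chi)$ in the image of $\pi_D^\vee \colon C_{KS}(X|D;m)^\vee \hookrightarrow \varinjlim_n C_{KS}(X|nD;m)^\vee$. Using \thmref{thm:RS-comp}, this image is $H^d_\etl(X, \ov{{\wh{\sK}^M_{d,X|D}}/p^m})^\vee$, so $\chi \in \Fil^{\bk}_{D,\etl} H^1_\etl(U, \Z/p^m)$ as required.

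The main obstacle will be verifying the identification $\psi^\vee = \lambda_m$ (i.e., the compatibility of the Kerz--Zhao reciprocity pairing with the JSZ duality) and the commutativity of the square. I expect both to follow from \cite[Theorem~3.4.1]{Kerz-Zhau}, which identifies the Kerz--Zhao reciprocity map with the Frobenius on closed points, combined with the density of $\sZ_0(U)$ in $\wt{C}^{\etl}_{U/X}(m)$ --- the same style of argument used in \propref{prop:Nis-et-rec}. The required factorization of $\psi$ through $\rho'_{X|D}$ at level $D$ is built into the construction of $\rho'_{X|D}$ in \thmref{thm:REC-RS-Main}. Once these compatibilities are in place, the inclusion is immediate, equal cardinality forces equality, and passing to the direct limit in $m$ and adjoining the prime-to-$p$ summand yields the equality of filtrations in $H^1(K)$.
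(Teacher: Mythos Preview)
Your approach is correct and uses the same two key ingredients the paper cites: \thmref{thm:RS-comp} (which is \cite[Theorem~3.3.1]{Kerz-Zhau}) for the Nisnevich--{\'e}tale comparison, and \corref{cor:REC-RS-Main-fin} for the reciprocity isomorphism. The paper's proof is a one-line reference to these results, treating the corollary as the direct logarithmic analogue of \thmref{thm:Comp-Main}; you have unpacked what that analogy entails, including the compatibility $\psi^\vee = \lambda_m$ (which, as you anticipate, is handled exactly as in \propref{prop:Nis-et-rec} via \cite[Theorem~3.4.1]{Kerz-Zhau} and the density of the cycle map).
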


\vskip .3cm

\section{Counterexamples to Nisnevich descent}\label{sec:CE}
We shall now prove \thmref{thm:Main-5}.
Let $k$ be a finite field of characteristic $p$ and $X$ an integral regular
projective scheme of dimension two over $k$. 
Let $C \subset X$ be an integral regular curve with complement $U$.
Let $K = k(X)$ and $\ff = k(\lambda)$, where $\lambda$ is the
generic point of $C$. Let $K_\lambda$ be the Henselian discrete valuation field
with ring of integers $\sO^h_{X,\lambda}$ and residue field $\ff$.
Fix a positive integer $m_0 = p^rm'$, where $r \ge 1$ and $p \nmid m'$. 
Assume that $p \neq 2$.
Then Matsuda has shown (see the proof of \cite[Proposition~3.2.7]{Matsuda})
that there is an isomorphism
\begin{equation}\label{eqn:Artin-cond}
\eta \colon \frac{\Fil^{\ms}_{m_0} H^1(K_\lambda)}{\Fil^{\bk}_{m_0 -1} H^1(K_\lambda)}
\xrightarrow{\cong} B_r \Omega^1_{\ff},
\end{equation}
where $B_\bullet \Omega^1_{\ff}$ is an increasing filtration of
$\Omega^1_{\ff}$, recalled in the proof of \lemref{lem:BK-RS}.
Suppose that $B_1 \Omega^1_{\ff} = d(\ff) \subset \Omega^1_{\ff}$ is not zero.
Then $B_r\Omega^1_{\ff} \neq 0$ for every $r \ge 1$.
Hence, the left hand side of ~\eqref{eqn:Artin-cond} is not zero.
Since the restriction map $\delta_\lambda \colon H^1(K) \to H^1(K_\lambda)$
is surjective, it follows that we can find a continuous character
$\chi \in H^1(K)$ such that $\delta_\lambda(\chi) \in
\Fil^{\ms}_{m_0} H^1(K_\lambda) \setminus \Fil^{\bk}_{m_0 -1} H^1(K_\lambda)$.

We let $U' \subseteq U$ be the largest open subscheme where 
$\chi$ is unramified and let $C' = X\setminus U'$ with the reduced
closed subscheme structure.
Since $X$ is a surface, we can find a morphism $f \colon X' \to X$
which is a composition of a sequence monoidal transformations
such that the reduced closed subscheme $f^{-1}(C')$ is a simple
normal crossing divisor.
In particular, $E_0 \to C$ is an isomorphism if we let $E_0$ be
the strict transform of $C$. We let $E = f^{-1}(C')$ with reduced
structure. Note that there is a finite closed subset $T \subset C'$
such that $f^{-1}(X \setminus T) \to X \setminus T$ is an isomorphism.

We let $E = E_0 + E_1 + \cdots + E_s$, where each $E_i$ is integral. 
We let $\lambda_i$ be the generic point of $E_i$ so that $\lambda_0
= \lambda$.
Let ${\rm ar}_{\lambda_i} \colon H^1(K) \to \Z$ be the Artin conductor
(see \cite[Definition~3.2.5]{Matsuda}). 
\thmref{thm:Fil-main} implies that 
${\rm ar}_{\lambda_0}(\chi) = m_0$.
We let $m_i = {\rm ar}_{\lambda_i}(\chi)$ for $i \ge 1$
and define $D' = \stackrel{s}{\underset{i = 0}\sum} m_i E_i$.
It is then clear that $\chi \in \Fil_D H^1(K) \setminus \Fil^{\bk}_D H^1(K)$.
We have thus found a smooth projective integral surface $X'$ and an
effective Cartier divisor $D' \subset X'$ with the property that
$D'_\red$ is a simple normal crossing divisor and
$\frac{\Fil_{D'} H^1(K)}{\Fil^{\bk}_{D'} H^1(K)} \neq 0$ if $K$ is the function 
field of $X'$.   
It follows that $\Ker(\pi^{\adiv}_1(X', D') \surj \pi^{\abk}_1(X', D'))
\neq 0$. Equivalently,
$\Ker(\pi^{\adiv}_1(X', D')_0 \surj \pi^{\abk}_1(X', D')_0)
\neq 0$.

We now look at the the commutative diagram
\begin{equation}\label{eqn:Rec-cycle}
\xymatrix@C.8pc{
\CH_0(X'|D')_0 \ar[r]^-{\cyc_{X'|D'}} \ar[d] & 
\pi^{\adiv}_1(X', D')_0 \ar[d] \\
H^{4}_\sM(X'|D', \Z(2))_0 \ar[r]^-{\cyc'_{X'|D'}} &
\pi^{\abk}_1(X', D')_0.}
\end{equation}
The top horizontal arrow is an isomorphism by 
\cite[Theorems~1.2, 1.3]{Gupta-Krishna-BF} and the bottom
horizontal arrow is an isomorphism by \corref{cor:Main-4}.
We conclude that the left vertical arrow is surjective but not injective.

To complete the construction of a counterexample, what remains is to
find a pair $(X,C)$ such that $d(\ff) \subset \Omega^1_{\ff}$ is not zero.
But this is an easy exercise. For instance, take $X = \P^2_k$ and
$C  \subset \P^2_k$ a coordinate hyperplane. Then $\ff \cong k(t)$
is a purely transcendental extension of degree one and
$d(t)$ is a free generator of $\Omega^1_{\ff}$.
We remark that we had assumed above that $p \neq 2$, but this condition
can be removed using the proof of \cite[Theorem~6.3]{Gupta-Krishna-REC}.

\vskip .3cm

Let $(X',D')$ be as above and let $F = k(t)$ be a purely transcendental extension of degree one.
We let $\wt{X} = X'_{F}$ and $\wt{D} = D'_F$. Then we have a commutative diagram
\[
  \xymatrix@C.8pc{
    \CH_0(X'|D') \ar[r] \ar[d] &   \CH_0(\wt{X}|\wt{D}) \ar[d] \\
    H^{4}_\sM(X'|D', \Z(2)) \ar[r] & H^{4}_\sM(\wt{X}|\wt{D}, \Z(2)),}
\]
where the horizontal arrows are the flat pull-back maps. It easily follows from the proof of
\cite[Proposition~4.3]{KP-Doc} that the top horizontal arrow is injective. It follows that
the right vertical arrow is not injective. This shows the failure of Nisnevich descent for
the Chow groups with modulus over infinite fields too.

\vskip .4cm

\noindent\emph{Acknowledgements.}
Gupta was supported by the
SFB 1085 \emph{Higher Invariants} (Universit\"at Regensburg).

\end{document}